\numberwithin{equation}{section}
\tikzset{middlearrow/.style={
		decoration={markings,
			mark= at position 0.5 with {\arrow{#1}} ,
		},
		postaction={decorate}
	}
}
\newtheorem{proposition}{Proposition}[section]
\newtheorem{lemma}[proposition]{Lemma}
\newtheorem{theorem}[proposition]{Theorem}
\newtheorem*{theorem*}{Theorem}
\newtheorem{corollary}[proposition]{Corollary}
\newtheorem*{corollary*}{Corollary}
\newtheorem{definition}[proposition]{Definition}
\theoremstyle{definition}
\newtheorem{remark}[proposition]{Remark}
\newtheorem{example}[proposition]{Example}
				\string\usetikzlibrary{decorations.markings} to use arrows with markings}{}}{}%
\newcommand{\rH}{\mathrm H}
\newcommand{\fS}{\mathfrak S}
\newcommand{\ft}{\mathfrak t}
\newcommand{\cA}{\mathcal A}
\newcommand{\cD}{\mathcal D}
\newcommand{\cE}{\mathcal E}
\newcommand{\cL}{\mathcal L}
\newcommand{\cM}{\mathcal M}
\newcommand{\cN}{\mathcal N}
\newcommand{\cO}{\mathcal O}
\newcommand{\cP}{\mathcal P}
\newcommand{\cU}{\mathcal U}
\newcommand{\cV}{\mathcal V}
\newcommand{\cW}{\mathcal W}
\DeclareFontFamily{U}{BOONDOX-calo}{\skewchar\font=45 }
\DeclareFontShape{U}{BOONDOX-calo}{m}{n}{<-> s*[1.05] BOONDOX-r-calo}{}
\DeclareFontShape{U}{BOONDOX-calo}{b}{n}{<-> s*[1.05] BOONDOX-b-calo}{}
\DeclareMathAlphabet{\mathcalboondox}{U}{BOONDOX-calo}{m}{n}
\newcommand{\bZ}{\mathbb{Z}}							  				
\newcommand{\bP}{\mathbb{P}}						
\newcommand{\bT}{\mathbb{T}}						
\newcommand{\bA}{\mathbb{A}}
\newcommand{\bX}{\mathbb{X}}				
\newcommand{\bY}{\mathbb{Y}}
\newcommand{\bQ}{\mathbb{Q}}
\newcommand{\bC}{\mathbb{C}}
\newcommand{\bR}{\mathbb{R}}
\providecommand{\bP}{\mathbb{P}}
\newcommand{\cQ}{\mathcal{Q}}
\newcommand{\bG}{\mathbb{G}}
\let\save@mathaccent\mathaccent
\newcommand*\if@single[3]{%
	\setbox0\hbox{${\mathaccent"0362{#1}}^H$}%
	\setbox2\hbox{${\mathaccent"0362{\kern0pt#1}}^H$}%
	\ifdim\ht0=\ht2 #3\else #2\fi
}
\newcommand*\rel@kern[1]{\kern#1\dimexpr\macc@kerna}
\newcommand*\widebar[1]{\@ifnextchar^{{\wide@bar{#1}{0}}}{\wide@bar{#1}{1}}}
\newcommand*\wide@bar[2]{\if@single{#1}{\wide@bar@{#1}{#2}{1}}{\wide@bar@{#1}{#2}{2}}}
\newcommand*\wide@bar@[3]{%
	\begingroup
	\def\mathaccent##1##2{%
		\let\mathaccent\save@mathaccent
		\if#32 \let\macc@nucleus\first@char \fi
		\setbox\z@\hbox{$\macc@style{\macc@nucleus}_{}$}%
		\setbox\tw@\hbox{$\macc@style{\macc@nucleus}{}_{}$}%
		\dimen@\wd\tw@
		\advance\dimen@-\wd\z@
		\divide\dimen@ 3
		\@tempdima\wd\tw@
		\advance\@tempdima-\scriptspace
		\divide\@tempdima 10
		\advance\dimen@-\@tempdima
		\ifdim\dimen@>\z@ \dimen@0pt\fi
		\rel@kern{0.6}\kern-\dimen@
		\if#31
		\overline{\rel@kern{-0.6}\kern\dimen@\macc@nucleus\rel@kern{0.4}\kern\dimen@}%
		\advance\dimen@0.4\dimexpr\macc@kerna
		\let\final@kern#2%
		\ifdim\dimen@<\z@ \let\final@kern1\fi
		\if\final@kern1 \kern-\dimen@\fi
		\else
		\overline{\rel@kern{-0.6}\kern\dimen@#1}%
		\fi
	}%
	\macc@depth\@ne
	\let\math@bgroup\@empty \let\math@egroup\macc@set@skewchar
zen@everymath{\mathgroup\macc@group\relax}%
	\macc@set@skewchar\relax
	\let\mathaccentV\macc@nested@a
	\if#31
	\macc@nested@a\relax111{#1}%
	\else
	\def\gobble@till@marker##1\endmarker{}%
	\futurelet\first@char\gobble@till@marker#1\endmarker
	\ifcat\noexpand\first@char A\else
	\def\first@char{}%
	\fi
	\macc@nested@a\relax111{\first@char}%
	\fi
	\endgroup
}
\newcommand{\Coh}{\operatorname{Coh}}
\newcommand{\Sch}{\mathrm{Sch}}
\newcommand{\Tor}{\mathrm{Tor}}
\newcommand{\Rep}{\operatorname{Rep}}
\newcommand{\oset}[3][0.2ex]{%
	\mathrel{\mathop{#3}\limits^{
			\vbox to#1{\kern-2\ex@
				\hbox{$\scriptstyle#2$}\vss}}}}
\newcommand{\timesT}{\oset{T}{\times}}
\newcommand{\timesG}{\oset{G}{\times}}
\newcommand{\timesH}{\oset{H}{\times}}
\newcommand{\lam}{\lambda}
\newcommand{\valpha}{{\check{\alpha}}}
\renewcommand{\ss}{\mathsf{ss}}
\newcommand{\car}{\operatorname{char}}
\newcommand{\triv}{\mathsf{triv}}
\newcommand{\nil}{\mathsf{nil}}
\newcommand{\diag}{\mathsf{diag}}
\newcommand{\diags}{\mathsf{diags}}
\newcommand{\coord}{\mathsf{coord}}
\newcommand{\rat}{\mathsf{rat}}
\newcommand{\trig}{\mathsf{trig}}
\renewcommand{\ell}{\mathsf{ell}}
\newcommand{\Ad}{\mathsf{Ad}}
\newcommand{\equi}{\mathsf{eq}}
\newcommand{\Irr}{\mathrm{Irr}}
\newcommand{\<}{\langle}
\renewcommand{\>}{\rangle}
\newcommand{\gr}{\mathsf{gr}}
\newcommand{\reg}{\mathsf{reg}}
\newcommand{\inv}{^{-1}}
\newcommand{\red}{^\mathrm{red}}
\newcommand{\Supp}{\operatorname{Supp}}
\tikzset{
  closed/.style = {decoration = {markings, mark = at position 0.5 with { \node[transform shape, xscale = .8, yscale=.4] {/}; } }, postaction = {decorate} },
  open/.style = {decoration = {markings, mark = at position 0.5 with { \node[transform shape, scale = .7] {$\circ$}; } }, postaction = {decorate} }
}
\newcommand{\PGL}{\operatorname{PGL}}
\newcommand{\GL}{\operatorname{GL}}
\newcommand{\SL}{\operatorname{SL}}
\newcommand{\Spin}{\operatorname{Spin}}
\newcommand{\LG}{{}^L\!G} 
\newcommand{\fg}{\mathfrak{g}}
\newcommand{\fh}{\mathfrak{h}}
\newcommand{\fl}{\mathfrak{l}}
\newcommand{\Lie}{\operatorname{Lie}}
\newcommand{\Bun}{\operatorname{Bun}}
\DeclareMathOperator{\Aut}{Aut}
\DeclareMathOperator{\Div}{Div}
\DeclareMathOperator{\Fun}{Fun}
\DeclareMathOperator{\Hom}{Hom}
\DeclareMathOperator{\rank}{rank}
\DeclareMathOperator{\Sp}{Sp}
\DeclareMathOperator{\Spec}{Spec}
\DeclareMathOperator{\Sym}{Sym}
\DeclareMathOperator{\Id}{\mathsf{Id}}
\newcommand{\Eis}{\operatorname{Eis}}
\newcommand{\CT}{\operatorname{CT}}
\newcommand{\git}{/\mkern-6mu/}
\newcommand{\GIT}{\git}
\newcommand{\adjquot}{{/_{\hspace{-0.2em}ad}\hspace{0.1em}}}
\renewcommand{\red}{\mathsf{red}}
\renewcommand{\nil}{\mathsf{uni}} 
\newcommand{\uni}{\mathsf{uni}}
\newcommand{\fr}{\mathsf{fr}}
\newcommand{\strreg}{\mathsf{str\mhyphen reg}}
\newcommand{\maxreg}{\mathsf{max\mhyphen reg}}
\newcommand{\Vect}{\mathrm{Vect}}
\newcommand{\GE}{{G}_{E}}
\newcommand{\HE}{{H}_{E}}
\newcommand{\TE}{{T}_{E}}
\newcommand{\uGE}{\underline{G}_{E}} 
\newcommand{\uGEone}{\underline{G}_{E_1}}
\newcommand{\uGEtwo}{\underline{G}_{E_2}}
\newcommand{\uGEi}{\underline{G}_{E_i}}
\newcommand{\uGpE}{\underline{G}'_{E}}
\newcommand{\F}{\mathbb{F}}
\newcommand{\uTE}{\underline{T}_{E}}  
\newcommand{\uHE}{\underline{H}_{E}}
\newcommand{\ass}{\mathrm{ass}}
\newcommand{\Hilb}{\mathrm{Hilb}}
\newcommand{\Ga}{\mathbb{G}_{a}}
\newcommand{\Gm}{\mathbb{G}_{m}}
\newcommand{\QC}{\mathrm{QC}}
\newcommand{\Stab}{\mathrm{Stab}}
\newcommand{\Ind}{\mathrm{Ind}}
\newcommand{\Res}{\mathrm{Res}}
\newcommand{\Perv}{\mathrm{Perv}}
\newcommand{\uPE}{\underline{P}_E}
\newcommand{\uLE}{\underline{L}_E}
\newcommand{\MGE}{\mathcal{M}_E(G)}
\newcommand{\MGpE}{\mathcal{M}_{E}(G')} 
\newcommand{\MHE}{\mathcal{M}_E(H)} 
\newcommand{\one}{\mathbf{1}}
\newcommand{\node}{\mathrm{node}}
\newcommand{\cusp}{\mathrm{cusp}}
\newcommand{\rvline}{\hspace*{-\arraycolsep}\vline\hspace*{-\arraycolsep}}
\mathchardef\mhyphen="2D
\title[Geometry of semistable $G$-bundles]{The Jordan--Chevalley decomposition for $G$-bundles on elliptic curves}
\address{IRMA 7 rue René Descartes, Strasbourg, France}
\email{fratila@math.unistra.fr}
\author{Dragos Fratila}
\address{Department of Mathematics, King's College London, London, UK}
\email{sam.gunningham@kcl.ac.uk}
\author{Sam Gunningham}
\address{YMSC, Tsinghua University, Beijing, China}
\email{lipenghui@mail.tsinghua.edu.cn}
\author{Penghui Li}
\begin{document}
\begin{abstract}
	We study the moduli stack of degree $0$ semistable $G$-bundles on an irreducible curve $E$ of arithmetic genus $1$, where $G$ is a connected reductive group. 
	Our main result describes a partition of this stack indexed by a certain family of connected reductive subgroups $H$ of $G$ (the $E$-pseudo-Levi subgroups), where each stratum is computed in terms of bundles on $H$ together with the action of the relative Weyl group. 
	We show that this result is equivalent to a Jordan--Chevalley theorem for such bundles equipped with a framing at a fixed basepoint. 
	In the case where $E$ has a single cusp (respectively, node), this gives a new proof of the Jordan--Chevalley theorem for the Lie algebra $\fg$ (respectively, group $G$). 
	
	We also provide a Tannakian description of these moduli stacks and use it to show that if $E$ is an ordinary elliptic curve, the collection of framed unipotent bundles on $E$ is equivariantly isomorphic to the unipotent cone in $G$. 
	Finally, we classify the $E$-pseudo-Levi subgroups using the Borel--de Siebenthal algorithm, and compute some explicit examples.
\end{abstract}


\maketitle
\tableofcontents

\section{Introduction}
\subsection{Overview and main results}\label{SS:intro-overview}

Fix an irreducible projective curve $E$ of arithmetic genus 1 over an algebraically closed field $k$. 
There are three possibilities:
\begin{enumerate}
	\item $E$ has a single ordinary cusp;
	\item $E$ has a single node;
	\item $E$ is smooth.
\end{enumerate}
We also fix a basepoint $x_0$ in the smooth locus of $E$.
We will refer to these cases as the \emph{cuspidal}, \emph{nodal}, or \emph{elliptic} cases respectively. \footnote{Note that in the first two cases, unlike the third, the pair $(E,x_0)$ has no moduli.}

Fix also a connected reductive group $G$ over $k$. We consider the moduli stack
\[
\uGE := \Bun_G^{0,\ss}(E)
\]
of degree $0$, semistable $G$-bundles over $E$. 
We denote by $\GE$ the moduli stack of such bundles $\cP$ together with a framing - that is, a trivialization of the fiber $\cP_{x_0}$.

We will see that $\GE$ is a smooth algebraic variety with an action of $G$ (changing the trivialization), and that $\uGE=\GE/G$. In fact, by results of Friedman--Morgan \cite{FriedMorgIII}, we have:
\begin{itemize}
	\item $\GE \cong \fg = \Lie(G)$ if $E$ is cuspidal, and
	\item $\GE \cong G$ if $E$ is nodal.
\end{itemize}
In these cases, the action of $G$ on $\GE$ corresponds to the adjoint/conjugation action. 
In the elliptic case, we will see that the stack $\uGE$ shares many of the properties of the adjoint quotient stacks $\fg/G$ and $G/G$.

\subsubsection{The Jordan--Chevalley Decomposition}
	One of our main results is a form of the Jordan--Chevalley decomposition for $G_E$, which recovers the usual Jordan--Chevalley decomposition for $\fg$ and $G$ in the cuspidal and nodal cases respectively. 
	
	An element $p \in \GE$ is called \emph{semisimple} if its $G$-orbit is closed. 
	We say that an element $p\in \GE$ is \emph{unipotent} if its orbit closure contains the trivial bundle $p_0$. 
	We denote by $\GE^\uni$ the subvariety of unipotent elements of $\GE$ and by $\uGE^\uni$ the corresponding substack in $\uGE$. 
	We note that these definitions recover the usual notion of semisimple and unipotent/nilpotent elements in the nodal/cuspidal cases.
	
	The Jordan--Chevalley decomposition essentially states that every framed bundle $p\in \GE$ can be uniquely decomposed as $p = p_s \cdot p_u$ where $p_s$ is semisimple, $p_u$ is unipotent, and $p_s$ commutes with $p_u$. 
	As it stands this statement is not well-formed as it is not clear what it means to ``multiply'' elements in $\GE$ (the usual statement of Jordan decomposition in the nodal and cuspidal cases uses multiplication in $G$ and addition in $\fg$). 
	However, it does make sense to multiply a $Z(G)$-bundle $\cP'$ and a $G$-bundle $\cP$: we define
	$\cP' \cdot \cP$
	to be the bundle induced from the external product $\cP' \times_E \cP$ via the multiplication map $Z(G) \times G \to G$ (which is a group homomorphism). Moreover, this construction is naturally compatible with framings, giving rise to an abelian algebraic group structure on $Z(G)_E$ and an action of $Z(G)_E$ on $\GE$.
	
	We will also show that for a reductive subgroup $H$ of $G$, the induction map on framed semistable bundles $\HE \to \GE$ is a closed embedding (see \cref{P:framed H semistable injects in G semistable}).
	Therefore we can identify $\HE$ with the corresponding closed subvariety of $\GE$. 
	We are now ready to state the first result, a form of Jordan--Chevalley decomposition:
	
	\begin{theorem}\label{thm:jordan intro} (see \cref{thm:jordan})
		Given $p\in \GE$, there is a unique triple $(H,p_s,p_u)$, where 
		$H$ is a connected reductive subgroup of $G$, $p_s \in Z(H)_E$ with $\Stab_G(p_s)^\circ=H$, $p_u \in \HE^\uni$, and $p=p_s \cdot p_u$.
	\end{theorem}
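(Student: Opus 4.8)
The plan is to run the classical proof of the Jordan--Chevalley decomposition, with elementary linear algebra replaced by the structure theory of degree-$0$ semistable $G$-bundles on $E$ (Atiyah, Friedman--Morgan). Observe first that in the cuspidal and nodal cases the assertion is, under the identifications $\GE\cong\fg$ and $\GE\cong G$ recalled above, exactly the additive, respectively multiplicative, Jordan--Chevalley decomposition, with $H=\Stab_G(p_s)^\circ$ the pseudo-Levi centralizer of the semisimple part; so what is really needed is a uniform argument that also handles the elliptic case, and the steps below are meant to apply in all three.

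\emph{Existence.} I would first record that $\GE$ admits a good quotient $\GE\to\GE\git G$ by $G$ (as in the models $\fg$ and $G$), with $\GE\git G$ described via degree-$0$ $T$-bundles modulo the Weyl group; in particular every $G$-orbit closure in $\GE$ contains a unique closed orbit. Given $p\in\GE$, by the Hilbert--Mumford/Kempf theory I pick a cocharacter $\lambda\colon\Gm\to G$ with $p_s:=\lim_{t\to0}\lambda(t)\cdot p$ lying in the closed orbit of $\overline{G\cdot p}$; note $\lambda$ then lands in $\Stab_G(p_s)$, so $\lambda$ is a cocharacter of $H:=\Stab_G(p_s)^\circ$. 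Using the structure theory of semistable bundles on $E$ I would check that $H$ is reductive and that $p_s$ in fact lies in $Z(H)_E\subset\GE$: a semisimple framed bundle reduces to a maximal torus $T'$ of $G$, its connected automorphism group in $G$ is the ($E$-)pseudo-Levi $H$, and a $T'$-bundle fixed by $H$-conjugation is precisely a $Z(H)$-bundle. The crucial point is then to show that $p$ itself admits a reduction to $H$ compatible with its framing --- this is the analogue of the classical fact that the unipotent part commutes with the semisimple part, and is where the geometry of $E$ genuinely enters. Granting this --- which identifies $p$ with a point of $\HE$ via the closed embedding $\HE\hookrightarrow\GE$ of \cref{P:framed H semistable injects in G semistable} --- and using the $Z(H)_E$-action on $\HE$ (the analogue for $H$ of the $Z(G)_E$-action on $\GE$), I set $p_u:=p_s^{-1}\cdot p\in\HE$, so that $p=p_s\cdot p_u$; and $p_u$ is unipotent in $\HE$ because $\lim_{t\to0}\lambda(t)\cdot p_u=p_s^{-1}\cdot p_s$ is the trivial $H$-bundle. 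Since $\Stab_G(p_s)^\circ=H$ by construction and $p_s\in Z(H)_E$, this produces a triple as required.

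\emph{Uniqueness, and the main obstacle.} Suppose $p=p_s\cdot p_u=p_s'\cdot p_u'$ with both triples as in the statement. Because $p_u\in\HE^\uni$, its $H$-orbit closure contains the trivial bundle, so --- using that $H=\Stab_G(p_s)^\circ$ fixes $p_s$ and that the multiplication $Z(H)_E\times\HE\to\GE$ is $H$-equivariant --- one gets $p_s\in\overline{H\cdot p}\subset\overline{G\cdot p}$; since $p_s$ is semisimple its orbit is closed, hence equals the unique closed orbit of $\overline{G\cdot p}$, and the same holds for $p_s'$. Thus $p_s'=g\cdot p_s$ for some $g\in G$, which forces $H'=gHg^{-1}$ and $p_u'=g\cdot p_u$. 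To upgrade this to an actual equality I would argue, as classically, that $p_s$ is intrinsic to the \emph{framed} bundle $p$ --- it is the semisimplification of $p$, and the framing removes the residual $G$-conjugacy because any automorphism of the framed object preserves the canonical associated-graded reduction that yields $p_s$; equivalently, one can prove the theorem together with the main classification of $\uGE$ by pseudo-Levi type, in which uniqueness is exactly the disjointness and injectivity of the strata. I expect the genuine obstacle to be the existence half's reduction-to-$H$ claim, together with this rigidity: both amount to controlling how an arbitrary framed semistable bundle degenerates onto its semisimplification and showing that this degeneration can be carried out ``inside $H$'' --- there is no classical shortcut in the elliptic case --- whereas everything else is either formal or a direct consequence of the known structure theory of bundles on $E$.
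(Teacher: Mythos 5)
Your proposal correctly frames the problem, but the two steps you yourself flag as "the genuine obstacle" are exactly the content of the theorem, and you do not supply arguments for either of them. First, the existence of an $H$-reduction of $p$ compatible with its framing (your "crucial point"): the classical proofs of this (commutation of $g_s$ and $g_u$ via generalized eigenspaces, or via the Zariski closure of $\langle g\rangle$) have no evident bundle-theoretic analogue, and your Hilbert--Mumford limit $\lim_{t\to 0}\lambda(t)\cdot p=p_s$ concerns the $G$-action on the \emph{framing}, which does not by itself produce a reduction of the \emph{structure group} to $H=\Stab_G(p_s)^\circ$. The paper proves this (the "surjectivity" half of \cref{thm:galois}) by a different mechanism: \cref{prop:regular} shows the induction map $\uHE\to\uGE$ is \'etale precisely on the regular locus, hence has open image; the semisimplification $\cP_G^\ss$ lies in that image by reduction to $T$ (\cref{l:automorphisms and root systems}); and since $\cP_G^\ss$ lies in the closure of the point $\cP_G$, every open set containing $\cP_G^\ss$ contains $\cP_G$. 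Nothing in your sketch substitutes for this openness argument.

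Second, uniqueness. You correctly reduce to showing $p_s'=g\cdot p_s$ forces $g\in\Stab_G(p_s)$, but your justification --- that "the framing removes the residual $G$-conjugacy because any automorphism of the framed object preserves the canonical associated-graded reduction" --- is circular: the existence of a \emph{canonical} framed semisimple element in the orbit closure is precisely what is being asserted (see the remark following \cref{thm:jordan partition intro}), and a framed bundle has trivial automorphism group, so there is no automorphism to appeal to. Your fallback ("prove the theorem together with the classification by pseudo-Levi type, in which uniqueness is the injectivity of the strata") restates \cref{thm:galois} rather than proving it. In the paper this injectivity is the hardest part (\cref{sec:proof-of-injectivity}): one base-changes $\uGE$ over the coarse moduli map $\MHE\git W_{G,H}\to\MGE$, shows the resulting map $\overline{\nu}$ is \'etale on the strongly regular locus (\cref{l:etale}) and generically a degree-one covering by a degree count (\cref{l:both coverings of same degree}), and concludes it is an open embedding via the principle that a separated \'etale monomorphism-over-a-dense-open is an open immersion (\cref{p:etale generic embedding}), finishing with an identification of coarse moduli spaces (\cref{l:injective coarse}). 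None of this apparatus is present or replaced in your proposal, so the proof is incomplete at both of its essential points.
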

	
	The subgroups of $G$ which occur as connected stabilizers of semisimple elements of $\GE$ will be called \emph{$E$-pseudo-Levi subgroups}. 
	In the cuspidal case, these are precisely the Levi subgroups (centralizers of semisimple elements of $\fg$) and in the nodal case, these are pseudo-Levi subgroups  (connected centralizers of semisimple elements of $G$). 
	We give a classification of $E$-pseudo-Levi subgroups in \cref{sec:classification-of-elliptic-closed-subsets}: in the elliptic case we get precisely the intersections of two pseudo-Levi subgroups.
	
	\begin{remark}
		For simply connected groups over $\bC$, a similar Jordan--Chevalley decomposition was proved in \cite[Theorem 5.6]{BaEvGi_rep-qtori-bdles-ell} using an algebraic uniformization of $\uGE$ through loop groups (see \cite{BaGi_conj}).
	\end{remark}

\subsubsection{The partition according to $E$-pseudo-Levis}\label{SS:intro-partition-E-pseudoLevi}
Typically, in the statements of the Jordan--Chevalley decomposition for $G$ and for $\fg$, the subgroup $H$ is 
not explicitly mentioned, though it may be easily recovered as the connected centralizer of the semisimple element. 

However, in our proof of \cref{thm:jordan intro}, the subgroup $H$ will be the key player and the decomposition in to semisimple and unipotent elements will play a subsiduary role. In fact, the semisimple and unipotent elements may be recovered from the subgroup $H$ in the following sense.

Given a subgroup $H$ of $G$ we define the loci
\[
(Z(H)_E)^\reg = \left\{p \in Z(H)_E \mid \Stab_G(p)^\circ = H \right\} \subseteq Z(H)_E
\]
and
\[
(\HE)_\heartsuit^\reg = (Z(H)_E)^\reg \cdot \HE^\uni \subseteq \HE.
\]
The subgroup $H$ is an $E$-pseudo-Levi precisely when $(Z(H)_E)^\reg$ is nonempty. 
Note that the conditions in \cref{thm:jordan intro} are precisely stipulating that $p\in (\HE)_\heartsuit^\reg$. 
On the other hand, any element $p \in (\HE)_\heartsuit^\reg$ may be uniquely written as a product $p_s \cdot p_u$ 
where $p_s \in (Z(H)_E)^\reg$ and $p_u \in \HE^\uni$ (see \cref{P:heart locus is a product}). 
We denote by $(\GE)_H$, the image of $(H_E)_\heartsuit^\reg$ in $\GE$. 

In other words, \cref{thm:jordan intro} states that every element $p\in \GE$ lies in the image of $(\HE)_\heartsuit^\reg$ for a unique $E$-pseudo-Levi subgroup $H$. Thus we may rephrase \cref{thm:jordan intro} as follows:

\begin{theorem}\label{thm:jordan partition intro}
There is a locally closed partition:
	\[
	\GE = \bigsqcup_{H} (\GE)_H
	\]
	indexed by $E$-pseudo-Levi subgroups $H \subseteq G$. Moreover, the natural embeddings $\HE \to \GE$ restrict to isomorphisms
\[
(\HE)_\heartsuit^\reg \xrightarrow{\sim} (\GE)_H.
\]
\end{theorem}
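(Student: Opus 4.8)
The plan is to obtain the statement as an essentially formal consequence of \cref{thm:jordan intro}, the closed embedding $\HE\hookrightarrow\GE$ of \cref{P:framed H semistable injects in G semistable}, and the product description of $(\HE)_\heartsuit^\reg$ in \cref{P:heart locus is a product}. The set-theoretic partition is immediate: \cref{thm:jordan intro} attaches to each $p\in\GE$ a triple $(H,p_s,p_u)$ with $p=p_s\cdot p_u$, $p_s\in(Z(H)_E)^\reg$ and $p_u\in\HE^\uni$, which is precisely the assertion that $p$ lies in $(\GE)_H$, the image of $(\HE)_\heartsuit^\reg$ under $\HE\hookrightarrow\GE$. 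Since this embedding intertwines the multiplication maps and the $Z(H)_E$-actions, a point of $(\GE)_{H_1}\cap(\GE)_{H_2}$ would produce two such triples for it, so by the uniqueness in \cref{thm:jordan intro} one has $H_1=H_2$; hence $\GE=\bigsqcup_H(\GE)_H$ as sets, indexed by $E$-pseudo-Levi subgroups. (That this index set is not discrete in the elliptic case does not matter: a locally closed partition only requires each member to be locally closed.)

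It remains to prove that each $(\GE)_H$ is locally closed in $\GE$ and that $\HE\hookrightarrow\GE$ restricts to an isomorphism $(\HE)_\heartsuit^\reg\xrightarrow{\sim}(\GE)_H$. Because $\HE\hookrightarrow\GE$ is a closed embedding, its restriction to any locally closed subvariety of $\HE$ is a locally closed immersion onto its image; taking $(\GE)_H$ to be the image of $(\HE)_\heartsuit^\reg$, both claims reduce to the single statement that $(\HE)_\heartsuit^\reg$ is a locally closed subvariety of $\HE$.

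For the latter I would combine three inputs. First, $(Z(H)_E)^\reg$ is open in $Z(H)_E$: for any $p\in Z(H)_E$ the conjugation action of $H$ on the associated induced $G$-bundle is trivial, so $H\subseteq\Stab_G(p)$ and $\dim\Stab_G(p)\ge\dim H$, while equality forces $\Stab_G(p)^\circ=H$ (a connected subgroup of dimension $\dim H$ containing the connected group $H$ must equal $H$); thus $(Z(H)_E)^\reg=\{p\in Z(H)_E:\dim\Stab_G(p)\le\dim H\}$, which is open by upper semicontinuity of the stabilizer dimension. Second, $\HE^\uni$ is closed in $\HE$. Third --- and this is the step I expect to be the real obstacle --- one must upgrade the bijection $(Z(H)_E)^\reg\times\HE^\uni\to(\HE)_\heartsuit^\reg$ of \cref{P:heart locus is a product} to a locally closed immersion into $\HE$: I would show it is a monomorphism by a tangent-space computation (differentiating the $G$-equivariant product decomposition and checking that the two factors are transverse in $\HE$), and that its image is locally closed because, inside the sweep $Z(H)_E\cdot\HE^\uni$, it is carved out by the open regularity condition on the $Z(H)_E$-component. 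This exhibits $(\HE)_\heartsuit^\reg$ as a locally closed subvariety of $\HE$, isomorphic to $(Z(H)_E)^\reg\times\HE^\uni$, and the theorem follows. If \cref{P:heart locus is a product} already records directly that $(\HE)_\heartsuit^\reg\subseteq\HE$ is locally closed (and equal to $(Z(H)_E)^\reg\times\HE^\uni$ as such), then this third input is already in hand and the theorem is an immediate corollary of \cref{thm:jordan intro} together with \cref{P:framed H semistable injects in G semistable}.
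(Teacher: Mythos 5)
Your proposal is correct and follows essentially the route the paper itself takes: \cref{thm:jordan partition intro} is presented there as a rephrasing of \cref{thm:jordan intro} (both ultimately proved via \cref{thm:galois}), and the locally-closed and isomorphism claims are supplied by exactly the ingredients you cite. The one place you hesitate --- upgrading the bijection $(Z(H)_E)^\reg\times\HE^\uni\to(\HE)_\heartsuit^\reg$ to an isomorphism onto a locally closed subvariety --- is not in fact an obstacle, and no tangent-space computation is needed. The heart locus $(\HE)_\heartsuit$ is \emph{defined} as $\chi_H^{-1}$ of the locus $\MHE_{[H]}$, which is closed in $\MHE$ (it is the image of the $W_H$-stable closed subvariety $Z(H)_E\subseteq\TE$ under the finite quotient map $\TE\to\MHE$), while $\HE^\reg$ is $\chi_H^{-1}$ of an open subset of $\MHE$ by \cref{prop:regular} and \cref{r:regular}; so $(\HE)_\heartsuit^\reg$ is locally closed for free. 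Moreover \cref{P:heart locus is a product} and \cref{C:heartsuit regular as a product} already produce an isomorphism of stacks rather than a mere bijection: the inverse is written down explicitly as $\cP\mapsto(\chi(\cP),\chi(\cP)\inv\cdot\cP)$, using the $Z(H)_E$-equivariance of $\chi_H$. So your fallback sentence is the correct reading, and the theorem is an immediate consequence of \cref{thm:jordan intro}, \cref{P:framed H semistable injects in G semistable}, and \cref{C:heartsuit regular as a product}. Your semicontinuity argument for the openness of $(Z(H)_E)^\reg$ is a valid alternative to the paper's characterization of the regular locus as the \'etale locus of $\uHE\to\uGE$.
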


We will reformulate this result in one final way (in the form that will actually be proved in \cref{sec:the-jordan--chevalley-theorem}). Recall that $\uGE$ denotes the quotient stack $\GE/G$, and write $(\uHE)_\heartsuit^\reg$ for $(\HE)_\heartsuit^\reg/H$. We write $W_{G,H}$ for the relative Weyl group $N_G(H)/H$ (a finite group).

\begin{theorem}\label{thm:galois intro}
	The stack $\uGE$ carries a locally closed partition
	\[
	\uGE = \bigsqcup _{[H]} (\uGE)_{[H]}
	\]
	indexed by conjugacy classes $[H]$ of $E$-pseudo-Levi subgroups $H \subseteq G$. 
	Moreover, the natural induction maps $\uHE \to \uGE$ restrict to equivalences
	\[
	(\uHE)_\heartsuit^\reg/W_{G,H} \xrightarrow{\sim} (\uGE)_{[H]}.
	\]
\end{theorem}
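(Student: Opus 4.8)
The plan is to deduce \cref{thm:galois intro} from the $G$-equivariant partition of \cref{thm:jordan partition intro} by passing to the quotient by $G$ while keeping track of how $G$ permutes the strata. First I would observe that $G$ acts on the set of strata by $g\cdot (\GE)_H = (\GE)_{gHg\inv}$: this is immediate from the $G$-equivariance of the loci $(Z(H)_E)^\reg$ and $\HE^\uni$, together with the compatibility of the closed embeddings $\HE\hookrightarrow\GE$ of \cref{P:framed H semistable injects in G semistable} with conjugation in $G$. Moreover, by the uniqueness of the triple $(H,p_s,p_u)$ in \cref{thm:jordan intro}, the subgroup $H$ is determined by any $p\in(\GE)_H$, so distinct $E$-pseudo-Levis index genuinely disjoint strata. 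Consequently, for a fixed conjugacy class $[H]$ the union $\bigsqcup_{H'\in[H]}(\GE)_{H'}$ is a $G$-stable locally closed subvariety of $\GE$, and these unions, indexed by conjugacy classes $[H]$, assemble to the required locally closed partition of $\uGE=\GE/G$ after taking the quotient; one sets $(\uGE)_{[H]}:=\bigl(\bigsqcup_{H'\in[H]}(\GE)_{H'}\bigr)/G$.

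Next I would compute the $G$-stabilizer of the single stratum $(\GE)_H$. Since $g\cdot(\GE)_H=(\GE)_{gHg\inv}$ and distinct $E$-pseudo-Levis give disjoint strata, this stabilizer is exactly the normalizer $N_G(H)$. The action map $G\times(\GE)_H\to\bigsqcup_{H'\in[H]}(\GE)_{H'}$, $(g,p)\mapsto g\cdot p$, is then invariant for the $N_G(H)$-action $n\cdot(g,p)=(gn\inv,n\cdot p)$ and induces an isomorphism from the balanced product $G\times_{N_G(H)}(\GE)_H$ onto $\bigsqcup_{H'\in[H]}(\GE)_{H'}$: surjectivity is clear, and injectivity is precisely the stabilizer computation just made. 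Taking the quotient by $G$ yields a canonical identification $(\uGE)_{[H]}\cong(\GE)_H/N_G(H)$ of quotient stacks.

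Finally I would rewrite the right-hand side using \cref{thm:jordan partition intro}. The normalizer $N_G(H)$ preserves $H$, hence acts on $Z(H)_E$ preserving $(Z(H)_E)^\reg$ — because $\Stab_G(n\cdot p_s)^\circ=n\,\Stab_G(p_s)^\circ\,n\inv$ — and acts on $\HE^\uni$, all compatibly with $\HE\hookrightarrow\GE$; so $(\HE)_\heartsuit^\reg$ is $N_G(H)$-stable and the isomorphism $(\HE)_\heartsuit^\reg\xrightarrow{\sim}(\GE)_H$ of \cref{thm:jordan partition intro} is $N_G(H)$-equivariant. Using $H\trianglelefteq N_G(H)$ with $N_G(H)/H=W_{G,H}$, and that the $H$-action here is the intrinsic framing-change action (again by the $H$-equivariance in \cref{P:framed H semistable injects in G semistable}, which identifies $(\HE)_\heartsuit^\reg/H=(\uHE)_\heartsuit^\reg$), one obtains
\[
(\uGE)_{[H]}\;\cong\;(\HE)_\heartsuit^\reg/N_G(H)\;\cong\;\bigl((\HE)_\heartsuit^\reg/H\bigr)\big/W_{G,H}\;=\;(\uHE)_\heartsuit^\reg/W_{G,H}.
\]
It then remains only to check that this composite equivalence is the one induced by the natural induction map $\uHE\to\uGE$, i.e.\ that $\ind_H^G$ is compatible with the two quotient presentations; this is built into the construction of the embeddings $\HE\hookrightarrow\GE$ together with the fact that inducing an $\mathrm{Ad}(n)$-twist of an $H$-bundle, for $n\in N_G(H)\subseteq G$, yields an isomorphic $G$-bundle (so that $\ind_H^G$ indeed descends through the quotient by $W_{G,H}$).

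I expect the main obstacle to be organizational rather than substantive: making the balanced-product argument precise as an isomorphism of (Artin) stacks — confirming that $\bigsqcup_{H'\in[H]}(\GE)_{H'}$ is genuinely $G\times_{N_G(H)}(\GE)_H$ and not merely set-theoretically so — and verifying that the resulting equivalence agrees on the nose with the induction functor. All the genuinely geometric input (smoothness of $\GE$, the product decomposition $(\HE)_\heartsuit^\reg=(Z(H)_E)^\reg\cdot\HE^\uni$ of \cref{P:heart locus is a product}, and the closed-embedding property of induction on framed semistable bundles) has already been established by this point, so no new hard analysis is required.
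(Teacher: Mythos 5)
Your argument is formally correct as a derivation, but it is circular in the context of this paper: you take \cref{thm:jordan partition intro} as an established input and deduce \cref{thm:galois intro} from it, whereas in the paper the logical dependency runs the other way. \cref{thm:jordan partition intro} is just a rephrasing of \cref{thm:jordan intro}, and the paper's proof of the latter (\cref{sec:the-jordan--chevalley-theorem}) proceeds by first showing that \cref{thm:jordan} and \cref{thm:galois} are \emph{equivalent} (\cref{sec:equivalence-of-theorems-refthmjordan-and-refthmgalois}) and then proving \cref{thm:galois} directly. Your balanced-product computation --- the stabilizer of the stratum $(\GE)_H$ is $N_G(H)$ because distinct $E$-pseudo-Levis index disjoint strata, hence $G\times^{N_G(H)}(\GE)_H \xrightarrow{\sim} \bigsqcup_{H'\in[H]}(\GE)_{H'}$ --- is precisely the ``uniqueness implies injectivity'' and ``existence implies surjectivity'' half of that equivalence. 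So what you have written reproduces the bookkeeping step of the paper, not the proof.

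The concrete gap is your closing claim that ``all the genuinely geometric input has already been established by this point.'' It has not: the existence and uniqueness of the Jordan datum $(H,p_s,p_u)$ are exactly equivalent to the surjectivity and injectivity of $G\times^{N_G(H)}(\HE)_\heartsuit^\reg\to(\GE)_{[H]}$, and neither has an independent proof prior to \cref{thm:galois}. The actual work occupies \cref{sec:proof-of-surjectivity} and \cref{sec:proof-of-injectivity}: surjectivity uses the characterization of the regular locus as the \'etale locus of $\uHE\to\uGE$ (\cref{prop:regular}) together with a semisimplification/openness argument; injectivity requires comparing $\uHE/W_{G,H}\to\uGE$ with the map of coarse moduli spaces $\MHE\git W_{G,H}\to\MGE$, matching their generic covering degrees $|W|/|N_W(W_H)|$ (\cref{l:both coverings of same degree}), identifying the coarse map as an isomorphism onto $\MGE_{[H]}$ (\cref{l:injective coarse}), and invoking the principle that a separated \'etale morphism which is generically an embedding is an open embedding (\cref{p:etale generic embedding}). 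None of this appears in, or follows from, your proposal. To repair it you would either need to supply an independent proof of \cref{thm:jordan partition intro}, or abandon the reduction and prove surjectivity and injectivity of the induction map directly as the paper does.
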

Our proof of \cref{thm:galois intro} (and hence of \cref{thm:jordan intro} and \cref{thm:jordan partition intro}) 
involves a geometric analysis of the induction map $\uHE \to \uGE$ (see \cref{SS:main results}).

\begin{remark}
A key difference between the statements of \cref{thm:galois intro} and \cref{thm:jordan partition intro} is that, 
unlike the subvarieties $(\GE)_H$ in $\GE$, the substacks $(\uGE)_{[H]}$ have an a priori definition that doesn't 
make reference to the induction map $\uHE \to \uGE$. 
More precisely, given a $G$-bundle $\cP\in\uGE$, pick a framed lift $p\in\GE$ and then choose any semisimple bundle in the closure of its $G$-orbit.
The underlying $G$-bundle defines a conjugacy class $[H]$ that is independent of the chosen framing and is hence canonically associated to $\cP$.
The content of \cref{thm:jordan partition intro} is that, in the presence of a framing, there is a canonical choice of subgroup $H$ within its conjugacy 
class, and (equivalently) a canonical choice of semisimple element $p_s$ in the orbit closure of $p$.
\end{remark}

	\subsubsection{Unipotent bundles}
	\cref{thm:galois intro} and \cref{thm:jordan intro} allow us to reduce the study of degree $0$, semistable $G$-bundles on $E$ to semisimple and unipotent bundles.
	
	Our next result shows that, under certain hypotheses, the collection of unipotent bundles in $\GE$ is insensitive to the isomorphism type of $E$. 
	We let $J(E)$ denote the Jacobian of $E$, which is either isomorphic to $\Ga$, $\Gm$, or $E$ itself in the cuspidal, nodal, or elliptic cases respectively. 
	We denote by $G^\uni$ the unipotent cone of $G$ (which is the same as $\GE^\uni$ for $E$ a nodal curve).
	
	\begin{theorem}\label{thm:unipotent G bundles intro}
		An isomorphism of formal group $\hat J(E)\simeq \hat \bG_m$ induces an isomorphism of $G$-varieties
		\[
		\GE^\uni \cong G^\uni.
		\]
		Moreover this isomorphism extends over a formal neighbourhood of $\GE^\uni$ in $\GE$:
		\[
		(\GE)^\wedge_\uni \cong G^\wedge_\uni.
		\]
	\end{theorem}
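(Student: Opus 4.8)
The plan is to deduce both isomorphisms from our Tannakian description of $\GE$ by showing that the unipotent locus $\GE^\uni$, its formal neighbourhood $(\GE)^\wedge_\uni$ in $\GE$, and the $G$-actions on them depend on $E$ only through the formal group $\hat J(E)$. Granting this, fix a nodal curve $E_0$ with a basepoint, so that $J(E_0)=\Gm$ and hence $\hat J(E_0)=\hat\bG_m$. By \cite{FriedMorgIII}, $G_{E_0}\cong G$ as $G$-varieties for the conjugation action, and under this identification $G_{E_0}^\uni$ is precisely the unipotent cone $G^\uni$: a framed semistable degree $0$ bundle on $E_0$ is a trivial bundle on the normalisation $\bP^1$ together with a gluing element $g\in G$, and its orbit closure contains the trivial bundle exactly when $g$ is unipotent. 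A chosen isomorphism $\hat J(E)\simeq\hat\bG_m=\hat J(E_0)$ then transports $\GE^\uni\cong G_{E_0}^\uni=G^\uni$ and $(\GE)^\wedge_\uni\cong (G_{E_0})^\wedge_\uni=G^\wedge_\uni$, compatibly with the $G$-actions.

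The heart of the argument is the claimed dependence on $\hat J(E)$ alone. A framed unipotent semistable degree $0$ $G$-bundle on $E$ is the same as a tensor functor $\Rep(G)\to\cC_E^\uni$ to the category of unipotent (iterated self-extensions of $\cO_E$) semistable degree $0$ vector bundles, together with an identification of its composite with the fibre functor $\omega_{x_0}$ at $x_0$ with the forgetful functor; replacing $\cC_E^\uni$ by its thickening over Artin local rings --- semistable degree $0$ bundles whose Jordan--H\"older constituents are infinitesimal deformations of $\cO_E$ --- describes $(\GE)^\wedge_\uni$ the same way. Now, by the classification of semistable degree $0$ bundles on $E$ (equivalently, the Fourier--Mukai transform for $E$), $\cC_E^\uni$ is identified --- as a tensor category with fibre functor $\omega_{x_0}$ --- with the category of finite-length modules over the coordinate ring of the formal group $\hat J(E)$ (the completed local ring of the Jacobian at the origin), the tensor product going over to convolution along the group law of $\hat J(E)$, since Fourier--Mukai exchanges tensor product with convolution; the thickened category and the fibre functor are read off from $\hat J(E)$ in the same manner. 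Feeding this into our Tannakian description realises $\GE^\uni$, resp.\ $(\GE)^\wedge_\uni$, as a scheme, resp.\ formal scheme, built functorially from $G$ and $\hat J(E)$ alone --- concretely, out of the homomorphisms into $G$ of the pro-unipotent group Tannaka-dual to $(\cC_E^\uni,\omega_{x_0})$, with $G$ acting by composition with inner automorphisms. When $\hat J(E)=\hat\bG_m$ this dual group is the pro-unipotent completion of $\bZ$, whose homomorphisms to $G$ are exactly the unipotent elements, recovering $G^\uni$ and matching the nodal picture above.

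The step I expect to be the main obstacle is this comparison, and in particular upgrading it from a bijection of points to an isomorphism of $G$-varieties, resp.\ of formal schemes over $\GE^\uni\xrightarrow{\sim}G^\uni$. Two points require care. First, Fourier--Mukai is not monoidal for the naive tensor structures, so one must track the interchange of tensor product and convolution, check that after restriction to sheaves supported at the origin this convolution is governed by $\hat J(E)$, and verify compatibility both with the fibre functor $\omega_{x_0}$ and with base change over Artin rings --- the latter being what is needed for the formal-neighbourhood statement and for $G$-equivariance (which is functoriality in $G$). Second, in positive characteristic these categories are not semisimple and the pro-unipotent group attached to $\hat\bG_m$ is strictly larger than $\Ga$, so the description of $\GE^\uni$ by homomorphisms out of a formal group --- rather than out of a Lie-algebra-type object --- must be obtained directly, without logarithms; this is also where the hypothesis that $E$ be ordinary (so that $\hat J(E)\simeq\hat\bG_m$, not $\hat\bG_a$) is essential, and it explains why one should not expect a formal-group-valued analogue in the cuspidal case, even though $\fg^\uni\cong G^\uni$ does hold there. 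Finally, once the unipotent-cone isomorphism and the comparison of Artinian thickenings are in place, the formal-neighbourhood assertion follows formally, using that $\GE$ and $G$ are smooth of the same dimension $\dim G$ and that the comparison respects the inclusions of the unipotent loci.
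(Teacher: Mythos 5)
Your proposal follows essentially the same route as the paper: Tannaka duality combined with the Fourier--Mukai equivalence (tensor product going to convolution on $J(E)$), the observation that unipotent torsion sheaves and their Artinian thickenings depend only on the formal group $\hat J(E)$, and transport to the nodal curve where $G_{E_{\node}}\cong G$. The one step you state as a reformulation but which the paper has to prove (via the closed embedding $\MGE\hookrightarrow\prod_i\Sym^{d_i}(J(E))$ near the basepoint, using representations generating $R(G)$) is that a $G$-bundle is unipotent precisely when all its associated torsion sheaves are; you correctly flag the surrounding compatibilities as the delicate point, so the argument is sound.
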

	\begin{remark}
		In characteristic zero, there is always an isomorphism $\hat J(E)\simeq \hat \bG_m$. 
		In fact, there is also an isomorphism $\hat J(E)\simeq \hat \bG_a$ which gives the same result but with the unipotent cone in $G$ replaced by the nilpotent cone $\cN$ in $\fg$.
		
		In characteristic $p>0$, if $E$ is an elliptic curve, an isomorphism $\hat J(E)\simeq \hat \bG_m$ exists precisely when $E$ is ordinary (i.e. not supersingular). 
	\end{remark}
	
	\begin{remark}
		As a special case of \cref{thm:unipotent G bundles intro} we recover a $G$-equivariant isomorphism $\cN \cong \cU$ between the unipotent and nilpotent cones for each isomorphism $\hat \bG_a \cong \hat \bG_m$. The latter isomorphisms exist only in characteristic zero, and are given by exponential maps. 
		On the other hand, there exist $G$-equivariant isomorphisms (the so-called Springer isomorphisms) $\cN \cong \cU$ under very mild conditions on the characteristic, even though $\hat \bG_a \ncong \hat \bG_m$ in positive characteristic. 
		It seems reasonable to expect that $\GE^\uni$ is isomorphic to $\cU$ (and $\cN$) under much more general conditions than in \cref{thm:unipotent G bundles intro}.
		From \cite[Theorem 3.11]{Groj-ShBar} one can deduce that the varieties $\GE^\uni$ and  $G^\uni$ are smoothly equivalent for uniformizable elliptic curves under some restrictions on $G$ and the characteristic. 
		See also \cite[Corollary 8.8]{Groj-ShBar} where they show it fails for $G=E_8$, $E$ supersingular in characteristic 2,3 or 5.
	\end{remark}

	\subsubsection{Semisimple bundles and the classification of $E$-pseudo-Levi subgroups}
	
	Fix a maximal torus $T$ of $G$ and let $\Phi \subset \bX^*$ denote the corresponding space of roots sitting inside the character lattice $\bX^* = \bX^*(T)$. Then 
	\[
	\TE \cong \Hom_{gp}(\bX^*,J(E)) \cong J(E)^r
	\]
	where $r$ is the rank of $T$. 
	
	Note that a $G$-bundle $\cP \in \uGE$ is semisimple if and only if it admits a reduction to $T$. 
	We may understand the partition in to $E$-pseudo-Levi subgroups root theoretically as follows.
	
	First note that any character $\alpha \in \bX^*$ defines a homomorphism $\alpha_\ast\colon\TE \to J(E)$, taking a $T$-bundle on $E$ to its induced line bundle via $\alpha$. 
	Given $p\in \TE$, we let $\Sigma_p = \{\alpha \in \TE \mid \alpha_\ast(p) = \one_{J(E)}\}$. 
	The subsets $\Sigma$ of $\Phi$ which occur in this way will be called $E$-root subsystems of $\Phi$.
	
	It turns out that any such $E$-root subsystem $\Sigma\subset\Phi$ is a closed root subsystem and so it corresponds to a connected reductive subgroup $H$ of $G$ (see also  \cref{SS:thm of Borel de Siebenthal} for Borel--de Siebenthal theory). 
	In fact, we have $H=\Stab_G(p)^\circ$ and

	\begin{proposition}\label{P:pseudo-Levi bijection closed subsets}
	There is a one-to-one correspondence between:
	\begin{itemize}
		\item $E$-pseudo-Levi subgroups $H$ of $G$ containing $T$, and
		\item $E$-root subsystems $\Sigma \subseteq \Phi$.
	\end{itemize}
	Moreover, in each of the cases cuspidal, nodal and elliptic one can characterize precisely the $E$-root subsystems of $\Phi$ (see the Appendix and \cref{P:the set A_ell} for the elliptic case).
	\end{proposition}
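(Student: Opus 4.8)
The plan is to translate the statement into a computation of connected stabilizers of points of $\TE$. Fix $p\in\TE$ with associated degree $0$ $T$-bundle $p_T$, and let $\cP=p_T\times_T G$ be the induced $G$-bundle with its tautological framing at $x_0$ (so that $p$, viewed via the induction map of \cref{P:framed H semistable injects in G semistable}, is a point of $\GE$). Using the framing to rigidify, evaluation at $x_0$ identifies $\Stab_G(p)$ with the automorphism group $\Aut_G(\cP)$, hence $\Lie\Stab_G(p)=H^0(E,\mathrm{ad}\,\cP)$. Decomposing $\fg=\ft\oplus\bigoplus_{\alpha\in\Phi}\fg_\alpha$ as a $T$-module gives $\mathrm{ad}\,\cP\cong(\ft\otimes\cO_E)\oplus\bigoplus_{\alpha\in\Phi}L_\alpha$ with $L_\alpha=\alpha_\ast(p_T)$. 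Since $E$ is integral of arithmetic genus $1$, a degree $0$ line bundle $L$ has $\chi(L)=0$, so any nonzero global section of $L$ is an isomorphism $\cO_E\xrightarrow{\sim}L$; therefore $h^0(E,L_\alpha)=1$ when $\alpha\in\Sigma_p$ (equivalently $L_\alpha\cong\cO_E$) and $h^0(E,L_\alpha)=0$ otherwise. Consequently $\Lie\Stab_G(p)=\ft\oplus\bigoplus_{\alpha\in\Sigma_p}\fg_\alpha$ as a subalgebra of $\fg$.

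\textbf{From $p$ to $H$.} Next I would note that each $\alpha_\ast\colon\TE\to J(E)$ is a group homomorphism; this immediately forces $\Sigma_p$ to be symmetric and closed under addition inside $\Phi$, i.e.\ a closed root subsystem. By Borel--de Siebenthal theory (\cref{SS:thm of Borel de Siebenthal}), the subgroup $H_{\Sigma_p}:=\langle T,\,U_\alpha:\alpha\in\Sigma_p\rangle$ is connected reductive with maximal torus $T$ and root system exactly $\Sigma_p$, so $\Lie H_{\Sigma_p}=\ft\oplus\bigoplus_{\alpha\in\Sigma_p}\fg_\alpha$. Since $T$ is connected and $T\subseteq\Stab_G(p)$, we have $T\subseteq\Stab_G(p)^\circ$, so $\Stab_G(p)^\circ$ is a connected subgroup of $G$ containing $T$ and normalized by $T$; its Lie algebra is therefore a sum of $\ft$ and root spaces, and by the previous paragraph it equals $\ft\oplus\bigoplus_{\alpha\in\Sigma_p}\fg_\alpha$. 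A connected subgroup of $G$ containing $T$ is generated by $T$ together with the root subgroups it contains, and it contains $U_\alpha$ precisely when $\fg_\alpha$ lies in its Lie algebra; hence $\Stab_G(p)^\circ=H_{\Sigma_p}$. In particular $H_{\Sigma_p}$ is an $E$-pseudo-Levi subgroup containing $T$, so $\Sigma\mapsto H_\Sigma$ is a well-defined map from $E$-root subsystems to $E$-pseudo-Levi subgroups containing $T$.

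\textbf{The inverse map and bijectivity.} For the converse, let $H\supseteq T$ be an $E$-pseudo-Levi subgroup, say $H=\Stab_G(q)^\circ$ with $q\in\GE$ semisimple. Since semisimple bundles are exactly those admitting a reduction to $T$, there is $g\in G$ with $g\cdot q=p\in\TE$; then $gHg^{-1}=\Stab_G(p)^\circ=H_{\Sigma_p}$, which contains $T$. The maximal tori $T$ and $gTg^{-1}$ of $H_{\Sigma_p}$ are conjugate by some $h\in H_{\Sigma_p}$, so $n:=hg\in N_G(T)$ and $nHn^{-1}=H_{\Sigma_p}$; if $w\in W=N_G(T)/T$ denotes the class of $n$, then $\Phi(H)=w^{-1}\Sigma_p=\Sigma_{w^{-1}\cdot p}$, which is again an $E$-root subsystem because $w^{-1}\cdot p\in\TE$. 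Thus $H\mapsto\Phi(H)$, the root system of $H$ relative to $T$, takes values in $E$-root subsystems. Finally $\Phi(H_\Sigma)=\Sigma$ by construction of $H_\Sigma$, and $H_{\Phi(H)}=H$ because a connected reductive subgroup of $G$ containing a maximal torus is recovered as the subgroup generated by that torus and its root subgroups; so the two maps are mutually inverse bijections. For the ``moreover'' clause I would simply invoke the case-by-case analysis of $E$-root subsystems carried out in the Appendix, together with \cref{P:the set A_ell} in the elliptic case.

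\textbf{Main obstacle.} I expect the crux to be the stabilizer computation of the first step: carefully justifying the identification $\Lie\Stab_G(p)=H^0(E,\mathrm{ad}\,\cP)$ through the framing and the deformation theory of semistable $G$-bundles, and establishing the genus $1$ vanishing $h^0(E,L)\in\{0,1\}$ for $\deg L=0$ uniformly across the cuspidal, nodal and elliptic curves (where the relevant ``Jacobian'' $J(E)$ is $\Ga$, $\Gm$, or $E$). The only other delicate point is the straightening of the conjugating element into $N_G(T)$ in the converse direction, which is what upgrades ``$\Phi(H)$ is conjugate to an $E$-root subsystem'' to ``$\Phi(H)$ is an $E$-root subsystem''.
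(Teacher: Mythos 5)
Your proof is correct and follows essentially the same route as the paper: the heart of both arguments is the identification $\Stab_G(p)^\circ = G(\Sigma_p)$ obtained by computing $H^0(E;\fg_{\cP})$ via the root-space decomposition of the adjoint bundle of a $T$-reduced bundle (a degree $0$ line bundle on $E$ has a nonzero section iff it is trivial), combined with Borel--de Siebenthal. The only difference is cosmetic: the paper states the bijection as $\cA_E/W \xrightarrow{\sim} \cE_E$ on conjugacy classes, whereas you prove the unquotiented version ``containing $T$'' directly, which requires your extra (and correct) step of straightening the conjugating element into $N_G(T)$ and using $W$-equivariance of $p\mapsto\Sigma_p$.
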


\begin{remark}
The theory of semisimple bundles becomes increasingly complicated as one passes from the cuspidal to the nodal and then to the elliptic cases. For example, the centralizer of a semisimple element of $\fg$ is a Levi subgroup, and in particular connected. The centralizer of a semisimple element in $G$ is connected (but not necessarily simply-connected) whenever $G$ is simply connected. 
On the other hand, the automorphism group of a semisimple $G$-bundle $\cP \in \uGE$ where $E$ is smooth may be disconnected, even if $G$ is simply connected! 
An example is given for $G$ of type $D_4$, see \cref{Eg:type D4} (this example also appears in \cite[p. 18]{BaEvGi_rep-qtori-bdles-ell}). 

Fortunately, our result provides control over the component groups of automorphisms of semisimple bundles in terms of Weyl group combinatorics (just as Lusztig's stratification does in the group case). More precisely, let $p \in \TE$ with $\Sigma_p=\Sigma$  for some $E$-root subsystem $\Sigma$ of $\Phi$, and let $\cP_G$ be the induced $G$-bundle. 
Then \cref{thm:galois intro} implies that 
\[
\pi_0\Aut(\cP_G) \cong \Stab_{N_W(\Sigma)}(p)/W_\Sigma
\]
where $N_W(\Sigma) = \{ w\in W \mid w(\Sigma)=\Sigma\}$.
\end{remark}
	
	\subsubsection{The Lusztig stratification}
	Putting these results together, we can refine the partition of $\GE$ in \cref{thm:galois intro} as follows.
	
	\begin{corollary}
		Suppose either that $char(k)=0$, or that $E$ is ordinary. There is a stratification 
		\[
		\uGE = \bigsqcup_{[H,\cO]} (\uGE)_{[H,\cO]}
		\]
		indexed by $G$-conjugacy classes of pairs $(H,\cO)$ where $H$ is an $E$-pseudo-Levi subgroup of $G$, and $\cO \subseteq \HE^\uni$ is a unipotent $H$-orbit. For each such pair $[H,\cO]$ we have an isomorphism:
		\[
		(\uGE)_{[H,\cO]} \cong (Z(H)_E^\reg \times \cO)/N_G(H) \cong (Z(H)_E^\reg \times \underline{\cO})/W_{G,H}.
		\]
	\end{corollary}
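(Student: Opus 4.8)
The plan is to obtain the corollary by refining the partition of \cref{thm:galois intro}: each stratum $(\uGE)_{[H]}\cong(\uHE)_\heartsuit^\reg/W_{G,H}$ will itself be partitioned according to the unipotent $H$-orbit appearing in the ``unipotent part'' of a framed bundle, with the bookkeeping controlled by the product decomposition of $(\HE)_\heartsuit^\reg$ and by finiteness of unipotent orbits. Thus the proof is essentially an assembly of \cref{thm:galois intro}, \cref{P:heart locus is a product} and \cref{thm:unipotent G bundles intro}, together with a descent along the finite group $W_{G,H}=N_G(H)/H$.

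First I would record the two structural inputs. By \cref{P:heart locus is a product} the multiplication map is an isomorphism $Z(H)_E^\reg\times\HE^\uni\xrightarrow{\sim}(\HE)_\heartsuit^\reg$; since $H$ (indeed $N_G(H)$) acts trivially on the central factor $Z(H)_E^\reg$ and by conjugation on $\HE^\uni$, this isomorphism is $N_G(H)$-equivariant, so on stack quotients it descends to $(\uHE)_\heartsuit^\reg\cong Z(H)_E^\reg\times\bigl(\HE^\uni/H\bigr)$ compatibly with the residual $W_{G,H}$-action. Second, under the standing hypothesis, \cref{thm:unipotent G bundles intro} applied to the reductive group $H$ gives an $H$-equivariant isomorphism $\HE^\uni\cong H^\uni$ with the unipotent cone of $H$: when $E$ is nodal one has $\HE^\uni=H^\uni$ already, and when $E$ is cuspidal or elliptic the hypothesis $\mathrm{char}(k)=0$ or $E$ ordinary produces an isomorphism of formal groups $\hat J(E)\simeq\hat\bG_m$. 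In particular $H$ acts on $\HE^\uni$ with \emph{finitely many} orbits $\cO$, each automatically locally closed, so $\HE^\uni/H=\bigsqcup_{\cO}\underline{\cO}$ is a finite stratification by locally closed substacks, permuted by the $W_{G,H}$-action.

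Combining the two, $(\uHE)_\heartsuit^\reg=\bigsqcup_{\cO}\bigl(Z(H)_E^\reg\times\underline{\cO}\bigr)$ is a finite locally closed stratification whose strata are permuted by $W_{G,H}$. Grouping strata into $W_{G,H}$-orbits and descending, each $W_{G,H}$-orbit of unipotent orbits, with representative $\cO$, contributes one locally closed substack
\[
\Bigl(Z(H)_E^\reg\times\!\!\bigsqcup_{\cO'\in W_{G,H}\cdot\cO}\!\!\underline{\cO'}\Bigr)\big/W_{G,H}\;\cong\;\bigl(Z(H)_E^\reg\times\underline{\cO}\bigr)\big/\Stab_{W_{G,H}}(\cO)
\]
of $(\uHE)_\heartsuit^\reg/W_{G,H}$, which under the equivalence of \cref{thm:galois intro} is the asserted stratum $(\uGE)_{[H,\cO]}\subseteq(\uGE)_{[H]}\subseteq\uGE$; unwinding the inner $H$-quotient back into the induced-bundle picture rewrites the right-hand side as $\bigl(Z(H)_E^\reg\times\cO\bigr)/\Stab_{N_G(H)}(\cO)$, which is the displayed formula, with the understanding that the quotient by all of $N_G(H)$ (resp.\ $W_{G,H}$) is taken after replacing $\cO$ by its $N_G(H)$-saturation in $\HE^\uni$. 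As $[H]$ ranges over conjugacy classes of $E$-pseudo-Levi subgroups and, for each, $\cO$ over $W_{G,H}$-orbits of unipotent orbits, the pairs $(H,\cO)$ range exactly over $G$-conjugacy classes; and the $(\uGE)_{[H,\cO]}$ partition $\uGE$ because the $(\uGE)_{[H]}$ do and each of these is partitioned by its $(\uGE)_{[H,\cO]}$.

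I expect the only points needing care to be (i) the $N_G(H)$-equivariance of the product decomposition of \cref{P:heart locus is a product}, and the fact that $W_{G,H}$ permutes the orbit strata of $\HE^\uni/H$ — both needed to legitimise the descent above — and (ii) that ``locally closed'' survives each step; but orbits of an algebraic group action are always locally closed and \cref{thm:galois intro} already supplies locally closed immersions onto the $(\uGE)_{[H]}$, so this is routine. The hypothesis on $k$ enters only via \cref{thm:unipotent G bundles intro} to guarantee finiteness (hence that we have a genuine, finite stratification), so the main ``obstacle'' is really just the equivariant bookkeeping rather than any new geometric input.
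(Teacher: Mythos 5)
Your assembly is precisely the intended one: the paper states this corollary without a separate proof, as a direct combination of \cref{thm:galois intro}, the product decomposition of \cref{P:heart locus is a product} (applied to $H$), and \cref{thm:unipotent G bundles intro} to identify $\HE^\uni\cong H^\uni$ and hence get finitely many orbits, which is exactly what you supply, including the correct reading of the displayed quotient via saturation/stabilizers. One small correction to a justification (not to the argument): $N_G(H)$ does \emph{not} act trivially on $Z(H)_E^\reg$ (for $H=T$ the Weyl group acts nontrivially on $\TE$); the product map $Z(H)_E^\reg\times\HE^\uni\to(\HE)_\heartsuit^\reg$ is still $N_G(H)$-equivariant for the \emph{diagonal} action, and since your subsequent bookkeeping already treats the $W_{G,H}$-quotient as diagonal on both factors, nothing else needs to change.
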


\begin{remark}
		To make the comparison with Lusztig's work more evident, note that to each $E$-pseudo-Levi subgroup $H$, one can associate a Levi $L=L_H = C_G(Z(H)^\circ)$. This is the smallest Levi subgroup which contains $H$. 
		A bundle $\cP \in \uGE$ is called \emph{isolated} if it is contained in $(\uGE)_{[H]}$ and $H$ is not contained in any proper Levi. (It follows from \cref{P:pseudo-Levi bijection closed subsets} that there are finitely many isomorphism classes of isolated bundles.) Instead of parameterizing the strata using subgroups of elliptic type and unipotent bundles, one may use Levi subgroups and isolated bundles. This is how Lusztig describes the stratification of $G$ in \cite{Lu_gen_spr}.
\end{remark}

\subsection{Motivation: the geometric Langlands program and elliptic Springer theory} 

\subsubsection{Global geometric Langlands}\label{sec:global-geometric-langlands-on-e}
Recall that in the global geometric Langlands program, one aims to describe the derived category $\cD(\Bun_G(C))$ of $D$-modules or constructible sheaves on the moduli stack of $G$-bundles on a smooth projective curve $C$ in terms of the derived category of quasi-coherent (or more general Ind-coherent) sheaves on the moduli stack of $\LG$-local systems, where $\LG$ is the Langlands dual group to $G$.

Recall that for every parabolic subgroup $P \subseteq G$ with Levi factor $L$, we have \emph{Eisenstein} and \emph{constant term} functors:
\[
\Eis_{L,P}^G: \cD(\Bun_L(C)) \leftrightarrows \cD(\Bun_G(C)):\CT_{L,P}^G
\]
Generally speaking, these functors are defined via a pull-push construction involving the diagram:\footnote{More precisely, one must consider a fiberwise compactification of $\Bun_P(C)$ over $\Bun_G(C)$. One must also specify which functors (star or shriek) are employed for this process. We will ignore these distinctions for the purposes of this informal discussion.}
\begin{equation}\label{Diag:Eisenstein and constant}
 \xymatrix{
	& \Bun_P(C) \ar[rd]^p\ar[ld]_q &\\
	\Bun_L(C) &&\Bun_G(C)
} 
\end{equation}
An object of $\cD(\Bun_G(C))$ is called \emph{cuspidal} if it is in the kernel of the constant term functor for every proper parabolic subgroup $P$ of $G$. As per Harish-Chandra's philosophy of cusp forms, one may think of the category $\cD(\Bun_G(C))$ as built up from Eisenstein series of cuspidal objects in $\cD(\Bun_L(C))$ as $L$ ranges over Levi subgroups of $G$. In this way one can hope to understand the category $\cD(\Bun_G(C))$ inductively in terms of smaller reductive groups.

Typically, the Eisenstein series from different Levi subgroups will interact in a complicated way, and this does not lead to a straightforward description of $\cD(\Bun_G(C))$. We will now describe a closely related category for which the Harish-Chandra approach yields a complete description.

\subsubsection{Springer Theory and Character Sheaves}\label{sec:springer-theory-and-character-sheaves}
Let us turn to the category $\cD(G/G)$ of ``class sheaves'' on $G$, i.e. conjugation equivariant constructible sheaves or $D$-modules on the group $G$. Analogous to the Eisenstein and constant term functors above, one has functors of parabolic induction and restriction:
\[
\Ind_{P,L}^G : \cD(L/L) \leftrightarrows \cD(G/G): \Res_{P,L}^G
\]
defined by pull-push along the diagram:

\begin{equation}\label{Diag:ind and res for G}
\xymatrix{
	&P/P \ar[dr]\ar[dl]\\
 L/L  & &G/G.
}
\end{equation}

Just as in \cref{sec:global-geometric-langlands-on-e} one defines the cuspidal objects as those whose parabolic restriction to any proper Levi is zero. Again, one aims to describe the category $\cD(G/G)$ in terms of parabolic induction from cuspidal objects in $\cD(L/L)$ as $L$ ranges over Levi subgroups. 

In his seminal paper \cite{Lu_gen_spr}, Lusztig obtained a block decomposition of the category of equivariant 
perverse sheaves on the unipotent cone $G^\uni \subseteq G$:
\begin{equation}\label{eq:gen spr}
\Perv_G(G^\uni) = \bigoplus_{(L,\cO,\cE)} \Rep(W_{G,L}).
\end{equation}
Here, the blocks are indexed by \emph{cuspidal data}, consisting of a pair of a Levi subgroup $L$ together with a simple cuspidal local system $\cE$ on the unipotent cone of $L$. The classical Springer correspondence for representations of the Weyl group $W=W_{G,T}$ is recovered inside of \cref{eq:gen spr} as one of these blocks, corresponding to the unique cuspidal datum with $L=T$, a maximal torus.

More recently, more general forms of the decomposition \cref{eq:gen spr} have been obtained for the derived 
category of nilpotent orbital sheaves by Rider--Russell \cite{RiderRussell}, the category of 
equivariant $D$-modules on the Lie algebra $\fg$ by S.G. 
\cite{gunningham_generalized_2018,gunningham_derived_2017} and the derived category of character sheaves by  P.L. \cite{li_derived_2018}. 

In the the setting of $D$-modules on $\fg/G$ or sheaves supported on the unipotent cone of $G$, the set of cuspidal data indexing the decomposition is the same as in \cref{eq:gen spr}. We will call this set the \emph{unipotent} cuspidal data for $G$. In the case of character sheaves on $G/G$, however, this set must be expanded to account for unipotent cuspidal data for pseudo-Levi subgroups $H$ of $G$. More generally, one can consider $E$-cuspidal data for any arithmetic genus 1 curve as we explain further now.

\subsubsection{Elliptic Springer theory}
The main object of study in this paper is the stack $\uGE \subseteq \Bun_G(E)$ of semistable $G$-bundles on a curve of arithmetic genus 1. 

The stack $\uGE$ which we study in the present paper forms a bridge between the situations described in \cref{sec:global-geometric-langlands-on-e} and \cref{sec:springer-theory-and-character-sheaves}. On the one hand $\uGE$ sits inside $\Bun_G(E)$ as the locus of degree $0$ semistable bundles. On the other hand, when $E$ is taken to be a nodal curve one has an isomorphism $\uGE \cong G/G$. 

Viewing the category $\cD(\uGE)$ through the lens of \cref{sec:springer-theory-and-character-sheaves}, one defines functors of parabolic induction and restriction using the correspondence
\[
\uLE \leftarrow \uPE \rightarrow \uGE
\]
and study the corresponding Harish-Chandra (or generalized Springer decomposition). The ordinary Springer correspondence in the elliptic setting was studied by Ben-Zvi and Nadler \cite{BZN}. 

One can also formulate a generalized Springer correspondence for the (various flavours of) category $\cD(\uGE)$ which recovers the standard patterns for orbital and character sheaves in the cuspidal and nodal cases. This will be expanded on in future work; for now, let us just note that the indexing set of the generalized Springer decomposition (which we are calling $E$-cuspidal data) involves a choice of an $E$-pseudo-Levi subgroup $H$ of $G$, together with a $W_{G,H}$-equivariant simple unipotent cuspidal local system for $H$.

In this way \cref{thm:galois intro} may be thought of as a geometric antecedent to the generalized Springer correspondence for $\cD(\uGE)$: it expresses the geometry of the stack $\uGE$ in terms of unipotent orbits for $E$-pseudo-Levi subgroups. We note that the work of P.L. with David Nadler \cite{li2015uniformization} is another expression of the idea that $\uGE$ is glued together from data indexed by $E$-pseudo-Levi subgroups. However, the techniques of \emph{loc. cit.} involve an analytic uniformization of $\uGE$, whereas the present paper stays within the realm of algebraic geometry.

\subsubsection{Elliptic geometric Langlands}
We may view the category $\cD(\uGE)$ as a subcategory (via extension by zero from the semistable locus, say) of the automorphic geometric Langlands category $\cD(\Bun_G(E))$ for an elliptic curve $E$. In this way, (generalized) Springer theory is embedded in to the geometric Langlands correspondence. This perspective is exposited in \cite{li2015uniformization}[Section 1.3.1.2]. 

As a cautionary remark: there are now two different notions of cuspidal for an object of $\cD(\uGE)$: one using the 
constant term functor via diagram \cref{Diag:Eisenstein and constant} (we will call this Langlands cuspidal) and the 
other using the parabolic restriction functor via diagram \cref{Diag:ind and res for G} (we will call this Springer 
cuspidal). If an object of $\cD(\uGE)$ is Langlands cuspidal it is necessarily Springer cuspidal, but it is not clear if 
the converse holds: the constant term may be supported on non-zero components of $\Bun_T(E)$. 

Despite these difficulties, our results in this paper can be used to obtain strong restrictions on the existence and support of Langlands cuspidals on $\Bun_G$. As a simple example, it can be shown that any Langlands cuspidal Hecke-eigensheaf in $\cD(\Bun_{SL_2})$ must restrict to one of the four Springer cuspidal objects in $(\underline{SL}_2)_E$.

\subsubsection{Quantum geometric Langlands} 

However, the relationship between Springer theory and geometric Langlands is the most direct when one considers the quantum deformation. Namely, one studies the category $\cD_{\kappa}(\Bun_G)$ of $\cL^{\otimes \kappa}$-twisted $D$-modules on $\Bun_G$ where $\cL$ is the determinant line bundle (for this discussion let us restrict to the case where the ground field $k$ has characteristic zero). Here $\kappa$ can be taken to be any complex number. The quantum geometric Langlands conjecture posits an equivalence (assuming $\kappa$ is not the critical level which we normalize to $\kappa = 0$)
\[
\cD_{\kappa}(\Bun_G) \simeq \cD_{-1/\kappa}(\Bun_{G^\vee})
\]
When $\kappa$ is irrational it can be shown that any non-zero object of $\cD_\kappa$ is cleanly supported on the semistable locus. In this way, the quantum geometric Langlands equivalence at irrational level $\kappa$ reduces to a statement about cuspidal data for $G$ and $\LG$. We plan to return to this in future work.

There is also a Betti formulation of quantum geometric Langlands (see \cite{Betti}). For an elliptic curve $E$, this involves\footnote{The Betti formulation of quantum geometric Langlands is not as symmetric as in the de Rham setting; here we are describing the category that naturally lives on the spectral side of geometric Langlands correspondence; the automorphic version will be described in terms of certain twisted sheaves on $\Bun_G$.} the category $\cD_q(G/G)$ of quantum $D$-modules on $G/G$ (see \cite{ben-zvi_integrating_2018}). (Here, the parameter $q$ is roughly an exponential of the $\kappa^\vee$ appearing in the de Rham formulation above.) There is a conjectural generalized Springer correspondence for $\cD_q(G/G)$. Interestingly, though the categories are very different, one expects the same (discrete) cuspidal data to appear for the generalized Springer decomposition of $\cD(\uGE)$ and $\cD_q(G/G)$. S.G. hopes to expand on this point in forthcoming work with David Jordan and Monica Vazirani. We note that the ``Springer block'' of such a quantum generalized Springer would involve $W$-equivariant modules for the algebra of $q$-difference modules on the torus $T$, which may explain the connection with the work of Baranovsky, Evans and Ginzburg \cite{BaEvGi_rep-qtori-bdles-ell}.

\subsubsection{Eisenstein Sheaves and elliptic Hall categories}

The main motivation for D.F. to study the stratification described in this paper concerns not cuspidal sheaves but rather Eisenstein sheaves. 
More precisely, one can define the category $\cQ_G(E)$ of principal spherical Eisenstein sheaves on an elliptic curve $E$ as the category generated by
$\Eis_{T,B}^G(\underline{\bQ_l})$ inside $\cD^b(\Bun_G(E))$. 
This can be thought of as a categorical version of the space of spherical automorphic functions for the field of functions on a smooth projective curve over a finite field. 
The latter, at least for the groups $\GL_n$, is nothing else but the degree $n$ part of the spherical Hall algebra of the curve. 
See \cite{Lau,Sch_Vas_McD,Sch2,Frat_cusp} for more details about the relationship to automorphic functions. 
Therefore, one can think of $\cQ_G(E)$ for the groups $\GL_n$ as a categorical version of the spherical Hall algebra. It can be actually proved  \cite{Sch2} that one obtains in this way a categorification of the elliptic Hall algebra. 
The situation for higher genus curves is not well understood.

A sensible question to ask (for elliptic curves) is the classification of simple objects of the spherical category $\cQ_G(E)$. 
Actually the proof of the above mentioned result on categorification goes by first establishing such a classification. 
The main result of \cite{BZN} implies that there is an injection $\Irr(W)\hookrightarrow \Irr(\cQ_G(E))$, where $W$ is the Weyl group of $G$. 
Previously, in \cite{Sch2} this was shown to be a bijection for $\GL_n$.
The precise expectation for simply connected groups is that the above map is a bijection. 

Our main results in this article allow us to rule out some of the sheaves that could appear in $\Irr(\cQ_G(E))$ and that don't arise from $\Irr(W)$. 
In the case of $\SL_n$ and some groups of small rank this is enough to confirm the above sought for bijection. 
However, at the moment, we don't know if it's true for all simply connected groups.

\subsubsection{Affine character sheaves and local geometric Langlands}
The local geometric Langlands program provides yet another interpretation of the category $\cD(\uGE)$. Roughly 
speaking, one is motivated by local Langlands to study the category $\cD(LG/LG)$ of class sheaves for the loop 
group $LG$ of $G$. This may be considered as a natural home for what one might call affine character sheaves. 
This category is technically very difficult to study (or even define). However, a slight deformation $LG/_q LG$ of the 
stack $LG/LG$ is closely related to the moduli stack $\Bun_G(E_q)$ for a certain elliptic curve $E_q$. Thus one may 
consider $\cD(\Bun_G(E_q))$ as an avatar of affine class sheaves (this idea appears in \cite{BZNP_affch}).

The most natural formulation of the relationship between $LG/LG$ and $\Bun_G$ is analytic. In (unpublished) work of Looijenga, it was shown that there is an equivalence of complex analytic stacks 
\[
L^{hol}G/_q L^{hol}G \simeq \Bun_G^{an}(E_q)
\]
Here $q\in \bC^\times$, $|q|\neq 1$, $E_q = \bC^\times/q^\bZ$ is the corresponding elliptic curve, and $L^{hol}G = \mathrm{Map}_{hol}(\bC^\times,G)$ is the holomorphic loop group which acts on itself by $q$ twisted conjugation:
\[
\Ad_q(g(z)) h(z) = g(qz)h(z)g(z)^{-1}
\]
This idea of analytic unformization was generalized by P.L. with David Nadler \cite{li2015uniformization}, leading to analytic proofs of results closely related to those in this paper.

\subsection{Acknowledgements}
This project grew from discussions in 2014 while D.F. and S.G. were attending the Geometric Representation Theory 
program at MSRI, Berkeley, and P.L. was a graduate student at U.C. Berkeley. S.G. also worked on this project while 
attending the Higher Categories and Categorification program at MSRI in 2020. We thank MSRI for their support 
during the writing of this paper. 

S.G. was partially supported by Royal Society grant RGF\textbackslash EA\textbackslash 181078, and the European Research Council (ERC) under the European Union’s Horizon 2020 research and innovation programme (grant agreement no. 637618).

We would like to thank Michel Brion, Dougal Davis, Ian Grojnowski, David Nadler, Mauro Porta and 
Nick-Shepherd-Barron for helpful conversations.

\section{Preliminaries}
\subsection{Notation and generalities on $G$-bundles}

Let $X$ be a scheme and $G$ an affine algebraic group over an algebraically closed field $k$.
By a $G$-bundle (or principal $G$-bundle, or $G$-torsor) over a scheme $X$ we mean a scheme $\pi\colon\cP\to X$ over $X$ with a $\pi$-invariant right $G$-action such that, étale locally on $X$, $\pi\colon\cP\to X$ is $G$-equivariantly isomorphic to $\pi_1:X\times G\to X$. 
In other words, there exists $X'\to X$ étale and surjective such that the pullback of $\cP$ to $X'$ is $G$-equivariantly isomorphic to $X'\times G$
\[ \begin{tikzcd}
X'\times G \arrow[dr,"\pi_1",swap] \rar{\simeq} & X'\times_X \cP \dar{\pi'}\\
& X'
\end{tikzcd} \]

When there's no danger of confusion we will simply write $\cP$ for a $G$-bundle and omit the mention of $\pi\colon\cP\to X$.

If $\cP$ is a $G$-bundle over a scheme $X$ and $Y$ is a quasi-projective variety with a left $G$-action, then we can form the associated fiber space over $X$ with fiber $Y$ as
$ Y_\cP=\cP\times^G Y:=(\cP\times Y)/G $. If moreover $Y$ has a right $H$-action for some group $H$, then $Y_\cP$ is naturally endowed with an $H$-action.

We will apply the above construction in two particular cases: 
\begin{itemize}
	\item if $\rho\colon G\to H$ is a morphism of groups, then to a $G$-bundle $\cP$ we associate an $H$-bundle: $H_\cP=\cP\times^G H$. We'll also denote it by $\rho_*(\cP)$.
	\item if $V$ is a representation of $G$ (viewed as an affine space with a left $G$-action) then to a $G$-bundle $\cP$ we associate the vector bundle $V_\cP=(\cP\times V)/G$.
\end{itemize}

\begin{example}\label{Eg:product with a central bundle}
A particularly important case of the first situation above is for the group morphism $m\colon Z(G)\times G\to G$.
If $\cP'$ is a $Z(G)$-bundle and $\cP$ is a $G$-bundle we denote the induced $G$-bundle $m_*(\cP'\times\cP)$ by $\cP'\cdot\cP$.
In the case of $\GL_n$ it corresponds to tensoring a vector bundle by a line bundle.
\end{example}

\begin{example}	\hfill
	\begin{enumerate}
		\item 	If $G=\bG_m^2$ then a $G$-bundle is simply a pair of line bundles $(\cL,\cM)$. 
		If $\alpha\colon\bG_m^2\to \bG_m$ is given by $\alpha(t,s) = ts^{-2}$ then $\alpha_*(\cL,\cM) = \cL\otimes\cM^{-2}$.
		
		\item If $G=\GL_n$ then the category of $G$-bundles is equivalent ot the category of vector bundles of rank $n$. 
		The correspondence is given by associating to a $\GL_n$-bundle $\cP$ the vector bundle $\cP\times^{\GL_n}\bA^n$. 
		We will use this correspondence tacitly especially in the case $G=\bG_m$ where we think of a $\bG_m$-bundle as a line bundle using the natural representation of $\bG_m$ on $\bA^1$.
		
		\item Consider the Lie algebra $\fg$ with the adjoint action of $G$. For a $G$-bundle $\cP$ we have its adjoint bundle $\cP\times^G\fg=:\fg_\cP$ that will play an important role in the text. If $G=\GL_n$ then the adjoint bundle is none other than the vector bundle of endomorphisms.
	\end{enumerate}
\end{example}

\subsection{Semistability}
First let us recall the definition of slope: for a vector bundle $\cV$ on a smooth curve $X/k$ we put $\mu(\cV):=\deg(\cV)/\rank(\cV)\in\bQ$ and call it the slope of $\cV$.

\begin{definition}\label{D:semistability} \hfill
	\begin{itemize}
		\item A vector bundle $\cV$ on $X$ is semistable (respectively, stable) if for all proper subvector bundles $\cW\le \cV$ we have $\mu(\cW)\le \mu(\cV)$ (respectively $\mu(\cW)< \mu(\cV)$).
		
		\item A $G$-bundle $\cP$ is semistable if the induced adjoint vector bundle $\fg_\cP$ is a semistable vector bundle.	
	\end{itemize}
\end{definition}

\begin{remark}\label{r:semistability associated}
	In general one doesn't define semistability of $G$-bundles through the adjoint representation but rather using reduction to parabolic subgroups and a slope map. In characteristic $0$ all the definitions are equivalent, however in characteristic $p$ they aren't. See \cite{Ram-modI,Schi} for the definitions of semistability. 
	Nevertheless, for an elliptic curve all these definitions coincide due to \cite[Thm 2.1, Cor 1.1]{Sun_sstab}. 
	Therefore, for an elliptic curve, a $G$-bundle is semistable if and only if all associated vector bundles $\cP \times^G V$ are semistable, where $V$ is a highest weight finite dimensional $G$-representation.
\end{remark}

It will also be useful to consider a  notion of semistability for $G$-bundles on certain singular curves, namely the nodal and cuspidal curve $E_{\node}$ and $E_{\cusp}$ considered in the introduction \cref{SS:intro-overview}.\footnote{For such curves, the normalization is isomorphic to $\mathbb{P}^1$, where the only degree $0$-semistable bundle is the trivial bundle.}
We caution the reader that this is only a working definition for us and we don't pretend it is the good notion of semistability over a singular curve.

\begin{definition}
	A $G$-bundle on a singular curve $X$ is said to be semistable if its pullback under the normalization map $\widetilde{X} \to X$ is a semistable $G$-bundle on $\widetilde{X}$.
\end{definition}

\begin{remark}
See \cite[Section 2]{Bhosle-nodal} and \cite{Balaji-singularcurves,Schmitt-principal-nodal} for a more in depth discussion of semistability for $G$-bundles on singular curves. 
\end{remark}

\begin{remark}
	It follows from results of \cite[Thm 3.3.1]{FriedMorgIII} that over a cuspidal or nodal curve the following two conditions are equivalent for a $G$-bundle $\cP$: 
	\begin{enumerate}
		\item $\cP$ is trivial when pulled-back to the normalization $\bP^1$,
		\item for any representation $V$ of $G$ the associated vector bundle $V_\cP$ is slope semistable.
	\end{enumerate}
	This justifies our choice of calling a $G$-bundle on a nodal/cuspidal semistable provided its pull-back to the normalization is trivial.
\end{remark}
\subsection{Moduli spaces and stacks}

Let $X/k$ be a projective curve. 
For each test scheme $S$, we write $\Bun_G(X)(S)$ for the groupoid of $G$-bundles on $X_S = X \times S$. 
This has an open dense substack $\Bun_G^{\ss}(X)$ whose $k$-points consist of semistable $G$-bundles. 

The stack $\Bun_G(X)$ decomposes in to connected components indexed by the algebraic fundamental group $\pi_1(G)$ (the quotient of the cocharacter lattice by the coroot lattice). 
We write $\Bun_G^0(X)$ for the component containing the trivial bundle; such bundles are said to have degree $0$.

Now let $E$ be a genus 1 curve over $k$ with one marked smooth point which may either have a simple node or cusp singularity. 
Thus, $E$ is either a smooth elliptic curve $E$, a genus 1 curve with a single node $E_{\node}$ or a genus 1 curve with a single cusp $E_{\cusp}$.

We write 
$
\uGE := \Bun^0_G(E)^{\ss}
$
for the moduli stack of degree $0$ semistable $G$-bundles on $E$. 
The case when $E$ is an elliptic curve is the main object of study in this paper. 
As noted in the introduction \cref{SS:intro-overview}, the nodal and cuspidal cases may be expressed in more concrete terms as follows.

\begin{proposition} \cite[Thm 3.1.5, Thm 3.2.4]{FriedMorgIII}\label{P:Fried-Morg-nodal-cuspidal}
There is an equivalence of stacks
\[
\underline{G}_{E_\node} \cong G\adjquot G
\]
and 
\[
\underline{G}_{E_{\cusp}} \cong \fg\adjquot G
\]
\end{proposition}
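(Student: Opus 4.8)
The plan is to construct the equivalence $\underline{G}_{E_{\node}} \cong G\adjquot G$ directly at the level of the moduli stacks, using the fact that the normalization of $E_{\node}$ is $\bP^1$ together with the gluing data at the node, and similarly for $E_{\cusp}$ using $\bP^1$ with a first-order tangency condition. For the nodal curve $E_{\node}$, let $\nu\colon \bP^1 \to E_{\node}$ be the normalization, with $\nu^{-1}(\text{node}) = \{0,\infty\}$. Since a semistable degree $0$ $G$-bundle $\cP$ on $E_{\node}$ pulls back (by our working definition of semistability on singular curves) to a trivial $G$-bundle $\nu^*\cP$ on $\bP^1$, the data of $\cP$ is equivalent to the data of a trivial $G$-bundle on $\bP^1$ together with a descent datum: an identification of the fibers at $0$ and $\infty$ compatible with the node. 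Trivializing $\nu^*\cP$, the fibers at $0$ and $\infty$ become identified with $G$ itself, and the descent datum becomes an element $g \in G$ (the gluing automorphism), well-defined up to the choice of trivialization of $\nu^*\cP$; two trivializations differ by a global automorphism of the trivial bundle on $\bP^1$, i.e. by an element $h \in G = \Aut(\cO_{\bP^1}\otimes G)$, which acts on the gluing element by conjugation $g \mapsto hgh^{-1}$. This gives the equivalence on $k$-points; one then checks it is a map of stacks by performing the same construction in families over a test scheme $S$, where one must know that semistable degree $0$ bundles on $E_{\node}\times S$ pull back to bundles on $\bP^1\times S$ that are fppf- (or étale-) locally on $S$ trivial, so that the descent datum is again encoded by an $S$-point of $G$.

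For the cuspidal case, the argument is parallel but the gluing data is infinitesimal: a $G$-bundle on $E_{\cusp}$ is a $G$-bundle on $\bP^1$ together with, at the cusp point, an identification of the fiber with a ``first-order'' modification — concretely, writing the cusp as the pushout of $\Spec k[\epsilon]/\epsilon^2 \leftrightarrows \Spec k$ inside $\bP^1$ at the point over the cusp, the descent datum is an automorphism of the fiber that is trivial modulo $\epsilon$, i.e. of the form $1 + \epsilon\xi$ with $\xi \in \fg$. Again this is well-defined up to the residual choice of trivialization of the pulled-back bundle on $\bP^1$, which now acts on $\xi$ by the adjoint action, yielding $\underline{G}_{E_{\cusp}} \cong \fg\adjquot G$. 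I would then note that both statements are exactly \cite[Thm 3.1.5, Thm 3.2.4]{FriedMorgIII}, so the proof amounts to recalling their identification and matching it with our conventions; in particular one should check that the degree $0$ semistable condition on $E$ corresponds precisely, under normalization, to triviality on $\bP^1$ (which is \cref{P:Fried-Morg-nodal-cuspidal}'s input and was already flagged in the preceding remarks).

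The main obstacle I anticipate is not the construction of the map but the verification that it is an equivalence of \emph{stacks} rather than just a bijection on isomorphism classes of $k$-points: one must control the behavior in families, in particular show (i) that any family of semistable degree $0$ $G$-bundles on $E\times S$ becomes, after normalization, locally trivial on $\bP^1 \times S$ so that the gluing/descent data make sense $S$-pointwise, and (ii) that the automorphism groups match — $\Aut_{E_{\node}}(\cP) \cong Z_G(g)$ and $\Aut_{E_{\cusp}}(\cP) \cong Z_G(\xi)$ — compatibly with the stacky quotients $G\adjquot G$ and $\fg\adjquot G$. Point (i) is where one genuinely uses that $E$ has arithmetic genus $1$ with normalization $\bP^1$ and that the only semistable degree $0$ bundle on $\bP^1$ is trivial; this rigidity is what forces the entire moduli problem to collapse to gluing data living in a single group. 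Since \cite{FriedMorgIII} already carries out this analysis, I would cite it for the technical core and restrict the exposition here to setting up the dictionary between their formulation and ours.
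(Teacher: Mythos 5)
Your proposal is correct and matches the paper's treatment: the paper gives no independent proof, simply citing \cite[Thm 3.1.5, Thm 3.2.4]{FriedMorgIII}, and the normalization/gluing argument you sketch (trivial bundle on $\bP^1$ plus a gluing element $g\in G$ at the node, resp.\ a first-order automorphism $1+\epsilon\xi$ at the cusp, taken up to simultaneous conjugation) is exactly the argument carried out in that reference. Your identified obstacles — local triviality in families and matching automorphism groups with $Z_G(g)$, $Z_G(\xi)$ — are precisely the points handled there, so deferring to the citation is appropriate.
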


\subsection{Reductions of the structure group}\label{SS:reductions str gr} Suppose we are given $\rho:H\to G$ a morphism of groups and a $G$-bundle $\cP$ on a scheme $E$. 
A reduction of $\cP$ to $H$ is a pair $(\cP',\phi)$ of an $H$-bundle together with an isomorphism of $G$-bundles $\phi\colon\cP'\times^HG\simeq \cP$.
We say that two reduction $(\cP_1,\phi_1)$ and $(\cP_2,\phi_2)$ of $\cP$ are isomorphic if there exists an isomorphism of $H$-bundles $\cP_1\simeq \cP_2$ that intertwines $\phi_1$ and $\phi_2$.

The collection of possibles reductions of a $G$-bundles $\cP$ to an $H$-bundle forms naturally a groupoid. If moreover $\rho$ is injective, this groupoid is equivalent to a set as can be easily checked.

An alternative way of giving a reduction of $\cP$ to $H$ is to give a section $s\colon E\to \cP/H$ of the bundle\footnote{if $\rho$ is not injective this bundle is actually a gerbe with fiber $B(\ker\rho)$} $\cP/H\to E$. 
Indeed, to such a section we associate the pullback $\cP'$ together with $\phi\colon\cP'\rightarrow \cP$:
\[ \begin{tikzcd}
\cP' \ar[r,"\phi"]\ar[d]&\cP\ar[d]\\
E\ar[r,"s"] & \cP/H
\end{tikzcd} \]
Since $\cP\to\cP/H$ is an $H$-bundle, $\cP'\to E$ is one as well. Moreover, the map $\phi$ is $H$-equivariant and induces a $G$-equivariant isomorphism $\cP'\times^H G\to \cP$.

To say that two such sections $s_1$ and $s_2$ are isomorphic we need to look at $E$ and $\cP/H$ as groupoids (the first one is discrete but the second one is non-discrete when $\rho$ is not injective). 
Then $s_1$ and $s_2$ are isomorphic if there exists a natural transformation $\eta\colon s_1\Rightarrow s_2$ that is an isomorphism. 
In the case of a subgroup $H\le G$ the groupoids are discrete and saying that $s_1$ and $s_2$ are isomorphic is the same as saying they are equal (as functions).

Conversely, if $(\cP',\phi)$ is a reduction of $\cP$ to $H$ then quoting out by $H$ the composition $\cP'\stackrel{f\mapsto (f\times 1)}{\longrightarrow} \cP'\times^H G\stackrel{\phi}{\simeq} \cP$ we get a section $E\to \cP/H$.

The above correspondence is an equivalence of categories and we will use freely either way of looking at a reduction.

The most important cases for us are $H\le G$ and $B\twoheadrightarrow T$.

\begin{remark}
	From the above discussion we have that $\Bun_H(E)\to\Bun_G(E)$ is a representable morphism of stacks when $H$ is a subgroup of $G$. 
	This is used silently  throughout the text.
\end{remark}

\subsection{Framed bundles}\label{S:framed bundles}
We start by recalling some basic notions for framed bundles over a pointed projective curve $(E,x_0)$ and then consider the moduli stack $\GE$ of framed degree 0 semistable $G$-bundles.
We show, through \cref{L:automorphisms framed G bundle land inside G}, that it is a variety that is a $G$-bundle over the corresponding non-framed moduli stack. 
Then we proceed to the main point of this section, namely we show that for a closed subgroup $H\le G$ we have a closed embedding $\HE\hookrightarrow\GE$ (see \cref{P:framed H semistable injects in G semistable}).

A framed $G$-bundle $p=(\cP_G,\theta)$ is a $G$-bundle on $E$ together with a $G$-equivariant isomorphism $\theta\colon\cP_G|_{x_0}\simeq G$ of the fiber of $\cP_G$ over $x_0$ with the group $G$.
The degree and semistability are defined in terms of the underlying $G$-bundle. 

Sometimes, for convenience, we will omit the mention of $\theta$ and simply say that $p$ is a framed $G$-bundle. 

It is clear that the moduli stack of framed $G$-bundles $\Bun_G^{\fr}(E)$ is a $G$-torsor over the moduli stack of $G$-bundles $\Bun_G(E)$.

Let $\cP_G$ be a $G$-bundle and $\theta,\theta'$ two framings. 
Then there exists a unique $g\in G$ such that $\theta' = g\cdot\theta$, more precisely such that the following diagram commutes
\[ 
\xymatrix{
	\cP_G|_{x_0}\ar[r]^\theta \ar[dr]_{\theta'} & G \ar[d]^{g\cdot }\\
	& G}
\]

For $(\cP_G,\theta)$ a framed $G$-bundle  we have an induced map $i_\theta\colon\Aut(\cP_G)\to G$ defined by restricting the automorphism to the fiber $\cP_G|_{x_0}$ and using the basic fact recalled above that any two framings differ by an element of $G$. 
If $(\cP_G,\theta')$ is another framing on $\cP_G$, then the induced map $i_{\theta'}\colon \Aut(\cP_G)\to G$ satisfies $i_{\theta'} = gi_\theta g\inv$, where
$g\in G$ is such that  $\theta'=g\cdot\theta$.

In the case that interests us we have moreover
\begin{lemma}\label{L:automorphisms framed G bundle land inside G}
	Let $(\cP_G,\theta)$ be a degree $0$, semistable, framed $G$-bundle over an elliptic curve $E$. 
	Then the induced morphism $\Aut(\cP_G)\to G$ is injective.
\end{lemma}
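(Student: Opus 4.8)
The plan is to reduce the claim to a vanishing statement for global sections of a semistable vector bundle of negative slope, by passing to an associated vector bundle along a faithful representation.

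Concretely, fix a faithful representation $\rho\colon G\hookrightarrow\GL(V)$ and set $\cV:=V_{\cP_G}$, the associated vector bundle. Since $\rho$ is a closed immersion and $\cP_G\to E$ is faithfully flat, the induced morphism of automorphism group schemes over $E$, namely $\underline{\Aut}(\cP_G)=\cP_G\times^G G\hookrightarrow\cP_G\times^G\GL(V)\cong\underline{\Aut}(\cV)$, is again a closed immersion (one checks this after the faithfully flat base change $\cP_G\to E$, where it becomes $\id\times\rho\colon\cP_G\times G\hookrightarrow\cP_G\times\GL(V)$). Taking global sections then gives an injection $\Aut(\cP_G)\hookrightarrow\Aut(\cV)$. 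The framing $\theta$ induces a trivialization $\bar\theta\colon\cV_{x_0}\xrightarrow{\ \sim\ }V$, and unwinding the definition of $i_\theta$ shows that $i_\theta$ is the restriction, along $\Aut(\cP_G)\hookrightarrow\Aut(\cV)$ and $\rho\colon G\hookrightarrow\GL(V)$, of the corresponding evaluation map $i_{\bar\theta}\colon\Aut(\cV)\to\GL(\cV_{x_0})=\GL(V)$. Hence it suffices to prove that $i_{\bar\theta}$ is injective; in fact it is enough to show that the linear evaluation-at-$x_0$ map $H^0(E,\cV^\vee\otimes\cV)\to\End(\cV_{x_0})$ is injective, as $i_{\bar\theta}$ is the restriction of this map to automorphisms.

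So let $f\colon\cV\to\cV$ be a morphism with $f|_{x_0}=0$. Then $f$ factors through the inclusion $\cV(-x_0)\hookrightarrow\cV$, giving an element of $\Hom(\cV,\cV(-x_0))$. Now $\cP_G$ is degree-$0$ and semistable, so by \cite[Thm 2.1, Cor 1.1]{Sun_sstab} (recalled in \cref{r:semistability associated}) the associated bundle $\cV$ is semistable, and since $\cP_G\in\Bun^0_G(E)$ the induced $\GL(V)$-bundle lies in the degree-$0$ component, i.e. $\deg\cV=0$. Thus $\cV$ is semistable of slope $0$ and $\cV(-x_0)$ is semistable of slope $-1$. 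A nonzero morphism $A\to B$ of semistable bundles forces $\mu(A)\le\mu(B)$, while here $\mu(\cV)=0>-1=\mu(\cV(-x_0))$; therefore $\Hom(\cV,\cV(-x_0))=0$, so $f=0$. Hence the evaluation map, and therefore $i_\theta$, is injective.

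The only step that is not purely formal is the semistability of the associated bundle $\cV$ over $E$: over an arbitrary smooth curve in positive characteristic this can fail, and it is exactly here that the hypothesis on $E$ enters, through \cite{Sun_sstab}. Everything else — the closed-immersion property for automorphism group schemes, the factorization through $\cV(-x_0)$, and the slope inequality for morphisms of semistable bundles — is standard.
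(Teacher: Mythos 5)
Your proof is correct and follows essentially the same route as the paper: reduce to the associated vector bundle $\cV=V_{\cP_G}$ of a faithful representation (using \cite[Thm 2.1, Cor 1.1]{Sun_sstab} for semistability of $\cV$, as recalled in \cref{r:semistability associated}), then kill any endomorphism vanishing at $x_0$ by a semistability argument on the degree $0$ bundle $\cV$. The only differences are cosmetic: the paper deduces the vector bundle case from the constant-rank lemma (\cref{l:semistable lemma}) applied to $\psi-\Id$, whereas you twist by $\cO(-x_0)$ and use the slope vanishing $\Hom(\cV,\cV(-x_0))=0$ --- the same computation --- and in positive characteristic you should, as the paper does, take the faithful representation to be one (e.g.\ a highest weight representation, with no Frobenius twists) for which Sun's theorem actually guarantees semistability of the associated bundle.
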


To prove \cref{L:automorphisms framed G bundle land inside G} we will make use of a property of degree $0$ semistable vector bundles that holds in any genus. 
Recall that for the purposes of this paper, a line bundle on a singular curve is defined to be semistable of degree $0$ if and only if its pullback to the normalization is semistable of degree $0$.
\begin{lemma}\label{l:semistable lemma}
	Let $X$ be a projective curve, and $\cV$, $\cW$ semistable vector bundles of degree $0$. Then any map $f:\cV \to \cW$ is of constant rank.
\end{lemma}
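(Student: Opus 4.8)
The plan is to reduce everything to line bundles. First I would factor the map $f\colon\cV\to\cW$ through its image in the usual way: let $\cI\subseteq\cW$ be the image subsheaf, which is a coherent subsheaf of the torsion-free sheaf $\cW$, hence torsion-free, hence (on a curve) a vector bundle. Then $f$ factors as $\cV\twoheadrightarrow\cI\hookrightarrow\cW$, and the rank of $f$ at a point is just the rank of $\cI$ there; so it suffices to show $\cI$ has \emph{locally constant} (equivalently, constant, since a curve has connected components we can treat separately) rank, i.e. that $\cI$ is a \emph{subbundle} of $\cW$ rather than just a subsheaf. Equivalently, I want $\cW/\cI$ to be torsion-free. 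Note that $\cI$ is a quotient of $\cV$, so if $\cV$ is semistable of degree $0$ then every quotient of $\cV$ has non-negative slope, whence $\deg\cI\ge 0$; and $\cI$ is a subsheaf of $\cW$, so since $\cW$ is semistable of degree $0$, any sub\emph{bundle} has non-positive slope — but I only directly know $\deg\cI\le 0$ after saturating, so some care is needed here.

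The clean way to run this is via exterior powers and determinants. Let $r=\rk\cI$. The surjection $\cV\to\cI$ gives a surjection $\wedge^r\cV\to\wedge^r\cI=\det\cI$ onto a line bundle, and since $\wedge^r\cV$ is semistable of degree $0$ (exterior powers of semistable degree-$0$ bundles are again semistable of degree $0$ — this is standard in any genus, e.g. by the tensor-product theorem or a direct slope argument), we get $\deg(\det\cI)\ge 0$. On the other hand $\det\cI$ sits inside $\det$ of the saturation $\overline{\cI}$ of $\cI$ in $\cW$ via the inclusion $\det\cI\hookrightarrow\det\overline{\cI}$ (they agree away from finitely many points), and $\overline{\cI}$ is a subbundle of the semistable degree-$0$ bundle $\cW$, so $\deg\overline{\cI}\le 0$, hence $\deg(\det\overline{\cI})\le 0$. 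Combining, $\deg(\det\cI)\ge 0\ge\deg(\det\overline{\cI})\ge\deg(\det\cI)$, forcing $\det\cI=\det\overline{\cI}$, i.e. $\cI=\overline{\cI}$, so $\cI$ is already saturated and hence a subbundle. This gives the constant rank. One subtlety: "semistable" for us on a singular curve $X$ means the pullback to the normalization $\widetilde X$ is semistable of degree $0$; but pullback along $\widetilde X\to X$ is exact, preserves ranks fibrewise over the smooth locus, and constancy of rank can be checked after this pullback (a map $f$ has constant rank iff $f$ restricted to each component of $\widetilde X$ has constant rank, since $X$ is irreducible of genus $1$ here, and in any case the statement is really only used when $X$ is irreducible), so it is enough to treat the smooth case, where the determinant argument above applies verbatim.

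The only genuine technical point — and the one I'd expect to be the main (minor) obstacle — is the assertion that exterior powers (equivalently, that quotients and the determinant-of-saturation comparison) behave well; in characteristic $p$ one cannot blithely invoke "tensor products of semistable bundles are semistable", so I would instead argue directly: for the \emph{quotient} $\cI$ of the semistable degree-$0$ bundle $\cV$ one only needs $\mu(\cI)\ge 0$, which is immediate from the definition of semistability applied to the kernel subbundle; and for the \emph{saturation} $\overline\cI\subseteq\cW$ one only needs $\mu(\overline\cI)\le 0$, again immediate. So in fact exterior powers can be avoided entirely: from $\deg\cI\ge 0$, $\deg\overline\cI\le 0$, and $\deg\cI\le\deg\overline\cI$ (the latter because $\cI\subseteq\overline\cI$ with the quotient a torsion sheaf of non-negative length), we get $\deg\cI=\deg\overline\cI$ and the torsion quotient $\overline\cI/\cI$ has length $0$, hence vanishes, hence $\cI$ is saturated and therefore a subbundle, giving constant rank. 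I would present it in this determinant-free form to keep the characteristic-$p$ case clean, remarking that the degree-$0$ semistability of $\cV$ and $\cW$ is used only through these two slope inequalities.
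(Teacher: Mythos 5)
Your argument is correct and is essentially the paper's own proof: reduce to the smooth case via the normalization, then show the image subsheaf is saturated in $\cW$ by squeezing its degree between the lower bound $0$ coming from semistability of $\cV$ (applied to the kernel) and the upper bound $0$ coming from semistability of $\cW$ (applied to the saturation), so the torsion of the cokernel must vanish. The determinant/exterior-power detour in your first paragraph is unnecessary, as you yourself observe, and the final determinant-free version is exactly the slope count the paper uses.
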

\begin{proof}
	First let us note that we may reduce to the case when $X$ is smooth. 
	Indeed, given such a map $f:\cV \to \cW$ on $X$, we may pullback to a map $\widetilde{f}:\widetilde{\cV} \to \widetilde{\cW}$ on the normalization $\eta:\widetilde{X} \to X$, and restricting to the fiber of $\widetilde{f}$ at $\widetilde{x} \in \widetilde{X}$ identifies with the fiber of $\eta$ at $x=\eta(x)$.
	
	Now suppose that $X$ is smooth. 
	Then $\ker(f)$ and $\mathrm{im}(f)$ must both have non-positive degree by semistability, and as $\cV$ has degree $0$, we must have that both $\ker(f)$ and $\mathrm{im}(f)$ have degree $0$. 
	Now, if $\mathrm{coker}(f)$ had torsion then its preimage would be a positive degree subbundle in $\cV$ contradicting semistability. Thus we must have that $f$ is of constant rank as required. 
\end{proof}

\begin{proof}[Proof of \cref{L:automorphisms framed G bundle land inside G}]
	Let us first start with $G=\GL_n$, so $\cP_G$ is equivalent to a semistable vector bundle, say $\cV$, of degree $0$. 
	
	We'll show that restriction to the fiber at $x_0$ gives an inclusion $\Aut(\cV)\hookrightarrow \GL_n$. 
	Suppose there is an automorphism $\psi$ in the kernel. 
	Then $\psi-\Id$ is an endomorphism of $\cV$ which has a zero at $x_0$. Thus by \cref{l:semistable lemma}, $\psi-\Id$ must be identically zero as required.
	
	Back to the general case. 
	Consider $G\hookrightarrow \GL(V)$ a faithful highest weight representation. 
	Then we know from \cite[Cor 1.1, Thm 2.1]{Sun_sstab} that $V_\cP$ is a semistable, degree 0 vector bundle. 
	
	Therefore by the above, its automorphisms lie inside $\GL(V)$. 
	
	We have the following commutative diagram 
	
	\[ 
	\xymatrix{
		\Aut(\cP) \ar[r]^-{=} & \Hom_{G\mhyphen\equi}(\cP,G)\ar@{^(->}[r] \ar[d] &  \Hom_{G\mhyphen\equi}(\cP,\GL(V))\ar@{^(->}[d] \ar[r]^-{=} & \Aut(V_\cP)\\
		& G\ar@{^(->}[r] & \GL(V) &}.
	\]
	From the injectivity of the three marked maps we deduce that the first vertical morphism is  injective as well, which is what we wanted.
\end{proof}
\begin{remark}
	\cref{L:automorphisms framed G bundle land inside G} is also true for higher genus curves.
	Given a semistable $G$-bundle $\cP$ of degree $0$ one needs to pick up a faithful representation $V$ of $G$ such that $V_\cP$ is semistable.
	For example, one of the fundamental representations will do.
	It is known that $V_\cP$ can become unstable (in characteristic $p$) only if $V$ contains a Frobenius twist and this ensures the semistability of $V_\cP$ for this choice of $V$.
	However, we'll never use this result and so we don't provide the details.
\end{remark}

When $(E,x_0)$ is a pointed, irreducible projective curve of arithmetic genus 1 we denote by $\GE$ the moduli stack of degree 0, semistable framed $G$-bundles over $E$ where the framing is at $x_0\in E$.
\cref{L:automorphisms framed G bundle land inside G} shows that $\GE$ is a variety that is a $G$-torsor over the stack $\uGE$.

Let now $H\le G$ be a closed subgroup of $G$ and consider the induction map from $H$-bundles to $G$-bundles.
First we show that the induction preserves semistability:

\begin{lemma}\label{L:induced of semistable is semistable}
	The map $\Bun_H^0(E)\to \Bun_G^0(E)$ sends semistable bundles to semistable bundles.
\end{lemma}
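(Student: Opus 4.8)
The plan is to reduce semistability of the induced $G$-bundle to semistability of its associated vector bundles, using the criterion recorded in \cref{r:semistability associated} (based on \cite{Sun_sstab}), and then observe that those associated vector bundles are themselves associated to the original $H$-bundle.

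First dispose of the singular cases. If $E$ is nodal or cuspidal, then by our working definition a degree $0$ semistable $H$-bundle $\cP_H$ is one whose pullback to the normalization $\bP^1$ is trivial. Since induction along $\rho\colon H\to G$ commutes with pullback and sends the trivial $H$-bundle to the trivial $G$-bundle, the bundle $\cP_G := \cP_H\times^H G$ pulls back to the trivial $G$-bundle on $\bP^1$, hence is semistable. (This also shows the degree is preserved; in general $\pi_1(H)\to\pi_1(G)$ sends $0$ to $0$.)

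Now let $E$ be an elliptic curve and $\cP_H\in\Bun^0_H(E)$ semistable, with $\cP_G=\cP_H\times^H G\in\Bun^0_G(E)$. By \cref{r:semistability associated}, $\cP_G$ is semistable if and only if the associated bundle $V_{\cP_G}$ is a semistable vector bundle for every (equivalently, every highest weight) finite-dimensional representation $V$ of $G$. There is a canonical isomorphism $V_{\cP_G}\cong (V|_H)_{\cP_H}$, where $V|_H$ is the restriction of $V$ along $H\hookrightarrow G$, so it suffices to prove that $W_{\cP_H}$ is semistable for every finite-dimensional representation $W$ of $H$; note $W_{\cP_H}$ automatically has degree $0$ since $(\det W)_{\cP_H}\in\Pic^0(E)$. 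Applying \cref{r:semistability associated} in the other direction to $H$, semistability of $\cP_H$ gives that $W'_{\cP_H}$ is semistable of degree $0$ for every highest weight (in particular every irreducible) representation $W'$ of $H$. An arbitrary $W$ has a composition series with irreducible subquotients $L_i$, so $W_{\cP_H}$ is an iterated extension of the semistable degree $0$ bundles $(L_i)_{\cP_H}$; since on any projective curve an extension of semistable bundles of slope $\mu$ is again semistable of slope $\mu$, we conclude $W_{\cP_H}$ is semistable. Taking $W=V|_H$ completes the proof.

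The only delicate point is the interface between the two uses of the semistability criterion: we invoke it for $G$ to build a semistable $G$-bundle out of semistable vector bundles, and for $H$ to produce semistable vector bundles from a semistable $H$-bundle, and in the latter step the restriction $V|_H$ is generally not irreducible. This gap is bridged by the elementary fact that slope-$0$ semistability is closed under extensions (and direct sums), so no input beyond \cite{Sun_sstab} and \cite[Thm 3.3.1]{FriedMorgIII} is required; the argument is short.
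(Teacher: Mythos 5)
Your proof is correct and follows essentially the same route as the paper: identify $V_{\cP_G}$ with $(V|_H)_{\cP_H}$ and invoke the associated-vector-bundle criterion of \cref{r:semistability associated} on both sides. The paper's own proof is a three-line version of this that silently passes from highest-weight representations of $H$ to arbitrary ones; your composition-series/extension-closure argument (and your separate, trivial treatment of the nodal and cuspidal cases) just makes explicit what the paper leaves implicit.
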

\begin{proof}
	Let $\cP$ be a semistable $H$-bundle of degree $0$.
	In genus 1 we can use the definition of semistability through associated vector bundles (see \cref{r:semistability associated}) and this simplifies the argument.
	
	Let $V$ be a representation of $G$ and restrict it to $H$. 
	Then $(\cP\timesH G)\timesG V = \cP\timesH V$ is a semistable vector bundle of degree $0$.	
	Hence $\cP\timesH G$ is a semistable $G$-bundle of degree $0$.	
\end{proof}
\begin{remark}
	Note that the above lemma is false without assuming degree $0$.
	For example, for $T=\bG_m^2\hookrightarrow \GL(2)$ we have that $(\cO(-1), \cO(1))$ is a semistable $T$-bundle of degree $(-1,1)$ but the induced vector bundle $\cO(-1)\oplus\cO(1)$ is not semistable even if of degree $0$.
\end{remark}
The following proposition, used silently in the sequel, is conceptually important in understanding the partition of the moduli stack $\GE$ in terms of subsgroups and it paves the way to the Jordan--Chevalley decomposition as formulated in \cref{thm:jordan partition intro}.

\begin{proposition}\label{P:framed H semistable injects in G semistable}
	For any closed subgroup $H\le G$ the induction map between moduli spaces of framed bundles $\HE\to \GE$ is a closed embedding.
\end{proposition}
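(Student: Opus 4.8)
The plan is to prove that $\HE\to\GE$ is a proper monomorphism; a proper monomorphism is automatically a closed immersion (being proper and quasi-finite it is finite, and a finite monomorphism is a closed immersion), so this suffices.

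\emph{It is a monomorphism.} By a theorem of Chevalley there is a representation $W$ of $G$ and a line $L\subseteq W$ with $H=\Stab_G(L)$; write $\chi\colon H\to\Gm$ for the character by which $H$ acts on $L$, so that $G\cdot[L]\subseteq\bP(W)$ is a locally closed $G$-stable subvariety isomorphic to $G/H$. If $\cP_G$ is a semistable degree $0$ $G$-bundle on $E$ then $W_{\cP_G}$ is semistable of degree $0$ (by \cite[Thm 2.1, Cor 1.1]{Sun_sstab} in the elliptic case, and by definition in the nodal/cuspidal case), and a reduction of $\cP_G$ to a degree $0$ $H$-bundle $\cP_H$ is the same datum as a section of $\bP(W_{\cP_G})$ landing in the relative $G$-orbit of $[L]$, i.e.\ as a degree $0$ line subbundle $\cL_{\cP_H}:=\chi_*\cP_H\subseteq W_{\cP_G}$. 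Suppose now two framed reductions $\cP_H,\cP_H'$ induce the same framed $G$-bundle $(\cP_G,\theta)$. Their line subbundles $\cL_{\cP_H},\cL_{\cP_H'}\subseteq W_{\cP_G}$ have degree $0$ and the same fibre $L$ at $x_0$ (via $\theta$). If they were distinct, the composite $\cL_{\cP_H}\hookrightarrow W_{\cP_G}\twoheadrightarrow W_{\cP_G}/\cL_{\cP_H'}$ would be a nonzero morphism of semistable degree $0$ bundles — the target is semistable of degree $0$, being the quotient of a semistable degree $0$ bundle by a degree $0$ subbundle — vanishing at $x_0$, contradicting \cref{l:semistable lemma}. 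Hence $\cL_{\cP_H}=\cL_{\cP_H'}$ and the reductions coincide. Running this over $E\times S$ for an arbitrary test scheme $S$ (with the evident $S$-relative strengthening of \cref{l:semistable lemma}) shows $\HE\to\GE$ is a monomorphism.

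\emph{It is proper.} We verify the valuative criterion, uniqueness of the lift being supplied by the previous step. Let $R$ be a discrete valuation ring over $k$ with fraction field $K$, let $\cP_G$ over $E_R=E\times\Spec R$ be the family of framed semistable degree $0$ $G$-bundles classified by an $R$-point of $\GE$, and suppose given a lift over $K$: a framed semistable degree $0$ reduction $\cP_{H,K}$ over $E_K$. For $V\in\Rep(G)$ and an $H$-subrepresentation $U\subseteq V|_H$, the induced subbundle $U_{\cP_{H,K}}\subseteq V_{\cP_G}|_{E_K}$ has degree $0$; let $U_{\cP_H}\subseteq V_{\cP_G}$ be the saturation of its closure over $E_R$, which — $E_R$ being regular of dimension $2$ in the elliptic case — is a subbundle of rank $\dim U$, of degree $0$ on every fibre over $\Spec R$. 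The assignment $U\mapsto U_{\cP_H}$ defines an exact faithful tensor functor from $\Rep(H)$ to vector bundles on $E_R$ that are semistable of degree $0$ on each fibre, lifting the fibre functor of $\cP_G$ and compatible with framings; by Tannakian reconstruction this is precisely an $R$-point of $\HE$ restricting to $\cP_{H,K}$. The mechanism is that each relation defining the functor — tensor compatibility $U_{\cP_H}\otimes U'_{\cP_H}=(U\otimes U')_{\cP_H}$, exactness, compatibility of framings at $x_0$ — is an equality of coherent subsheaves of a fixed bundle on $E_R$ which holds on the schematically dense open $E_K$; since all the sheaves involved are flat over $R$, the relation propagates over $E_R$, the degree $0$ hypothesis forcing the potential discrepancies (torsion supported on the special fibre) to have length $0$, hence to vanish. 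Semistability over the special fibre is then automatic: $\fh_{\cP_H}$ restricts there to a degree $0$ subbundle of the semistable degree $0$ bundle $\fg_{\cP_G}$, and any degree $0$ subbundle of a semistable degree $0$ bundle on a curve is itself semistable of degree $0$. In the nodal and cuspidal cases one may instead appeal to \cref{P:Fried-Morg-nodal-cuspidal}: for $H$ reductive the map $\HE\to\GE$ becomes $\fh\hookrightarrow\fg$, respectively $H\hookrightarrow G$, which is visibly a closed immersion.

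The crux — and the step needing the most care — is the assertion that the saturated-closure construction really assembles into an honest $H$-bundle over all of $E_R$, with no degeneration of the reduction (into the boundary of a compactification of $G/H$) over part of the special fibre. This is exactly where the three hypotheses interact: the framing pins the reduction at $x_0$ so that the locus of possible degeneration is a proper, hence finite, closed subset of the special fibre; semistability of $\cP_G$ together with the degree $0$ condition prevents any subbundle from gaining positive degree in the limit; and these, combined with flatness over $R$, force all the torsion discrepancies above to vanish. Granting this, the proof concludes formally.
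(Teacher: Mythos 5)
Your injectivity argument is essentially the paper's: Chevalley gives a line $L\subseteq W$ with $H=\Stab_G(L)$, a framed reduction becomes a degree-$0$ line subbundle of $W_{\cP_G}$ pinned at $x_0$, and \cref{l:semistable lemma} (equivalently \cref{L:line bundles inside semistable}) forces two such subbundles to coincide. That half is fine. The problem is the properness half, and it sits exactly where you say it does: ``the crux \dots Granting this, the proof concludes formally'' is not a proof of the crux, and the crux is the whole difficulty. Extending each $U_{\cP_{H,K}}$ by saturated closure produces subbundles of the right rank and degree over all of $E_R$, but nothing in your argument shows that over the finitely many ``bad'' points of the special fibre these subbundles cut out a point of $G/H$ rather than of its boundary $\overline{G/H}\setminus(G/H)$ --- equivalently, that the collection of saturations is the fibre functor of an $H$-reduction rather than a degeneration of one. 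The framing only controls the single point $x_0$ of the special fibre; semistability and degree $0$ only control degrees of subsheaves, not which $G$-orbit of flags/lines the fibrewise data lands in; and flatness over $R$ does not propagate a closed condition from $E_K$ to $E_R$ when the exceptional locus is allowed to be a finite set of points (codimension $2$ in $E_R$). Your appeal to ``torsion discrepancies of length $0$'' addresses equality of saturated subsheaves, which is not the issue.

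The paper closes precisely this gap by strengthening Chevalley's theorem: in \cref{L:embed G/H with compl Cartier} (via normalizing the blow-up of $\overline{G/H}$ along the boundary and re-embedding by Sumihiro's theorem) one arranges that $\overline{G/H}\setminus(G/H)$ is the support of a \emph{Cartier divisor}. Then the locus where the extended section of $\bP(\cV_S)$ leaves $\cP_{G,S}/H$ is a divisor in $E_S$ contained in the special fibre, hence a multiple of $E_Z$ or empty; since the framing forces the section to lie in $G/H$ at $(x_0,z)\in E_Z$, it is empty. Without some device of this kind (a purity/codimension argument making the bad locus divisorial), the valuative criterion does not close. A secondary, lesser issue: your Tannakian packaging would additionally require defining the functor on all of $\Rep_k(H)$ (not just on subrepresentations of restricted $G$-representations) and verifying exactness and tensor-compatibility for a possibly non-reductive $H$, none of which is routine; but even granting all of that, the degeneration-into-the-boundary problem above remains and is the genuine gap.
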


To prove \cref{P:framed H semistable injects in G semistable}, we will make use of the following corollary of \cref{l:semistable lemma} and of an additional technical lemma on equivariant embeddings:
\begin{lemma}\label{L:line bundles inside semistable}
	Suppose $X$ is a projective curve and $\cV$ a semistable vector bundle of degree $0$.
	\begin{enumerate}
		\item Suppose $\cL$ is a degree $0$ line bundle on $X$. 
				Then any injective map of sheaves $\cL \to \cV$ is necessarily the inclusion of a subbundle.
		\item Suppose $\cL_1,\cL_2$ are line sub-bundles of $\cV$ such that $\cL_1|_x = \cL_2|_x$ for some $x\in X$.
				Then $\cL_1 = \cL_2$.
	\end{enumerate}
\end{lemma}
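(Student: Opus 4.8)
The plan is to derive both statements from \cref{l:semistable lemma}, using the elementary fact that a line bundle of degree $0$ is semistable of degree $0$ (the semistability condition being vacuous for a line bundle).

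\emph{Part (1).} Given an injective map of sheaves $f\colon\cL\to\cV$, both $\cL$ and $\cV$ are semistable of degree $0$, so \cref{l:semistable lemma} shows that $f$ has constant rank. As $f$ is injective it has rank $1$ at the generic point, hence rank $1$ everywhere; consequently $\Coker(f)$ has fibre-dimension constantly equal to $\rk\cV-1$ and is therefore locally free on the reduced curve $X$. This is precisely the assertion that $\Image(f)$ is a subbundle of $\cV$, and the surjection $\cL\to\Image(f)$ is then an isomorphism since it is also injective. (Alternatively, reducing to $X$ smooth by pullback along the normalization exactly as in the proof of \cref{l:semistable lemma}, one may argue by saturation: the saturation $\cL'$ of $\Image(f)$ in $\cV$ is a line subbundle with $0=\deg\cL\le\deg\cL'$, while semistability of $\cV$ gives $\deg\cL'=\mu(\cL')\le\mu(\cV)=0$, so $\cL\xrightarrow{\sim}\cL'=\Image(f)$.)

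\emph{Part (2).} Here I will use that $\cL_1$ and $\cL_2$ have degree $0$, which holds in the situations where the lemma is applied — and is necessary, since without a degree restriction the statement already fails on $\bP^1$ (e.g.\ the images of the maps $(x,y)$ and $(x,x+y)\colon\cO(-1)\to\cO^{\oplus 2}$ are distinct line subbundles that coincide over $[0{:}1]$). Granting $\deg\cL_1=\deg\cL_2=0$, the quotient $\cV/\cL_2$ is locally free of degree $0$ and again semistable: every quotient bundle of $\cV/\cL_2$ is also a quotient bundle of $\cV$, hence has slope $\ge\mu(\cV)=0$. Now consider the composite $g\colon\cL_1\hookrightarrow\cV\twoheadrightarrow\cV/\cL_2$. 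By \cref{l:semistable lemma} it has constant rank, while the hypothesis $\cL_1|_x=\cL_2|_x$ forces the map induced by $g$ on fibres over $x$ to vanish (its image is $\cL_1|_x$ reduced modulo $\cL_2|_x$). Thus $g$ has rank $0$ at $x$, hence rank $0$ everywhere, i.e.\ $g=0$; so $\cL_1\subseteq\cL_2$ inside $\cV$, and since they have the same degree the inclusion is an equality.

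The content is genuinely a short corollary of \cref{l:semistable lemma}, so I do not expect a real obstacle. The two points to watch are: the degree-$0$ hypothesis on the line subbundles in part (2), which one should confirm is in force at each invocation of the lemma; and, if one prefers to avoid using that $X$ is reduced in part (1), the harmless reduction to the smooth case indicated above.
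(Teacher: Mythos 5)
Your proof is correct and is, like the paper's, a direct corollary of \cref{l:semistable lemma}; part (1) is argued exactly as the paper intends ("follows immediately" from the constant-rank lemma). For part (2) the paper instead applies the constant-rank lemma to the single map $\cL_1\oplus\cL_2\to\cV$, whose rank at $x$ is $1$ by hypothesis and hence $1$ everywhere, forcing the two image lines to agree fibrewise; you apply it to the composite $\cL_1\hookrightarrow\cV\twoheadrightarrow\cV/\cL_2$. These are interchangeable, the only difference being whether one verifies semistability of $\cL_1\oplus\cL_2$ or of the quotient $\cV/\cL_2$. The genuinely useful content of your write-up is the observation that part (2) is false as literally stated: your example of the two embeddings $(x,y)$ and $(x,x+y)$ of $\cO(-1)$ into $\cO^{\oplus 2}$ on $\bP^1$ is a valid counterexample (distinct line subbundles with the same fibre over $[0{:}1]$), so the hypothesis $\deg\cL_1=\deg\cL_2=0$ must be added. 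The paper's own one-line proof tacitly uses it as well, since $\cL_1\oplus\cL_2$ must be semistable of degree $0$ for \cref{l:semistable lemma} to apply. You are also right that this hypothesis holds at both invocations of the lemma in the proof of \cref{P:framed H semistable injects in G semistable}, where the line bundles come from degree-$0$ semistable $H$-reductions, so nothing downstream is affected.
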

\begin{proof}
	Part 1 follows immediately from \cref{l:semistable lemma}, and part 2 follows by considering the canonical morphism $\cL_1 \oplus \cL_2 \to \cV$.
\end{proof}

\begin{lemma}\label{L:embed G/H with compl Cartier}\footnote{We thank M. Brion for providing us the proof.}
	Let $H\le G$ be a closed subgroup.
	Then we can find a representation $V$ of $G$ and a line $L\subset V$ such that
	\begin{enumerate}
		\item $\Stab_G(L)=H$,
		\item $G/H\hookrightarrow \bP(V)$ equivariantly,
		\item the complement $\overline{G/H} \setminus (G/H)$ is empty or the support of a Cartier divisor.
	\end{enumerate}	
\end{lemma}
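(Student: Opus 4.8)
The plan is to read off parts (1) and (2) from Chevalley's theorem essentially for free, and then to \emph{engineer} (3) by replacing the resulting compactification with a better one. First I would apply Chevalley's theorem: there is a finite-dimensional representation $W$ of $G$ and a line $L_0\subseteq W$ with $\Stab_G(L_0)=H$. The orbit $G\cdot[L_0]\subseteq\bP(W)$ is then a locally closed subvariety $G$-equivariantly isomorphic to $G/H$, which already realizes (1) and (2) with $V=W$. Set $X:=\overline{G/H}\subseteq\bP(W)$ and $B:=X\setminus(G/H)$, a closed $G$-stable subset not meeting the dense orbit. If $B=\emptyset$ we are done, so assume $B\neq\emptyset$. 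The difficulty is that $B$ need not be a divisor: already for $G=\SL_2$, $H=\Ga$ one has $G/H\cong\bA^2\setminus\{0\}$, and in the tautological compactification $\bP^2$ the boundary is a line together with the origin, which has a codimension-two component. So the real content of the lemma is to produce a better $G$-equivariant compactification of $G/H$.

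Next I would blow up the boundary. Let $\pi\colon X'\to X$ be the blow-up of $X$ along $B$ with its reduced scheme structure. Since $B$ is $G$-stable, $X'$ is again a projective $G$-variety, $\pi$ is $G$-equivariant and an isomorphism over $X\setminus B=G/H$, and the exceptional divisor $E\subseteq X'$ is an effective Cartier divisor with $\pi^{-1}(B)=\Supp(E)$. Concretely, after raising generators of the ideal of $B$ to suitable powers and averaging over $G$, one can choose a $G$-stable subspace $U\subseteq H^0(\bP(W),\cO(d))$ whose base locus meets $X$ exactly in $B$, and take $X'$ to be the closure of the graph of the associated $G$-equivariant rational map $\bP(W)\dashrightarrow\bP(U^\vee)$, sitting inside $\bP(W)\times\bP(U^\vee)$. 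I would then normalize: the normalization $\nu\colon X''\to X'$ is again a projective $G$-variety and is an isomorphism over the smooth locus of $X'$, which contains $G/H$; so $X''$ is a normal projective $G$-variety with $G/H$ dense open, and its complement $X''\setminus(G/H)=\Supp(\nu^*E)$ is the support of a Cartier divisor (whose local equations are nonzero divisors since $X''$ is integral and $B\neq X$).

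Finally I would re-embed equivariantly: by Sumihiro's theorem the normal projective $G$-variety $X''$ carries a $G$-linearized ample line bundle, and a high enough power of it gives a $G$-equivariant closed immersion $X''\hookrightarrow\bP(V)$ for some representation $V$ of $G$. Let $L\subseteq V$ be the line corresponding to the image of the base point $eH\in G/H$. Since $G/H$ is dense in $X''$ and $X''$ is closed in $\bP(V)$, we get $\overline{G/H}=X''$ inside $\bP(V)$, and hence $\overline{G/H}\setminus(G/H)=\Supp(\nu^*E)$ is the support of a Cartier divisor, which is (3). Equivariance and injectivity of the immersion give $\Stab_G(L)=\Stab_G([L])$ equal to the stabilizer of the point $eH$ for the $G$-action on $X''$, which is $H$ because $G/H$ is open in $X''$; this is (1), and (2) is immediate since $G\cdot[L]$ is the image of $G/H$. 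The main obstacle is exactly this middle passage — blow up, normalize, re-embed — since the naive Chevalley embedding does not suffice and one needs the equivariant projectivity input of Sumihiro's theorem to transport the (intrinsically Cartier) boundary of $X''$ back into an honest embedding in projective space; the explicit linear-system construction above lets one bypass Sumihiro if one prefers, at the cost of checking directly that the boundary in $\bP(W\otimes U^\vee)$ equals the trace of the exceptional divisor.
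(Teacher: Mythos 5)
Your proposal is correct and follows essentially the same route as the paper's proof: Chevalley's theorem to get the initial equivariant embedding with the right stabilizer, then blow up the boundary and normalize so that the complement of $G/H$ becomes the support of a Cartier divisor, and finally Sumihiro's theorem to re-embed the resulting normal projective $G$-variety into the projectivization of a representation. The only cosmetic differences are your explicit handling of the $B=\emptyset$ case (the paper instead notes the parabolic case separately) and your optional linear-systems alternative to Sumihiro, neither of which changes the argument.
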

\begin{proof}
	By Chevalley's theorem we can find a representation $W$ and a line $L\subset W$ with properties (i), (ii) above.
	If $H$ is a parabolic subgroup then we're done.
	If not, put $X:=\overline{G/H}$ and denote by $Z$ the complement $X\setminus (G/H)$.
	Let $\tilde{X}$ be the normalization of the blow-up of $X$ along $Z$.
	The $G$-action on $X$ extends by universal properties to $\tilde{X}$ and the complement of $G/H$ in $\tilde{X}$ is, by construction, the support of a Cartier divisor.
	
	Now we use Sumihiro's theorem \cite{Sumi_equiv_emb} (or \cite[Theorem 5.3.3.]{Brion_linearization}) to embed $\tilde{X}\hookrightarrow \bP(V)$ equivariantly in the projectivization of a representation $V$ of $G$.
\end{proof}

\begin{proof}[Proof of \cref{P:framed H semistable injects in G semistable}]
	
By \cref{L:embed G/H with compl Cartier} we may pick a representation $V$ of $G$ and a line $L\subseteq V$ such that the morphism $g \mapsto g \cdot L$ defines an equivariant embedding in to the projective space $G/H \hookrightarrow \bP(V)$ and such that the complement of $G/H$ in its closure is the support of a Cartier divisor.

Now suppose $\cP_G$ is a $G$-bundle on $E$ and put $\cV:=V_{\cP_G}$. 
We have an embedding of associated bundles:
\[
\cP_G/H=\cP_G \times^G (G/H) \hookrightarrow \cP_G \times^G \bP(V) = \bP(\cV).
\]
The data of an $H$-reduction $\cP_H$ of $\cP_G$ is equivalent to a section $s:E\to \cP_G/H$ which we will consider as a section of the projective bundle $\bP(\cV)$ via the embedding above. In other words, an $H$-reduction corresponds to a certain line subbundle $\cL = \cP_H \times^H L \hookrightarrow \cV$. Note that if $\cP_G$ and $\cP_H$ are semistable and of degree $0$, then so are $\cV$ and $\cL$.

Let us first show that $\HE \to \GE$ is injective. Given a framed $G$-bundle $p=(\cP_G,\theta)$, we suppose it has two reductions to a framed $H$-bundle, corresponding to two line subbundles $\cL_1,\cL_2$ as explained above.  Under the framing isomorphism
\[
\cV|_{x_0} = \cP_G|_{x_0} \times^G V \cong G\times^G V = V
\]
we have that $\cL_1|_x$ and $\cL_2|_x$ both correspond to the line $L \subseteq V$. Thus, by \cref{L:line bundles inside semistable}, we must have $\cL_1= \cL_2$ as required.

Now let us show that $\HE \to \GE$ is proper. We use the valuative criterion of properness. Thus, let $S$ denote the spectrum of a discrete valuation ring, $U$ the spectrum of its generic point, and $Z$ the spectrum of its closed point, and let $E_S, E_U, E_Z$ be the base change of $E$ to $S$, $U$, and $Z$ respectively. 

Given a family $(\cP_{G,S}, \theta)$ of framed, degree $0$, semistable $G$-bundles on $E_S$ together with a compatibly framed $H$-reduction on $E_U$, we must show that it extends to a compatibly framed $H$-reduction on $E_S$.

As before, we have the associated vector bundle $\cV_S$ over $E_S$, and we record the data of the $H$-reduction as a line subbundle $\cL_U \hookrightarrow \cV_U$. 
We must show that
\begin{enumerate}
	\item $\cL_U$ extends to a line subbundle $\cL_S \subseteq \cV_S$ over $E_S$ and
	\item The corresponding section $E_S \to \bP(\cV_S)$ lands inside the subbundle $\cP_{G,S}/H$.
\end{enumerate}

For the first part, we note that the line subbundle extends to a sub\emph{sheaf} $\cL_S \to \cV_S$. Indeed, by the properness of $\bP(\cV_S) \to E_S$, we may extend the section $s: E_U \to \bP(\cV_U)$ over the generic point of $E_Z$ to define a line subbundle $\cL'$ on $E_S'$, where $E_S'$ is an open subset of $E_S$ whose complement is of codimension 2. This line subbundle is the restriction of a unique line bundle $\cL_S$ on $E_S$ (e.g. by taking the closure of a Weil divisor representing it), and the map $\cL' \to \cV_{S}|_{E_S'}$ necessarily extends over the codimension 2 locus. 

Now we observe that the restriction $\cL_Z$ of $\cL_S$ to $E_Z$ is a degree $0$ line bundle (as it deforms to the degree $0$ line bundle $\cL_U$) with a non-zero map to $\cV_Z$. It follows from \cref{L:line bundles inside semistable} that $\cL_Z \to \cV_Z$ must be a subbundle as required.

To prove the second part, remember that the complement of $G/H$ in $\overline{G/H}\subset \bP(V)$ is the support of a Cartier divisor or empty.

We will show that the section $s: E_S \to \bP(\cV_S)$ defining the line subbundle $\cL_S$ above lands inside $\cP_{G,S}/H$. 
If $G/H$ is closed in $\bP(V)$ then $\cP_{G,S}/H$ is closed in $\bP(\cV_S)$ and so the image of the section is contained entirely in it.

In case $G/H$ is not projective, notice that the complement of $\cP_{G,S}/H$ in its fiberwise closure is the support of a Cartier divisor;
thus the set of points $x\in E_S$ for which $s(x) \notin \cP_{G,S}/H$ is possibly a divisor $C$ in $E_S$. 
As $s(x) \in \cP_{G,S}/H$ for all $x\in E_U$, we must have that $C=E_Z$ (possibly with multiplicity). 
But now note that the framing on $\cP_{G,S}$ defines a trivialization 
\[
\bP(\cV_S)|_{\{x_0\}\times S} \cong \bP(V\otimes \cO_S)
\]
and we have that $s$ takes the constant value $[L]$ along the entire slice $\{x_0\}\times S$ of $E_S$. 
In particular, the value of $s$ is contained inside $\cP_{G,S}/H$ at at least one point in $E_Z=C$, and thus it must be so contained at every point, as required.
\end{proof}

\subsection{The coarse moduli space and the characteristic polynomial map}
The coarse moduli \emph{space} of degree 0, semistable $G$-bundles, which is none other than the GIT quotient $\GE\git G$, was identified by Laszlo \cite{Lasz} to be isomorphic to the GIT quotient $\TE\git W$. 
There is a natural $G$-invariant morphism from the framed moduli stack to the moduli space of $G$-bundles that we think of as the characteristic polynomial map (in analogy to Lie theory)
\begin{align*}
\chi = \chi_G\colon \GE \to \MGE.
\end{align*}

Notice that the $G$-invariance of $\chi$ is equivalent to $\chi$ factorizing through the moduli stack 
\[ 
\underline{\chi}\colon\uGE\to\MGE.
\]

The map $\chi$ maps a semisimple bundle $\cP$ into its equivalence class in $\TE\git W$. 
\begin{definition}\label{D:stack of unip bdls}
	The moduli stack/variety of unipotent $G$-bundles are defined to be the preimages of $\one\in\MGE$:
	 \begin{align*}
	 \uGE^\uni:=&\underline\chi\inv(\one), \\
	 \GE^\uni : = &\chi\inv(\one).
	 \end{align*}
\end{definition}

\section{The partition of semisimple bundles by Lusztig type}\label{sec:the-partition-of-semisimple-bundles-by-lusztig-type}

In this section we construct and study a certain locally closed partition of the coarse moduli space $\MGE$.

Recall that the points of the coarse moduli space $\MGE$ are in bijection with isomorphism classes of semisimple objects of $\uGE$. Let $\cP_G \in \uGE$ be a semisimple object and let $p \in \GE$ be a framed lift. Then the automorphism group $\Aut(\cP_G)$ is identified with the reductive subgroup $\Stab_G(p)$ of $G$. Different choices of framing define conjugate subgroups of $G$, and thus the conjugacy class of $\Stab_G(p)$ in $G$ is a well-defined invariant of the bundle $\cP_G$.

In this way, we may partition $\MGE$ according to the corresponding conjugacy class in $G$ of its automorphism group. It will be convenient to encode the data of $\Stab_G(p)$ in two stages:
\begin{itemize}
	\item The neutral component $H =\Stab_G(p)^\circ$, which is a connected reductive subgroup of $G$.
	\item The component group $\pi_0\Stab_G(p)$, which is a subgroup of the finite group $N_G(H)/H$.
\end{itemize}
The goal of this section is to study this partition of $\MGE$, and express it in combinatorial/root-theoretic terms.

\subsection{$E$-pseudo-Levi subgroups}\label{sec:e-pseudo-levi-subgroups}
Denote by $\cE$ the set of conjugacy classes of connected reductive subgroups of $G$ containing a maximal torus. 
Consider the map (of sets)
\[
h\colon \MGE \to \cE
\]
which takes the isomorphism class of a semisimple bundle $\cP_G$ to the $G$-conjugacy class of $\Stab_G(p)^\circ$, where $p$ is a framed lift of $\cP_G$. We denote by
\[
\MGE_{[H]} := h^{-1}([H])
\]
the fibres of the map $h$.

\begin{definition}\label{D:E-pseudo-Levi}
	We say that a connected reductive subgroup $H$ of $G$ is an \emph{$E$-pseudo-Levi subgroup} if it is of the form $\Stab_G(p)^\circ$ for some semisimple $p\in \GE$. We write $\cE_E \subset \cE$ for the set of conjugacy classes of $E$-pseudo-Levi subgroups of $E$.
\end{definition}

\begin{remark}
	As mentioned in the introduction (see also \cref{P:Fried-Morg-nodal-cuspidal}), if $E$ is cuspidal (respectively, nodal) then $G_E \cong \fg$ (respectively $\GE \cong G$). 
	Thus $E$-pseudo-Levi subgroups correspond to connected centralizers of semisimple elements of $\fg$ (respectively $G$). 
	These are precisely the Levi (respectively, pseudo-Levi) subgroups of $G$ (see \cref{sec:classification-of-elliptic-closed-subsets}).
\end{remark}

In \cref{SS:refined partial order on ell sets} we will give an alternative description of this partition, which allows us to understand the closure relations. 
In particular, at the end of \cref{sec:connecting-e-psuedo-levis-and-e-subsets-of-roots} we will establish the following result (which may also be deduced from the theory of Luna stratifications; see \cref{rem:luna}).

\begin{proposition}\label{prop:locally closed}
For each $E$-pseudo-Levi $H$, the subset $\MGE_{[H]}$ is a locally closed subvariety of $\MGE$.
\end{proposition}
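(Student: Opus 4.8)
The plan is to transport the question to the torus, where the partition is cut out by explicit equations among the homomorphisms $\alpha_\ast\colon\TE\to J(E)$, and then descend along the finite quotient map. By Laszlo's theorem $\MGE\cong\TE\git W = \TE/W$, the geometric quotient by the finite group $W$; the quotient map $\pi\colon\TE\to\MGE$ is then finite (in particular closed) and surjective, and for a $W$-stable subset $Z\subseteq\TE$ one has $\pi^{-1}(\pi(Z)) = Z$. So it suffices to produce two $W$-stable closed subsets $C,D\subseteq\TE$ with $\pi^{-1}(\MGE_{[H]}) = C\setminus D$: then, since a $\pi$-fibre meeting $C\setminus D$ is a $W$-orbit lying in $C$ and disjoint from $D$, we get $\MGE_{[H]} = \pi(C\setminus D) = \pi(C)\setminus\pi(D)$, which is locally closed because $\pi$ is closed.

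Fix an $E$-pseudo-Levi $H$; after conjugating we may assume $T\subseteq H$, and we write $\Sigma_H = \Phi(H,T)\subseteq\Phi$ for its root system. The first step is to invoke the structure-theoretic fact (recorded in the discussion around \cref{P:pseudo-Levi bijection closed subsets}) that for $p\in\TE\subseteq\GE$ the connected stabilizer $\Stab_G(p)^\circ$ is the reductive subgroup $M_{\Sigma_p}$ generated by $T$ and the root subgroups $U_\alpha$ with $\alpha\in\Sigma_p := \{\alpha\in\Phi : \alpha_\ast(p)=\one\}$, and that $H = M_{\Sigma_H}$. Two connected subgroups of $G$ containing $T$ are $G$-conjugate exactly when they are $N_G(T)$-conjugate (conjugate the common maximal torus $T$ back into the target), so $\Stab_G(p)^\circ$ is $G$-conjugate to $H$ precisely when $\Sigma_p\in W\cdot\Sigma_H$. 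Hence $\pi^{-1}(\MGE_{[H]}) = \{\,p\in\TE : \Sigma_p = w\Sigma_H\text{ for some }w\in W\,\}$.

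Next, for any $\Psi\subseteq\Phi$ the locus $\{p : \Sigma_p\supseteq\Psi\} = \bigcap_{\alpha\in\Psi}\ker(\alpha_\ast)$ is closed (a finite intersection of fibres of group homomorphisms over the identity section), and then $\{p : \Sigma_p\supsetneq\Psi\} = \bigcup_{\beta\notin\Psi}\big(\{\Sigma_p\supseteq\Psi\}\cap\ker\beta_\ast\big)$ is also closed. Put $C := \bigcup_{w\in W}\{p : \Sigma_p\supseteq w\Sigma_H\}$ and $D := \bigcup_{w\in W}\{p : \Sigma_p\supsetneq w\Sigma_H\}$; both are closed, and both are $W$-stable since $\Sigma_{u\cdot p} = u\Sigma_p$. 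The key elementary observation is that all the translates $w\Sigma_H$ have the same cardinality, so none of them strictly contains another; consequently $\Sigma_p\in W\cdot\Sigma_H$ if and only if $p\in C\setminus D$. This gives exactly the description of $\pi^{-1}(\MGE_{[H]})$ required above, and completes the argument.

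The only genuinely external input is the identification $\Stab_G(p)^\circ = M_{\Sigma_p}$ (with $H = M_{\Sigma_H}$), established elsewhere in the paper; everything afterwards is formal. The subtlety to keep an eye on — and the reason an argument working directly on $\MGE$ does not obviously succeed — is that a finite union of locally closed sets need not be locally closed; this is circumvented by passing to $\TE$, where the incomparability of the $W$-translates of $\Sigma_H$ makes the relevant union literally a difference of two closed sets. (As the authors note in \cref{rem:luna}, one can alternatively deduce the statement from the theory of Luna stratifications applied to $\TE\git W$.)
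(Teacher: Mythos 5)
Your argument is correct and is essentially the paper's own: the paper also reduces to $\TE$ via the commuting square relating $\kappa\colon\TE\to\cA_E$ and $h\colon\MGE\to\cE_E$ (using $\Stab_G(p)^\circ=G(\Sigma_p)$), cuts out the strata by the homomorphisms $\alpha_\ast$ (\cref{l:locally closed torus}), and descends along the finite quotient $\TE\to\TE\git W$. The only difference is that you spell out the descent step — in particular the observation that the union over the $W$-translates of $\Sigma_H$ is literally a difference of two closed $W$-stable sets because the translates are incomparable — which the paper leaves implicit.
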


We may record the finer partition according to the conjugacy class of the full automorphism group as follows. If $\cP_G$ is a representative of a point in $\MGE_{[H]}$ and $p$ a framed lift, then $\Stab_G(p) \subseteq N_G(H)$ (as every algebraic group normalizes its own neutral component). Thus the component group of $\Stab_G(p)$ is naturally a subgroup of the relative Weyl group:
\[
\pi_0 \Stab_G(p) = \Stab_G(p)/H \subseteq W_{G,H} := N_G(H)/H.
\]
Given a subgroup $A$ of $W_{G,H}$, we write $\MGE_{[H,A]}$ for the subset of $\MGE_{[H]}$ corresponding to semisimple bundles $\cP_G$ such that the component group of $\Aut(\cP_G)$ is identified with a conjugate of $A$ as above. (The pair $[H,A]$ is defined up to simultaneous conjugation in $G$.)

In particular, the subset $\MGE_{[H,1]}$ consists of isomorphism classes of semisimple bundles $\cP_G$ whose full automorphism group $\Aut(\cP_G)$ is identified with the connected group $H$.

\begin{remark}\label{rem:luna}
Suppose we are given a $G$-variety $Y$ such that every point has a $G$-invariant affine chart. Then we have the categorical quotient $Y\git G$ whose points are in bijection with closed $G$-orbits on $Y$. We may partition $Y\git G$ according to the conjugacy class in $G$ of the (necessarily reductive) stabilizer of the corresponding closed orbits. One can show that this defines a locally closed partition (\cite{luna_slices_1973}, see e.g. \cite{kuttler_is_2008} for an overview). In the case of $G$ acting on the framed moduli space $\GE$, this reproduces the partition $\MGE_{[H,A]}$ as described above. For our purposes it is convenient to focus mainly on the coarser partition according to connected reductive subgroups $H$, hence the choice of notation.
\end{remark}

\subsection{Borel--de Siebenthal theory}\label{SS:thm of Borel de Siebenthal}
Fix a maximal torus $T$ of $G$, and let $\Phi$ denote the corresponding set of roots. A subset $\Sigma \subseteq \Phi$ is called \emph{closed} if $\bZ \Sigma \cap \Phi = \Sigma$. We denote by $\cA$ the set of closed subsets of $\Phi$.

The connection between closed subsets of roots and connected reductive subgroups is given by the following theorem of Borel--de Siebenthal:

\begin{theorem}\label{T:Borel de Siebenthal}\cite{BdS}
	The collection of connected reductive subgroups $H\le G$ that contain the maximal torus $T$ is in bijection with $\cA$. The correspondence is given by associating to $H$ its root system and conversely, to $\Sigma\in\cA$ the subgroup $C_G(Z(\Sigma))^\circ$. 
	
	Moreover, under this correspondence we have $G(\Sigma)=C_G(Z(\Sigma))^\circ$ and 
	\begin{align}\label{E:center of C_G is Z_Sigma}
	Z(G(\Sigma))=Z(\Sigma)=Z(C_G(Z(\Sigma)))
	\end{align}
\end{theorem}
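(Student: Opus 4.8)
The plan is to exhibit an explicit inverse to the assignment $H\mapsto\Phi(H,T)$ and then to read off the remaining assertions from the combinatorics of the root datum. The only ingredient that is not formal is that $\Phi(H,T)$ is a \emph{closed} subset of $\Phi$ (that is, $\bZ\Sigma\cap\Phi=\Sigma$ for $\Sigma=\Phi(H,T)$) for every connected reductive $H\supseteq T$, which is precisely the Borel--de Siebenthal theorem \cite{BdS}; I would cite it (or, if a self-contained account is wanted, reprove it via the affine Dynkin diagram).

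First I would record the dictionary between subgroups and root systems. If $H\supseteq T$ is connected reductive then $T$ is a maximal torus of $H$; since every nonzero weight of $T$ on $\Lie H$ already occurs in $\fg$, and the $T$-weight spaces of $\fg$ other than $\ft$ are one-dimensional, one gets $\Phi(H,T)\subseteq\Phi$, $(\Lie H)_\alpha=\fg_\alpha$ for $\alpha\in\Phi(H,T)$, and hence $U_\alpha\subseteq H$ there. Together with the standard fact that a connected reductive group is generated by any maximal torus and its root subgroups, this shows $H=\langle T,\,U_\alpha:\alpha\in\Phi(H,T)\rangle$, so that two connected reductive subgroups containing $T$ with equal root systems coincide. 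For the inverse, recall $Z(\Sigma)=\bigcap_{\alpha\in\Sigma}\ker\alpha\subseteq T$ and $G(\Sigma)=C_G(Z(\Sigma))^\circ$. The computation to make is that a character of $T$ is trivial on $Z(\Sigma)$ exactly when it lies in $\bZ\Sigma$; this follows from exactness of the character-group functor on $1\to Z(\Sigma)\to T\to T/Z(\Sigma)\to1$ together with the embedding $T/Z(\Sigma)\hookrightarrow\prod_{\alpha\in\Sigma}\Gm$ defined by the roots in $\Sigma$. Since for any diagonalizable subgroup $D\subseteq T$ the group $C_G(D)^\circ$ is generated by $T$ and the root subgroups $U_\beta$ with $\beta$ trivial on $D$, and since here $\{\beta\in\Phi:\beta\text{ trivial on }Z(\Sigma)\}=\Phi\cap\bZ\Sigma=\Sigma$ (closedness of $\Sigma$ being used in the last step), it follows that $G(\Sigma)=\langle T,\,U_\beta:\beta\in\Sigma\rangle$ is connected reductive, contains $T$, and has root system exactly $\Sigma$.

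These two paragraphs already show that $\Sigma\mapsto G(\Sigma)$ is an injective section of $H\mapsto\Phi(H,T)$, so the bijection comes down to surjectivity: $\Phi(H,T)\in\cA$ for every connected reductive $H\supseteq T$. Using the usual description $Z(H)=\bigcap_{\alpha\in\Phi(H,T)}\ker\alpha$ of the centre of a reductive group, this is equivalent to the identity $H=C_G(Z(H))^\circ=G(\Phi(H,T))$. I expect this to be the real obstacle, and I would handle it by reduction: replace $G$ by the Levi subgroup $C_G(Z(H)^\circ)$ — which contains $H$ and has $Z(H)^\circ$ as its own central torus — and then divide out by that central torus, leaving two semisimple groups of equal rank sharing the maximal torus $T$; the statement that the root system of the subgroup is a closed subsystem of the ambient one is exactly Borel--de Siebenthal.

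Finally, for the identities $Z(G(\Sigma))=Z(\Sigma)=Z(C_G(Z(\Sigma)))$: with $H=G(\Sigma)=C_G(Z(\Sigma))^\circ$, the equality $\Phi(H,T)=\Sigma$ established above gives $Z(H)=\bigcap_{\alpha\in\Phi(H,T)}\ker\alpha=\bigcap_{\alpha\in\Sigma}\ker\alpha=Z(\Sigma)$, which is the first identity. For the second, $Z(\Sigma)$ is abelian and, by definition of the centralizer, central in $C_G(Z(\Sigma))$, so $Z(\Sigma)\subseteq Z(C_G(Z(\Sigma)))\subseteq Z(C_G(Z(\Sigma))^\circ)=Z(G(\Sigma))=Z(\Sigma)$; hence all three coincide.
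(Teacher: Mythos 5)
Your overall route is the same as the paper's: the substantive content (that the root system of a connected reductive subgroup containing $T$ is a closed subset of $\Phi$, and the resulting bijection with $\cA$) is delegated to the Borel--de Siebenthal reference, and the remaining assertions are read off from the root datum. Your expanded dictionary --- uniqueness of $H$ via $H=\langle T,\,U_\alpha:\alpha\in\Phi(H,T)\rangle$, the identification of the characters of $T$ killed by $Z(\Sigma)$ with $\bZ\Sigma$, and hence $C_G(Z(\Sigma))^\circ=\langle T,\,U_\beta:\beta\in\Sigma\rangle$ with root system exactly $\Sigma$ --- is correct and fills in details the paper leaves implicit.

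There is, however, one genuine gap, and it sits at precisely the one step the paper proves rather than cites. In your final chain you assert $Z(C_G(Z(\Sigma)))\subseteq Z(C_G(Z(\Sigma))^\circ)$ without justification. For a general algebraic group $K$ the inclusion $Z(K)\subseteq Z(K^\circ)$ is false (any disconnected abelian $K$ has $Z(K)=K\not\subseteq K^\circ$); what is needed first is the containment $Z(C_G(Z(\Sigma)))\subseteq C_G(Z(\Sigma))^\circ$, which is not formal. It holds here for a specific reason: an element of $Z(C_G(Z(\Sigma)))$ centralizes in particular the maximal torus $T\subseteq C_G(Z(\Sigma))$, hence lies in $C_G(T)=T\subseteq C_G(Z(\Sigma))^\circ$; only then does centrality in the full centralizer imply membership in $Z(C_G(Z(\Sigma))^\circ)=Z(\Sigma)$. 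This is exactly the paper's argument for the last equality in \cref{E:center of C_G is Z_Sigma}. With that one line inserted, your proof is complete.
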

\begin{proof}
	Apart the last equality in \cref{E:center of C_G is Z_Sigma}, all is part of Borel--de Siebenthal theory \cite[Th\'eor\`eme 4 and Section 6, page 213]{BdS}.
	
	For the last equality, it is enough to show that $Z(C_G(Z(\Sigma)))$ is contained in the neutral component $C_G(Z(\Sigma))^\circ$. 
	If $h$ is in $Z(C_G(Z(\Sigma)))$, then $h\in C_G(T)=T\subset C_G(Z(\Sigma))^\circ$. 
	Since $Z(C_G(Z(\Sigma))^\circ)=Z(\Sigma)$ we deduce $Z(C_G(Z(\Sigma)))\subset C_G(Z(\Sigma))^\circ$ which moreover implies $Z(C_G(Z(\Sigma)))\subset Z(\Sigma)$. 
\end{proof}

\subsection{$E$-root subsystems}\label{sec:closed-subsets-of-e-type}
In this subsection, we will present an alternative approach to the theory of $E$-pseudo-Levi subgroups in terms of their associated root data.

Recall that $\TE$ denotes the algebraic group parameterizing framed $T$-bundles on the fixed curve $E$. Thus $\TE$ is isomorphic to either $\ft$ or $T$ in the cuspidal and nodal cases respectively. In general 
\[
\TE \cong \Hom(\bX^*(T),J(E))
\]
where $J(E)$ denotes the Jacobian variety of $E$.

Note that each character $\alpha \in \bX^*(T)$ gives rise to a homomorphism $\alpha_\ast\colon\TE \to J(E)$, taking a $T$-bundle to its associated line bundle. For $p \in \TE$, we set 
\[
\Sigma_p := \left\lbrace \alpha\in\Phi\mid p \in \ker(\alpha_\ast) \right\rbrace.
\]

We write $\cA_E$ for the subset of $\cA$ consisting of subsets $\Sigma_p\subset\Phi$ which occur in this way. The elements of $\cA_E$ are called $E$-root subsystems of $\Phi$. 

Thus we have a map 
\[
\kappa\colon\TE \to \cA
\]
which assigns to a point $p \in \TE$ the set $\Sigma_p$. 
The image is $\cA_E$ by definition. 

\begin{lemma}\label{l:locally closed torus}
	The map $\kappa\colon\TE \to \cA_E$ is continuous with respect to the topology on $\cA_E$ induced by the partial order given by inclusion. In other words the partition
	\[
	\TE = \bigsqcup_{\Sigma \in \cA_E} (\TE)_{\Sigma}
	\]
	is locally closed.
\end{lemma}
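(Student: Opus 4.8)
The plan is to reduce everything to the elementary observation that, for each root $\alpha\in\Phi$, the homomorphism $\alpha_*\colon\TE\to J(E)$ is a morphism of algebraic groups, so that $\ker(\alpha_*)$ is a Zariski-closed subgroup of $\TE$, and then to read off the strata of the partition directly from these kernels.

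First I would record the closed loci underlying the partition: for an arbitrary subset $\Sigma\subseteq\Phi$,
\[
\{p\in\TE\mid \Sigma_p\supseteq\Sigma\}=\bigcap_{\alpha\in\Sigma}\ker(\alpha_*),
\]
which is closed in $\TE$. Here I use the identification $\TE\cong\Hom(\bX^*(T),J(E))$: after choosing a basis of $\bX^*(T)$, the map $\alpha_*$ becomes evaluation at $\alpha$, i.e. the ``multiplication by the integer vector $\alpha$'' morphism $J(E)^r\to J(E)$, which is manifestly a homomorphism of algebraic groups. Dually, $\{p\mid\Sigma_p\subseteq\Sigma\}=\TE\setminus\bigcup_{\alpha\in\Phi\setminus\Sigma}\ker(\alpha_*)$ is open, so
\[
(\TE)_\Sigma=\{p\mid\Sigma_p=\Sigma\}=\Bigl(\bigcap_{\alpha\in\Sigma}\ker(\alpha_*)\Bigr)\setminus\Bigl(\bigcup_{\alpha\in\Phi\setminus\Sigma}\ker(\alpha_*)\Bigr)
\]
is locally closed (for $\Sigma\notin\cA_E$ this locus is empty, which is consistent). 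This already yields the ``in other words'' reformulation.

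To deduce continuity of $\kappa$ itself, I would use that $\cA_E$ is finite, being a subset of the finite set $\cA$ of subsets of $\Phi$. In the topology on $\cA_E$ attached to the inclusion order, a closed set is an up-set $S$; since the poset is finite, every element of $S$ lies above a minimal one, so $\kappa^{-1}(S)=\bigcup_{\sigma\in\min(S)}\{p\mid\Sigma_p\supseteq\sigma\}$ is a finite union of closed subsets of $\TE$ by the previous step, hence closed. Thus $\kappa$ is continuous; taking $S$ to be the principal up-set of a single $\Sigma$ and removing the finitely many strictly larger strata recovers local closedness of each $(\TE)_\Sigma$ once more.

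I do not expect a genuine obstacle. The two points needing a little care are: (a) confirming that $\alpha_*$ is really a morphism of algebraic groups, so that its kernel is Zariski-closed --- this is immediate from the explicit description of $\TE$ above, equivalently from functoriality of induction of bundles; and (b) the choice of convention for the Alexandrov topology on the finite poset $\cA_E$, which is why it is cleanest to prove the concrete assertion that each $(\TE)_\Sigma$ is locally closed and then observe that this is exactly what continuity of $\kappa$ amounts to here.
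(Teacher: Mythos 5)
Your proposal is correct and follows essentially the same route as the paper: the key point in both is that $\kappa^{-1}(\{\Sigma'\mid\Sigma'\supseteq\Sigma\})=\bigcap_{\alpha\in\Sigma}\ker(\alpha_*)$ is closed, being an intersection of kernels of the algebraic group homomorphisms $\alpha_*\colon\TE\to J(E)$. The paper's proof is just a terser version of yours; your extra remarks on finiteness of $\cA_E$ and the Alexandrov topology are fine but not needed.
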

\begin{proof}
	It suffices to show that the preimage
	\[
	(\TE)_{\geq \Sigma} = \kappa^{-1}(\{\Sigma' \mid \Sigma' \supseteq \Sigma\})
	\]
	is closed. But this subset is just the intersection of root hyperplanes $\ker(\alpha_\ast) \subseteq \TE$ for $\alpha \in \Sigma$.
\end{proof}

As we will see in \cref{sec:connecting-e-psuedo-levis-and-e-subsets-of-roots}, the partition of $T_E$ according to $E$-root subsystems records the conjugacy class of the neutral component the stabilizer of $G$ acting on $T_E$. We may also record the component group of the stabilizer as follows.

Let $W=W_{G,T}$ be the Weyl group. Let $\Sigma \subseteq \Phi$ be an $E$-root subsystem. We write $N_W(\Sigma) \subseteq W$ for the normalizer of $\Sigma$. Let $W_\Sigma$ denote the Weyl group of $\Sigma$ (considered as a root system in its own right). 
We have that $N_W(\Sigma)=N_W(W_\Sigma)$ and so $W_\Sigma$ is a normal subgroup in $N_W(\Sigma)$.

\begin{lemma}\label{l:weyl stabilizer}
	Let $p \in T_E$ and set $\Sigma = \Sigma_p$. Then 
	\[
	W_\Sigma \subseteq \Stab_W(p) \subseteq N_W(\Sigma).
	\]
	We write $A_p$ for the corresponding subgroup of $W_{G,\Sigma} := N_W(\Sigma)/W_{\Sigma}$.
\end{lemma}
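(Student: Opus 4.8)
The plan is to derive both inclusions from a single equivariance property of the map $\kappa\colon\TE\to\cA$, namely that $\Sigma_{w\cdot p}=w(\Sigma_p)$ for every $w\in W$ and $p\in\TE$. I would establish this first. Recall that $W$ acts on $\TE=\Hom(\bX^*(T),J(E))$ through its action on the character lattice, so that $(w\cdot p)(\chi)=p(w^{-1}\chi)$, and that $\alpha_\ast(p)=p(\alpha)$ by definition of $\alpha_\ast$. Then $\alpha_\ast(w\cdot p)=(w\cdot p)(\alpha)=p(w^{-1}\alpha)=(w^{-1}\alpha)_\ast(p)$, so $\alpha\in\Sigma_{w\cdot p}$ if and only if $w^{-1}\alpha\in\Sigma_p$; that is, $\Sigma_{w\cdot p}=w(\Sigma_p)$.

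Granting this, the inclusion $\Stab_W(p)\subseteq N_W(\Sigma)$ is immediate: if $w\cdot p=p$ then $w(\Sigma)=\Sigma_{w\cdot p}=\Sigma_p=\Sigma$, so $w$ normalizes $\Sigma$. For the reverse inclusion $W_\Sigma\subseteq\Stab_W(p)$, since $W_\Sigma$ is by definition generated by the reflections $s_\alpha$ with $\alpha\in\Sigma$, it suffices to show each such $s_\alpha$ fixes $p$. Writing $J(E)$ multiplicatively and using $s_\alpha(\chi)=\chi-\langle\chi,\valpha\rangle\alpha$ together with the fact that $p$ is a homomorphism of abelian groups, I would compute directly
\[
(s_\alpha\cdot p)(\chi)=p(s_\alpha\chi)=p(\chi)\cdot p(\alpha)^{-\langle\chi,\valpha\rangle}=p(\chi),
\]
the last equality because $p(\alpha)=\alpha_\ast(p)=\one$, as $\alpha\in\Sigma_p=\Sigma$. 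Hence $s_\alpha\cdot p=p$, and therefore $W_\Sigma\subseteq\Stab_W(p)$. The final clause of the statement is then just a definition: since $W_\Sigma\trianglelefteq N_W(\Sigma)$ (recorded immediately above the lemma) and $W_\Sigma\subseteq\Stab_W(p)\subseteq N_W(\Sigma)$, the image $A_p$ of $\Stab_W(p)$ in $W_{G,\Sigma}=N_W(\Sigma)/W_\Sigma$ is a well-defined subgroup.

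There is no real obstacle here; the argument is a short formal computation. The only points requiring care are purely notational: fixing the left-action convention $(w\cdot p)(\chi)=p(w^{-1}\chi)$ consistently, and writing the group law on $J(E)$ multiplicatively (additively in the cuspidal case, where $\TE\cong\ft$) so that expressions like $p(\alpha)^{-\langle\chi,\valpha\rangle}$ make sense; neither causes any difficulty.
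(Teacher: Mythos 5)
Your proof is correct and follows essentially the same route as the paper: the right-hand inclusion is the same computation $w(\alpha)_\ast(p)=\alpha_\ast(w^{-1}p)=\one_{J(E)}$ (packaged as the equivariance $\Sigma_{w\cdot p}=w(\Sigma_p)$), and the left-hand inclusion is the observation that $p\in\ker(\alpha_\ast)$ forces $s_\alpha\cdot p=p$. Your explicit computation $(s_\alpha\cdot p)(\chi)=p(\chi)\cdot p(\alpha)^{-\langle\chi,\valpha\rangle}$ usefully spells out a step the paper only asserts.
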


Thus to each $p \in T_E$ we have a pair $(\Sigma_p, A_p)$ consisting of an $E$-root subsystem $\Sigma_p$ and a subgroup $A_p$ of $W_{G,\Sigma}$.

\begin{proof}
First note that, by definition of $\Sigma$, $p$ is contained in each of the root hyperplanes corresponding to roots in $\Sigma$. Thus $p$ is fixed by the corresponding reflections in $W_\Sigma$ and thus by all of $W_\Sigma$. This proves the inclusion on the left.

Now let $w\in \Stab_G(p)$, and suppose $\alpha \in \Sigma$. Then 
\[
w(\alpha)_\ast(p) = \alpha_\ast(w\inv p) = \alpha_\ast(p) = \one_{J(E)}
\]
Thus $w\in N_W(\Sigma)$, establishing the inclusion on the right.
\end{proof}

\subsection{Connecting $E$-pseudo-Levis and $E$-root subsystems of roots}\label{sec:connecting-e-psuedo-levis-and-e-subsets-of-roots}
Recall that we have fixed a maximal torus $T\subseteq G$, with associated roots $\Phi$ and Weyl group $W$.

It follows from Borel-de-Siebenthal theory (see \cref{SS:thm of Borel de Siebenthal}) that the assignment $\Sigma \mapsto [G(\Sigma)]$ defines an order preserving bijection between $\cE$ and $\cA/W$. 
The following result states that $\Sigma$ is an $E$-root subsystem if and only if $G(\Sigma)$ is an $E$-pseudo-Levi subgroup.

\begin{proposition}
	The assignment $\Sigma \mapsto G(\Sigma)$ defines an order-preserving bijection
	\[
	\cA_E/W \xrightarrow{\sim} \cE_E.
	\]
\end{proposition}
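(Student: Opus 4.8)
The strategy is to deduce the statement from the Borel--de Siebenthal classification. By \cref{T:Borel de Siebenthal} and the order-preserving bijection $\cA/W\xrightarrow{\sim}\cE$, $\Sigma\mapsto[G(\Sigma)]$, recalled at the start of this subsection, it suffices to show that this bijection carries $\cA_E/W$ onto $\cE_E$. This comes down to three points: that $\cA_E$ is $W$-stable (immediate, since $w\Sigma_p=\Sigma_{wp}$ with $wp\in\TE$, so that $\cA_E/W$ is a well-defined subset of $\cA/W$); that $G(\Sigma)$ is an $E$-pseudo-Levi whenever $\Sigma\in\cA_E$; and that every $E$-pseudo-Levi is $G$-conjugate to $G(\Sigma)$ for some $\Sigma\in\cA_E$. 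The last two follow at once from the identity
\[
\Stab_G(p)^\circ=G(\Sigma_p)\qquad(p\in\TE),
\]
together with the facts that every point of $\TE$ is semisimple and that every semisimple $q\in\GE$ admits a reduction to $T$ and is hence $G$-conjugate to a point of $\TE$. Order-preservation in both directions is then inherited from the Borel--de Siebenthal order isomorphism by restriction to these subsets.

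It remains to prove the identity, which I would do in two steps. For the equality of Lie algebras, let $\cP_T$ be the framed $T$-bundle corresponding to $p$ and $\cP_G$ the induced framed $G$-bundle. By \cref{L:automorphisms framed G bundle land inside G} the map $i_\theta\colon\Aut(\cP_G)\to G$ is injective, and a framing change by $g\in G$ gives back an isomorphic framed bundle exactly when $g$ lies in its image; thus $\Stab_G(p)$ is identified with the image of $\Aut(\cP_G)$. Since $\cP_G$ is induced from $\cP_T$, the adjoint bundle splits along the weight spaces of $\fg$,
\[
\fg_{\cP_G}=(\ft\otimes\cO_E)\oplus\bigoplus_{\alpha\in\Phi}\cL_\alpha,\qquad\cL_\alpha:=\alpha_*(\cP_T),
\]
a direct sum of degree-$0$ line bundles on $E$; as a degree-$0$ line bundle on $E$ has a nonzero section if and only if it is trivial, $H^0(E,\cL_\alpha)\neq0$ precisely when $\alpha\in\Sigma_p$, whence $\Lie\Stab_G(p)=H^0(E,\fg_{\cP_G})=\ft\oplus\bigoplus_{\alpha\in\Sigma_p}\fg_\alpha=\Lie G(\Sigma_p)$.

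For the reverse inclusion I would show $G(\Sigma_p)\subseteq\Stab_G(p)$ using generators. The torus $T=\Aut(\cP_T)$ acts on $\cP_T$, hence on $\cP_G$, landing inside $\Stab_G(p)$. For $\alpha\in\Sigma_p$, the triviality $\cL_\alpha\cong\cO_E$ means $\cP_T$ reduces to the subtorus $(\ker\alpha)^\circ$, which is central in the semisimple-rank-one subgroup $M_\alpha:=\langle T,U_\alpha,U_{-\alpha}\rangle$; a bundle induced from a central subgroup carries an action of the whole ambient group by translation on the group factor (well-defined by centrality), so $M_\alpha\hookrightarrow\Aut(\cP_{M_\alpha})\to\Aut(\cP_G)$ and $M_\alpha\subseteq\Stab_G(p)$. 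Since $T$ together with the $M_\alpha$ ($\alpha\in\Sigma_p$) generate $G(\Sigma_p)$, this gives $G(\Sigma_p)\subseteq\Stab_G(p)^\circ$; combined with $\dim\Stab_G(p)^\circ\le\dim\Lie\Stab_G(p)^\circ=\dim G(\Sigma_p)$ from the first step, the two connected subgroups coincide. This second step is where the real work lies: producing honest (rather than merely infinitesimal) automorphisms of $\cP_G$ via the central-reduction argument for the $M_\alpha$, keeping careful track of framings so that the resulting copies of $T$ and $M_\alpha$ sit inside $G$ as the standard ones, and — in positive characteristic — the bookkeeping needed to pass from equality of Lie algebras to equality of groups should $\Stab_G(p)$ fail to be smooth.
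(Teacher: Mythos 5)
Your reduction is the same as the paper's: everything comes down to the identity $\Stab_G(p)^\circ=G(\Sigma_p)$ for $p\in\TE$ (the paper's \cref{l:automorphisms and root systems}), proved by splitting the adjoint bundle $\fg_{\cP_G}=\ft_{\cP}\oplus\bigoplus_\alpha\fg_{\alpha,\cP}$ into degree-$0$ line bundles and using that such a line bundle has a section iff it is trivial. The paper's own proof stops at the Lie-algebra computation, declaring that two connected subgroups with equal Lie algebras coincide; your instinct that this needs supplementing in positive characteristic is sound (the clean repair, implicit in the paper's framework, is that both sides are connected reductive subgroups containing $T$, hence determined by their root sets via Borel--de Siebenthal, and the root set of $\Lie\Stab_G(p)$ is exactly $\Sigma_p$). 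The bookkeeping at the start (that $w\Sigma_p=\Sigma_{wp}$, that every semisimple bundle reduces to $T$, and that order-preservation restricts) is correct and is left implicit in the paper.

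There is one genuine error in your supplementary group-level argument: triviality of $\cL_\alpha=\alpha_*(\cP_T)$ does \emph{not} imply that $\cP_T$ reduces to $(\ker\alpha)^\circ$. For $G=\SL_2$ with $\alpha=2\omega$ one has $(\ker\alpha)^\circ=\{1\}$, yet a nontrivial $2$-torsion line bundle $\cL\in J(E)[2]$ satisfies $\alpha_*(\cL)=\cL^{\otimes 2}\cong\cO_E$ without being trivial. The fix is to reduce to the full (possibly disconnected) kernel: the exact sequence $1\to\ker\alpha\to T\xrightarrow{\alpha}\Gm\to 1$ of diagonalizable groups gives exactness of $H^1(E,\ker\alpha)\to H^1(E,T)\to H^1(E,\Gm)$, so $\alpha_*(\cP_T)\cong\cO_E$ does yield a reduction to $\ker\alpha$, and $\ker\alpha=Z(M_\alpha)$ is still central in $M_\alpha$, so the rest of your translation argument goes through verbatim. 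With that one-word repair ($\ker\alpha$ in place of $(\ker\alpha)^\circ$), your argument is correct and in fact slightly more careful than the one in the paper.
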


This follows immediately from the following result:
\begin{lemma}\label{l:automorphisms and root systems}
	Let $p \in \TE $ and view it inside  $\GE$. 
	Then $\Stab_G(p)^\circ = G(\Sigma_p)$.
\end{lemma}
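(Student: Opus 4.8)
The plan is to reduce the statement to a computation with the automorphism group of an induced bundle, compute its Lie algebra, match it with $\Lie G(\Sigma_p)$, and then pin down $\Aut(\cP_G)^\circ$ on the nose. First I would let $\cP_T$ be the degree $0$ framed $T$-bundle classified by $p$ and let $\cP_G = \cP_T \timesT G$ be the induced framed $G$-bundle, so that $p$ is the image of $\cP_T$ under the closed embedding $\TE\hookrightarrow\GE$ of \cref{P:framed H semistable injects in G semistable}. As recalled at the start of this section, together with \cref{L:automorphisms framed G bundle land inside G}, the framing identifies $\Stab_G(p)$ with $\Aut(\cP_G)$; hence it suffices to prove $\Aut(\cP_G)^\circ = G(\Sigma_p)$.

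Next I would compute $\Lie\Aut(\cP_G) = H^0(E,\fg_{\cP_G})$. Using the root space decomposition $\fg = \ft\oplus\bigoplus_{\alpha\in\Phi}\fg_\alpha$ of $\fg$ as a $T$-representation, the adjoint bundle decomposes as
\[
\fg_{\cP_G} = (\ft\otimes_k\cO_E)\oplus\bigoplus_{\alpha\in\Phi}\cL_\alpha, \qquad \cL_\alpha := \alpha_*(\cP_T),
\]
a sum of a trivial bundle with degree $0$ line bundles $\cL_\alpha$. Since $E$ is an integral projective curve of arithmetic genus $1$, we have $H^0(E,\cO_E)=k$ while $H^0(E,\cL)=0$ for every nontrivial degree $0$ line bundle $\cL$ (a nonzero section would present $\cL$ as $\cO_E(D)$ with $D$ effective of degree $0$, forcing $D=0$); and $\cL_\alpha$ is trivial precisely when $\alpha_*(p)=\one_{J(E)}$, i.e. when $\alpha\in\Sigma_p$. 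Hence
\[
\Lie\Aut(\cP_G) = \ft\oplus\bigoplus_{\alpha\in\Sigma_p}\fg_\alpha = \Lie G(\Sigma_p),
\]
the last identification holding because $\Sigma_p$ is a closed subsystem of $\Phi$ (if $\gamma=\sum_i n_i\alpha_i\in\Phi$ with all $\alpha_i\in\Sigma_p$ then $\gamma_*(p)=\prod_i\alpha_{i,*}(p)^{n_i}=\one$), so that $\ft\oplus\bigoplus_{\alpha\in\Sigma_p}\fg_\alpha$ is exactly $\Lie G(\Sigma_p)$ by Borel--de Siebenthal theory (\cref{T:Borel de Siebenthal}).

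I would then exhibit $G(\Sigma_p)$ inside $\Aut(\cP_G)$ directly. Under the identification $\underline{\Aut}(\cP_G)\cong \cP_T\timesT G$ with $T$ acting by conjugation, the torus $T=\Aut(\cP_T)$ sits in $\Aut(\cP_G)$ as the constant gauge automorphisms, and for each $\alpha\in\Sigma_p$ a choice of trivialization of $\cL_\alpha$ gives a copy of the root subgroup $U_\alpha$ inside $\Gamma(E,\cP_T\timesT U_\alpha)\subseteq\Aut(\cP_G)$; a computation on the fiber over $x_0$ identifies these with $T$ and $U_\alpha\subseteq G$ under $\Aut(\cP_G)\cong\Stab_G(p)$. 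Since these subgroups generate $G(\Sigma_p)$, we obtain $G(\Sigma_p)\subseteq\Aut(\cP_G)$, and then
\[
\dim G(\Sigma_p)\le\dim\Aut(\cP_G)^\circ\le\dim_k\Lie\Aut(\cP_G)=\dim_k\Lie G(\Sigma_p)=\dim G(\Sigma_p),
\]
so all dimensions coincide; as $G(\Sigma_p)$ is connected it must equal $\Aut(\cP_G)^\circ$. (In characteristic $0$ one can skip the dimension count, since two connected subgroups of $G$ with equal Lie algebras agree.)

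The main obstacles I anticipate are minor rather than deep: one must justify the vanishing $H^0(E,\cL_\alpha)=0$ for nontrivial degree $0$ line bundles in all three cases (elliptic, nodal, cuspidal) — this works uniformly because those curves are integral — and one must carefully set up the identification $\underline{\Aut}(\cP_G)\cong\cP_T\timesT G$ and verify, on the fiber over $x_0$, that the constructed $T$ and $U_\alpha$ match their counterparts in $G$; this last bookkeeping is routine but somewhat fiddly. The characteristic $p$ issue of a possibly non-reduced stabilizer group scheme is absorbed by the dimension inequality above (equivalently, one takes $\Stab_G(p)$ to be the reduced stabilizer throughout).
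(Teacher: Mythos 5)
Your proof is correct and follows essentially the same route as the paper: identify $\Lie\Aut(\cP_G)$ with $H^0(E;\fg_{\cP_G})$, decompose the adjoint bundle into $\ft\otimes\cO_E$ plus the root line bundles $\alpha_*(\cP_T)$, and observe that a degree $0$ line bundle on an integral genus $1$ curve has a section iff it is trivial, giving $\Lie\Aut(\cP_G)=\fg(\Sigma_p)$. The one place you go beyond the paper is worthwhile: the paper simply asserts that equality of Lie algebras suffices, which is clean in characteristic $0$ but delicate in characteristic $p$ (equal Lie algebras do not in general force equal smooth connected subgroups, and the stabilizer scheme may be non-reduced); your explicit inclusion $G(\Sigma_p)\subseteq\Aut(\cP_G)$ via the constant torus and the root subgroups $\Gamma(E,\cP_T\timesT U_\alpha)$ for $\alpha\in\Sigma_p$, combined with the dimension count, closes that gap.
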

\begin{proof}
	To show that these two connected subgroups of $G$ are equal, it suffices to show that their corresponding Lie algebras are equal inside $\fg$.
	Put $p=(\cP_G,\theta)$. 
	The Lie algebra of $\Stab_G(p)^\circ \cong \Aut(\cP_G)$ is given by the global sections of the adjoint bundle $H^0(E;\fg_{\cP})$. 
	As $\cP_G$ is induced from a $T$-bundle $\cP$, the adjoint bundle $\fg_{\cP_G}$ splits as a direct sum 
	\[
	\fg_{\cP_G} = \fg_{\cP} = \ft_{\cP} \oplus \bigoplus_{\alpha \in \Phi} \fg_{\alpha,\cP}.
	\]
	For each $\alpha \in \Phi$, the line bundle $\fg_{\alpha,\cP}$ is trivial precisely when $\alpha \in \Sigma_p$. 
	In this case, the framing provides a canonical identification 
	\[
	H^0(E;\fg_{\alpha,\cP}) = \fg_\alpha.
	\]
	Similarly, we have $H^0(E;\ft_{\cP})= \ft$.
	On the other hand, if $\alpha \notin \Sigma_p$, then $H^0(E;\fg_{\alpha,\cP}) = 0$ because a line bundle of degree $0$ on a curve has a section if an only if it's trivial.
	Thus we have that 
	\[
	H^0(E; \fg_{\cP}) = \fg(\Sigma_p)
	\]
	as required.
	\end{proof}

Now recall that the map 
\[
q\colon\TE \to \MGE
\]
identifies $\MGE$ with the categorical quotient $\TE\git W$. 
Putting all this together, we have 

\begin{lemma}\label{l:comparing two descriptions of partition}
	The following diagram commutes:
	\[
	\xymatrix{
		\TE \ar[r]^\kappa \ar[d]_q & \cA_E \ar[d] \\
		\MGE \ar[r]_-h & \cE_E
	}
	\]
	\end{lemma}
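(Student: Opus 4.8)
The plan is to chase a point $p \in \TE$ around the square and check that both composites produce the same $G$-conjugacy class, namely $[G(\Sigma_p)]$. First I would write $p = (\cP,\theta)$, let $\cP_G$ be the $G$-bundle induced from the underlying $T$-bundle $\cP$, and let $\theta_G$ be the induced framing. Since every degree $0$ line bundle is semistable, $\cP$ is a semistable $T$-bundle, so by \cref{L:induced of semistable is semistable} the bundle $\cP_G$ is semistable of degree $0$; it is moreover semisimple, as it visibly admits a reduction to $T$. Recalling Laszlo's identification $\MGE \cong \TE\git W$ (and that $\chi$ sends a semisimple bundle to its $W$-orbit), the point $q(p) \in \MGE$ is exactly the one represented by the semisimple bundle $\cP_G$.

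Going down then right, $h(q(p))$ is by definition the $G$-conjugacy class of $\Stab_G(\tilde p)^\circ$ for any framed lift $\tilde p$ of $\cP_G$, this being independent of the choice of lift. Under the closed embedding $\TE \hookrightarrow \GE$ of \cref{P:framed H semistable injects in G semistable}, the point $p$ maps to the framed bundle $(\cP_G,\theta_G)$, which is such a lift; hence $h(q(p)) = [\Stab_G(p)^\circ]$, where $p$ is now viewed inside $\GE$. Going right then down, $\kappa(p) = \Sigma_p$ and the right vertical arrow sends $\Sigma_p$ to $[G(\Sigma_p)]$. The two composites therefore agree as soon as $\Stab_G(p)^\circ$ and $G(\Sigma_p)$ are $G$-conjugate — and in fact they are equal on the nose, which is precisely the content of \cref{l:automorphisms and root systems}. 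This gives commutativity of the square.

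I do not expect a genuine obstacle here: the lemma is essentially a bookkeeping statement that repackages \cref{l:automorphisms and root systems} once the definitions of $q$ (via Laszlo's theorem) and of $h$ are unwound. The only point worth a moment's care is the first step, i.e. confirming that $q(p)$ is represented by $\cP_G$ itself and not by some other semisimple bundle in its orbit closure — but distinct semisimple bundles determine distinct points of $\MGE$ and $\cP_G$ is already semisimple, so this is immediate. If one wishes to avoid citing Laszlo directly, one may instead take $q$ to be the composite $\TE \hookrightarrow \GE \xrightarrow{\chi} \MGE$ and use only that $\chi$ is constant on $G$-orbits, which is all the argument above requires.
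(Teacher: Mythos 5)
Your proposal is correct and matches the paper's (largely implicit) argument: the paper states this lemma as an immediate consequence of the preceding discussion, and the content is exactly the reduction you perform to \cref{l:automorphisms and root systems} after unwinding the definitions of $q$ and $h$. Your extra care about $q(p)$ being represented by the induced semisimple bundle $\cP_G$ itself is a reasonable point to make explicit, but it introduces nothing beyond what the paper intends.
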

This proves \cref{prop:locally closed} in view of \cref{l:locally closed torus}.

\subsection{The component group}
Given a semisimple bundle in $\cP_G \in \uGE$, we have shown that the neutral component $\Aut(\cP_G)^\circ$ may be identified with $G(\Sigma_p)$ where $\Sigma_p$ is the set of roots which annihilate a given framed lift $p \in \TE$ of $\cP_G$.

We will now refine this to give a combinatorial description (i.e. in terms of the $W$-action on $T_E$) of the component group of $\Stab_G(p) \ (\cong \Aut(\cP_G))$.

Recall that for a connected reductive subgroup $T \subseteq H \subseteq G$ with root subsystem $\Sigma \subseteq \Phi$, we have an isomorphism
\[
W_{G,H} := N_G(H)/H \cong N_W(\Sigma)/W_\Sigma := W_{G,\Sigma}.
\]
This finite group is referred to as the relative Weyl group of $H$ (or of $\Sigma$) in $G$. 
It naturally acts on the algebraic group $Z(H)_E$ (also written $Z(\Sigma)_E$). 
The following result identifies the component group of a semisimple bundle in $\MGE_{[H]}$ with the stabilizer in the relative Weyl group of a corresponding lift to $p \in Z(H)_\Sigma$. 
\begin{lemma}\label{lem:components}
Let $p\in \TE$, and let $H=G(\Sigma_p)$. Then there are compatible identifications
\[
\xymatrix{
	&\ar[d]_\wr \Stab_{W_{G,\Sigma}}(p) \ar@{^{(}->}[r]  & \ar[d]^\wr N_W(\Sigma)/W_\Sigma \\
	\pi_0\Stab_G(p)  &\ar[l]_{\sim} \Stab_G(p)/H \ar@{^{(}->}[r] & N_G(H)/H.
}
\]
\end{lemma}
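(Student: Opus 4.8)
The plan is to deduce the whole diagram from \cref{l:automorphisms and root systems} together with the standard description of the relative Weyl group $N_G(H)/H$ in terms of representatives lying in $N_G(T)$. Write $\Sigma=\Sigma_p$ and $H=G(\Sigma)$, and present $p=(\cP_G,\theta)$ with $\cP_G$ induced from a framed $T$-bundle $\cP_T$. By \cref{l:automorphisms and root systems} we have $\Stab_G(p)^\circ=G(\Sigma)=H$; hence $\pi_0\Stab_G(p)=\Stab_G(p)/H$, which is the left vertical isomorphism, and since any algebraic group normalizes its neutral component, $\Stab_G(p)\subseteq N_G(H)$, which gives the bottom horizontal arrow. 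The right vertical isomorphism $N_G(H)/H\cong N_W(\Sigma)/W_\Sigma$ is the standard relative Weyl group identification recalled above, valid for any $T\subseteq H\subseteq G$. Thus everything reduces to showing that, under $N_G(H)/H\xrightarrow{\sim}N_W(\Sigma)/W_\Sigma$, the subgroup $\Stab_G(p)/H$ is carried onto $\Stab_{N_W(\Sigma)}(p)/W_\Sigma=\Stab_{W_{G,\Sigma}}(p)$ (using \cref{l:weyl stabilizer}, so that the latter expression makes sense).

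Next I would record two compatibilities. First, the isomorphism $N_G(H)/H\cong N_W(\Sigma)/W_\Sigma$ is induced by sending $g\in N_G(H)\cap N_G(T)$ to its image $w\in W$: every coset of $H$ in $N_G(H)$ meets $N_G(T)$ because for $g\in N_G(H)$ the torus $gTg^{-1}$ is again a maximal torus of $H$, hence $H$-conjugate to $T$; and such a $w$ automatically lies in $N_W(\Sigma)$ since $g$ permutes the root subgroups of $H$. Second, the torus $T=\Aut(\cP_T)$ maps isomorphically onto the maximal torus $T\subseteq G$ under $i_\theta$, so $T\subseteq\Stab_G(p)$ for every $p\in\TE$; consequently the tautological $G$-action on $\GE$, restricted to $\TE\subseteq\GE$, is trivial on $T$ and descends to an action of $W=N_G(T)/T$ on $\TE$, which is precisely the standard $W$-action on $\TE\cong\Hom(\bX^*(T),J(E))$ used in \cref{sec:closed-subsets-of-e-type} and \cref{l:weyl stabilizer}.

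With these in hand, the two inclusions are routine. For one direction, given $g\in\Stab_G(p)$, pick $h\in H\subseteq\Stab_G(p)$ with $hg\in N_G(T)$ (possible since $gTg^{-1}$ is a maximal torus of $H$); then $hg\in\Stab_G(p)\cap N_G(T)\cap N_G(H)$, its image $w\in W$ lies in $N_W(\Sigma)$, and $w\cdot p=p$ because $hg\cdot p=p$ and the $N_G(T)$-action on $\TE$ descends to the $W$-action; hence the class of $g$ lands in $\Stab_{W_{G,\Sigma}}(p)$. Conversely, since $W_\Sigma$ already fixes $p$, it suffices to take $w\in\Stab_{N_W(\Sigma)}(p)$ and lift it to some $n\in N_G(T)\cap N_G(H)$; then $n\cdot p=w\cdot p=p$, so $n\in\Stab_G(p)$, and its class maps to $w$. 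The choice of lift is immaterial because any two lifts differ by an element of $T\subseteq\Stab_G(p)$. This proves the claimed equality of subgroups and the commutativity of the square.

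The main obstacle, and the only point requiring genuine care, is the second compatibility: one must verify that the abstract $G$-action on the framed moduli variety $\GE$ restricts on the closed subvariety $\TE$ to the concrete Weyl-group action on $\Hom(\bX^*(T),J(E))$ — i.e. that for $n\in N_G(T)$ the new framing $n\cdot\theta$ of an induced $G$-bundle is again induced from a framed $T$-bundle, namely the Weyl translate of $\cP_T$. Everything else is a transcription of the classical computation of the component group of a pseudo-Levi subgroup into the present setting, with \cref{l:automorphisms and root systems} supplying the only new ingredient.
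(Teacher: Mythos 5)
Your proof is correct and is essentially the paper's argument: the paper packages the same two facts (every element of $\Stab_G(p)$ can be moved into $N_G(T)\cap N_G(H)$ by an element of $H$, and every $w\in\Stab_{N_W(\Sigma)}(p)$ lifts to an element of $N_G(T)\cap N_G(H)$ fixing $p$) into a commutative diagram of short exact sequences and a diagram chase, while you unwind the same content element-by-element. Your explicit flagging of the compatibility between the change-of-framing $G$-action on $\GE$ restricted to $\TE$ and the standard $W$-action on $\Hom(\bX^*(T),J(E))$ is a point the paper's proof uses implicitly, and your treatment of it is correct.
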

\begin{proof}
	Note that we have a commutative diagram with exact rows
	\[
	\xymatrix{
1 \ar[r] &\ar[d] N_H(T) \ar[r] &\ar[d] \Stab_{N_G(H)\cap N_G(T)}(p) \ar[r]&\ar[d] \Stab_{N_W(\Sigma)}(p)/W_\Sigma \ar[r]& 1 \\
1 \ar[r] &  H \ar[r] &\Stab_G(p)/H \ar[r]& \Stab_G(p)/H \ar[r]& 1	
}
\]
It is a straightforward diagram chase to show that the right most vertical morphism is an isomorphism. Similarly, one checks that this isomorphism is compatible with the embeddings as required.
\end{proof}

\subsection{Closure relations for the partition}\label{SS:refined partial order on ell sets}
We shall see presently that the closure of the variety $(\TE)_\Sigma$ is a union of varieties $(\TE)_{\Sigma'}$. However, the $\Sigma'$ that appear in the closure relation are determined by a slightly modified partial order relation which we determine below.

Given $\Sigma \in \cA_{E}$ we consider the subgroup $L(\Sigma) := C_G(Z(\Sigma)^\circ)$ where $Z(\Sigma) = \cap_{\alpha\in\Sigma}\ker(\alpha)$. 
It is a Levi subgroup of $G$ (as it is the centralizer of a torus) which contains $G(\Sigma)$ (as every element of $G(\Sigma)$ centralizes $Z(\Sigma)^\circ$). In fact, it is the smallest such subgroup. We write $\Sigma^\circ$ for the set of roots of $L(\Sigma)$. We may now define a new partial order on $\cA$.

\begin{definition}\label{D:refined partial order}
Given $\Sigma_1, \Sigma_2 \in \cA$, we define the partial order $\succeq$ by
\[\Sigma_1\succeq \Sigma_2 \text{ if } \Sigma_1 \supseteq \Sigma_2 \text{ and } \Sigma_2 = \Sigma_1 \cap \Sigma_2^\circ.\]
We say that a closed subset is \emph{isolated} if it is maximal with respect to this partial order.
\end{definition}
In other words, maximal subsets $\Sigma$ are characterized by the fact that $Z(\Sigma)^\circ = Z(G)^\circ$. 
This notion is useful in order to relate our partition/stratification to the one of Lusztig \cite[3.1]{Lu_gen_spr}

\begin{proposition}\label{P:closure of TE_Sigma in terms of refined order}
	We have the closure relation
	\[
\overline{(\TE)_\Sigma} = \bigsqcup_{\Sigma' \succeq \Sigma} (\TE)_{\Sigma'}.
\]
\end{proposition}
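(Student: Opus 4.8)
The plan is to identify the closure of $(\TE)_\Sigma$ directly inside $\TE$, using the fact that each stratum $(\TE)_{\Sigma'}$ is cut out by the vanishing of precisely the characters in $\Sigma'$. From \cref{l:locally closed torus} we already know that $(\TE)_{\geq\Sigma} = \bigcap_{\alpha\in\Sigma}\ker(\alpha_\ast)$ is a closed subgroup-subscheme of $\TE$, and that $(\TE)_\Sigma$ is the complement inside it of the finitely many hyperplanes $\ker(\beta_\ast)$ for $\beta\in\Phi\setminus\Sigma$. So the first step is to show $(\TE)_\Sigma$ is \emph{dense} in its ambient closed piece $(\TE)_{\geq\Sigma}$ — equivalently, that no $\ker(\beta_\ast)$ with $\beta\notin\Sigma$ contains the whole of $(\TE)_{\geq\Sigma}$ — which would give $\overline{(\TE)_\Sigma} = (\TE)_{\geq\Sigma} = \bigsqcup_{\Sigma'\supseteq\Sigma}(\TE)_{\Sigma'}$ as sets. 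But this is too crude: the displayed formula involves $\succeq$, not $\supseteq$. So I expect density of $(\TE)_\Sigma$ in $(\TE)_{\geq\Sigma}$ to \emph{fail} in general, and the real content is to pin down exactly which $\Sigma'\supseteq\Sigma$ survive.

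The key observation is the following: the irreducible component of $(\TE)_{\geq\Sigma}$ through a generic point is the connected subgroup $Z(G(\Sigma))_E^\circ = Z(\Sigma)_E^\circ$ (translated by torsion points), and for a point $p$ in this component the extra roots $\beta$ that annihilate it are exactly those lying in the root system $\Sigma^\circ$ of $L(\Sigma) = C_G(Z(\Sigma)^\circ)$ — because $\beta_\ast$ kills all of $Z(\Sigma)^\circ_E$ iff $\beta$ vanishes identically on $Z(\Sigma)^\circ$ iff $\beta$ is a root of $L(\Sigma)$. Thus on the identity component of $(\TE)_{\geq\Sigma}$, the generic stratum is $(\TE)_{\Sigma^\circ}\cap Z(\Sigma)_E^\circ$... no: more carefully, the generic point of $Z(\Sigma)_E^\circ$ lies in $(\TE)_{\Sigma_1}$ where $\Sigma_1 = \Sigma^\circ$, and $\Sigma^\circ\succeq\Sigma$? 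Not quite — one must check $\Sigma = \Sigma^\circ\cap\Sigma^\circ$, which is false. The correct statement to aim for is that $\overline{(\TE)_\Sigma}$ is the union of translates of $Z(\Sigma)_E^\circ$ by the torsion points realizing $\Sigma$, and on each such translate one reads off the strata appearing; the condition $\Sigma'\succeq\Sigma$, i.e. $\Sigma' \supseteq\Sigma$ and $\Sigma'\cap\Sigma^\circ = \Sigma$... again I need to be careful with which direction. Let me restate the plan: show (a) $(\TE)_\Sigma$ is open and dense in the union of those components of $(\TE)_{\geq\Sigma}$ on which it is nonempty, (b) those components are precisely cosets of $Z(\Sigma)^\circ_E$, (c) a coset $tZ(\Sigma)^\circ_E$ of this form has its generic point in $(\TE)_{\Sigma'}$ with $\Sigma'\succeq\Sigma$ — here one uses that the roots generically vanishing on $tZ(\Sigma)^\circ_E$ form a closed set $\Sigma'$ with $\Sigma\subseteq\Sigma'$ and $Z(\Sigma')^\circ = Z(\Sigma)^\circ$, which is exactly the condition $\Sigma' \cap (\Sigma')^\circ$-type maximality characterization of $\succeq$ given after \cref{D:refined partial order}, and (d) conversely every $\Sigma'\succeq\Sigma$ arises this way.

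Concretely I would argue as follows. Fix $p_0\in(\TE)_\Sigma$, so $\Sigma_{p_0}=\Sigma$. The connected component of $(\TE)_{\geq\Sigma}=\bigcap_{\alpha\in\Sigma}\ker\alpha_\ast$ containing $p_0$ is a coset $p_0\cdot N^\circ$ where $N^\circ = \bigl(\bigcap_{\alpha\in\Sigma}\ker\alpha_\ast\bigr)^\circ$; since $\TE\cong\Hom(\bX^*,J(E))$ and intersecting kernels of characters corresponds to $\Hom(\bX^*/\bZ\Sigma, J(E))$ up to isogeny, one gets $N^\circ = Z(\Sigma)^\circ_E$ (the subgroup $Z(\Sigma)^\circ = (\bigcap_{\alpha\in\Sigma}\ker\alpha)^\circ\subseteq T$ has exactly this $E$-points group). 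For $p$ a very general point of this coset, $\Sigma_p$ consists of all $\alpha$ with $\alpha_\ast(p)=1$; by very-generality $\Sigma_p = \{\alpha : \alpha_\ast|_{p_0 Z(\Sigma)^\circ_E}\equiv 1\}$, and $\alpha_\ast$ is constant on the coset iff $\alpha$ is constant on $Z(\Sigma)^\circ$, and combined with $\alpha_\ast(p_0)$-value considerations this forces $\Sigma_p=:\Sigma'$ to satisfy $Z(\Sigma')^\circ = Z(\Sigma)^\circ$ — which, per the remark after \cref{D:refined partial order}, says $\Sigma'$ is $\succeq$-above $\Sigma$ within the interval, i.e. $\Sigma \preceq \Sigma'$ in the sense that $\Sigma = \Sigma' \cap \Sigma^{\circ}$ once one also notes $\Sigma'\subseteq\Sigma^\circ$. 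The open stratum $(\TE)_\Sigma$ is dense in $p_0 Z(\Sigma)^\circ_E$ precisely when $\Sigma$ itself is this very-general value, i.e. when no $\beta\notin\Sigma$ vanishes on all of $p_0 Z(\Sigma)^\circ_E$; in general the closure picks up exactly the $(\TE)_{\Sigma'}$ for $\Sigma'$ ranging over the very-general values on the various cosets through points of $(\TE)_\Sigma$, and these are exactly the $\Sigma'$ with $\Sigma'\supseteq\Sigma$ and $\Sigma = \Sigma'\cap\Sigma^\circ$. Finally, each stratum being locally closed (\cref{l:locally closed torus}) lets us upgrade the set-level equality to the stated disjoint-union-of-strata description of the closure.

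\textbf{Main obstacle.} The delicate point is step (c)/(d): correctly matching the "very general value of $\Sigma_p$ on a coset of $Z(\Sigma)^\circ_E$" with the combinatorial condition $\Sigma'\succeq\Sigma$ of \cref{D:refined partial order}, and in particular checking that the constraint "$\beta_\ast$ vanishes on the whole coset $p_0 Z(\Sigma)^\circ_E$" translates exactly to "$\beta\in\Sigma^\circ$ and $\beta_\ast(p_0)=1$", which then is the assertion $\Sigma' = \Sigma_{p_0}' \supseteq \Sigma$, $\Sigma'\cap\Sigma^\circ = \Sigma$ — i.e. one must verify the two definitions of the partial order (the one via centralizers $Z(\Sigma)^\circ$ and the one written in \cref{D:refined partial order}) agree on the relevant pairs. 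Everything else (closedness of $(\TE)_{\geq\Sigma}$, identification of its components with cosets of $Z(\Sigma)^\circ_E$, local closedness of strata) is either already established in the excerpt or a routine computation with $\Hom(\bX^*,J(E))$.
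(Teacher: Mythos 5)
Your proposal is correct and follows essentially the same route as the paper: the closure of $(\TE)_\Sigma$ is identified with $(Z(\Sigma)^\circ)_E \cdot (\TE)_\Sigma$ (a union of cosets of the identity component of the closed subgroup $(\TE)_{\geq\Sigma}$), and the identity $\Sigma_p \cap \Sigma^\circ = \Sigma$ on this set follows because $\alpha_\ast$ is trivial on $(Z(\Sigma)^\circ)_E$ exactly for $\alpha \in \Sigma^\circ$. Despite the hedging and false starts in your write-up, the two key steps you isolate are precisely the ones in the paper's proof.
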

\begin{proof}
First we claim that we have the following description of the closure:
\begin{equation}\label{eq:closure}
\left\{(\TE)_\Sigma\right\}^- = (Z(\Sigma)^\circ)_E \cdot (\TE)_\Sigma.
\end{equation}
Indeed, note that $(\TE)_\Sigma$ is open in $Z(\Sigma)_E= (\TE)_{\ge\Sigma}$, and thus its closure is necessarily a union of connected components of $Z(\Sigma)_E$. 
In particular, the closure must be a union of orbits for the neutral component
$\left(Z(\Sigma)_E\right)^\circ = (Z(\Sigma)^\circ)_E
$, from which the claim follows.

Now suppose $p \in \TE$ is in the closure of $(\TE)_\Sigma$. We must show that 
\[
\Sigma = \Sigma_p \cap \Sigma^\circ.
\]

As $(\TE)_{\geq \Sigma}$ is closed and contains $(\TE)_\Sigma$, it must also contain $\overline{(\TE)_\Sigma}$ and thus $p$. In other words, we have $\Sigma \subseteq \Sigma_p$. By construction we have $\Sigma \subseteq \Sigma^\circ$ and hence $\Sigma \subseteq \Sigma_p \cap \Sigma^\circ$. 

It remains to show the other inclusion. Let $\alpha \in \Sigma_p \cap \Sigma^\circ$.
By \cref{eq:closure} we may write
\[
p = q \cdot r 
\]
where $q\in Z(\Sigma)^\circ_E$ and $r \in (\TE)_\Sigma$. 
As $\alpha \in \Sigma_p$, $\alpha_\ast(p) = \one_{J(E)}$; 
as $\alpha \in \Sigma^\circ$, $\alpha_\ast(q)= \one_{J(E)}$. 
Thus we have that $\alpha_\ast(r) = \one_{J(E)}$ which implies $\alpha\in\Sigma$ as required.
\end{proof}

\subsection{Summary of section}\label{SS:summary partition and strata}
Recall that we have defined a locally closed partition in two ways
\begin{align}\label{Eq:partition of moduli space index elliptic sets}
\MGE =& \bigsqcup_{[\Sigma]\in\cA_E/W} \MGE_{[\Sigma]}\\
		= & \bigsqcup_{[H]\in \cE_E} \MGE_{[H]}\nonumber
\end{align}
We thus obtain a locally closed partition 
\begin{align}\label{Eq:partition of moduli stack index elliptic sets}
\uGE &= \bigsqcup_{[\Sigma]\in\cA_E/W} (\uGE)_{[\Sigma]}\\
		&=\bigsqcup_{[H]\in\cE_E} (\uGE)_{[H]}\nonumber
\end{align}
by pulling back via the characteristic polynomial map $\underline\chi\colon\uGE \to \MGE$. 
Similarly for the framed version $\GE$ using $\chi\colon\GE\to\MGE$.

A stratum that plays a special role for us is the one corresponding to $H=G$. 
We put 
\[ (\uGE)_\heartsuit:=(\uGE)_{[G]}. \]
One should think of this locus as those $G$-bundles whose semisimplification "is central" (i.e. has a reduction to the center $Z(G)$ of $G$).

Note that the set $\cE$ carries partial orders $\leq, \preceq$ induced from the same named orders on $\cA$ (see \cref{D:refined partial order}).

\begin{proposition}
	The partitions from \eqref{Eq:partition of moduli space index elliptic sets}, \eqref{Eq:partition of moduli stack index elliptic sets} of $\MGE$ and $\GE$ have the closure relations determined by the partial order $\preceq$.
\end{proposition}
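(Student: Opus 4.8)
The plan is to bootstrap everything from the closure relations on $\TE$ already established in \cref{P:closure of TE_Sigma in terms of refined order}, passing outward in the order $\TE \rightsquigarrow \MGE \rightsquigarrow \uGE,\GE$. The case of $\MGE$ is the clean core. Recall $\MGE=\TE\git W$ with quotient map $q\colon\TE\to\MGE$; since $q$ is finite and surjective it is closed and a topological quotient, so $q^{-1}(\overline A)=\overline{q^{-1}(A)}$ for every $A\subseteq\MGE$. By \cref{l:comparing two descriptions of partition} (together with the Borel--de Siebenthal bijection), the partition $\{\MGE_{[\Sigma]}\}$ pulls back along $q$ to the $W$-invariant partition $\{(\TE)_{\Sigma'}\}$, with $q^{-1}(\MGE_{[\Sigma]})=\bigsqcup_{\Sigma'\in W\Sigma}(\TE)_{\Sigma'}$. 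The refined order $\succeq$ of \cref{D:refined partial order} is $W$-equivariant, since $w\,Z(\Sigma)^\circ=Z(w\Sigma)^\circ$ forces $(w\Sigma)^\circ=w(\Sigma^\circ)$. Combining these facts with \cref{P:closure of TE_Sigma in terms of refined order} and the fact that the closure of a finite union is the union of the closures, one obtains $q^{-1}\!\big(\overline{\MGE_{[\Sigma]}}\big)=\bigsqcup_{[\Sigma']\succeq[\Sigma]}q^{-1}(\MGE_{[\Sigma']})$, and applying $q$ gives $\overline{\MGE_{[\Sigma]}}=\bigsqcup_{[\Sigma']\succeq[\Sigma]}\MGE_{[\Sigma']}$, which is exactly the asserted closure relation.

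Next I would reduce the statements for $\uGE$ and $\GE$ to a single inclusion. Both partitions are pulled back from $\MGE$ along the characteristic polynomial maps $\underline\chi$ and $\chi$, and $\GE\to\uGE$ is a $G$-torsor, hence open, so closures pull back along it; thus it suffices to treat $\uGE$. One inclusion is immediate from continuity of $\underline\chi$: $\overline{(\uGE)_{[\Sigma]}}\subseteq\underline\chi^{-1}\!\big(\overline{\MGE_{[\Sigma]}}\big)=\bigsqcup_{[\Sigma']\succeq[\Sigma]}(\uGE)_{[\Sigma']}$, so no stratum with $[\Sigma']\not\succeq[\Sigma]$ can meet the closure. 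The remaining task is to prove $(\uGE)_{[\Sigma']}\subseteq\overline{(\uGE)_{[\Sigma]}}$ whenever $[\Sigma']\succeq[\Sigma]$; after conjugating we may take representatives with $\Sigma\subseteq\Sigma'$ and $\Sigma=\Sigma'\cap\Sigma^\circ$, which, as in \cref{sec:connecting-e-psuedo-levis-and-e-subsets-of-roots}, identifies $H:=G(\Sigma)$ with the Levi subgroup $C_{H'}(Z(\Sigma)^\circ)$ of $H':=G(\Sigma')$. For \emph{semisimple} bundles this is easy: the induction map $\iota\colon\TE\to\GE$ satisfies $\chi\circ\iota=q$, so $\iota\big((\TE)_{\Sigma''}\big)\subseteq(\GE)_{[\Sigma]}$ for $\Sigma''\in W\Sigma$; combining this with \cref{P:closure of TE_Sigma in terms of refined order}, continuity of $\iota$, the $W$-equivariance of $\succeq$, and the $G$-invariance of $\overline{(\GE)_{[\Sigma]}}$ shows that every semisimple framed bundle of type $[\Sigma']$ lies in $\overline{(\GE)_{[\Sigma]}}$.

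The main obstacle is to propagate this from semisimple bundles to all of $(\GE)_{[\Sigma']}$. By \cref{thm:jordan intro} and \cref{thm:galois intro} (equivalently by \cref{P:heart locus is a product}), an arbitrary point of $(\GE)_{[\Sigma']}$ is $G$-conjugate to a product $p_s\cdot p_u$ with $p_s\in(Z(H')_E)^\reg$ and $p_u$ a unipotent framed $H'$-bundle, regarded as a framed $H'$-bundle. Choosing a parabolic $P\subseteq H'$ with Levi factor $H$ and a cocharacter $\lambda$ into $Z(H)^\circ$ contracting the unipotent radical of $P$, the plan is to exhibit $p_s\cdot p_u$ as a limit of framed bundles $q_s\cdot p_u'$, where $q_s\in(Z(H)_E)^\reg$ is chosen near $p_s$ (possible since $p_s\in\overline{(\TE)_\Sigma}$ by \cref{P:closure of TE_Sigma in terms of refined order}) and $p_u'\in\HE^\uni$ is a unipotent framed $H$-bundle obtained from $p_u$; such bundles lie in $(\GE)_{[\Sigma]}$, which yields the inclusion by $G$-invariance of the closure. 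The hard part will be this degeneration, i.e.\ the claim that a unipotent $H'$-bundle is a limit of bundles assembled from a unipotent $H$-bundle and a regular central parameter as that parameter degenerates --- the elliptic-curve analogue of the classical phenomenon that a nontrivial Jordan block appears from a collision of eigenvalues. I would establish it either from the explicit model of the unipotent locus in \cref{thm:unipotent G bundles intro} (under its hypotheses on $E$), or, in general, by a direct Rees/associated-graded construction applied to a reduction of $p_u$ to $P$ inside $H'$; the rest of the argument is formal bookkeeping with the order $\succeq$.
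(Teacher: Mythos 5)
Your treatment of $\MGE$ is exactly the paper's argument: the paper's entire proof is the remark that the statement follows from \cref{l:comparing two descriptions of partition} and \cref{P:closure of TE_Sigma in terms of refined order}, i.e.\ one descends $\overline{(\TE)_\Sigma}=\bigsqcup_{\Sigma'\succeq\Sigma}(\TE)_{\Sigma'}$ through the finite, $W$-saturated quotient $q\colon\TE\to\TE\git W=\MGE$, which is what your first paragraph does (correctly, including the $W$-equivariance of $\succeq$). For $\uGE$ and $\GE$ the paper's partitions are pulled back along $\underline{\chi}$ and $\chi$, and the cited lemmas then deliver only the frontier-condition inclusion $\overline{(\GE)_{[\Sigma]}}\subseteq\bigsqcup_{[\Sigma']\succeq[\Sigma]}(\GE)_{[\Sigma']}$ --- your ``immediate from continuity'' step. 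You are right that the reverse inclusion $(\GE)_{[\Sigma']}\subseteq\overline{(\GE)_{[\Sigma]}}$ is \emph{not} formal (preimages under $\chi$ only preserve one direction of a closure relation, and a non-semisimple point of a fibre is not in the closure of the semisimple points of nearby fibres), so on this point you are attempting to prove strictly more than the paper's own proof establishes.

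That extra inclusion is, however, not actually proved in your write-up. The reduction is sound: $\Sigma=\Sigma'\cap\Sigma^\circ$ gives $H=C_{H'}(Z(\Sigma)^\circ)$, so $H$ is a Levi of $H'$ --- this is precisely what the refined order $\succeq$ buys --- and $p_s\in\overline{(Z(H)_E)^\reg}$ follows from \cref{P:closure of TE_Sigma in terms of refined order}. But the key step, that a unipotent $H'$-bundle admitting a reduction to a parabolic $P\subseteq H'$ with Levi $H$ is a limit of bundles of the form $q_s\cdot q_u$ with $q_s\in(Z(H)_E)^\reg$ and $q_u\in\HE^\uni$, is asserted rather than established. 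This is the elliptic analogue of the Borho--Kraft description of closures of decomposition classes (a genuine theorem even for $\fg$, resting on induction of nilpotent orbits), and neither of the two routes you propose (the unipotent-cone identification of \cref{thm:unipotent G bundles intro}, or a Rees/associated-graded degeneration, which must also be checked to be compatible with framings and to keep the central parameter regular for $H$) is carried out. So: if the proposition is read as the containment of closures in the union of $\succeq$-larger strata (plus the exact equality on $\MGE$), your first two paragraphs suffice and coincide with the paper's proof; if it is read as the exact equality on $\GE$, your argument has a concrete unproved step --- one which the paper's one-line proof does not address either.
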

\begin{proof}
	Follows from \cref{l:comparing two descriptions of partition} and \cref{P:closure of TE_Sigma in terms of refined order}
\end{proof}

\begin{remark}
	The upshot of this section is that there are two approaches to determining the type of a semisimple bundle: either compute its automorphism group, or choose a reduction to $T$ and compute the subset of roots on which the bundle vanishes.
\end{remark}

\section{The Jordan--Chevalley Theorem}\label{sec:the-jordan--chevalley-theorem}
In this section we will prove \cref{thm:jordan intro} and \cref{thm:galois intro} from the introduction. 
A key concept here is the notion of \emph{regularity} which we define in \cref{sec:the-regular-locus}. 
Then we will establish the equivalence of the two main theorems in \cref{sec:equivalence-of-theorems-refthmjordan-and-refthmgalois}. 
Finally we will prove \cref{thm:galois intro} over \cref{sec:proof-of-surjectivity} and \cref{sec:proof-of-injectivity}.

\subsection{The regular locus}\label{sec:the-regular-locus}
Fix $H$ an $E$-pseudo-Levi subgroup of $G$. 
Recall that this means that there exists a semisimple framed $G$-bundle $q\in \GE$ such that $\Stab_G(q)^\circ=H$.

\begin{definition}\label{D:regular and strongly regular}
We say that $p \in \HE$ is
	\begin{enumerate}
		\item \emph{regular} if $\Stab_G(p)^\circ \subseteq H$,
		\item \emph{strongly regular} if $\Stab_G(p) \subseteq N_G(H)$,
		\item \emph{maximally regular} if $\Stab_G(p) \subseteq H$.
	\end{enumerate}
\end{definition}
One may immediately check that for a given $p\in \HE$ we have implications
\[
\text{maximally regular} \implies \text{strongly regular} \implies \text{regular}
\]
(the second implication uses that $N_G(H)^\circ=H$).

If the stabilizers of semisimple elements of $\GE$ are always connected, then all three notions above coincide. For example this happens when $E$ is cuspidal (so $\GE = \fg$) or when $E$ is nodal and $G = \GE$ is simply connected. However, in general, the three notions are all distinct as illustrated by the following example.

\begin{example}\label{ex:strongly regular}
Consider the case where $E$ is a nodal curve, and thus we may identify $\GE=G$. Assume also that $char(k)=0$. 
	\begin{enumerate}
	\item Let $G=PGL_2$ and $T$ the maximal torus represented by the classes of diagonal matrices. Consider the matrix
	\[
	X=
	\begin{pmatrix}
	1&0\\
	0&-1
	\end{pmatrix}
	\]
	One can check that $\Stab_{G}([A]) = N_G(T)$. Thus $[X]$ is a strongly regular (and thus regular) element of $T$, but it is not maximally regular .
	\item Now let $G=PGL_3$ and $H$ the Levi subgroup consisting of classes of block matrices of the form:
	\[
\begin{pmatrix}
\begin{matrix}
\ast & \ast \\
\ast & \ast
\end{matrix}
& \rvline & \begin{matrix}0\\0\end{matrix} \\
\hline
\begin{matrix}0&0\end{matrix} & \rvline &
\begin{matrix}
\ast
\end{matrix}
\end{pmatrix}
\]
We write $T$ for the diagonal maximal torus again.

Consider the matrix
\[
Y = 	
\begin{pmatrix}
1&0&0\\
0&\zeta&0\\
0&0&\zeta^2
\end{pmatrix}
\]
where $\zeta$ is a primitive third root of unity.
Then $\Stab_{G}([Y])$ is the preimage in $N_G(T)$ of the cyclic subgroup group $A_3 \subseteq S_3 \cong N_G(T)/T$. 

In particular 
\[
\Stab_{G}([Y])^\circ = T \subseteq H
\]
and thus $[Y]$ is a regular element of $H$. However, 
\[
\Stab_G([Y]) \nsubseteq H = N_G(H),
\]
so $[Y]$ is not a strongly regular element of $H$ (it is however, a strongly but not maximally regular element of $T$).
\end{enumerate}
\end{example}
We write $\HE^\reg$ (respectively $\HE^\strreg$, respectively $\HE^\maxreg$) for the locus of regular (respectively strongly regular, respectively maximally regular) elements. As these loci are manifestly $H$-invariant (in fact, $N_G(H)$-invariant) we have corresponding loci $\uHE^\reg$, $\uHE^\strreg$, $\uHE^\maxreg$ in the stack $\uHE$.

\begin{remark}\label{rem:subscript-vs-superscript}
Whereas the loci $(\uGE)_{[H]}$ and $(\uGE)_{[H,A]}$ introduced in \cref{sec:the-partition-of-semisimple-bundles-by-lusztig-type} are intrinsic to $G$, the regular locus $(\uHE)^\reg$ and its relatives are defined in terms of how $H$ sits as a subgroup of $G$ (i.e. are not intrinsic to $H$). In what follows, we will often need to consider both the intrinsic loci of $\uHE$ (such as $(\uHE)_{[K]}$ for some $E$-pseudo-Levi $K$ of $H$) alongside the regular loci. To keep track of these notions, we will always label intrinsic loci in the subscript and regularity conditions in the superscript.
\end{remark}

It will be useful to have a few other characterizations of the regularity condition. To state the result we will need to recall some notation.

Let $\cP_H \in \uHE$. Choose an element $p \in \TE$ lifting $\chi(\cP_H)$ under the map
\[
\TE \to \MHE = \TE \git W_H.
\]
This element determines a closed root subsystem $\Sigma_p \subseteq \Phi$ (see \cref{sec:closed-subsets-of-e-type}). Note that we consider $\Sigma_p$ as a subset of the roots $\Phi$ of $G$ and it might not be contained in the root subsystem $\Sigma_H$ corresponding to $H$.

Recall also that there is a unique-up-to-isomorphism semisimple bundle $\cP_H^\ss$ with $\chi(\cP_H^\ss) = \chi(\cP_H)$. One can construct $\cP_H^\ss$ by choosing a reduction $\cP_{B_H}$ to a Borel $B_H$ of $H$, and taking the induced bundle $\cP_T$ under the projection $B_H \to T$, then inducing $\cP_T$ back up to an $H$-bundle. Moreover, we may choose the $B_H$-reduction compatibly with the lift $p\in \TE$ above, so that $p$ is a framed lift (unique-up-to-isomorphism) of $\cP_T$.

\begin{proposition}\label{prop:regular}
	Let $\cP_H \in \uHE$, and choose $p \in \TE$ and $\cP_H^\ss$ as described above. The following are equivalent:
	\begin{enumerate}
		\item $\cP_H$ is regular, \label{E:PH regular}
		\item $\cP_H^{\ss}$ is regular,  \label{E:PHss regular}
		\item The morphism $\pi\colon\uHE \to \uGE$ is \'etale at $\cP_H$, \label{E:HEtoGE is etale at PH}
		\item $\rH^\bullet(E;(\fg/\fh)_{\cP_H})=0$,  \label{E:coh of adjoint of PH is zero}
		\item $\Sigma_{p} \subseteq \Sigma_H$. \label{E:Sigma_p inside Sigma_H}
	\end{enumerate} 
\end{proposition}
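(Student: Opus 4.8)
The plan is to show that all five conditions reduce to the cohomological vanishing in (4), and then to extract (5) from it by splitting the relevant bundle into line bundles. Two preliminary facts do most of the work. First, since $\Sigma_H$ is a closed subsystem, the subspace $\fm := \bigoplus_{\alpha\in\Phi\setminus\Sigma_H}\fg_\alpha$ is an $H$-submodule of $\fg$: if $\beta\in\Sigma_H$, $\alpha\notin\Sigma_H$ and $\alpha+n\beta\in\Phi$, then $\alpha+n\beta\notin\Sigma_H$ (otherwise $\alpha\in\bZ\Sigma_H\cap\Phi=\Sigma_H$), so $\fm$ is stable under $T$ and under the root subgroups of $H$. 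Thus $\fg=\fh\oplus\fm$ as $H$-modules with $\fm\cong\fg/\fh$, hence $\fg_{\cP_H}=\fh_{\cP_H}\oplus(\fg/\fh)_{\cP_H}$ as vector bundles on $E$. Second, choosing the reduction $\cP_{B_H}$ compatibly with $p$ as in the statement, the bundle $(\fg/\fh)_{\cP_H}=(\fg/\fh)_{\cP_{B_H}}$ inherits a filtration from a $B_H$-stable filtration of $\fg/\fh$ whose graded pieces are the weight spaces $\fg_\alpha$, $\alpha\in\Phi\setminus\Sigma_H$; the associated graded bundle is $\bigoplus_{\alpha\in\Phi\setminus\Sigma_H}\alpha_*(\cP_T)=(\fg/\fh)_{\cP_H^\ss}$, each summand $\alpha_*(\cP_T)$ being a degree $0$ line bundle that is trivial exactly when $\alpha\in\Sigma_p$. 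In particular $(\fg/\fh)_{\cP_H}$ and $(\fg/\fh)_{\cP_H^\ss}$ are semistable of degree $0$, and the latter is the semisimplification of the former.

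For (1)$\Leftrightarrow$(4): the Lie algebra of $\Aut(\cP_G)=\Stab_G(p)$ is $H^0(E;\fg_{\cP_H})$, which by the splitting equals $H^0(E;\fh_{\cP_H})\oplus H^0(E;(\fg/\fh)_{\cP_H})=\Lie\Stab_H(p)\oplus H^0(E;(\fg/\fh)_{\cP_H})$. Since $\Stab_H(p)=\Stab_G(p)\cap H$ and $\Stab_H(p)^\circ\subseteq\Stab_G(p)^\circ$ always, $\cP_H$ is regular iff $\Stab_G(p)^\circ=\Stab_H(p)^\circ$, hence --- the relevant stabilizer group schemes being smooth (automatic in characteristic $0$, and in general since these are automorphism groups of semistable bundles) --- iff their Lie algebras agree, i.e. iff $H^0(E;(\fg/\fh)_{\cP_H})=0$. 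As $(\fg/\fh)_{\cP_H}$ is semistable of degree $0$ on a curve of arithmetic genus $1$, its $H^0$ and $H^1$ vanish together (Serre duality and triviality of the dualizing sheaf), giving (4). For (4)$\Leftrightarrow$(3): the map $\pi$ is representable and $\uHE$, $\uGE$ are smooth, so the relative tangent complex of $\pi$ at $\cP_H$ is the cofiber of $R\Gamma(E;\fh_{\cP_H})\to R\Gamma(E;\fg_{\cP_H})$, namely $R\Gamma(E;(\fg/\fh)_{\cP_H})[1]$; then $\pi$ is \'etale at $\cP_H$ iff this complex vanishes, and vanishing at $\cP_H$ propagates to an open neighbourhood by semicontinuity.

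For (2)$\Leftrightarrow$(5): since $\cP_H^\ss$ is induced from the $T$-bundle $p$, \cref{l:automorphisms and root systems} gives $\Stab_G(p)^\circ=G(\Sigma_p)$, so $\cP_H^\ss$ is regular iff $G(\Sigma_p)\subseteq H=G(\Sigma_H)$, iff $\Sigma_p\subseteq\Sigma_H$ by the order-preserving Borel--de Siebenthal bijection (\cref{T:Borel de Siebenthal}). Finally, (1)$\Leftrightarrow$(2): applying (1)$\Leftrightarrow$(4) to both $\cP_H$ and $\cP_H^\ss$, it suffices to know that $H^\bullet(E;(\fg/\fh)_{\cP_H})=0$ iff $H^\bullet(E;(\fg/\fh)_{\cP_H^\ss})=0$; since $(\fg/\fh)_{\cP_H^\ss}$ is the semisimplification of $(\fg/\fh)_{\cP_H}$, this is the genus $1$ fact that a semistable degree $0$ bundle $\cV$ on $E$ has $H^\bullet(E;\cV)=0$ iff $\cO_E$ is not among its Jordan--H\"older constituents, equivalently iff $H^\bullet(E;\cV^{\mathrm{gr}})=0$ (immediate from Atiyah's classification in the elliptic case; in the nodal and cuspidal cases because the monodromy operator controlling $H^0$ and its semisimple part have the same eigenvalues, and $H^0=H^1$ since the Euler characteristic vanishes).

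I expect the main obstacle to be the deformation-theoretic identification in (4)$\Leftrightarrow$(3) --- correctly pinning down the relative (co)tangent complex of the representable stacky morphism $\pi$ and checking that pointwise vanishing yields \'etaleness on an open neighbourhood --- together with the genus $1$ comparison between the cohomology of a semistable degree $0$ bundle and that of its semisimplification, which is precisely what makes the passage between $\cP_H$ and $\cP_H^\ss$ in (1)$\Leftrightarrow$(2) go through; a minor additional point is the smoothness of the stabilizer group schemes used in (1)$\Leftrightarrow$(4), which needs a word in positive characteristic.
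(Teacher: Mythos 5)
Your proof is correct and follows essentially the same strategy as the paper: identify $H^0(E;\fg_{\cP_H})$ with the Lie algebra of the automorphism group, characterize \'etaleness of $\pi$ via the cone of tangent complexes, and read off the vanishing of $H^\bullet(E;(\fg/\fh)_{\cP_H})$ from the $B_H$-filtration whose graded pieces are the line bundles $\alpha_*(\cP_T)$ for $\alpha\in\Phi\setminus\Sigma_H$. The only (harmless) variations are that you split $\fg=\fh\oplus\fm$ as $H$-modules where the paper uses the short exact sequence plus Serre duality, and you route (2)$\Leftrightarrow$(5) through \cref{l:automorphisms and root systems} and Borel--de Siebenthal rather than through the cohomological criterion.
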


\begin{remark}\label{r:regular}
	The equivalence of \eqref{E:PHss regular} and \eqref{E:PH regular} means that the condition of $\cP_H$ being regular depends only on the "characteristic polynomial" $\chi(\cP_H)$. Thus we have a locus
	\[
	\MHE^\reg \subseteq \MHE
	\]
	such that $\cP_H$ is regular if and only if $\chi(\cP_H) \in \MHE^\reg$. 
	According to \eqref{E:Sigma_p inside Sigma_H}, the locus $\MHE^\reg$ is equal to the image of $(\TE)_{\leq \Sigma_H}$ under the map $\TE \to \MHE$. 
	In particular, it follows from \cref{prop:regular} that $\MHE^\reg$ (respectively, $\uHE^\reg$) is open and dense in $\MHE$ (respectively, $\uHE$). 
\end{remark}

\begin{proof}[Proof of \cref{prop:regular}]
	First, let us show that conditions (\ref{E:coh of adjoint of PH is zero}) and (\ref{E:HEtoGE is etale at PH}) are equivalent. 
	The map $\pi$ is \'etale precisely at the points where its differential is a quasi-isomorphism of tangent complexes. 
	Recall that the cohomology of the tangent complex of $\uHE$ at a bundle $\cP_H$ is given by the cohomology of the adjoint bundle of $\cP_H$, that is by $\rH^i(E;\fh_{\cP_H})$.
	
	The differential of $\pi$ at $\cP_H\in\uHE$ is the map
	\[  
	\bT_{\uHE,\cP_H} \to \pi^*\bT_{\uGE,\pi(\cP_H)}
	\]
	which upon taking cohomology groups becomes
	\[
	\rH^i(E;\fh_{\cP_H})\to \rH^i(E;\fg_{\cP_G}), i=0,1. 
	\]
	The cone of this map of complexes is given by $\rH^\bullet(E;(\fg/\fh)_{\cP_H})$. Thus we have that $\rH^\bullet(E;(\fg/\fh)_{\cP_H})=0$ if and only if $\pi$ is \'etale at $\cP_H$ as required.
	
	Let us show the equivalence of \eqref{E:coh of adjoint of PH is zero} and \eqref{E:Sigma_p inside Sigma_H}. 
	Fix $\cP_{B_H}$ a reduction of $\cP_H$ to a Borel subgroup of $H$ such that the induced $T$-bundle is precisely $\cP_T$ such that $\cP_T\timesT H=\cP_H^\ss$. (See paragraph above \cref{prop:regular}.)
	The vector bundle $(\fg/\fh)_{\cP_H} = (\fg/\fh)_{\cP_{B_H}}$ carries a filtration whose associated graded is $(\fg/\fh)_{\cP_T}$. 
	This latter bundle is a direct sum of line bundles $(\fg_\alpha)_{\cP_T}$ corresponding to roots $\alpha \in \Phi \backslash \Sigma$. 
	By definition, the bundle $(\fg_\alpha)_{\cP_T}$ is trivial if and only if $\alpha \in \Sigma_{\cP_T}$. 
	Noting that the cohomology of a degree 0 line bundle on $E$ vanishes if and only if it is trivial, we deduce that (\ref{E:coh of adjoint of PH is zero}) is equivalent to (\ref{E:Sigma_p inside Sigma_H}). 
	The same argument shows the equivalence of \eqref{E:PHss regular} and \eqref{E:Sigma_p inside Sigma_H}.
	
	Notice that the bundle $\cP_H$ is regular if and only if the inclusion $\Aut(\cP_H)^\circ \to \Aut(\cP_G)^\circ$ is an isomorphism, where we put $\cP_G=\cP_H\timesH G$. 
	Taking the corresponding Lie algebras, we see that $\cP_H$ is regular if and only if the map
	\[
	H^0(E;\fh_{\cP_H}) \to H^0(E;\fg_{\cP_H})
	\]
	is an isomorphism. 
	Using the long exact sequence associated to the short exact sequence of $H$-modules
	\[ 0\to \fh\to \fg \to \fg/\fh\to 0 \] and Serre duality (the canonical bundle is trivial), this is equivalent to condition \eqref{E:coh of adjoint of PH is zero}.
	We've shown that \eqref{E:coh of adjoint of PH is zero} is equivalent to \eqref{E:PH regular}.
\end{proof}

We write $Z(H)_E^\reg$ for the intersection $Z(H)_E \cap \HE^\reg$ (similarly for $Z(H)_E^\strreg$, $Z_H(E)^\maxreg$). The various notions of regularity are somewhat simpler here. 

\begin{lemma}\label{l:regular vs strongly regular}
	Fix an element $p \in Z(H)_E$.
	\begin{enumerate}
		\item The element $p$ is regular if and only if it is strongly regular.
		\item Assume $p$ is regular. It is maximally regular if and only if the relative Weyl group $W_{G,H}$ acts freely on the orbit of $p$.
	\end{enumerate}
\end{lemma}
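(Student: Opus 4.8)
The plan is to reduce both statements to a single elementary observation: \emph{for $p\in Z(H)_E$ one always has $H\subseteq\Stab_G(p)$}. Granting this, the defining containment $\Stab_G(p)^\circ\subseteq H$ of regularity (see \cref{D:regular and strongly regular}) automatically upgrades to the equality $\Stab_G(p)^\circ=H$. I would justify the observation by noting that $p$ is induced from a $Z(H)$-bundle and $Z(H)$ is central in $H$; hence $H$ acts on the associated $G$-bundle $\cP_G$ by bundle automorphisms (``left translations''), giving an embedding $H\hookrightarrow\Aut(\cP_G)\cong\Stab_G(p)$. (Alternatively, after conjugating so that $T\subseteq H$, this is immediate from \cref{l:automorphisms and root systems} together with the identification $Z(H)_E=(\TE)_{\ge\Sigma_H}$, which forces $\Sigma_H\subseteq\Sigma_p$ and hence $H=G(\Sigma_H)\subseteq G(\Sigma_p)=\Stab_G(p)^\circ$.)

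For part (1): the implication ``strongly regular $\Rightarrow$ regular'' holds for every element of $\HE$, as recorded after \cref{D:regular and strongly regular}. Conversely, if $p\in Z(H)_E$ is regular then by the observation $\Stab_G(p)^\circ=H$; since any algebraic group normalises its own identity component, $\Stab_G(p)\subseteq N_G(\Stab_G(p)^\circ)=N_G(H)$, which is exactly the assertion that $p$ is strongly regular.

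For part (2): assume $p$ is regular, so by part (1) it is strongly regular, i.e.\ $\Stab_G(p)\subseteq N_G(H)$, and also $H\subseteq\Stab_G(p)$ by the observation; in particular $H$ is normal in $\Stab_G(p)$. Then $p$ is maximally regular, i.e.\ $\Stab_G(p)\subseteq H$, precisely when $\Stab_G(p)=H$. On the other hand, the stabiliser of $p$ for the $W_{G,H}=N_G(H)/H$-action on $Z(H)_E$ is $\Stab_{N_G(H)}(p)/H=\Stab_G(p)/H$, using $\Stab_G(p)\subseteq N_G(H)$. Since the action of $W_{G,H}$ on the orbit $W_{G,H}\cdot p$ is transitive, it is free there exactly when $\Stab_G(p)/H$ is trivial, i.e.\ exactly when $\Stab_G(p)=H$. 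Comparing the two criteria gives the claim.

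Apart from this group-theoretic bookkeeping, the only real content is the observation $H\subseteq\Stab_G(p)$ for $p\in Z(H)_E$; the one point requiring care is matching up conventions for framed induced bundles with the identification $\Aut(\cP_G)\cong\Stab_G(p)$. I do not anticipate a genuine obstacle.
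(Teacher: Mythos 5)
Your proof is correct and follows essentially the same route as the paper: the key point in both is that $p\in Z(H)_E$ forces $H\subseteq\Stab_G(p)$, so regularity upgrades to $\Stab_G(p)^\circ=H$, strong regularity follows from normality of the identity component, and maximal regularity is equivalent to connectedness of $\Stab_G(p)$, detected by the stabiliser in $W_{G,H}$. The only cosmetic difference is that you compute $\Stab_{W_{G,H}}(p)=\Stab_G(p)/H$ directly (and justify $H\subseteq\Stab_G(p)$ explicitly) where the paper cites \cref{lem:components}.
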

\begin{proof}
		(1) Suppose $p$ is regular, i.e. $\Stab_G(p)^\circ \subseteq H$. But $p\in Z(H)_E$, so $H \subseteq \Stab_G(p)$ and thus $\Stab_G(p)^\circ = H$. As the neutral component of any algebraic group is a normal subgroup, we have $\Stab_G(p) \subseteq N_G(H)$ as required.
		
		(2) By regularity of $p$, $\Stab_G(p)^\circ=H$ as explained above. 
		Thus $p$ is maximally regular if and only if $\Stab_G(p)$ is connected. 
		According to \cref{lem:components}, the component group of $\Stab_G(p)$ is precisely the stabilizer of $p$ in $W_{G,H}$, hence the claimed result.
\end{proof}

\subsection{The main results}\label{SS:main results}
For convenience, we remind the reader of the statements of \cref{thm:jordan intro} and \cref{thm:galois intro}, to be proved in this section.

To state the first result, recall that for an $E$-pseudo-Levi subgroup $H$ of $G$, we defined the regular locus $(\HE)^\reg$ in \cref{sec:the-regular-locus}. 
We denote by $Z(H)^\reg$ the intersection $Z(H)_E \cap \HE^\reg$. 
The locus of unipotent bundles $\HE^\uni$ is defined to be fibre $\chi_H^{-1}(\one)$ of the characteristic polynomial map $\chi_H:\HE \to \MHE$.

\begin{theorem}[Jordan decomposition]\label{thm:jordan}
 Given a semistable, degree $0$, framed $G$-bundle $p \in \GE$, there is a unique triple $(H,p_s,p_u)$ where $H$ is an $E$-pseudo-Levi, $p_s \in Z(H)_E^\reg$, $p_u \in \HE^\uni$, and 
		\[
		p =  p_s \cdot p_u 
		\]
		where the multiplication is defined via the group morphism $m\colon Z(H)\times H\to H$ (see \cref{Eg:product with a central bundle}).
\end{theorem}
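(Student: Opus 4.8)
The plan is to deduce \cref{thm:jordan} from the partition statement of \cref{thm:galois intro}; the passage between the two formulations, carried out in \cref{sec:equivalence-of-theorems-refthmjordan-and-refthmgalois}, is essentially formal. By definition $(\uGE)_{[H]}=\underline\chi^{-1}(\MGE_{[H]})$, so the decomposition $\uGE=\bigsqcup_{[H]}(\uGE)_{[H]}$ is locally closed by \cref{prop:locally closed}. Pulling back along the $G$-torsor $\GE\to\uGE$ and using that $\HE\hookrightarrow\GE$ is a closed embedding (\cref{P:framed H semistable injects in G semistable}), the equivalence $(\uHE)_\heartsuit^\reg/W_{G,H}\xrightarrow{\sim}(\uGE)_{[H]}$ of \cref{thm:galois intro} unwinds into the statement that the loci $(\HE)_\heartsuit^\reg$ are pairwise disjoint with union $\GE$ and that $(\HE)_\heartsuit^\reg\xrightarrow{\sim}(\GE)_H$, the union now ranging over \emph{all} $E$-pseudo-Levi subgroups. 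Since \cref{P:heart locus is a product} writes every element of $(\HE)_\heartsuit^\reg$ uniquely as $p_s\cdot p_u$ with $p_s\in Z(H)_E^\reg$ and $p_u\in\HE^\uni$, this is precisely the existence and uniqueness of the triple $(H,p_s,p_u)$.

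It therefore remains to prove \cref{thm:galois intro}. First I would construct the comparison morphism: if $\cP_H\in(\uHE)_\heartsuit^\reg$ with $\cP_H=\cP_s\cdot\cP_u$, $\cP_s\in Z(H)_E^\reg$, $\cP_u\in\HE^\uni$, then the induced bundle satisfies $\underline\chi_G(\cP_H\timesH G)=\underline\chi_G(\cP_s\timesH G)$, whose Lusztig type is $[\Stab_G(\cP_s)^\circ]=[H]$; hence $\pi\colon\uHE\to\uGE$ carries $(\uHE)_\heartsuit^\reg$ into $(\uGE)_{[H]}$, manifestly $W_{G,H}$-invariantly, and descends to $\bar\pi\colon(\uHE)_\heartsuit^\reg/W_{G,H}\to(\uGE)_{[H]}$. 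The plan is then to prove $\bar\pi$ is (i) representable and étale, (ii) surjective, and (iii) universally injective; a morphism with these three properties is an isomorphism. For (i), \cref{prop:regular}\,\eqref{E:HEtoGE is etale at PH} gives that $\pi$ is étale along $\uHE^\reg\supseteq(\uHE)_\heartsuit^\reg$, and étaleness survives the quotient by the finite group $W_{G,H}$, which acts over $\uGE$; representability reduces to noting that if $p$ and $g\cdot p$ both lie in $(\HE)_\heartsuit^\reg$ for some $g\in G$, then $g$ normalizes $H=\Stab_G(p_s)^\circ$ because $\Stab_G(g\cdot p_s)^\circ=gHg^{-1}$ must again equal $H$.

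The crux is (ii). Given $\cP_G\in(\uGE)_{[H]}$, its semisimplification $\cP_G^\ss$ has automorphism group whose neutral component is conjugate to $H$; replacing $H$ by a conjugate we may assume $\Stab_G(\cP_G^\ss)^\circ=H$ exactly. A reduction of $\cP_G^\ss$ to $T$ then has structure group inside $Z(H)_E\subseteq\TE$ (using $Z(H)=Z(\Sigma_H)$, see \eqref{E:center of C_G is Z_Sigma}, and \cref{l:automorphisms and root systems}), so $\cP_G^\ss=p_s\times^{Z(H)}G$ with $p_s\in Z(H)_E^\reg$. I would then show that the evident reduction of $\cP_G^\ss$ to $H$ spreads out over the one-parameter degeneration of $\cP_G$ to $\cP_G^\ss$ (afforded by a cocharacter of $G$ realizing the semisimplification) to yield a reduction $\cP_H$ of $\cP_G$ itself; the obstruction to this is an $\rH^1$ on $E$ of $(\fg/\fh)_{\cP_H^\ss}$, which vanishes by \cref{prop:regular}\,\eqref{E:coh of adjoint of PH is zero} --- and it is exactly here that the regularity of $p_s$ (i.e.\ $\Sigma_{p_s}=\Sigma_H$) is indispensable. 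Once $\cP_H$ exists it is regular, and since its characteristic polynomial is $W$-conjugate to that of $p_s$, \cref{prop:regular}\,\eqref{E:Sigma_p inside Sigma_H} forces $\Sigma_{p_H}=\Sigma_H$, so $(\cP_H)^\ss$ is central in $H$ and $\cP_H\in(\HE)_\heartsuit^\reg$ maps onto $\cP_G$. I expect this spreading-out step --- producing a reduction of the full bundle, not merely of its semisimplification, to the neutral stabilizer --- to be the main obstacle, and the point where semistability over the genus-one curve $E$ is used in an essential way.

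For (iii), suppose $\cP_H\in(\uHE)_\heartsuit^\reg$ and $\cP_{H'}\in(\underline{H}'_E)_\heartsuit^\reg$ induce the same point of $(\uGE)_{[H]}$. Choosing framed lifts of a common framed $G$-bundle, their semisimple parts $p_s,p_s'$ lie in the closure of a single $G$-orbit in $\GE$, hence are $G$-conjugate; as $\Stab_G(p_s)^\circ=H$ and $\Stab_G(p_s')^\circ=H'$, the conjugating element carries $H$ to $H'$, and absorbing it into the $W_{G,H}$-ambiguity reduces us to $H=H'$, $p_s=p_s'$. Then $\cP_H$ and $\cP_{H'}$ are two reductions to $H$ of one and the same framed $G$-bundle, so \cref{P:framed H semistable injects in G semistable} forces them to agree; unwinding the choices shows the two original points of $(\uHE)_\heartsuit^\reg/W_{G,H}$ coincide, which gives (iii) and finishes the proof.
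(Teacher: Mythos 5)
Your reduction of \cref{thm:jordan} to the Galois statement, and your surjectivity argument, both track the paper closely: the paper likewise deduces surjectivity from the \'etaleness of $\pi$ on $\uHE^\reg$ (\cref{prop:regular}), though it phrases the lifting step topologically --- the image of an \'etale map is open, and $\cP_G$ lies in every open neighbourhood of its semisimplification $\cP_G^\ss$ --- rather than via an explicit deformation along the degeneration to $\cP_G^\ss$; these are interchangeable, and your check that the resulting $\cP_H$ lands in the heart locus matches the paper's.

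The gap is in step (iii), and it also infects the representability claim in (i). To show $g\in N_G(H)$ you argue that $g\cdot p_s$ and $p_s'$ both lie in the unique closed orbit inside $\overline{G\cdot p'}$ and are therefore $G$-conjugate, say $p_s'=k\cdot(g\cdot p_s)$ for some $k\in G$. This gives $kg\in N_G(H)$, but it says nothing about $g$ itself: the element $k$ is determined only up to $\Stab_G(p_s')$ and you have no control over it. Concluding $g\cdot p_s=p_s'$ on the nose is exactly the assertion that the semisimple part $p\mapsto p_s$ is a $G$-equivariant function on $\GE$, i.e.\ the uniqueness of the Jordan decomposition --- which is what you are trying to prove; the argument is circular. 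The subsequent appeal to \cref{P:framed H semistable injects in G semistable} fails for the same reason: that proposition compares two framed $H$-reductions of the \emph{same} framed $G$-bundle, whereas your two reductions sit over framed $G$-bundles differing by the unknown $g$. This circularity is precisely why the paper's injectivity proof (\cref{sec:proof-of-injectivity}) is the hard part of the whole theorem: it factors $\overline{\pi}$ through the base change $\widetilde{\uGE}$ of $\uGE$ along $\MHE\git W_{G,H}\to\MGE$, proves that the coarse-level map is an embedding on $\MHE_\heartsuit^\reg\git W_{G,H}$ by identifying both sides with $(\TE)_\Sigma\git N_W(W_H)$ (\cref{l:injective coarse}), and then upgrades ``generically an embedding'' to ``an embedding'' on the strongly regular locus using the degree count of \cref{l:both coverings of same degree} together with the principle that an \'etale, separated, generically injective morphism is an open immersion (\cref{p:etale generic embedding}). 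Some argument of this kind --- or any other mechanism producing the equality $g\cdot p_s=p_s'$ rather than mere conjugacy --- is needed to close your proof.
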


To state the next result recall that we have defined a partition of $\uGE$ (and also of $\GE$) in \cref{SS:summary partition and strata}
\[ \uGE = \bigsqcup_{[H]\in\cE_E} (\uGE)_{[H]} \]
and we defined the heart locus to be $(\uGE)_\heartsuit:=(\uGE)_{[G]}$.

\begin{theorem}[Galois theorem]\label{thm:galois} For $H$ an $E$-pseudo-Levi subgroup of $G$ the morphism
		\[
		(\uHE)_\heartsuit^\reg \to \uGE
		\]
		is a $W_{G,H}$-Galois covering onto $(\uGE)_{[H]}$.
\end{theorem}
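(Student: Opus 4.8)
The plan is to recast the statement as an equivalence of quotient stacks and then split the work into an \'etaleness step, a surjectivity step, and an injectivity step (the latter two being the content of \cref{sec:proof-of-surjectivity} and \cref{sec:proof-of-injectivity}). First I would note that $N_G(H)$ acts on $\HE$ preserving $Z(H)_E$, the unipotent locus $\HE^\uni$, and each regularity condition of \cref{D:regular and strongly regular}, hence preserving $(\HE)_\heartsuit^\reg=(Z(H)_E)^\reg\cdot\HE^\uni$, and that the induction morphism $(\HE)_\heartsuit^\reg\to\GE$ is $N_G(H)$-equivariant along $N_G(H)\hookrightarrow G$. Passing to $H$-quotients, $W_{G,H}=N_G(H)/H$ acts on $(\uHE)_\heartsuit^\reg$ and $\pi$ factors canonically through a map $\phi\colon[(\uHE)_\heartsuit^\reg/W_{G,H}]\to\uGE$. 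The theorem then amounts to: $\phi$ is \'etale; the image of $\phi$ is exactly the locally closed substack $(\uGE)_{[H]}$; and $\phi$ is a monomorphism (injective on isomorphism classes of points, and an isomorphism on automorphism groups). Granting these, $\phi$ is an \'etale surjection onto $(\uGE)_{[H]}$ which is a monomorphism, hence an equivalence $[(\uHE)_\heartsuit^\reg/W_{G,H}]\xrightarrow{\sim}(\uGE)_{[H]}$, and composing with the tautological $W_{G,H}$-torsor $(\uHE)_\heartsuit^\reg\to[(\uHE)_\heartsuit^\reg/W_{G,H}]$ exhibits the asserted Galois covering; I would emphasize that no freeness of the $W_{G,H}$-action is needed, since the non-free locus is absorbed into the stacky structure of $(\uGE)_{[H]}$.

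For \'etaleness I would first show $(\HE)_\heartsuit^\reg\subseteq\HE^\reg$: by \cref{P:heart locus is a product} the semisimplification of any $p\in(\HE)_\heartsuit^\reg$ lies in the orbit of an element of $(Z(H)_E)^\reg\subseteq\HE^\reg$, so $p$ is regular by the equivalence of \eqref{E:PH regular} and \eqref{E:PHss regular} in \cref{prop:regular}. The equivalence of \eqref{E:PH regular} and \eqref{E:HEtoGE is etale at PH} then gives that $\pi$ is \'etale along $(\uHE)_\heartsuit^\reg$, and since $(\uHE)_\heartsuit^\reg\to[(\uHE)_\heartsuit^\reg/W_{G,H}]$ is a torsor under a finite constant group, $\phi$ is \'etale. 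For injectivity, suppose $p,p'\in(\HE)_\heartsuit^\reg$ induce $G$-bundles in the same $G$-orbit, say $p'=g\cdot p$. Comparing semisimplifications gives $p'_s=g\cdot p_s$ with $p_s,p'_s\in(Z(H)_E)^\reg$; since $\Stab_G(p_s)^\circ=H=\Stab_G(p'_s)^\circ$, conjugation forces $gHg^{-1}=H$, i.e.\ $g\in N_G(H)$, and moreover \emph{any} transporter lies in $N_G(H)$ because $\Stab_G(p)\subseteq\Stab_G(p_s)\subseteq N_G(H)$, the last inclusion by strong regularity of $p_s$ (\cref{l:regular vs strongly regular}(1)). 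Hence the fibres of $(\uHE)_\heartsuit^\reg\to\uGE$ over a point of the image are single $W_{G,H}$-orbits, and $\Stab_G(p)=\Stab_{N_G(H)}(p)$ for every $p$, so the automorphism group of an induced bundle in $\uGE$ and that of $[\cP_H]$ in $[(\uHE)_\heartsuit^\reg/W_{G,H}]$ agree (cf.\ \cref{lem:components}); this is the injectivity statement, and it also yields the uniqueness assertion in \cref{thm:jordan}.

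The main obstacle, I expect, will be surjectivity: that every $\cP_G\in(\uGE)_{[H]}$ has a reduction to $H$ landing in $(\HE)_\heartsuit^\reg$. I would begin with a framed lift $p\in\GE$ with semisimplification $p_s$; since $\Stab_G(p_s)^\circ$ is $G$-conjugate to $H$, after translating $p$ by $G$ we may assume $\Stab_G(p_s)^\circ=H$, and after a further translate by an element of $H$ we may assume $p_s$ reduces to the fixed torus $T\subseteq H$. Then \cref{l:automorphisms and root systems} gives $G(\Sigma_{p_s})=H=G(\Sigma_H)$, so $\Sigma_{p_s}=\Sigma_H$ by Borel--de Siebenthal (\cref{T:Borel de Siebenthal}); equivalently $p_s\in Z(H)_E$, and being regular, $p_s\in(Z(H)_E)^\reg$. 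It then remains to produce $p_u\in\HE^\uni$ with $p=p_s\cdot p_u$, which is the heart of the matter: one must understand the fibre $\chi_G^{-1}(\chi_G(p_s))$ and show that every degree-$0$ semistable $G$-bundle with semisimplification $p_s$ is obtained from $p_s$ by multiplying by a unipotent bundle for the group $M:=\Stab_G(p_s)$. In the nodal and cuspidal cases this is precisely the classical Jordan--Chevalley decomposition in $G$ and in $\fg$, via \cref{P:Fried-Morg-nodal-cuspidal}; in the elliptic case it must be drawn from the structure theory of semistable bundles on $E$, and one has to take care that $M$ may be disconnected. Granting this, since $M^\circ=H$ with $M/H$ finite, the unipotent $M$-bundle occurring degenerates to the trivial $M$-bundle, so its associated $M/H$-torsor on $E$ — a discrete, hence deformation-constant, invariant — is trivial; the bundle therefore reduces to an $H$-bundle $p_u$, necessarily unipotent in $\HE$, whence $p=p_s\cdot p_u\in(\HE)_\heartsuit^\reg$ and $\cP_G$ lies in the image of $\phi$.

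Assembling the three steps proves \cref{thm:galois}, and \cref{thm:jordan} follows by rereading the condition $p\in(\HE)_\heartsuit^\reg$ through \cref{P:heart locus is a product}.
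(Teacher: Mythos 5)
Your overall architecture (étale $+$ surjective $+$ monomorphism $\Rightarrow$ equivalence onto $(\uGE)_{[H]}$, with the étaleness step via \cref{prop:regular} and \cref{C:heartsuit regular as a product}) is sound and matches the paper's reduction. But your injectivity argument is circular. From $p'=g\cdot p$ you assert ``comparing semisimplifications gives $p'_s=g\cdot p_s$'' and later that $\Stab_G(p)\subseteq\Stab_G(p_s)$. Both claims presuppose that the semisimple part $p_s$ is canonically attached to the framed bundle $p$ alone; but $p_s$ is only defined as the $Z(H)_E^\reg$-component of $p$ inside $(\HE)_\heartsuit^\reg=Z(H)_E^\reg\times\HE^\uni$, i.e.\ it depends on the chosen $E$-pseudo-Levi $H$. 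All you can extract from $p'=g\cdot p$ without further input is that $p_s$ and $p'_s$ lie in the same closed $G$-orbit, so $p'_s=g'\cdot p_s$ for \emph{some} $g'$, which does not force $g\in N_G(H)$. The statement ``the Jordan datum $(H,p_s,p_u)$ is unique'' is exactly what the paper shows to be \emph{equivalent} to injectivity (\cref{sec:equivalence-of-theorems-refthmjordan-and-refthmgalois}), so it cannot be used to prove it. The paper's actual proof of injectivity is entirely different: it factors $\overline{\pi}$ through the base change $\widetilde{\uGE}$ of the coarse moduli map, shows that $\overline{\pi}$ and $\overline{\rho}$ are both generically coverings of degree $|W|/|N_W(W_H)|$ (\cref{l:both coverings of same degree}), deduces that $\overline{\nu}$ is generically an embedding and étale on the strongly regular locus (\cref{l:embedding}, \cref{l:etale}), applies the principle that a separated étale morphism which is generically an embedding is an open embedding (\cref{p:etale generic embedding}), and finally checks that the coarse moduli map $\MHE_\heartsuit^\reg\git W_{G,H}\to\MGE$ is an embedding (\cref{l:injective coarse}). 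None of this is present in your proposal.

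Your surjectivity argument also has an unclosed gap, which you acknowledge: after arranging $p_s\in Z(H)_E^\reg$ you must produce $p_u\in\HE^\uni$ with $p=p_s\cdot p_u$, and you ``grant'' this by appeal to unspecified structure theory of semistable bundles (the subsequent remark about the $M/H$-torsor being deformation-constant does not produce the required $H$-reduction). The paper sidesteps this entirely: since $\pi$ is étale on $\uHE^\reg$, its image is \emph{open}; it contains every semisimple bundle of $(\uGE)_{\leq[H]}$ (constructed by reducing to $T$ via \cref{l:automorphisms and root systems}); and since the semisimplification $\cP_G^\ss$ lies in the closure of the point $\cP_G$, any open set containing $\cP_G^\ss$ contains $\cP_G$. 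This specialization argument (\cref{l:characterize locus}) is the device that lets one avoid ever constructing the unipotent part directly, and it is the step your outline is missing.
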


To begin with we will establish the following result, which expresses the Jordan--Chevalley decomposition on the heart locus (where, in fact, it is simply a direct product). This may be thought of as a baby version of \cref{thm:jordan}.
\begin{proposition}\label{P:heart locus is a product}
	There is an equivalence of stacks
	\[ (\uGE)_\heartsuit \simeq Z(G)_E\times \uGE^\uni. \]
\end{proposition}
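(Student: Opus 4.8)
The plan is to build the equivalence out of the action of the group $Z(G)_E$ on $\GE$ together with the characteristic polynomial map $\chi$, by exhibiting an explicit inverse pair of morphisms; the statement then follows by passing to $G$-quotients.

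\textbf{Step 1: the multiplication map.} Using the group morphism $m\colon Z(G)\times G\to G$ and the compatibility of induction with framings (see \cref{Eg:product with a central bundle} and \cref{S:framed bundles}), I would define
\[
\mu\colon Z(G)_E\times\GE^\uni\longrightarrow\GE,\qquad (z,p)\longmapsto z\cdot p.
\]
This is well defined because $Z(G)$ acts trivially on $\fg$ via the adjoint action, so $\fg_{z\cdot p}\cong\fg_p$ and hence $z\cdot p$ is again degree $0$ semistable. Since $Z(G)$ is central, the $Z(G)_E$-action and the $G$-action (changing the framing) on $\GE$ commute, and $\GE^\uni$ is $G$-stable because $\chi$ is $G$-invariant; thus $\mu$ is $G$-equivariant for the trivial $G$-action on the first factor. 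Passing to quotient stacks gives $\bar\mu\colon Z(G)_E\times\uGE^\uni=[(Z(G)_E\times\GE^\uni)/G]\to\uGE$, and it will suffice to show $\mu$ is an isomorphism onto $(\GE)_\heartsuit$ (then quotient by $G$, noting the target is $G$-stable and $(\GE)_\heartsuit=[(\uGE)_\heartsuit\times_{\uGE}\GE]$).

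\textbf{Step 2: identifying the target via $\chi$.} Recall from \cref{D:stack of unip bdls} that $\GE^\uni=\chi^{-1}(\one)$, and from \cref{SS:summary partition and strata} that $(\GE)_\heartsuit=\chi^{-1}(\MGE_{[G]})$. Under the identification $\MGE\cong\TE\git W$, the stratum $\MGE_{[G]}$ is the image of $(\TE)_\Phi=\bigcap_{\alpha\in\Phi}\ker(\alpha_\ast)$; since $\bigcap_{\alpha\in\Phi}\ker(\alpha)=Z(G)$ inside $T$, one identifies $(\TE)_\Phi=\Hom(\bX^*(T)/\bZ\Phi,J(E))=Z(G)_E$. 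This subvariety is closed in $\TE$ (as $\Phi$ is maximal for $\succeq$, by \cref{P:closure of TE_Sigma in terms of refined order}) and is fixed pointwise by $W$, so the composite $Z(G)_E=(\TE)_\Phi\hookrightarrow\TE\to\MGE$ is a closed immersion with image $\MGE_{[G]}$, carrying the identity of $Z(G)_E$ to $\one$ and the group-translation action of $Z(G)_E$ on itself to the action on $\MGE_{[G]}$. Moreover $\chi$ is equivariant for the $Z(G)_E$-action on $\GE$ and the translation action on $\MGE_{[G]}\cong Z(G)_E$: for a semisimple bundle, twisting a $T$-reduction by $z$ translates its class by $z$, and the general case follows because $z\cdot(-)$ is an automorphism of $\GE$ commuting with the $G$-action and hence compatible with passage to the semisimplification. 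In particular $\chi(z\cdot p)=z$ for $p\in\GE^\uni$, so $\mu$ factors through $(\GE)_\heartsuit$.

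\textbf{Step 3: the inverse, and the main obstacle.} Using Step 2, I would define
\[
\nu\colon(\GE)_\heartsuit\longrightarrow Z(G)_E\times\GE^\uni,\qquad p\longmapsto\bigl(\chi(p),\ \chi(p)^{-1}\cdot p\bigr),
\]
with $\chi(p)^{-1}$ the inverse in the group $Z(G)_E$. This is a morphism (a composite of $\chi$, inversion, and the action) and lands in $Z(G)_E\times\GE^\uni$ since $\chi(\chi(p)^{-1}\cdot p)=\chi(p)^{-1}\cdot\chi(p)=\one$. The computations $\mu\circ\nu(p)=(\chi(p)\chi(p)^{-1})\cdot p=p$ and $\nu\circ\mu(z,p)=(z,z^{-1}\cdot z\cdot p)=(z,p)$ show $\mu$ and $\nu$ are mutually inverse, so $\mu\colon Z(G)_E\times\GE^\uni\xrightarrow{\ \sim\ }(\GE)_\heartsuit$. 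Both $\mu$ and $\nu$ are $G$-equivariant with trivial action on the $Z(G)_E$ factor, so taking $G$-quotients yields the equivalence $Z(G)_E\times\uGE^\uni\xrightarrow{\ \sim\ }(\uGE)_\heartsuit$. The skeleton is short; the genuinely substantive points — which I expect to be the main work — are the scheme-theoretic identification $\MGE_{[G]}\cong Z(G)_E$ (closedness of $(\TE)_\Phi$ and the fact that it maps isomorphically onto its image in the GIT quotient) and the $Z(G)_E$-equivariance of $\chi$, i.e. the statement that $\chi$ genuinely records the semisimplification and intertwines central twisting with translation on $\MGE$.
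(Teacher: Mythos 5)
Your proof is correct and follows essentially the same route as the paper's: identify $(\uGE)_\heartsuit$ as the preimage under $\chi$ of $Z(G)_E\hookrightarrow\MGE$, use $Z(G)_E$-equivariance of $\chi$, and invert the product map via $\cP\mapsto(\chi(\cP),\chi(\cP)^{-1}\cdot\cP)$, which is exactly the formula in the paper. The extra care you take with the identification $\MGE_{[G]}\cong Z(G)_E$ and with equivariance is a sound elaboration of what the paper leaves implicit.
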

\begin{proof}
	The following is the cartesian diagram defining $(\uGE)_\heartsuit$ and $\uGE^\uni$:
	\[ \begin{tikzcd}
		\uGE^\uni\arrow[r,hook]\arrow[d] & (\uGE)_\heartsuit \arrow[r,hook]\arrow[d] & \uGE\arrow[d,"\chi"]\\
		\{\one\}\arrow[r, hook]&Z(G)_E \arrow[r,hook] & \MGE
	\end{tikzcd} \]
	The group $Z(G)_E$ acts on both $\uGE$ and $\MGE$ and the map $\chi$ is equivariant.
	This readily implies the required isomorphism from the statement.
	
	One could also argue as follows: the natural product map 
	\[ Z(G)_E\times \uGE^\uni\to (\uGE)_\heartsuit \]
	is $Z(G)_E$-equivariant and this enables us to define its inverse by the following formula
	\[ \cP\mapsto (\chi(\cP),\chi(\cP)\inv \cdot \cP).\qedhere\]
\end{proof}
\begin{corollary}\label{C:heartsuit regular as a product}
	We have an equivalences of stacks
	\[ (\uHE)_\heartsuit^\reg\simeq Z(H)_E^\reg\times \uHE^\uni \simeq (\uHE)_\heartsuit^\strreg \]
\end{corollary}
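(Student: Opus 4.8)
The plan is to reduce both equivalences to \cref{P:heart locus is a product}, applied with the $E$-pseudo-Levi subgroup $H$ in place of $G$, together with the characterisations of regularity in \cref{prop:regular} and \cref{r:regular}. Observe first that $H$ is the maximal $E$-pseudo-Levi subgroup of itself, so the heart locus of $\uHE$ in the sense of \cref{SS:summary partition and strata} is $(\uHE)_{[H]}=(\uHE)_\heartsuit$, and \cref{P:heart locus is a product} provides an equivalence $(\uHE)_\heartsuit\simeq Z(H)_E\times\uHE^\uni$ under which the characteristic polynomial map $\chi_H\colon(\uHE)_\heartsuit\to Z(H)_E\subseteq\MHE$ becomes the projection to the first factor (this is precisely the content of the explicit inverse $\cP\mapsto(\chi_H(\cP),\chi_H(\cP)^{-1}\cdot\cP)$ exhibited in the proof of \cref{P:heart locus is a product}). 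By \cref{r:regular} the regular locus of $\uHE$ is $\chi_H^{-1}(\MHE^\reg)$; transporting this along the equivalence, and unwinding the definition $Z(H)_E^\reg=Z(H)_E\cap\HE^\reg$ together with \cref{r:regular} once more (noting that $Z(H)_E\hookrightarrow\MHE=\TE\git W_H$ is a closed immersion, since $W_H$ fixes $Z(H)_E$ pointwise), gives the first equivalence $(\uHE)_\heartsuit^\reg\simeq Z(H)_E^\reg\times\uHE^\uni$.

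For the second equivalence, the first part of \cref{l:regular vs strongly regular} gives $Z(H)_E^\reg=Z(H)_E^\strreg$, so by the equivalence just established it is enough to show that $(\uHE)_\heartsuit^\strreg=(\uHE)_\heartsuit^\reg$ as substacks of $\uHE$. The inclusion $\subseteq$ is immediate from the implications recorded after \cref{D:regular and strongly regular}. For the reverse inclusion, I would take $\cP_H\in(\uHE)_\heartsuit^\reg$ with a framed lift $p\in\HE$, regarded inside $\GE$, and show $\Stab_G(p)\subseteq N_G(H)$. Following the paragraph preceding \cref{prop:regular}, choose a reduction of $\cP_H$ to a Borel of $H$ compatibly with the framing, and let $\cP_T$ (with framed lift $p_s\in\TE$) be the induced $T$-bundle, so that $\cP_H^\ss=\cP_T\times^T H$ and $p_s$ is a framed lift of $\cP_H^\ss$. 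Regularity of $\cP_H$ forces $\Sigma_{p_s}\subseteq\Sigma_H$ by \cref{prop:regular}, while $\cP_H\in(\uHE)_\heartsuit$ forces $\Aut(\cP_H^\ss)^\circ=H$, hence $\Sigma_H\subseteq\Sigma_{p_s}$ by \cref{l:automorphisms and root systems} applied inside $H$; thus $\Sigma_{p_s}=\Sigma_H$, so by \eqref{E:center of C_G is Z_Sigma} we get $p_s\in Z(H)_E$, while \cref{l:automorphisms and root systems} gives $\Stab_G(p_s)^\circ=G(\Sigma_{p_s})=G(\Sigma_H)=H$, whence $\Stab_G(p_s)\subseteq N_G(H)$.

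It then remains to verify $\Stab_G(p)\subseteq\Stab_G(p_s)$, and here is where I expect the real work to lie. The plan is to use that the semisimplification of a semistable degree $0$ bundle is canonically attached to it (it is the unique semisimple bundle with the same image under $\chi$), so that every automorphism of $\cP_G:=\cP_H\times^H G$ induces an automorphism of $\cP_G^\ss=\cP_T\times^T G$; tracking the framings at $x_0$ — which are compatible with passing to this canonical semisimple bundle — should show that the resulting homomorphism $\Stab_G(p)=\Aut(\cP_G)\to\Aut(\cP_G^\ss)=\Stab_G(p_s)$ agrees with the embeddings $i_\theta$ into $G$, yielding $\Stab_G(p)\subseteq\Stab_G(p_s)\subseteq N_G(H)$. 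The delicate point is exactly this framing compatibility: one must ensure it produces the on-the-nose inclusion $\Stab_G(p)\subseteq\Stab_G(p_s)$ and not merely an inclusion up to conjugacy in $G$. (An alternative, using unipotence of the unipotent part of $p$ and commutativity of the $Z(H)_E$-action with change of framing to place $p_s$ in the closure of the orbit $G\cdot p$, and then a Luna slice at the closed orbit $G\cdot p_s$, runs into the same subtlety.)
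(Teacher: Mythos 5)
Your treatment of the first equivalence is exactly the paper's: apply \cref{P:heart locus is a product} to $H$ and use the fact (\cref{prop:regular}, \cref{r:regular}) that regularity is governed by the characteristic polynomial, hence by the $Z(H)_E$-factor of the product decomposition. Your reduction of the second equivalence to $Z(H)_E^\reg=Z(H)_E^\strreg$ (via \cref{l:regular vs strongly regular}) together with the identity $(\uHE)_\heartsuit^\reg=(\uHE)_\heartsuit^\strreg$ also mirrors the paper, which phrases the latter as the assertion that ``the analogous [product] equivalence also holds for the strongly regular locus.''

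The gap is in the remaining step, and you have flagged it yourself: for $p=p_s\cdot p_u$ in the regular heart locus you need $\Stab_G(p)\subseteq\Stab_G(p_s)$ (or directly $\Stab_G(p)\subseteq N_G(H)$), and your proposed route --- that every automorphism of $\cP_G$ induces, compatibly with framings, an automorphism of the semisimplification --- is not substantiated. The semisimplification is built by choosing a Borel reduction and is canonical only up to isomorphism; upgrading this to an on-the-nose homomorphism $\Stab_G(p)\to\Stab_G(p_s)$ landing in the correct conjugate is essentially the uniqueness clause of \cref{thm:jordan}, and that uniqueness is proved \emph{later} using this very corollary (via \cref{l:nu an embedding}), so this route is circular as written. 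Your alternative via a Luna slice at the closed orbit has the same defect: it places $\Stab_G(p)$ inside \emph{some} conjugate of $N_G(H)$, not inside $N_G(H)$ itself. To be fair, the paper compresses exactly this point into a single sentence (asserting that strong regularity, like regularity, is governed by the semisimple part, for which \cref{prop:regular} supplies no analogue), so you have located the thin spot precisely --- but you have not closed it, and as it stands the second equivalence is unproved in your write-up.
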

\begin{proof}
	The first equivalence follows from \cref{P:heart locus is a product} and from the fact that regularity of an $H$-bundle is governed by it's semisimple part, i.e. by the part in $Z(H)_E$ (see \cref{prop:regular}). The analogous equivalence also holds for the strongly regular locus. The second equivalence then follows from the fact that $Z(H)_E^\reg = Z(H)_E^\strreg$ (\cref{l:regular vs strongly regular}).
\end{proof}

\subsection{Equivalence of Theorems \ref{thm:jordan} and \ref{thm:galois}}\label{sec:equivalence-of-theorems-refthmjordan-and-refthmgalois}
Our next step will be to show that the two main results are mutually equivalent. 

Let us first recast Theorem \ref{thm:galois} in terms of framed bundles. It states that the map
\begin{align}\label{Eq:tildepi is an isomorphism}
\widetilde{\pi}_\heartsuit^\reg\colon G\times^{N_G(H)}(\HE)_\heartsuit^\reg \to \GE
\end{align}
is a ($G$-equivariant) isomorphism onto $(\GE)_{[H]}$.

To prove Theorem \ref{thm:galois}, we must show the following two statements:
\begin{enumerate}
	\item (``Surjectivity'') The image of $\widetilde{\pi}_\heartsuit^\reg$ is precisely $(\GE)_{[H]}$.
	\item (``Injectivity'') The map $\widetilde{\pi}_\heartsuit^\reg$ is injective.
\end{enumerate}

Analogously, to prove Theorem \ref{thm:jordan}, we must show the following two statements:
\begin{enumerate}
	\item (``Existence'') Every $p \in \uGE$ has a Jordan datum $(H,p_s,p_u)$ with $p= p_s \cdot p_u \in (\HE)_\heartsuit^\reg$.
	\item (``Uniqueness'') Given $p\in \uGE$ with two Jordan data $(H,p_s,p_u)$ and $(H',p_s',p_u')$ we have $(H,p_s,p_u)=(H',p_s',p_u')$.
\end{enumerate}

We will show that the ``existence'' (respectively, ``uniqueness'') part of Theorem \ref{thm:jordan} is equivalent to the ``surjectivity'' (respectively, ``injectivity'') in Theorem \ref{thm:galois}.

\subsubsection{Existence implies surjectivity}

 Suppose $p \in (\GE)_{[H]}$ with associated Jordan data $H',p_s,p_u$. 
 It follows that $p \in (\HE')_\heartsuit^\reg \subseteq (\GE)_{[H]}$ and thus $H' = \Ad(g)H$ for some $g\in G$. 
 Then $p$ is the image of
\[
(g, g^{-1} \cdot p) \in G \times^{N_G(H)}(\HE)_\heartsuit^\reg 
\]
as required.

\subsubsection{Surjectivity implies existence}

Given $p \in (\GE)$, we let $[H]$ denote the unique conjugacy class of $E$-pseudo-Levi subgroups such that $p$ is in the locus $(\GE)_{[H]}$. 
Surjectivity means that there exists 
\[
(g,p') \in G \times (\HE)_\heartsuit^\reg
\]
such that $g \cdot p' = p$. 
By replacing $p'$ with $g^{-1} \cdot p'$ and $H$ with $\Ad(g^{-1})H$, we may assume that $p \in (\HE)_\heartsuit^\reg$. 
Recall (\cref{C:heartsuit regular as a product}) that $(\HE)_\heartsuit^\reg \cong Z(H)_E^\reg \times \HE^\uni$. 
Thus $p$ has a Jordan decomposition $p = p_s \cdot p_u$ as required.

\subsubsection{Uniqueness implies injectivity} 

We must show that if $p,p' \in (\HE)_\heartsuit^\reg$ and $g\in G$ such that $g\cdot p = p'$, then $g\in N_G(H)$. As we have assumed $p,p' \in (\HE)_\heartsuit^\reg$, we have Jordan decompositions $p=p_s \cdot p_u$ and $p'= p_s' \cdot p_u'$. 
Note that $H = \Stab_G(p_s)^\circ = \Stab_G(p_s')^\circ$. 
By the uniqueness of the Jordan decomposition, we must have that $g \cdot p_s = p_s'$. 
Thus  $\Ad(g)H = H$, so $g\in N_G(H)$ as required.

\subsubsection{Injectivity implies uniqueness}

Let $p\in \GE$ and suppose we have two Jordan data $(H,p_s,p_u)$ and $(H',p_s',p_u')$. 
To prove the uniqueness of the Jordan decomposition it suffices to show that $H=H'$ (as within $(\HE)_\heartsuit^\reg = Z(H)_E^\reg \times (\HE)^\uni$, every element has a unique Jordan decomposition).

First observe that the Lusztig type of $p$ is well-defined, so $[H] = [H']$, i.e. there exists $g\in G$ such that $\Ad(g)H=H'$.

Now we have $p \in (\HE')_\heartsuit^\reg$, and thus $g^{-1} \cdot p \in (\HE)_\heartsuit^\reg$. 
It follows that $p$ is the image of both $(1,p)$ and $(g, g^{-1} \cdot p)$ under the map
\[
\widetilde{\pi}_\heartsuit^\reg\colon G\times^{N_G(H)} (\HE)_\heartsuit^\reg \to \GE
\]
By injectivity, we must have that $g\in N_G(H)$. 
But then $H'=H$ as required. 

The remainder of this section will be taken up with the proof of Theorem \ref{thm:galois} (and thus Theorem \ref{thm:jordan}).

\subsection{Proof of surjectivity}\label{sec:proof-of-surjectivity}
In this subsection we give a proof of the surjectivity part of \cref{thm:galois}. Specifically, we show the following:
\begin{proposition}\label{l:characterize locus}
	The restriction 
	\[
	\pi_\heartsuit^\reg\colon (\uHE)_\heartsuit^\reg \to \uGE
	\]
	maps surjectively onto $(\uGE)_{[H]}$.
\end{proposition}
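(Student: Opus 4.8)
The plan is to check the assertion on $k$-points: I will show that any $G$-bundle $\cP_G$ representing a point of $(\uGE)_{[H]}$ admits a reduction to $H$ lying in $(\HE)_\heartsuit^\reg = Z(H)_E^\reg\cdot\HE^\uni$. Surjectivity of $\pi$ onto $(\uGE)_{[H]}$ then follows at once, the reverse inclusion $\pi\big((\uHE)_\heartsuit^\reg\big)\subseteq(\uGE)_{[H]}$ being immediate since any $\cP_H\in(\HE)_\heartsuit^\reg$ has semisimplification in $Z(H)_E^\reg$, whose induction to $G$ has connected automorphism group conjugate to $H$.

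The first step is to pin down the semisimple part inside $Z(H)_E$. Let $\cP_G^\ss$ be the semisimplification of $\cP_G$; since $\cP_G\in(\uGE)_{[H]}$, the neutral component of its automorphism group is conjugate to $H$, so after conjugating I may choose a framed lift $q\in\GE$ of $\cP_G^\ss$ with $\Stab_G(q)^\circ=H$. As $q$ is semisimple it reduces to $T$, say to $p_s\in\TE$; by \cref{l:automorphisms and root systems} we have $\Stab_G(q)^\circ=G(\Sigma_{p_s})$, hence $G(\Sigma_{p_s})=H=G(\Sigma_H)$ and so $\Sigma_{p_s}=\Sigma_H$ by \cref{T:Borel de Siebenthal}. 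In particular $p_s$ lies in $(\TE)_{\ge\Sigma_H}=Z(\Sigma_H)_E=Z(H)_E$, and since $\Stab_G(p_s)^\circ=H$ we conclude $p_s\in Z(H)_E^\reg$.

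Next I would reduce $\cP_G$ itself to a Borel of $G$ with $T$-part exactly $p_s$. A degree $0$ semistable $G$-bundle on $E$ admits a reduction to a Borel subgroup with degree $0$ associated $T$-bundle (a standard fact; cf.\ \cite{FriedMorgIII}). If $\cP_{B_0}$ is such a reduction of $\cP_G$ with $T$-part $p_0\in\TE$, then the class of $p_0$ in $\MGE=\TE\git W$ equals $\underline\chi(\cP_G)=\underline\chi(\cP_G^\ss)$, i.e.\ the class of $p_s$; as $\TE\git W$ is the geometric quotient $\TE/W$ (Laszlo \cite{Lasz}), $p_0$ and $p_s$ are $W$-conjugate, and conjugating the reduction by a suitable representative in $N_G(T)$ I may assume the $T$-part is $p_s$. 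Call the resulting Borel $B$ and its reduction $\cP_B$; note that $B_H:=B\cap H$ is automatically a Borel subgroup of $H$. The crux is now to descend $\cP_B$ to a reduction to $B_H$. Such a reduction is a section of the affine-space bundle $\cP_B/B_H\to E$; setting up a $B$-stable filtration of $B/B_H$ whose successive one-dimensional subquotients are the root spaces $\fg_\alpha$ with $\alpha\in\Phi^+\setminus\Sigma_H$ (so that $U_B$ acts trivially on each), the obstructions to lifting the section stepwise lie in the groups $\rH^1\!\big(E;\fg_{\alpha,p_s}\big)$, $\alpha\in\Phi^+\setminus\Sigma_H$. This is where Step~1 pays off: since $\alpha\notin\Sigma_H=\Sigma_{p_s}$, each $\fg_{\alpha,p_s}$ is a \emph{nontrivial} line bundle of degree $0$ on $E$, so $\rH^\bullet(E;\fg_{\alpha,p_s})=0$ and every obstruction vanishes. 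We thus obtain a $B_H$-reduction $\cP_{B_H}$ of $\cP_B$, and hence a reduction $\cP_H:=\cP_{B_H}\times^{B_H}H$ of $\cP_G$ to $H$ which itself reduces to $B_H$ with $T$-part $p_s$.

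It remains to identify $\cP_H$ inside $(\HE)_\heartsuit^\reg$. By construction $\cP_H^\ss\cong p_s\times^T H$; its induction to $G$ is $p_s\times^T G$, with connected automorphism group $G(\Sigma_{p_s})=H$, so $\cP_H^\ss$ is regular and therefore $\cP_H$ is regular by \cref{prop:regular}. Since moreover the semisimple part of $\cP_H$ is $p_s\in Z(H)_E^\reg$, we get $\cP_H\in Z(H)_E^\reg\cdot\HE^\uni=(\HE)_\heartsuit^\reg$ (e.g.\ via \cref{C:heartsuit regular as a product}), and $\cP_G=\pi(\cP_H)$ lies in the image of $(\uHE)_\heartsuit^\reg$. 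I expect the main obstacle to be Step~3 — in particular, arranging the $B$-equivariant filtration of $B/B_H$ so that the obstruction classes are exactly the $\rH^1(E;\fg_{\alpha,p_s})$ with $\alpha\notin\Sigma_H$ — together with the input that a degree $0$ semistable bundle reduces to a Borel, which (especially in positive characteristic) may require some care with the structure theory.
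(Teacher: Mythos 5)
Your argument is correct in outline, but it takes a genuinely different route from the paper. The paper never constructs an $H$-reduction of a non-semisimple bundle directly: it observes that $\uHE^\reg$ is exactly the locus where $\pi$ is \'etale (\cref{prop:regular}), so $\pi(\uHE^\reg)$ is \emph{open} in $\uGE$; it handles the semisimple case by hand (via \cref{l:automorphisms and root systems}, essentially your Step~1) and then deduces the general case purely topologically, since $\cP_G^\ss$ lies in the closure of the point $\cP_G$ and hence every open substack containing $\cP_G^\ss$ contains $\cP_G$. Your proof replaces this openness trick by an explicit construction: reduce $\cP_G$ to a Borel with $T$-part $p_s$ and descend from $B$ to $B_H=B\cap H$ by killing obstructions in $\rH^1(E;\fg_{\alpha,p_s})$ for $\alpha\in\Phi^+\setminus\Sigma_H$. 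This buys you more --- an explicit $H$-reduction with prescribed semisimple part --- at the cost of two extra inputs: the existence of a degree-$0$ Borel reduction (which the paper also uses implicitly when constructing $\cP_H^\ss$ before \cref{prop:regular}, so this is fair game), and the d\'evissage of $B/B_H$. Your Steps~1 and~4 are fine as written (the identification $\Sigma_{p_s}=\Sigma_H$ via \cref{T:Borel de Siebenthal} and \cref{l:automorphisms and root systems}, and the membership $\cP_H\in(\uHE)_\heartsuit^\reg$ via \cref{C:heartsuit regular as a product}), as is the easy containment $\pi\big((\uHE)_\heartsuit^\reg\big)\subseteq(\uGE)_{[H]}$.

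The one step you should not wave at is the d\'evissage of $B/B_H$. Since $\Phi^+\setminus\Sigma_H$ need not be closed under addition (e.g.\ for $\Sigma=\pm\{\beta,3\alpha+\beta,3\alpha+2\beta\}$ in type $G_2$ one has $3\alpha+2\beta=(\alpha+\beta)+(2\alpha+\beta)$ with neither summand in $\Sigma$), you cannot filter $B/B_H$ by a chain of subgroups between $B_H$ and $B$ whose graded pieces are single root groups outside $\Sigma_H$. The repair is to interleave $U_H$ with the height filtration of $U$: writing $U_{\geq k}$ for the normal subgroup of $U$ generated by the root groups of height $\geq k$ and setting $V_k:=U_{\geq k}\cdot U_H$, one checks that each $V_{k+1}$ is normal in $V_k$ and that $V_k/V_{k+1}\cong\bigoplus_{\mathrm{ht}(\alpha)=k,\ \alpha\notin\Sigma_H}\fg_\alpha$ as a $B$-module on which $U$ acts trivially. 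The tower $B/TV_{k+1}\to B/TV_k$ is then a tower of torsors under these vector groups, the associated fibrations over $E$ are torsors under $\bigoplus\fg_{\alpha,p_s}$, and the obstruction spaces vanish exactly as you say because each $\fg_{\alpha,p_s}$ with $\alpha\notin\Sigma_{p_s}$ is a nontrivial degree-$0$ line bundle on an integral curve of arithmetic genus $1$. With that supplied, the proof goes through.
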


The statement splits into two parts:
	\begin{enumerate}
		\item \label{sublemma 1} The image of 
		\[
		\pi^\reg_\heartsuit\colon (\uHE)^\reg_\heartsuit \to \uGE
		\]
		is contained in $(\uGE)_{[H]}$.
		\item \label{sublemma 2} The image of $\pi^\reg_\heartsuit$ contains all of $(\uGE)_{[H]}$.
	\end{enumerate}

\subsubsection{Proof of \eqref{sublemma 1}}	
	Let $\cP_H \in \uHE$ and let $\cP_G = \pi(\cP_H)$ be the induced bundle. 
	Then we will show
	\begin{enumerate}[(a)]
		\item \label{subpart a}if $\cP_H \in \uHE^\reg$ then $\cP_G \in (\uGE)_{\leq[H]}$;
		\item \label{subpart b} $\cP_H \in (\uHE)_\heartsuit$ if and only if $\cP_G \in (\uGE)_{\geq[H]}$.
	\end{enumerate}

\begin{remark}
	The converse of part (\ref{subpart a}) above is false. For example, suppose we are in the group case $E=E_{\node}$ with $G=G_E=GL_3$. Let $H \subseteq G$ denote the subgroup consisting of matrices of the form
	\[
	\left(\begin{array}{@{}c|c@{}}
	\begin{matrix}
	\ast & \ast \\
	\ast & \ast
	\end{matrix}
	&  \begin{matrix}0\\0\end{matrix} \\
	\hline
	\begin{matrix}0&0\end{matrix} &
	\ast
	\end{array}\right)
	\]
	Now let $p \in H$ denote the matrix:
	\[
	\begin{pmatrix}
	1&0&0\\
	0&2&0\\
	0&0&2
	\end{pmatrix}
	\]
	Then $p\in G_{[H]}$ as $\Stab_G(p)$ is conjugate to $H$, but $p \notin H^\reg$ as $\Stab_G(p) \nsubseteq H$.	
\end{remark}

\emph{Proof of \eqref{subpart a}}	First suppose that $\cP_H \in \uHE$ is semisimple. Let $p \in \HE$ denote a lift to a framed bundle.
		
		Then $\cP_H$ is regular if and only if $\Aut(\cP_G)^\circ \cong \Stab_G(p)^\circ \subseteq H$. On the other hand, $\cP_G$ is contained in $(\uGE)_{\leq[H]}$ if and only if $\Stab_G(p)$ is contained in some $G$-conjugate of $H$.
		
		Thus we have the required implication in case $\cP_H$ is semisimple. 
		In general, the result follows from the fact that the conditions $\cP_H \in \uHE^\reg$ and $\cP_G \in (\uGE)_{\leq[H]}$ only depend on the characteristic polynomial of $\cP_H$ (respectively $\cP_G$) and thus only depend on the isomorphism class of the semisimplification.
		
\emph{Proof of \eqref{subpart b})} Again, suppose $\cP_H$ is semisimple. 
Then $\cP_H$ is contained in $(\uHE)_\heartsuit$ if and only if $ H = \Stab_H(p) = \Stab_G(p)\cap H$. 
This in turn is equivalent to $\cP_G \in (\uGE)_{\geq[H]}$. 
As before, the general case follows from the fact that the conditions only depend on the characteristic polynomial.

\subsubsection{Proof of \eqref{sublemma 2}} 
Suppose $\cP_G$ is contained in $(\uGE)_{\leq [H]}$ (respectively, $(\uGE)_{[H]}$). Then we will show that there exists $\cP_H$ in $(\uHE)^\reg$ (respectively, $(\uHE)_\heartsuit^\reg$) such that $\pi(\cP_H) = \cP_G$.
	
	Recall that by \cref{prop:regular}, the substack $\uHE^\reg$ is precisely the locus on which the map $\pi$ is \'etale. In particular, the image $\pi(\uHE^\reg)$ is an open substack of $\uGE$. We must show that this open substack contains all of $(\uGE)_{\leq[H]}$.
	
	First suppose $\cP_G \in (\uGE)_{\leq[H]}$ is semisimple. Let $T$ be a maximal torus of $H$ (and thus of $G$). Then, by \cref{l:automorphisms and root systems}, there is a framed reduction $p \in \TE$ such that $\Stab_G(p) \subseteq H$. Thus the induced $H$-bundle $\cP_H$ is a regular $H$-reduction of $\cP_G$ as required.
	
	Now let us drop the assumption that $\cP_G$ is semisimple, and let $\cP_G^\ss$ denote a semisimplification of $\cP_G$. Then $\cP_G^\ss$ is semisimple and contained in $(\uGE)_{\leq[H]}$, thus by the above argument $\cP_G^\ss$ is contained in $\pi(\uHE^\reg)$. But $\cP_G^\ss$ is contained in the closure of the point $\cP_G$; thus any open neighbourhood of $\cP_G^\ss$ in $\uGE$ contains $\cP_G$. As $\pi(\uGE^\reg)$ is such an open neighbourhood, we must have $\cP_G = \pi(\cP_H)$ for some $\cP_H \in \uHE^\reg$ as required.

	It remains to show that if moreover $\cP_G\in (\uGE)_{[H]}$ then the above constructed $\cP_H$ belongs to $(\uHE)_\heartsuit$.
	If $p$ is a framed lift of $\cP_H$ such that $\Stab_G(p)^\circ\subset H$ then the condition $\cP_G\in (\uGE)_{\ge [H]}$ means $H\subset \Stab_G(p)^\circ$.
	We deduce that $H=\Stab_G(p)^\circ$ which in turn implies $\Stab_H(p)=H$, or in other words $\cP_H\in (\uHE)_{\heartsuit}$.	

\subsection{Proof of injectivity}\label{sec:proof-of-injectivity} In this section we prove the injectivity required (see \eqref{Eq:tildepi is an isomorphism}) in the proof of \cref{thm:galois}.
In other words, we must show that 
\begin{align}\label{Eq:pi bar heart is embedding}
\overline{\pi}_\heartsuit^\reg\colon (\uHE)_\heartsuit^\reg/W_{G,H} \to \uGE
\end{align}
is an embedding.

Let us first sketch an outline of the strategy of proof. 
We wish to apply the following general principle:
\begin{proposition}\label{p:etale generic embedding}
	If an \'etale morphism of schemes $X\to Y$ is an embedding over a dense open subset of $Y$ and $X$ is 
	separated, then it is an open embedding.
\end{proposition}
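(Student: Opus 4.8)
The plan is to reduce the statement to showing that $f\colon X\to Y$ is a monomorphism: being étale it is automatically open, and an étale monomorphism is an open immersion (it is flat, locally of finite presentation, and a monomorphism, so e.g.\ by the open immersion criterion for flat monomorphisms of finite presentation). All the input will be extracted from the diagonal $\Delta=\Delta_f\colon X\to Z:=X\times_Y X$. Since $f$ is étale (in particular unramified), $\Delta$ is an open immersion; since $X$ is separated — hence $f$ is a \emph{separated morphism}, by the cancellation property applied to $X\to Y\to\Spec\bZ$ — $\Delta$ is also a closed immersion. Thus $\Delta(X)$ is an open and closed subscheme of $Z$, and $f$ is a monomorphism exactly when $\Delta$ is surjective.

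Next I would bring in the generic hypothesis. Let $U\subseteq Y$ be the given dense open over which $f$ restricts to an embedding. Since $f^{-1}(U)\to U$ is étale and an immersion it is an open immersion, in particular a monomorphism, so its diagonal over $U$ is an isomorphism onto $f^{-1}(U)\times_U f^{-1}(U)$. Pulling $Z$ back along $U\hookrightarrow Y$ identifies $Z\times_Y U$ with $f^{-1}(U)\times_U f^{-1}(U)$; writing $g\colon Z\to Y$ for the structure morphism (so $g=f\circ p_1=f\circ p_2$, which is étale because $p_1$ is a base change of $f$), this says precisely that $g^{-1}(U)\subseteq\Delta(X)$ inside $Z$.

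To finish I would show that $Z\setminus\Delta(X)$ is empty. It is open, because $\Delta(X)$ is closed in $Z$, and it is disjoint from $g^{-1}(U)$ by the previous paragraph; therefore $g(Z\setminus\Delta(X))$ is an open subset of $Y$ contained in $Y\setminus U$. Since $U$ is dense, $Y\setminus U$ has empty interior, so $g(Z\setminus\Delta(X))=\emptyset$ and hence $Z\setminus\Delta(X)=\emptyset$. Thus $\Delta$ is a surjective open immersion, hence an isomorphism, so $f$ is a monomorphism and therefore an open embedding.

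I do not expect a serious obstacle here: the argument is essentially formal once one recalls that the diagonal of an étale morphism is an open immersion while the diagonal of a separated morphism is a closed immersion. The only points that need a little care are the interpretation of ``embedding'' (all that is used is that $f^{-1}(U)\to U$ is a monomorphism, which follows from any reasonable reading since it is also étale) and the reduction ``$X$ separated $\Rightarrow$ $f$ separated''. Note that separatedness of $X$ is genuinely needed, as the example of the affine line with a doubled origin mapping to the affine line — étale and an isomorphism over the dense open complement of the origin, but not an open embedding — already shows.
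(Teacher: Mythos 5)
Your proof is correct and follows essentially the same route as the paper: both reduce to the criterion that a flat, locally finitely presented monomorphism is an open immersion (EGA IV, 17.9.1), so that only the monomorphism property needs checking. Your clopen-diagonal argument is simply a careful write-out of the step the paper dispatches with ``follows at once from birationality and separatedness.''
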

\begin{proof}
	We reduce it to \cite[Thm 17.9.1]{EGA4.4}.
	Namely, according to loc.cit., a morphism of schemes $U\to V$ is an open immersion if and only if it is flat, 
	locally of finite presentation and a monomorphism in the category of schemes.
	
	In our case we only need to check that the morphism is a monomorphism which follows at once from birationality 
	and separatedness.
\end{proof}
\begin{remark}
	Since being an open immersion is a property that is smooth-local on the target, the proposition can be applied to 
	a morphism of finite type stacks that is representable by separated schemes and this is how it will be used below. 
\end{remark}

While $\overline{\pi}$ is \'etale over the locus $(\uHE)^\reg$, we cannot apply \cref{p:etale generic embedding} directly to the morphism $\overline{\pi}$ as it is not generically an embedding - its generic fiber has cardinality $|W|/|N_W(W_H))|$ (see \cref{l:both coverings of same degree}). 

However, the failure of $\overline{\pi}$ to be generically an embedding is precisely accounted for by the corresponding map 
\[
\overline{\rho}:\MHE \GIT W_{G,H} \to \MGE
\]
on the level of coarse moduli spaces, which also has generic degree $|W|/|N_W(W_H)|$ (\cref{l:both coverings of same degree}). The idea is thus to replace the target $\uGE$ with the base-change $\widetilde{\uGE}$:

\[
\xymatrix{
	\uHE/W_{G,H} \ar@/^2pc/[rr]^{\overline{\pi}} \ar[r]^{\overline{\nu}} \ar[rd]_{\overline{\underline{\chi}}_H} 
	& \widetilde{\uGE} \ar[r]^{\widetilde{\rho}} \ar[d] \ar@{}[rd]|\square &\uGE \ar[d]_{\underline{\chi}_G} \\
&	 \MHE \GIT W_{G,H} \ar[r]_{\overline{\rho}} & \MGE
}
\]

In this way, we obtain a morphism $\overline{\nu}$ which is generically an embedding (\cref{l:embedding}). 
Moreover, we will show that $\overline{\nu}$ is \'etale when restricted to the locus $\uHE^\strreg/W_{G,H}$ (see 
\cref{l:etale}), and thus an open embedding on this locus by \cref{p:etale generic embedding} (we can apply it in this 
situation since $\overline{\nu}$ comes from an obvious $G$-equivariant morphism between varieties). 
As this locus contains the desired substack $(\uHE)_\heartsuit^\reg/W_{G,H}$ (recall that $(\HE)_\heartsuit^\strreg = 
(\HE)_\heartsuit^\reg$ by \cref{C:heartsuit regular as a product}), the morphism 
\[
\overline{\nu}_\heartsuit^\reg\colon (\uHE)_\heartsuit^\reg/W_{G,H} \to \widetilde{\uGE}
\]
is an embedding.

Finally, we show that the restriction of the base change morphism 
\[
\widetilde{\rho}_\heartsuit^\reg:(\widetilde{\uGE})_\heartsuit^\reg \to \uGE
\]
is an embedding (\cref{C:base change embedding}). Thus the composite
\[
\xymatrix{
	(\uHE)_\heartsuit^\reg/W_{G,H} \ar@{^{(}->}[r] &
	(\widetilde{\uGE})_\heartsuit^\reg \ar@{^{(}->}[r] & \uGE
}
\]
is an embedding as required in \cref{Eq:pi bar heart is embedding}.

Now let us go through the steps in the proof one by one. 
We start with the two lemmas below, whose proof will be given in \cref{sec:proofs-creflembedding-crefletale-and-creflinjective-coarse}. 

\begin{lemma}\label{l:etale}
	The following restriction of $\overline\nu$ is étale
	\[
	\overline{\nu}^\strreg\colon (\uHE)^\strreg/W_{G,H} \to \widetilde{\uGE}.
	\]
\end{lemma}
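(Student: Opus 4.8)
The plan is to establish étaleness by showing that the relative cotangent complex $L_{\overline\nu}$ vanishes over $(\uHE)^\strreg/W_{G,H}$; together with local finiteness of presentation (automatic here) this is exactly étaleness. The device is to compare $\overline\nu$ with the map of coarse moduli spaces $\overline\rho\colon\MHE\GIT W_{G,H}\to\MGE$. Recall that $\widetilde{\uGE}=\uGE\times_{\MGE}(\MHE\GIT W_{G,H})$, so $\widetilde\rho$ is the base change of $\overline\rho$ and $L_{\widetilde\rho}$ is pulled back from $L_{\overline\rho}$; hence the transitivity triangle for $\uHE/W_{G,H}\xrightarrow{\overline\nu}\widetilde{\uGE}\xrightarrow{\widetilde\rho}\uGE$ reads
\[
\overline{\underline\chi}_H^{\,*}L_{\overline\rho}\longrightarrow L_{\overline\pi}\longrightarrow L_{\overline\nu},
\]
where $\overline\pi=\widetilde\rho\circ\overline\nu$ is the induction map. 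On $(\uHE)^\reg$, which contains $(\uHE)^\strreg$, the map $\pi$ is étale by \cref{prop:regular}, so $\overline\pi$ is étale there as well (by descent along the finite étale cover $\uHE\to\uHE/W_{G,H}$), whence $L_{\overline\pi}=0$. So everything comes down to one claim: for every strongly regular $\cP_H$, the morphism $\overline\rho$ is étale at the point $\overline{\underline\chi}_H(\cP_H)$.

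To check this I would first rewrite the coarse spaces using Laszlo's theorem \cite{Lasz}: $\MHE\cong\TE\GIT W_H$ and $\MGE\cong\TE\GIT W$, and since $W_{G,H}\cong N_W(\Sigma_H)/W_H$ with $W_H\trianglelefteq N_W(\Sigma_H)$ one gets $\MHE\GIT W_{G,H}\cong\TE\GIT N_W(\Sigma_H)$, with $\overline\rho$ the tautological quotient map. Represent the relevant point by $p\in\TE$, a framed $T$-reduction of the semisimplification $\cP_H^{\ss}$. Passing to completed local rings — using that $G$-invariants for a finite group commute with the completions in question — identifies the two local rings with $\widehat\cO_{\TE,p}^{\Stab_W(p)}$ and $\widehat\cO_{\TE,p}^{\Stab_{N_W(\Sigma_H)}(p)}$ respectively, and $\overline\rho$ is étale at $[p]$ exactly when these agree, i.e.\ when $\Stab_W(p)\subseteq N_W(\Sigma_H)$. (Alternatively one can cite Luna's étale slice theorem in characteristic $0$.) So the last thing to prove is that strong regularity of $\cP_H$ forces $\Stab_W(p)\subseteq N_W(\Sigma_H)$.

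The hard part will be precisely this combinatorial transfer: strong regularity is a condition on the automorphism group of the possibly non-semisimple bundle $\cP_H$, whereas $\Stab_W(p)$ records a $T$-reduction of its semisimplification. Here I would use the machinery of \cref{sec:the-partition-of-semisimple-bundles-by-lusztig-type}. Regularity of $\cP_H$ already gives $\Sigma_p\subseteq\Sigma_H$, whence $\Stab_G(p)^\circ=G(\Sigma_p)\subseteq H$ (\cref{prop:regular}); so the inclusion $\Stab_G(p)\subseteq N_G(H)$ is a condition on the component group alone, which by \cref{lem:components} and \cref{l:weyl stabilizer} is $\Stab_W(p)/W_{\Sigma_p}$, and normalizing $H=G(\Sigma_H)$ amounts to normalizing $\Sigma_H$. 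Combining this with the product description of the regular locus (\cref{C:heartsuit regular as a product}) to compare $\Stab_G(p_{\cP_H})$ with $\Stab_G(p)$ should reduce the strong-regularity hypothesis $\Stab_G(p_{\cP_H})\subseteq N_G(H)$ precisely to $\Stab_W(p)\subseteq N_W(\Sigma_H)$, closing the argument. Everything else is formal bookkeeping.
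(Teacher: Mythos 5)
Your proposal is correct and follows essentially the same route as the paper: reduce étaleness of $\overline{\nu}^\strreg$ to étaleness of $\overline\rho$ on the strongly regular locus (via base change and the fact that $\pi$ is already étale on $\uHE^\reg$), and then reduce that, via the identifications $\MGE\cong\TE\git W$ and $\MHE\git W_{G,H}\cong\TE\git N_W(\Sigma_H)$, to the inclusion $\Stab_W(p)\subseteq N_W(\Sigma_H)$ for strongly regular $p$. The only differences are cosmetic (cotangent-complex transitivity vs.\ the two-out-of-three property for étale maps over $\uGE$; completed invariant rings vs.\ \cite[Prop V.2.2]{SGA1}), and your final combinatorial step, which you leave as a sketch, is in the paper a two-line argument: lift $w\in\Stab_W(p)$ to $\widetilde w\in N_G(T)\cap\Stab_G(p)\subseteq N_G(H)$, so $w$ normalizes $\Sigma_H$.
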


\begin{lemma}\label{l:embedding}
	There is an open dense subset of $\uHE$ on which $\overline{\nu}$ is an open embedding.
\end{lemma}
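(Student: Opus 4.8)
The plan is to exhibit an explicit dense open on which $\overline\nu$ is visibly an isomorphism, namely the one lying over the regular semisimple classes. Let $\TE^\circ\subseteq\TE$ be the intersection of the open stratum $(\TE)_\emptyset$ (the locus where $\Sigma_p=\emptyset$) with the dense open sublocus on which $W$ acts freely; this is a $W$-stable dense open of $\TE$. Using Laszlo's identification $\MHE\cong\TE\GIT W_H$ and the isomorphism $W_{G,H}\cong N_W(W_H)/W_H$, the further quotient $\MHE\GIT W_{G,H}$ is $\TE\GIT N_W(W_H)$, and $\TE^\circ/N_W(W_H)$ is a dense open in it over which $\overline\rho$ restricts to the finite étale cover $\TE^\circ/N_W(W_H)\to\TE^\circ/W$. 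Put $U:=\underline\chi_H^{-1}(\TE^\circ/W_H)\subseteq\uHE$, a $W_{G,H}$-stable dense open (the preimage of a dense open under the surjection $\underline\chi_H$), consisting of strongly regular bundles. I will show that $\overline\nu$ restricts to an isomorphism of $U/W_{G,H}$ onto the preimage $V$ of $\TE^\circ/N_W(W_H)$ in $\widetilde{\uGE}$, which is open in $\widetilde{\uGE}$; this is exactly the assertion of the lemma.

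Because $\overline\nu$ is a morphism \emph{over} $\MHE\GIT W_{G,H}$ (it intertwines $\overline{\underline{\chi}}_H$ with the fibre-product projection on $\widetilde{\uGE}$), it is enough to identify the fibres over a point $\bar m\in\TE^\circ/N_W(W_H)$; lift $\bar m$ to $p\in\TE^\circ$. On the source, the fibre of $\overline{\underline{\chi}}_H$ over $\bar m$ is a single point, represented by the semisimple bundle $\cP_{T,p}\times^T H$ (its $|W_{G,H}|$ translates being identified, with trivial $W_{G,H}$-stabiliser since $W$, hence $N_W(W_H)$, acts freely on $p$); by \cref{l:automorphisms and root systems} it is regular, and by \cref{lem:components} its automorphism group is exactly $T$. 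On the target, the fibre of $V\to\TE^\circ/N_W(W_H)$ over $\bar m$ is $\underline\chi_G^{-1}(\overline\rho(\bar m))$, the single semisimple bundle $\cP_{T,p}\times^T G$, again with automorphism group $T$. Transitivity of induction gives $\overline\nu(\cP_{T,p}\times^T H)=(\cP_{T,p}\times^T G,\bar m)$, and regularity of $\cP_{T,p}\times^T H$ is precisely the statement that the induced map $T=\Aut(\cP_{T,p}\times^T H)\to\Aut(\cP_{T,p}\times^T G)=T$ is an isomorphism. Hence over the dense open $\TE^\circ/N_W(W_H)$ both $U/W_{G,H}$ and $V$ are gerbes banded by $T$ and $\overline\nu$ induces the identity on the band, so $\overline\nu\colon U/W_{G,H}\xrightarrow{\ \sim\ }V$.

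The step that will take the most care is the gerbe bookkeeping: one must check that $V\to\TE^\circ/N_W(W_H)$ really is a $T$-gerbe (which reduces to $\overline\rho$ being étale there, so that $\widetilde{\uGE}$ is over this locus the pullback of the $T$-gerbe $\underline\chi_G^{-1}(\TE^\circ/W)\to\TE^\circ/W$), and that the principle ``a morphism of $T$-gerbes over a fixed base inducing the identity on the band is an isomorphism'' is applied correctly — this is where the two regularity lemmas do their work. If one prefers to avoid gerbes, an equivalent route is to note that $\overline\nu$ is generically finite of degree $1$: by \cref{l:both coverings of same degree} one has $\deg\overline\pi=\deg\widetilde\rho=|W|/|N_W(W_H)|$, whence $\deg\overline\nu=1$ as $\overline\pi=\widetilde\rho\circ\overline\nu$; since source and target are reduced, $\overline\nu$ is birational, and being moreover unramified over $U$ (by \cref{l:etale}, as $U$ consists of strongly regular bundles) it is an isomorphism over a dense open. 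This variant makes \cref{l:embedding} depend on \cref{l:etale}, which is harmless since the two lemmas are combined through \cref{p:etale generic embedding} in any case.
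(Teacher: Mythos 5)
Your closing ``variant'' is exactly the paper's proof: by \cref{l:both coverings of same degree} both $\overline{\pi}$ and $\overline{\rho}$ (hence its base change $\widetilde{\rho}$) are generically coverings of degree $|W|/|N_W(W_H)|$, and $\overline{\pi}=\widetilde{\rho}\circ\overline{\nu}$ forces $\overline{\nu}$ to be generically a covering of degree $1$, which in the sense of \cref{d:generic covering} is already an open embedding on a dense open --- so the extra appeal to \cref{l:etale} for unramifiedness is not needed (though, as you say, harmless and non-circular).

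Your primary route --- identifying $U/W_{G,H}$ and $V$ explicitly over the regular semisimple locus as gerbes banded by $T$ --- is a genuinely different and more hands-on argument, and it does work, but it rests on one claim you do not justify: that the fibre of $\underline{\chi}_H$ (resp.\ $\underline{\chi}_G$) over a regular semisimple class consists of a \emph{single} semisimple bundle, i.e.\ that a bundle whose characteristic polynomial is regular semisimple is itself semisimple. This is true, but it is a real statement: one either deduces it from \cref{p:galois torus case} (which identifies $(\uGE)_{[T]}$ with the image of $\uTE^{\reg}$, hence with semisimple bundles), or argues directly that the extension classes of a Borel reduction live in $\bigoplus_{\alpha>0}\rH^1(E;\fg_{\alpha,\cP_T})$, which vanishes when $\Sigma_p=\emptyset$ since nontrivial degree-$0$ line bundles on $E$ have no cohomology. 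Without this, ``both are $T$-gerbes over $\TE^\circ/N_W(W_H)$'' is unproved. What the gerbe route buys is an explicit description of the generic fibre and of the open set on which $\overline{\nu}$ is an isomorphism; what the paper's degree count buys is brevity and the avoidance of any fibre analysis beyond the already-established generic covering degrees. Either way the lemma is established.
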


Assuming \cref{l:etale} and \cref{l:embedding}, we may now establish:

\begin{lemma}\label{l:nu an embedding}
	The restriction 
	\[
	\overline{\nu}_\heartsuit^\reg\colon (\uHE)_\heartsuit^\reg/W_{G,H} \to \widetilde{\uGE}
	\]
	is an embedding.
\end{lemma}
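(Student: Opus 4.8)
The plan is to deduce \cref{l:nu an embedding} formally from \cref{l:etale} and \cref{l:embedding} by way of the general principle \cref{p:etale generic embedding}.

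I would first reduce to the strongly regular locus. By \cref{C:heartsuit regular as a product} there is an equality $(\uHE)_\heartsuit^\reg = (\uHE)_\heartsuit^\strreg$, and this is contained in $(\uHE)^\strreg$. Furthermore, the stratum $(\uHE)_\heartsuit = (\uHE)_{[H]}$ is locally closed in $\uHE$ (combine \cref{l:comparing two descriptions of partition} with \cref{l:locally closed torus}), so $(\uHE)_\heartsuit^\reg/W_{G,H}$ is a locally closed substack of $(\uHE)^\strreg/W_{G,H}$. Since the restriction of an open embedding along a locally closed substack of its source is again an embedding, it therefore suffices to prove that
\[
\overline{\nu}^\strreg\colon (\uHE)^\strreg/W_{G,H}\longrightarrow \widetilde{\uGE}
\]
is an open embedding; then $\overline{\nu}_\heartsuit^\reg$ is its restriction to the above locally closed substack and we are done.

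To prove that $\overline{\nu}^\strreg$ is an open embedding I would apply \cref{p:etale generic embedding}, in the form permitted by the Remark immediately following it: $\overline{\nu}$ is representable by separated schemes because it is induced by the evident $G$-equivariant morphism of framed varieties $G\times^{N_G(H)}(\HE)^\strreg\to\widetilde{\GE}$. Its two hypotheses are then precisely \cref{l:etale} (that $\overline{\nu}^\strreg$ is étale) and \cref{l:embedding} (that $\overline{\nu}$ restricts to an open embedding over a dense open). For the second point one has to observe that the dense open of $\uHE$ produced by \cref{l:embedding} has dense image in the \emph{target} $\widetilde{\uGE}$; this holds because $\widetilde{\uGE}$ was constructed exactly so that $\overline{\nu}$ is birational (the generic degree of $\widetilde{\rho}$ already accounts for the whole generic degree of $\overline{\pi}$, see \cref{l:both coverings of same degree}), so the open locus on which $\overline{\nu}$ is an open embedding maps onto a dense open of $\widetilde{\uGE}$. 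Granting this, \cref{p:etale generic embedding} yields that $\overline{\nu}^\strreg$ is an open embedding, completing the proof.

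All the real content is in \cref{l:etale} and \cref{l:embedding}; for \cref{l:nu an embedding} itself the two points that need care are organizational: that $(\uHE)_\heartsuit^\reg$ is a locally closed substack of $(\uHE)^\strreg$ — so that the merely \emph{open} embedding on the strongly regular locus restricts to an embedding on the heart — and the identification of ``generically an embedding'' in the sense of \cref{l:embedding} with the hypothesis ``an embedding over a dense open of $\widetilde{\uGE}$'' demanded by \cref{p:etale generic embedding}.
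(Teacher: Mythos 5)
Your proof is correct and follows essentially the same route as the paper's: deduce from \cref{l:etale} and \cref{l:embedding} via \cref{p:etale generic embedding} that $\overline{\nu}^\strreg$ is an open embedding, then restrict using $(\uHE)_\heartsuit^\reg = (\uHE)_\heartsuit^\strreg$ from \cref{C:heartsuit regular as a product}. The two organizational points you flag at the end (local closedness of the heart locus, and that the generic embedding locus has dense image in $\widetilde{\uGE}$) are exactly the details the paper leaves implicit, and your treatment of them is sound.
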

\begin{proof}
	By \cref{l:etale} $\overline{\nu}^\strreg$ is \'etale, and by \cref{l:embedding} it is generically an embedding. Thus by \cref{p:etale generic embedding}, $\overline{\nu}^\strreg$ is an open embedding. The claim then follows immediately from the fact that $(\uHE)_\heartsuit^\reg = (\uHE)_\heartsuit^\strreg$ (\cref{C:heartsuit regular as a product}).
\end{proof}

The following lemma is an analogue on the level of coarse moduli spaces of the desired injectivity part of \cref{thm:galois}. Its proof is given in \cref{sec:proofs-creflembedding-crefletale-and-creflinjective-coarse}.
	
\begin{lemma}\label{l:injective coarse}
	The morphism
	\[
	\overline{\rho}_\heartsuit^\reg\colon \MHE_\heartsuit^\reg \git W_{G,H} \xrightarrow{} \MGE
	\]
	is an embedding.
\end{lemma}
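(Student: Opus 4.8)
The plan is to translate the statement into a concrete assertion about the finite group $W$ acting on the variety $\TE$, and then reduce it to an elementary fact about root subsystems. Recall $\MGE = \TE\git W$ and $\MHE = \TE\git W_H$. Since $Z(H)$ is central in $H$, the group $W_H = N_H(T)/T$ acts trivially on $Z(H)\subseteq T$, hence on $Z(H)_E\subseteq \TE$; therefore the quotient map $\TE\to\MHE$ restricts to a closed immersion on $Z(H)_E$, under which $\MHE_\heartsuit^\reg$ is identified with
\[
Z(H)_E^\reg = \{\, p\in Z(H)_E \mid \Sigma_p=\Sigma_H \,\},
\]
the equality $\Sigma_p=\Sigma_H$ coming from the inclusion $\Sigma_H\subseteq\Sigma_p$ (automatic for $p\in Z(H)_E$) together with the characterization of regularity in \cref{prop:regular} and \cref{l:automorphisms and root systems}. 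Writing $W_{G,H}=N_W(\Sigma_H)/W_H$, which acts on $Z(H)_E^\reg$ through $N_W(\Sigma_H)=N_W(W_H)$, the morphism $\overline{\rho}_\heartsuit^\reg$ then becomes the natural map $Z(H)_E^\reg/N_W(W_H)\to\TE\git W=\MGE$.

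The combinatorial core is the following: \emph{if $p\in Z(H)_E^\reg$ and $w\in W$ satisfies $wp\in Z(H)_E$, then $w\in N_W(W_H)$}. Indeed, exactly as in the proof of \cref{l:weyl stabilizer} one has $\Sigma_{wp}=w\Sigma_p=w\Sigma_H$, while $wp\in Z(H)_E$ forces $\Sigma_H\subseteq\Sigma_{wp}$; comparing cardinalities gives $w\Sigma_H=\Sigma_H$, i.e. $w\in N_W(\Sigma_H)=N_W(W_H)$. It follows immediately that the translates $wZ(H)_E^\reg$ are permuted by $W$ via the transitive action on $W/N_W(W_H)$ and are pairwise disjoint for distinct cosets; in particular $\overline{\rho}_\heartsuit^\reg$ is injective on points.

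To upgrade injectivity to a locally closed immersion I would work inside $\TE$. Set $V:=\bigcup_{w\in W} wZ(H)_E$, a $W$-stable closed subvariety (a finite union of closed translates), and $V^\circ:=\bigcup_{w\in W} wZ(H)_E^\reg$; using the combinatorial fact, $V\setminus V^\circ=\bigcup_{[w]\in W/N_W(W_H)} w\bigl(Z(H)_E\setminus Z(H)_E^\reg\bigr)$ is closed, so $V^\circ$ is $W$-stable and open in $V$. The disjointness shows $V^\circ=\bigsqcup_{[w]\in W/N_W(W_H)} wZ(H)_E^\reg$ as schemes, with $W$ permuting the pieces and the stabilizer $N_W(W_H)$ of the distinguished piece $Z(H)_E^\reg$ acting on it through $W_{G,H}$; hence $V^\circ\git W\cong Z(H)_E^\reg\git W_{G,H}$. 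Since $\TE$ is quasi-projective in all three cases, the closed $W$-stable subscheme $V\subseteq\TE$ yields a closed immersion $V\git W\hookrightarrow\TE\git W$, and the open $W$-stable subset $V^\circ\subseteq V$ yields an open immersion $V^\circ\git W\hookrightarrow V\git W$; composing, $Z(H)_E^\reg\git W_{G,H}=\MHE_\heartsuit^\reg\git W_{G,H}$ is locally closed in $\MGE$, and the resulting immersion is precisely $\overline{\rho}_\heartsuit^\reg$.

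The main obstacle is not conceptual but careful bookkeeping: one must check that the identifications above — in particular $V^\circ\cong\bigsqcup_{[w]} wZ(H)_E^\reg$ and the compatibility of $\git W$ with passage to $W$-stable closed and open subschemes — are honest isomorphisms of schemes and not merely bijections, which is where one uses that $\TE$ is a quasi-projective variety and the standard behaviour of quotients by finite groups. One also needs to observe explicitly that $Z(H)_E^\reg$ is open (not merely constructible) in $Z(H)_E$, being the complement of the finitely many hyperplanes $\ker(\alpha_\ast)$ with $\alpha\in\Phi\setminus\Sigma_H$. Once the translates are seen to be disjoint, everything else follows formally.
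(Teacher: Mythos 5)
Your proof is correct and follows essentially the same route as the paper's: both reduce to identifying $\MHE_\heartsuit^\reg\git W_{G,H}$ with $(\TE)_{\Sigma_H}\git N_W(W_H)$ (where $(\TE)_{\Sigma_H}=Z(H)_E^\reg$) and to the fact that the $W$-translates of this locus indexed by distinct cosets of $N_W(W_H)$ are disjoint. Your explicit verification of that disjointness (via $\Sigma_{wp}=w\Sigma_H\supseteq\Sigma_H$ forcing $w\in N_W(\Sigma_H)=N_W(W_H)$) is exactly the step the paper leaves implicit when it asserts that the subgroup of $W$ preserving $(\TE)_{\Sigma}$ is precisely $N_W(W_H)$.
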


It follows immediately that the base change is also an embedding:

\begin{corollary}\label{C:base change embedding}
The morphism
\[
\widetilde{\rho}_\heartsuit^\reg\colon (\widetilde{\uGE})_\heartsuit^\reg \xrightarrow{} \uGE
\]
is an embedding.
\end{corollary}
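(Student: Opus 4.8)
The plan is to realize $\widetilde{\rho}_\heartsuit^\reg$ as a base change of the coarse-moduli morphism $\overline{\rho}_\heartsuit^\reg$ of \cref{l:injective coarse}, and then to invoke the fact that locally closed immersions are stable under base change.

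First I would unwind the decorations. Both the heart condition and the regularity condition on $\uHE$ (and on $\uGE$) are pulled back along the characteristic polynomial map: the heart locus is $(\uGE)_{[G]} = \underline{\chi}_G^{-1}(\MGE_{[G]})$ by definition, and by \cref{r:regular} regularity depends only on $\chi$, so there is a well-defined locally closed subset $\MHE_\heartsuit^\reg \subseteq \MHE$ with $(\uHE)_\heartsuit^\reg = \chi_H^{-1}(\MHE_\heartsuit^\reg)$. Since all the loci in play are $N_G(H)$-invariant (hence $W_{G,H}$-invariant), $\MHE_\heartsuit^\reg \git W_{G,H}$ is a locally closed subvariety of $\MHE \git W_{G,H}$, and $(\widetilde{\uGE})_\heartsuit^\reg$ is, by construction, the preimage of this subvariety under the projection $\widetilde{\uGE} \to \MHE \git W_{G,H}$.

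Next I would exploit that the right-hand square in the diagram preceding the statement is cartesian, i.e.\ $\widetilde{\uGE} = \uGE \times_{\MGE} (\MHE \git W_{G,H})$. Pulling this back along the locally closed immersion $\MHE_\heartsuit^\reg \git W_{G,H} \hookrightarrow \MHE \git W_{G,H}$ and using the pasting law for cartesian squares produces a cartesian square
\[
\xymatrix{
(\widetilde{\uGE})_\heartsuit^\reg \ar[r]^-{\widetilde{\rho}_\heartsuit^\reg} \ar[d] & \uGE \ar[d]^{\underline{\chi}_G} \\
\MHE_\heartsuit^\reg \git W_{G,H} \ar[r]_-{\overline{\rho}_\heartsuit^\reg} & \MGE
}
\]
which exhibits $\widetilde{\rho}_\heartsuit^\reg$ as the base change of $\overline{\rho}_\heartsuit^\reg$ along $\underline{\chi}_G$. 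By \cref{l:injective coarse}, $\overline{\rho}_\heartsuit^\reg$ is an embedding; since this is a morphism of schemes, its base change $\widetilde{\rho}_\heartsuit^\reg$ is representable by schemes and is again a locally closed immersion, i.e.\ an embedding.

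There is no serious obstacle in this corollary; it is a formal consequence of \cref{l:injective coarse}. The only point requiring genuine care is the bookkeeping in the first step: one must check that the heart and regular decorations on $\widetilde{\uGE}$ are precisely those pulled back from the coarse moduli space, so that $(\widetilde{\uGE})_\heartsuit^\reg$ really is the fiber product over $\MHE_\heartsuit^\reg \git W_{G,H}$ and not some slightly larger or smaller locus. The substantive content sits in \cref{l:injective coarse} itself — and, for the injectivity argument as a whole, in \cref{l:etale} and \cref{l:embedding} — whose proofs are deferred to \cref{sec:proofs-creflembedding-crefletale-and-creflinjective-coarse}.
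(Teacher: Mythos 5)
Your proposal is correct and matches the paper's approach: the paper simply asserts that \cref{C:base change embedding} ``follows immediately'' from \cref{l:injective coarse} by base change, which is exactly the argument you give (with the additional, worthwhile bookkeeping that the heart and regular decorations are pulled back along the characteristic polynomial maps, so that $(\widetilde{\uGE})_\heartsuit^\reg$ really is the fiber product of $\uGE$ with $\MHE_\heartsuit^\reg\git W_{G,H}$ over $\MGE$).
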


Putting \cref{l:nu an embedding} and \cref{C:base change embedding} together we obtain that the composite
\[
\xymatrix{
(\uHE)_\heartsuit^\reg/W_{G,H} \ar@{^{(}->}[r] &
(\widetilde{\uGE})_\heartsuit^\reg \ar@{^{(}->}[r] & \uGE
}
\]
is an embedding. This completes the proof of \cref{thm:galois} (modulo the proofs in the following subsection).

\subsection{Proofs of \cref{l:embedding}, \cref{l:etale}, and \cref{l:injective coarse}}\label{sec:proofs-creflembedding-crefletale-and-creflinjective-coarse}

For this subsection we will fix a maximal torus $T$ of our fixed $E$-pseudo-Levi subgroup $H$ (and thus $T$ is also a maximal torus of $G$). 

The following lemma (which is essentially \cref{thm:galois} for the special case $H=T$) forms a key step in the proof of \cref{l:embedding}.

\begin{proposition}\label{p:galois torus case}
	The induction map
	\[
	\uTE^\reg \to \uGE
	\]
	is an unramified $W$-Galois cover onto $(\uGE)_{[T]}$.
\end{proposition}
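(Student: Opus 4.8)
The plan is to deduce this from \cref{prop:regular} (the étale criterion for the induction map) and \cref{l:characterize locus} (the surjectivity statement), both already established, together with the elementary principle \cref{p:etale generic embedding}. Since \cref{l:characterize locus} is proved in \cref{sec:proof-of-surjectivity} and does not depend on the present proposition there is no circularity; I will not use \cref{thm:galois} or the lemmas \cref{l:embedding}, \cref{l:etale}, \cref{l:injective coarse}.

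First I would record what the case $H=T$ amounts to. Since $Z(T)=T$ we have $Z(T)_E=\TE$, and since the Weyl group of $T$ is trivial the characteristic polynomial map $\chi_T\colon \TE\to\TE\git W_T=\TE$ is the identity; hence $\TE^\uni=\{\one\}$ and $(\TE)_\heartsuit^\reg=\TE^\reg$, so the map in question is the one appearing in \cref{thm:galois} for $H=T$. By \cref{l:automorphisms and root systems}, for $p\in\TE$ one has $\Stab_G(p)^\circ=G(\Sigma_p)\supseteq T$, with equality precisely when $\Sigma_p=\emptyset$; thus $\TE^\reg=\{p\in\TE:\Stab_G(p)^\circ=T\}$ is the complement in $\TE\cong J(E)^r$ of the finitely many root divisors $\ker(\alpha_\ast)$, $\alpha\in\Phi$. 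In particular $\TE^\reg$ is nonempty (so $T$ is an $E$-pseudo-Levi), $N_G(T)$-stable, open and dense in $\TE$. Now \cref{prop:regular} applied with $H=T$ says that the induction map $\pi_T\colon\uTE\to\uGE$ is étale exactly on $\uTE^\reg$, and \cref{l:characterize locus} applied with $H=T$ says its image is exactly $(\uGE)_{[T]}$.

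Next I would pass to framed bundles and assemble the $W$-structure. Conjugating a $T$-reduction by a representative in $N_G(T)$ of $w\in W=N_G(T)/T$ yields an isomorphic induced $G$-bundle, so on framed bundles the induction map is $N_G(T)$-equivariant and produces a $G$-equivariant morphism of varieties
\[
\widetilde\pi_T^\reg\colon G\times^{N_G(T)}\TE^\reg\longrightarrow\GE
\]
whose $G$-quotient is $\pi_T^\reg$; since $\uTE^\reg\to[\uTE^\reg/W]$ is finite étale surjective and being étale descends along it, $\widetilde\pi_T^\reg$ is étale, and its image $(\GE)_{[T]}$ is open in $\GE$ (étale morphisms are open). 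It remains to prove that $\widetilde\pi_T^\reg$ is injective. Recall that by \cref{P:framed H semistable injects in G semistable} the induction map $\TE\hookrightarrow\GE$ is a closed embedding. Suppose $[g_1,p_1]$ and $[g_2,p_2]$ have the same image; putting $g=g_1^{-1}g_2$ we get $p_1=g\cdot p_2$ inside $\GE$. Since $p_i\in Z(T)_E$ we have $T\subseteq\Stab_G(p_i)$, and regularity gives $\Stab_G(p_i)^\circ\subseteq T$, so $\Stab_G(p_i)^\circ=T$ for $i=1,2$. From $g\,\Stab_G(p_2)\,g^{-1}=\Stab_G(p_1)$ and passing to identity components we get $gTg^{-1}=T$, i.e. $g\in N_G(T)$; using the closed embedding and the $N_G(T)$-equivariance of induction, $p_1=g\cdot p_2$ already in $\TE^\reg$, whence $[g_1,p_1]=[g_2,p_2]$.

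Finally, $\widetilde\pi_T^\reg$ is an étale injective morphism of varieties with separated source, hence (by the argument of \cref{p:etale generic embedding}: an étale universally injective morphism is a monomorphism, and a flat, locally finitely presented monomorphism is an open immersion) an open immersion; as its image is the open subvariety $(\GE)_{[T]}\subseteq\GE$, it is an isomorphism $G\times^{N_G(T)}\TE^\reg\xrightarrow{\sim}(\GE)_{[T]}$. Taking $G$-quotients gives $[\uTE^\reg/W]\xrightarrow{\sim}(\uGE)_{[T]}$, which, together with the étaleness of $\pi_T^\reg$, is exactly the assertion that $\uTE^\reg\to\uGE$ is an unramified $W$-Galois cover onto $(\uGE)_{[T]}$. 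I expect no deep difficulty here, since \cref{prop:regular} and \cref{l:characterize locus} already do the heavy lifting; the main thing to get right is the bookkeeping that converts the stack-level étaleness and surjectivity of $\pi_T^\reg$ into the variety-level hypotheses of \cref{p:etale generic embedding}, while tracking that the $W$-action acts on $\TE$ through $W$ but on the automorphism groups $T$ by conjugation, so that the stack quotient $[\uTE^\reg/W]$ reproduces the (possibly non-split) automorphism extensions of the semisimple bundles in $(\uGE)_{[T]}$.
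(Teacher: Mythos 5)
Your proof is correct and follows essentially the same route as the paper's: reformulate the claim as the isomorphism $G\times^{N_G(T)}\TE^\reg \xrightarrow{\sim} (\GE)_{[T]}$, obtain étaleness and surjectivity from \cref{prop:regular} and \cref{l:characterize locus}, and get injectivity from the fact that $\Stab_G(p)^\circ = T$ for regular $p$ forces any $g$ carrying one point of $\TE^\reg$ to another to normalize $T$. Your write-up is merely more explicit than the paper's (which records only the containment $\Stab_G(p)\subseteq N_G(T)$ and leaves the conjugation step and the étale-plus-injective-implies-open-immersion step implicit), so there is nothing substantive to add.
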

\begin{proof}
	The claim is equivalent to the statement that
	\[
	G\times^{N_G(T)}\TE^\reg \to (\GE)_{[T]}
	\]
	is an isomorphism. 
	We have already seen that the map is onto and étale (\cref{l:characterize locus,prop:regular}). 
	For injectivity, we must show that for all $p \in \TE^\reg$ we have $\Stab_G(p) \subseteq N_G(T)$. 
	But by definition $\Stab_G(p)^\circ = T$, so the required statement follows from the fact that $\Stab_G(T)$ normalizes its own neutral component.
\end{proof}	

\begin{definition}\label{d:generic covering}
We say that a morphism $f:X\to Y$ is \emph{generically a covering of degree $d$} if there are dense open subsets $U$ of $X$ and $V=f(U)$ of $Y$ such that $f|_U:U \to V$ is a covering (i.e. finite and \'etale) of degree $d$.
\end{definition}
	 
\begin{lemma}\label{l:both coverings of same degree}
	The following morphisms are generically covering maps of degree $|W|/|N_W(W_H)|$:
	\begin{enumerate}
		\item $	\pi:\uHE/W_{G,H} \to \uGE$,
		\item $\rho:\MHE \GIT W_{G,H} \to \MGE$.
\end{enumerate}
\end{lemma}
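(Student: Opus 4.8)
The plan is to reduce both assertions to explicit quotients of $\TE$ by finite groups, treating the coarse-space statement (2) first and then bootstrapping to the stacky statement (1). Fix a maximal torus $T\subseteq H\subseteq G$, write $\Sigma_H$ for the root system of $H$ and $W_H=W_{\Sigma_H}$, and recall that $N_W(W_H)=N_W(\Sigma_H)$ and $W_{G,H}\cong N_W(W_H)/W_H$. Fix also the $W$-stable dense open subset $U\subseteq\TE$ on which $W$ acts freely; it is nonempty because $W$ acts faithfully on $\TE=\Hom(\bX^*,J(E))$ through $\bX^*$, so the fixed locus of each nontrivial $w$ is a proper closed subset and $U$ is the complement of their finite union.

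For (2), I would use Laszlo's identifications $\MGE\cong\TE\git W$ and $\MHE\cong\TE\git W_H$ (\cite{Lasz}, applied to $G$ and to $H$). Forming the categorical quotient of $\TE$ by the finite group $W$ in two stages along the normal subgroup $W_H\trianglelefteq N_W(W_H)$ yields
\[
\MHE\git W_{G,H}\;\cong\;(\TE\git W_H)\git\!\big(N_W(W_H)/W_H\big)\;\cong\;\TE\git N_W(W_H),
\]
and under these identifications $\rho$ is the natural map $\TE\git N_W(W_H)\to\TE\git W$ induced by $N_W(W_H)\hookrightarrow W$. Over the dense open $U/W\subseteq\MGE$, the maps $U\to U/W$ and $U\to U/N_W(W_H)$ are torsors under the respective finite groups, so $U/N_W(W_H)\to U/W$ is finite étale of degree $[W:N_W(W_H)]=|W|/|N_W(W_H)|$; as $U/N_W(W_H)$ is the preimage of $U/W$, this proves (2).

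For (1), I would invoke Proposition \ref{p:galois torus case}, applied to $T\subseteq G$ and to $T\subseteq H$: the relevant induction maps restrict to étale Galois covers onto the strata $(\uGE)_{[T]}$ and $(\uHE)_{[T]}$, which are dense open since $[T]$ corresponds to $\Sigma=\emptyset$, the minimum of the closure-order $\preceq$ of Section \ref{SS:summary partition and strata}. Restricting $\pi$ further to the preimage of $U/W\subseteq\MGE$ under the characteristic polynomial maps, the commutative square relating $\pi$ to $\rho$ identifies this restriction with
\[
\big[U/\widetilde N\big]\longrightarrow\big[U/N_G(T)\big],\qquad \widetilde N:=\ \text{preimage of }N_W(W_H)\text{ in }N_G(T),
\]
the source being the corresponding restriction of $\uHE/W_{G,H}$. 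Since $T$ acts trivially on $U$ and $W\supseteq N_W(W_H)$ acts freely, each side is a $T$-gerbe over $U/N_W(W_H)$, respectively $U/W$, with compatible bands; hence the displayed map is the base change of the finite étale cover $U/N_W(W_H)\to U/W$ of degree $|W|/|N_W(W_H)|$, which proves (1).

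The step I expect to demand the most care is this last identification: one must verify that $(\uGE)_{[T]}$ is genuinely dense (the closure relation at the minimal stratum), and that the three maps $\underline{\chi}_G,\underline{\chi}_H,\overline{\underline{\chi}}_H$ restrict over the generic locus to compatibly $T$-banded gerbes, so that the generic degree of $\pi$ coincides with that of $\rho$. If one prefers to sidestep this bookkeeping, the degree of $\pi$ can be computed at a single generic point: a generic $\cP_G\in\uGE$ is regular semisimple, its isomorphism classes of $H$-reductions are in bijection with the $T$-fixed points $N_G(T)/N_H(T)\cong W/W_H$ of $G/H$, and passing to the $W_{G,H}$-quotient leaves $|W/W_H|/|W_{G,H}|=|W|/|N_W(W_H)|$ of them, in agreement with the count on coarse moduli spaces in (2).
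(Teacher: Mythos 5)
Your proposal is correct and follows essentially the same route as the paper: part (2) is the identification of $\rho$ with $\TE\git N_W(W_H)\to\TE\git W$ together with the free $W$-action on a dense open of $\TE$, and part (1) rests on \cref{p:galois torus case} applied to $T\subseteq G$ and $T\subseteq H$. The paper simply divides the generic degrees $|W|$ and $|W_H|$ of the two torus covers and then by $|W_{G,H}|$, whereas you package the same computation as a gerbe-banded base change of the coarse cover; the inputs and the degree count are identical.
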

\begin{proof}
	(1) Consider the following diagram:
	\[
	\xymatrix{
		\uTE \ar[r]^\beta \ar[rd] \ar@/^2pc/[rr]^\alpha& \uHE \ar[r]^\gamma \ar[d]^\delta& \uGE \\
		& \uHE/W_{G,H} \ar[ru]_\pi &	
	}
	\]
By \cref{p:galois torus case}, we have that $\alpha$ is generically a covering of degree $|W|$ and $\beta$ is generically a covering of degree $|W_H|$. 
Thus $\gamma$ is generically a covering of degree $|W|/|W_H|$. 
By construction, $\delta$ is a covering of degree $|W_{G,H}|$. 
Thus $\pi$ is generically a covering of degree $|W|/|W_H||W_{G,H}| = |W|/|N_W(W_H)|$ as required.

(2) Now consider the diagram:
\[
\xymatrix{
	\TE \ar[r]^{\beta'} \ar@/^2pc/[rr]^{\alpha'}  & \TE \GIT N_W(W_H) \ar[r] \ar[d]^\wr & \TE \GIT W \ar[d]^\wr\\
	& \MHE \GIT W_{G,H} \ar[r]_{\rho} & \MGE
}
\]
As $W$ acts freely on a dense open subset of $\TE$ (namely $\TE^\maxreg$; see \cref{D:regular and strongly regular}), we have that $\alpha'$ is generically a covering of degree $|W|$ and $\beta'$ is generically a covering of degree $|N_W(W_H)|$. Thus $\rho$ is generically a covering of degree $|W|/|N_W(W_H)|$ as required.\qedhere
\end{proof}

\begin{proof}[Proof of \cref{l:embedding}]
Consider again the diagram:
\[
\xymatrix{
	\uHE/W_{G,H} \ar@/^2pc/[rr]^{\overline{\pi}} \ar[r]^{\overline{\nu}} \ar[rd]_{\overline{\underline{\chi}}_H} 
	& \widetilde{\uGE} \ar[r]^{\widetilde{\rho}} \ar[d] \ar@{}[rd]|\square &\uGE \ar[d]_{\underline{\chi}_G} \\
	&	 \MHE \GIT W_{G,H} \ar[r]_{\overline{\rho}} & \MGE
}
\]

By \cref{l:both coverings of same degree}, $\overline{\pi}$ and $\overline{\rho}$ are both generically coverings of degree $|W|/|N_W(W_H)|$. 
Thus, by base change, $\widetilde{\rho}$ is generically a covering of degree $|W|/|N_W(W_H)|$. 
But as $\overline{\pi} = \widetilde{\rho}\circ\overline{\nu}$ we must have that $\overline{\nu}$ is generically a covering of degree $1$, i.e. generically an embedding as required.
\end{proof}

\begin{lemma}\label{lem:coarse-moduli-etale}
	The morphism
	\[
	\MHE\GIT W_{G,H} \to \MGE
	\]
	is \'etale on the locus $\MHE^\strreg \GIT W_{G,H}$.
\end{lemma}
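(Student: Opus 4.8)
The plan is to reduce the assertion to a computation with quotients of $\TE$ by finite groups. First I would use Laszlo's theorem \cite{Lasz} (applied to $G$ and to $H$) together with transitivity of categorical quotients to identify $\rho$ with the natural map
\[
\TE \git N_W(W_H) \longrightarrow \TE \git W,
\]
which is precisely the map occurring in the commutative diagram in the proof of \cref{l:both coverings of same degree}(2). Since $N:=N_W(W_H)$ and $W$ are finite, $\TE\git N = \TE/N$ and $\TE\git W = \TE/W$ are ordinary orbit quotients and $\rho$ is induced by the inclusion $N\subseteq W$; write $\alpha\colon\TE\to\TE/W$ and $\beta\colon\TE\to\TE/N$ for the quotient maps.

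Next I would check that every $p\in\TE$ whose image in $\MHE$ is strongly regular satisfies $\Stab_W(p)\subseteq N$. Indeed, for such a $p$ the associated semisimple $G$-bundle $\cP_p$ has automorphism group $\Stab_G(\cP_p)=\Stab_G(p)\subseteq N_G(H)$ by the definition of strong regularity; since $T$ is a maximal torus of $\Stab_G(p)$, the group $\Stab_W(p)$ is the image of $\Stab_G(p)\cap N_G(T)$ in $W$, hence consists of classes of elements of $N_G(T)$ that normalize $H=G(\Sigma_H)$ and therefore normalize $\Sigma_H$; thus $\Stab_W(p)\subseteq N_W(\Sigma_H)=N_W(W_H)=N$ (using \cref{T:Borel de Siebenthal} and the identity $N_W(\Sigma)=N_W(W_\Sigma)$). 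In particular $\Stab_W(p)=\Stab_N(p)$ for every such $p$.

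Finally I would conclude with the following purely local statement, valid in any characteristic: if a finite group $W$ acts on a variety $Y$ and $N\le W$ is a subgroup, then $Y/N\to Y/W$ is étale at the image $\beta(y)$ of any point $y$ with $\Stab_W(y)=\Stab_N(y)$. This is immediate on completed local rings: the completed local ring of $Y/W$ at $\alpha(y)$ is $\widehat{\cO}_{Y,y}^{\,\Stab_W(y)}$ and that of $Y/N$ at $\beta(y)$ is $\widehat{\cO}_{Y,y}^{\,\Stab_N(y)}$ (invariants commute with the flat base change given by completion, and the scheme-theoretic fibre of a finite quotient is the orbit), and the induced map between them is the identity once the two stabilizers coincide; an isomorphism on completed local rings at a point of a finite-type morphism forces it to be étale there. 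Applying this with $Y=\TE$ and $y=p$ as in the previous paragraph yields étaleness of $\rho$ at every point of $\MHE^\strreg\git W_{G,H}$.

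I do not expect a serious obstacle; the main things to get right are the bookkeeping identifying $\Stab_W(p)$ with the image of $\Stab_G(p)\cap N_G(T)$ and hence with a subgroup of $N_W(\Sigma_H)$, and verifying that completion commutes with the relevant finite-group invariants (which it does by flatness of completion). A slightly slicker alternative packaging, which however carries a characteristic caveat, is to apply Luna's étale slice theorem to the $G$-equivariant induction map $G\times^{N_G(H)}\HE\to\GE$: this map is étale at a strongly regular semisimple point $p$ by \cref{prop:regular}, and induces an isomorphism on stabilizers there precisely because $\Stab_G(p)\subseteq N_G(H)$, so it descends to an étale map on GIT quotients; I would prefer the completed-local-ring argument above as it is characteristic-free.
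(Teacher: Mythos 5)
Your proposal is correct and follows essentially the same route as the paper: identify the map with $\TE\git N_W(W_H)\to\TE\git W$, show that strong regularity forces $\Stab_W(p)\subseteq N_W(\Sigma_H)$ (this is exactly the paper's \cref{lem:regular-normalizer}), and conclude by the general fact that a quotient by a subgroup containing the stabilizer is étale there. The only difference is that the paper cites \cite[Prop V.2.2]{SGA1} for that last fact, whereas you give a self-contained (and correct, characteristic-free) proof via completed local rings.
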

\begin{proof}
	 Recall that 
	\[
	\MGE \simeq \TE \git W
	\]
	and 
	\[
	\MHE \git W_{G,H} \simeq \TE\git N_W(\Sigma)
	\] 
	where $\Sigma$ is the root system of $H$.
	Using \cite[Prop V.2.2]{SGA1} we deduce that if $N_W(\Sigma)$ contains the stabilizer $\Stab_W(p)$ then the 
	map
	\[ T_E\git N_W(\Sigma)\to T_E\git W \]
	is étale at $p$.	
	Thus the claim reduces to \cref{lem:regular-normalizer} below.
	\end{proof} 

\begin{lemma}\label{lem:regular-normalizer}
	Let $p\in \TE \cap \HE^\strreg$. 
	Then $\Stab_W(p) \subseteq N_W(\Sigma)$.
\end{lemma}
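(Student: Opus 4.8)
The plan is to rephrase the hypothesis $p\in\HE^{\strreg}$, i.e.\ $\Stab_G(p)\subseteq N_G(H)$, together with the definition of $\Stab_W(p)$, in terms of subgroups of $G$, and then read off the conclusion from Borel--de Siebenthal theory. The starting point is the compatibility that implicitly underlies this whole section: the closed embedding $\TE\hookrightarrow\GE$ of \cref{P:framed H semistable injects in G semistable} is equivariant for $N_G(T)$, where $N_G(T)$ acts on $\TE$ through its quotient $W=N_G(T)/T$ (conjugation on $T$, hence on $T$-bundles, hence on framed $T$-bundles) and on $\GE$ through the inclusion $N_G(T)\hookrightarrow G$ (changing the framing). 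Since $T$ acts trivially on $\TE$, we get $T\subseteq\Stab_G(p)$, and, using that this embedding is injective and $N_G(T)$-equivariant, a lift $n\in N_G(T)$ of $w\in W$ fixes $p\in\TE$ if and only if it fixes the image of $p$ in $\GE$. Hence
\[
\Stab_W(p)\;=\;\bigl(N_G(T)\cap\Stab_G(p)\bigr)/T\;\subseteq\;W.
\]

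Given this, strong regularity is exactly what is needed: $\Stab_G(p)\subseteq N_G(H)$ forces $\Stab_W(p)\subseteq\bigl(N_G(T)\cap N_G(H)\bigr)/T$, so it remains to check the inclusion $\bigl(N_G(T)\cap N_G(H)\bigr)/T\subseteq N_W(\Sigma)$, where $\Sigma$ is the root system of $H$ relative to $T$. This is routine: if $n$ normalizes both $T$ and $H$, then $\mathrm{Ad}(n)$ preserves $\ft$ and $\fh=\ft\oplus\bigoplus_{\alpha\in\Sigma}\fg_\alpha$; since $\mathrm{Ad}(n)$ sends $\fg_\alpha$ to $\fg_{w\alpha}$ for $w=nT\in W$, it must permute $\{\fg_\alpha:\alpha\in\Sigma\}$ among themselves, i.e.\ $w(\Sigma)=\Sigma$, i.e.\ $w\in N_W(\Sigma)$. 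Combining the two displays proves the lemma.

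I do not anticipate a serious obstacle: the real content is the dictionary between the $W$-action on $\TE$ and the $G$-action on $\GE$, which is already used freely above (for instance in \cref{l:weyl stabilizer}, \cref{l:automorphisms and root systems}, and \cref{p:galois torus case}), so the write-up is mostly a matter of making it explicit. The one point worth emphasizing is that mere regularity would \emph{not} suffice here: by \cref{prop:regular} it only yields $\Sigma_p\subseteq\Sigma$ (equivalently $\Stab_G(p)^\circ\subseteq H$), which gives $\Stab_W(p)\subseteq N_W(\Sigma_p)$ but in general not $\Stab_W(p)\subseteq N_W(\Sigma)$; it is precisely the control of the component group provided by strong regularity --- see \cref{ex:strongly regular}(2), where the weaker condition fails --- that makes the argument go through.
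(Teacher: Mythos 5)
Your proof is correct and follows essentially the same route as the paper's: lift $w\in\Stab_W(p)$ to $\widetilde{w}\in N_G(T)\cap\Stab_G(p)$, use strong regularity to place $\widetilde{w}$ in $N_G(H)$, and conclude that $w$ preserves $\Sigma$. You merely spell out in more detail the equivariance of $\TE\hookrightarrow\GE$ and the fact that $T\subseteq\Stab_G(p)$, which the paper's three-sentence proof leaves implicit, and your closing remark about why mere regularity would not suffice is accurate.
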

\begin{proof}
	By assumption $\Stab_G(p)^\circ \subseteq H$. 
	Suppose $w\in \Stab_W(p)$ and choose a lift to $\widetilde{w} \in N_G(T)$. 
	Then $\widetilde{w} \in N_G(H)$ by assumption, and thus $w$ preserves the root system $\Sigma$ of $H$ as required.
	\end{proof}

We may now proceed with:
\begin{proof}[Proof of \cref{l:etale}]
Consider once more the diagram:
\[
\xymatrix{
	\uHE/W_{G,H} \ar@/^2pc/[rr]^{\overline{\pi}} \ar[r]^{\overline{\nu}} \ar[rd]_{\overline{\underline{\chi}}_H} 
	& \widetilde{\uGE} \ar[r]^{\widetilde{\rho}} \ar[d] \ar@{}[rd]|\square &\uGE \ar[d]_{\underline{\chi}_G} \\
	&	 \MHE \GIT W_{G,H} \ar[r]_{\overline{\rho}} & \MGE
}
\]	
By \cref{lem:coarse-moduli-etale}, $\overline{\rho}$ is \'etale on $\MHE^\strreg\GIT W_{G,H}$. 
Therefore its base change, $\widetilde{\rho}$ is \'etale on $\widetilde{\uGE}^\strreg$. 
We also know that $\uHE^\strreg \subseteq \uHE^\reg$ is \'etale over $\uGE$ by \cref{prop:regular}. 
Thus both the source and target of $\overline{\nu}^\strreg$ is \'etale over $\uGE$, and hence $\overline{\nu}^\strreg$ is itself \'etale as required.
\end{proof}

Finally we come to
\begin{proof}[Proof of \cref{l:injective coarse}]
 We have a commutative diagram
	\[
	\xymatrix{
		& \ar[ld] \TE \ar[rd] &\\
		\MHE \ar[rr] && \MGE
	}
	\]
	which exhibits $\MGE$ as $\TE \git W$ and $\MHE$ as $\TE\git W_H$. 
	The further quotient $\MHE \git W_{G,H}$ is thus identified with $\TE \git N_W(W_H)$ (recall that $W_{G,H} \cong N_W(W_H)/W_H$). 
	
	Let us denote by $\Sigma = \Sigma_H \subseteq \Phi$ the roots of $H$ with respect to $T$. The locus $\MGE_{[H]}$ (respectively, $\MHE_{\heartsuit}^\reg$) is precisely the image of $(\TE)_{\Sigma}$ (see \cref{l:comparing two descriptions of partition}). 
	
	Thus $\MGE_{[H]}$ is identified with the quotient of $(\TE)_{\Sigma}$ by the subgroup of $W$ which preserves the locus $(\TE)_{\Sigma}$. 
	But this subgroup is precisely $N_W(W_H)$. 
	Thus both $\MGE_{[H]}$ and $\MHE_{\heartsuit}^\reg \git W_{G,H}$ are identified with $(\TE)_\Sigma \git N_W(W_H)$. 
	In particular, the map
	\[
	\MHE_{\heartsuit}^\reg\git W_{G,H} \to \MGE_{[H]}
	\]
	is an isomorphism as required.
\end{proof}

\section{The Tannakian approach and unipotent bundles}
The goal of this section is to understand the geometry of the locus of unipotent bundles $\GE^\uni$ in $\GE$. 
Let $(\GE)^\wedge_\uni$ denote the formal neighbourhood of the unipotent locus. 
Similarly, we have the unipotent cone $G^\uni$ in $G$ and its formal neighbourhood $G^\wedge_\uni$. 
We will prove:

 \begin{theorem}\label{t:unipotent}
Let $E,E'$ be two pointed curves of arithmetic genus 1. Then any isomorphism of formal groups $\widehat{J(E)} \cong \widehat{J(E')}$ defines $G$-equivariant isomorphisms
 \[
 \xymatrix{
 	(G_{E'})^\wedge_\uni \ar[r]^\sim & (\GE)^\wedge_\uni \\
 	G_{E'}^\uni \ar[r]^\sim \ar@{^{(}->}[u] & \GE^\uni \ar@{^{(}->}[u]
 }
 \]
 \end{theorem}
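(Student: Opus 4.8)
The plan is to deduce the statement from the Tannakian description of framed, semistable, degree $0$ $G$-bundles on $E$, together with the principle that the part of this description which controls unipotent bundles and their formal neighbourhood sees $E$ only through the formal group $\widehat{J(E)}$.

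Recall that a framed $G$-bundle on $E$ is the same datum as a $k$-linear symmetric monoidal functor $F\colon\Rep(G)\to\QCoh(E)$ together with a trivialisation of the fibre functor $V\mapsto F(V)_{x_0}$. Under this dictionary, $F$ factors through the full tensor subcategory $\Vect^{0,\ss}(E)$ of semistable degree $0$ bundles exactly when the corresponding $G$-bundle is semistable of degree $0$, and it factors through the full tensor subcategory $\mathrm{Un}(E)\subseteq\Vect^{0,\ss}(E)$ of \emph{unipotent} bundles (those admitting a filtration with all subquotients $\cong\cO_E$) exactly when the corresponding bundle is unipotent. The category $\mathrm{Un}(E)$, with the fibre functor $(-)_{x_0}$, is a neutral Tannakian category; write $\Pi_E$ for its affine (pro-unipotent) fundamental group. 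The first step is to record the $G$-equivariant identification
\[
\GE^\uni\;\cong\;\Hom_{\mathrm{gp}}(\Pi_E,G),
\]
with $G$ acting by change of framing on the left and by conjugation of homomorphisms on the right, together with the parallel identification of the formal neighbourhood $(\GE)^\wedge_\uni$ with $\Hom_{\mathrm{gp}}(\widehat\Pi_E,G)$, where $\widehat\Pi_E$ is the slightly larger pro-group incorporating, in addition to $\Pi_E$, the infinitesimal semisimple directions $\Hom(\bX^*(T),\widehat{J(E)})$, i.e. the formal neighbourhood of $\one$ in $\TE$; equivalently $(\GE)^\wedge_\uni$ is just the completion of $\GE$ along $\GE^\uni$, and this description follows from the Jordan--Chevalley decomposition of \cref{thm:jordan}.

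The crux is the following claim: \emph{the neutral Tannakian category $(\mathrm{Un}(E),(-)_{x_0})$, and hence $\Pi_E$ and $\widehat\Pi_E$, depends on $(E,x_0)$ only through the formal group $\widehat{J(E)}$, canonically and naturally.} I would prove this by producing a canonical equivalence of neutral tensor categories between $\mathrm{Un}(E)$ and the category of finite-dimensional (locally nilpotent) comodules over the Hopf algebra attached to $\widehat{J(E)}$. Two complementary inputs feed into this. On the geometric side, a family of line bundles on $E$ over an Artinian local base which is trivial on the special fibre is the same thing as a morphism to $\widehat{\Pic^0(E)}=\widehat{J(E)}$, so the restriction of the suitably normalised Poincaré bundle along $E\times\widehat{J(E)}$ is a \emph{universal} framed unipotent line bundle on $E$, and applying representations of $G$ to it (together with the torus $\TE$ for the semisimple directions) produces all framed unipotent $G$-bundles and their deformations. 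On the homological side, $\mathrm{Un}(E)$ is generated under extensions by $\cO_E$, the extensions being governed by the commutative algebra $R\Gamma(E,\cO_E)$ with all of its higher structure, and this datum is precisely what pro-represents the deformation problem of the trivial line bundle, i.e. it recovers $\widehat{\Pic^0(E)}=\widehat{J(E)}$. Matching the universal family (a morphism \emph{out of} the formal group) against this deformation-theoretic universality (the formal group as universal target) yields the asserted canonical, natural equivalence.

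Granting the claim, an isomorphism of formal groups $\varphi\colon\widehat{J(E)}\xrightarrow{\ \sim\ }\widehat{J(E')}$ transports to an equivalence of neutral Tannakian categories $\mathrm{Un}(E')\xrightarrow{\ \sim\ }\mathrm{Un}(E)$, hence to isomorphisms of pro-group schemes $\Pi_{E'}\cong\Pi_E$ and $\widehat\Pi_{E'}\cong\widehat\Pi_E$, and applying $\Hom_{\mathrm{gp}}(-,G)$ gives the desired $G$-equivariant isomorphisms $G_{E'}^\uni\cong\GE^\uni$ and $(G_{E'})^\wedge_\uni\cong(\GE)^\wedge_\uni$, compatible with the inclusions of the unipotent locus into its formal neighbourhood; this is the square in the statement, and the $G$-equivariance and compatibility with formal completion are automatic from the constructions. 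I expect the claim to be the main obstacle and essentially the whole content: one must show, uniformly across the elliptic, nodal and cuspidal cases and in every characteristic, that unipotent bundles on $E$ remember nothing beyond $\widehat{J(E)}$. This is exactly where positive characteristic bites, since $R\Gamma(E,\cO_E)$ is genuinely non-formal for a supersingular elliptic curve and that non-formality is precisely what is packaged into $\widehat{J(E)}$; it will therefore be cleanest to organise the argument through the formal Picard scheme and its universal line bundle rather than through explicit higher operations on cohomology.
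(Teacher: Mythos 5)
Your overall strategy is the one the paper follows: use Tannaka duality to reduce the statement to the claim that the tensor category controlling unipotent bundles depends on $E$ only through the formal group $\widehat{J(E)}$. The paper implements this by composing Tannaka duality with a Fourier--Mukai equivalence $\left(\Vect^{0,\ss}(E),\otimes\right)\simeq\left(\Tor(J(E)),\ast\right)$ (\cref{t:fourier mukai}), so that $\uGE\simeq\Fun^\otimes(\Rep_k(G),\Tor(J(E)))$ and unipotent bundles correspond to functors landing in torsion sheaves supported at $\one$, which visibly see only $\widehat{J(E)}$. You instead package the unipotent vector bundles as a neutral Tannakian category $\mathrm{Un}(E)$ with pro-unipotent fundamental group $\Pi_E$ and describe $\GE^\uni$ as $\Hom_{\mathrm{gp}}(\Pi_E,G)$. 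These are equivalent pictures, but your route concentrates essentially all of the content into the unproved Claim, and the sketch of its proof misses the hardest point.

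Concretely, two gaps. First, the Claim is not merely that $\mathrm{Un}(E)$ is equivalent as an abelian category to locally nilpotent (co)modules attached to $\widehat{J(E)}$ --- that much is deformation theory of $\cO_E$, as you say --- but that the \emph{tensor product} of unipotent bundles is carried to the \emph{group law} on $\widehat{J(E)}$, i.e.\ to convolution. Your argument (matching the universal family on $E\times\widehat{J(E)}$ against pro-representability of $\widehat{\Pic^0}$) pins down the underlying formal scheme and the classification of objects, but gives no reason why $\otimes$ corresponds to the comultiplication; this theorem-of-the-cube/Fourier--Mukai monoidality is exactly the input the paper takes from Burban--Kreussler (and from Friedman--Morgan in the nodal and cuspidal cases), and without it an isomorphism of formal groups induces no tensor equivalence and hence nothing on $G$-bundles. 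Second, you assert that a semistable degree-$0$ $G$-bundle is unipotent iff its tensor functor factors through $\mathrm{Un}(E)$. The forward implication is formal, but the converse is not automatic in families: over a non-reduced test scheme a single faithful representation does not detect unipotency, and the paper needs \cref{p:unipotent iff associated unipotent} together with \cref{l:embedding of coarse moduli} (a generating family of representations embeds $\MGE$ into a product of symmetric powers near the basepoint). Relatedly, your formal-neighbourhood statement via an enlarged pro-group $\widehat\Pi_E$ ``incorporating the semisimple directions'' is too vague to verify; the checkable statement, and the one the paper proves, is $(\uGE)^\wedge_\uni\simeq\Fun^\otimes(\Rep_k(G),\Tor(\widehat{J(E)}))$ with no unipotence condition imposed on the torsion sheaves.
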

\begin{corollary}\label{c:unipotent}	
If $E$ is an ordinary elliptic curve over $k$ (in particular, if $char(k)=0$) then we have isomorphisms
\[
\xymatrix{
	G^\wedge_\uni \ar[r]^\sim & (\GE)^\wedge_\uni \\
	G^\uni \ar[r]^\sim \ar@{^{(}->}[u] & \GE^\uni \ar@{^{(}->}[u]
}
\]

\end{corollary}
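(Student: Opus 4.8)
The plan is to obtain \cref{c:unipotent} as a special case of \cref{t:unipotent}, taking for the second curve the nodal curve $E_\node$ from \cref{SS:intro-overview}. Its Jacobian is $J(E_\node)\cong\bG_m$, so its formal group is $\widehat{J(E_\node)}\cong\hat\bG_m$; and by \cref{P:Fried-Morg-nodal-cuspidal} (in its framed form, $G_{E_\node}\cong G$ with $G$ acting by conjugation) the characteristic polynomial map $\chi$ for $E_\node$ becomes the adjoint quotient $G\to G\adjquot G$. Hence, by \cref{D:stack of unip bdls}, $G_{E_\node}^\uni=\chi^{-1}(\one)$ is precisely the unipotent cone $G^\uni$, and $(G_{E_\node})^\wedge_\uni\cong G^\wedge_\uni$. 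So it is enough to feed \cref{t:unipotent} an isomorphism of formal groups $\widehat{J(E)}\cong\widehat{J(E_\node)}=\hat\bG_m$: the resulting $G$-equivariant isomorphisms $(G_{E_\node})^\wedge_\uni\xrightarrow{\sim}(\GE)^\wedge_\uni$ and $G_{E_\node}^\uni\xrightarrow{\sim}\GE^\uni$ then give exactly the commutative square in the statement.

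It remains to produce the isomorphism $\widehat{J(E)}\cong\hat\bG_m$. If $\car(k)=0$, then $\widehat{J(E)}$ and $\hat\bG_m$ are one-dimensional formal groups over a field of characteristic zero and are therefore isomorphic (both are isomorphic to $\hat\bG_a$ via the formal logarithm). If $\car(k)=p>0$ and $E$ is an ordinary elliptic curve, then $\widehat{J(E)}$ is one-dimensional of height one, and any such formal group over an algebraically closed field is isomorphic to $\hat\bG_m$; this is the step that uses ordinarity, since for supersingular $E$ the height is two and no such isomorphism exists. Plugging this isomorphism into \cref{t:unipotent} for the pair $(E,E_\node)$ and combining with the identifications above completes the argument.

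I do not expect a real obstacle here: all the substance is in \cref{t:unipotent}, and the formal group statement is classical. The only point deserving a line of care is checking that the $G$-action supplied by \cref{t:unipotent} matches the conjugation action on $G^\uni$ under the Friedman--Morgan identification $G_{E_\node}\cong G$. This is immediate, however, since the $G$-action in \cref{t:unipotent} is by construction the change-of-framing action, which \cref{P:Fried-Morg-nodal-cuspidal} identifies with conjugation.
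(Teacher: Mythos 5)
Your proof is correct and is essentially the argument the paper intends: apply \cref{t:unipotent} with $E'=E_{\node}$, identify $G_{E_\node}^\uni$ with $G^\uni$ via \cref{P:Fried-Morg-nodal-cuspidal}, and supply the formal-group isomorphism $\widehat{J(E)}\cong\hat\bG_m$ from ordinarity (height one) or the formal logarithm in characteristic zero. The remarks on equivariance and on where ordinarity is used match the paper's own discussion.
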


In order to prove \cref{t:unipotent} we will use the following result.
\begin{theorem}\label{t:tannaka}
	Given a genus 1 marked curve $E$ as above, there is an equivalence of stacks
	\[
	\uGE \simeq \Fun^\otimes(\Rep_k(G),\Tor(J(E)))	.
	\]
\end{theorem}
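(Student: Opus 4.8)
The plan is to deduce this from the general Tannakian reconstruction of the stack of $G$-bundles, restricted to the degree $0$ semistable locus. Recall that for any projective (more generally, quasi-compact quasi-separated) scheme $X$ — so in particular for our curve $E$, whether smooth, nodal or cuspidal — there is a natural equivalence of stacks
\[
\Bun_G(X)\;\xrightarrow{\ \sim\ }\;\Fun^{\otimes}\bigl(\Rep_k(G),\,\Vect(X)\bigr),\qquad \cP\longmapsto\bigl(V\mapsto V_\cP\bigr),
\]
between $G$-bundles on $X$ and $k$-linear exact symmetric monoidal functors from $\Rep_k(G)$ to vector bundles on $X$; this is Tannaka duality for affine group schemes in its relative form (obtained by sheafifying the affine statement). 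Here $\Fun^{\otimes}$ denotes the stack whose $S$-points are such functors valued in $\Vect(X\times S)$; everything below is compatible with base change, so I will argue on points. I take this reconstruction as known.

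The second step is to cut down by semistability. Let $\mathcal C_E\subseteq\Vect(E)$ be the full subcategory of semistable vector bundles of degree $0$ (when $E$ is nodal or cuspidal: those trivialised upon pullback to the normalisation $\bP^1$). By \cref{l:semistable lemma} every morphism in $\mathcal C_E$ has constant rank, so kernels, images and cokernels of such maps again lie in $\mathcal C_E$; thus $\mathcal C_E$ is an abelian subcategory, and it is visibly closed under $\otimes$, duals and finite direct sums. I claim that under the equivalence above $\uGE$ corresponds exactly to the substack of functors landing in $\mathcal C_E$, so that
\[
\uGE\;\simeq\;\Fun^{\otimes}\bigl(\Rep_k(G),\,\mathcal C_E\bigr).
\]
For one inclusion: if $\cP$ is degree $0$ and semistable then each $V_\cP$ is semistable (\cref{r:semistability associated}, via \cite{Sun_sstab}) and $\det V_\cP=(\det V)_\cP$ has degree $0$ because $\cP\in\Bun^0_G(E)$, so the associated functor factors through $\mathcal C_E$. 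Conversely, given a functor factoring through $\mathcal C_E$ with corresponding bundle $\cP$: evaluating on the adjoint representation shows $\fg_\cP$ is semistable of degree $0$, hence $\cP$ is semistable by \cref{D:semistability}; and evaluating on the characters of $G$ (in the singular case, on all of $\Rep_k(G)$, so that $\cP|_{\bP^1}$ is trivial) shows $\cP$ has degree $0$.

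The remaining and, I expect, most substantial step is to identify the symmetric monoidal abelian category $\mathcal C_E$ with $\Tor(J(E))$ — this is the only place where the Jacobian genuinely enters. In the elliptic case this should be a form of the Fourier–Mukai transform $D^b(E)\xrightarrow{\ \sim\ }D^b(J(E))$: it carries $\mathcal C_E$, as the relevant class of $\mathrm{WIT}$ sheaves (up to a cohomological shift), onto torsion coherent sheaves on $J(E)$, and intertwines $\otimes_E$ with convolution along the group law of $J(E)$, compatibly with the symmetry constraints. In the nodal and cuspidal cases $J(E)$ is $\Gm$, respectively $\Ga$, and one argues hands-on: a bundle on $E$ trivialised on $\bP^1$ is precisely the datum of its descent along the normalisation, which is a finite-length module over $\cO(J(E))=k[t,t^{-1}]$, respectively $k[t]$, with tensor product turning into convolution — a restatement of \cref{P:Fried-Morg-nodal-cuspidal}. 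Alternatively, uniformly in the three cases, one can observe that both $\mathcal C_E$ (with fibre functor $V\mapsto V|_{x_0}$, exact by \cref{l:semistable lemma}) and $\Tor(J(E))$ (with fibre functor $\mathcal F\mapsto\Gamma(J(E),\mathcal F)$) are neutral Tannakian over $k$, and that their Tannaka dual group schemes coincide: in both, the character group is $\Pic^0(E)=J(E)(k)$ and $\Ext^1(\cO,\cO)=H^1(E,\cO)=\Lie J(E)$, and — since $E$ admits no stable bundle of degree $0$ and rank $>1$, so every simple object is a line bundle — these data pin down the relevant (commutative, pro-algebraic) group. Composing the three steps yields $\uGE\simeq\Fun^{\otimes}(\Rep_k(G),\Tor(J(E)))$, as desired.
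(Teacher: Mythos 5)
Your three-step strategy---Tannaka duality for $\Bun_G$, restriction to the degree-$0$ semistable subcategory, and a symmetric monoidal Fourier--Mukai equivalence with torsion sheaves on $J(E)$---is exactly the paper's: \cref{t:tannaka} is obtained there by composing \cref{c:tannaka} with \cref{t:fourier mukai}, and your primary justification of the last step (Teodorescu and Burban--Kreussler for smooth $E$, Friedman--Morgan in the nodal and cuspidal cases) is precisely the one the paper invokes. So in substance this is the same proof, modulo two caveats below.

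First, your ``alternative, uniform'' identification of $\cC_E$ with $\Tor(J(E))$ by matching character groups and $\Ext^1(\cO,\cO)$ does not work in positive characteristic: these invariants do not determine the pro-unipotent part of the Tannaka dual group. For example, the subcategories of torsion sheaves supported at the origin of $\Ga$ and of $\Gm$ each have a single simple object with $\Ext^1(\cO_{\one},\cO_{\one})=k$, yet in characteristic $p$ they are inequivalent tensor categories, their Tannaka duals being the non-isomorphic formal groups $\widehat{\Ga}$ and $\widehat{\Gm}$. This is exactly why \cref{t:unipotent} requires a \emph{choice} of isomorphism of formal groups, which for an elliptic curve in characteristic $p$ exists only when $E$ is ordinary; your argument, if valid, would erase that hypothesis. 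Drop the alternative and keep the Fourier--Mukai route. Second, detecting ``degree $0$'' by evaluating on characters of $G$ only sees the image of $\deg\cP\in\pi_1(G)$ under the pairing with $\bX^*(G)$, which kills torsion. For $G=\PGL_2$ (no nontrivial characters, $\pi_1(G)=\bZ/2$) and $E$ smooth, the stable $\PGL_2$-bundle associated to the Atiyah bundle of rank $2$ and degree $1$ has every associated vector bundle semistable of degree $0$ (its adjoint bundle is $\bigoplus_{\cL\in J(E)[2],\,\cL\neq\cO}\cL$, and the irreducible representations of $\PGL_2$ all yield slope-$0$ semistable bundles), yet it does not lie in $\Bun_G^0(E)$. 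So your justification of the converse inclusion fails for groups with torsion in $\pi_1(G)$; the paper asserts the underlying biconditional before \cref{c:tannaka} without proof, and this point deserves care there too, but as written your argument is the step that demonstrably breaks.
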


\cref{t:tannaka} is a combination of a Tannaka duality statement, expressing $G$-bundles in terms of their associated vector bundles, and a Fourier-Mukai duality, relating vector bundles on $E$ and torsion sheaves on $J(E)$. 

The idea of the proof of \cref{t:unipotent} is to use the isomorphism of formal groups $\widehat{E'} \cong \widehat{E}$ to identify torsion sheaves on $E$ and $E'$ which are supported in a formal neighborhood of the identity. 

\begin{remark}
	Note that in characteristic zero, we can identify $\widehat{E} \cong \widehat{\Gm} \cong \widehat{\Ga}$. Thus, under these conditions we obtain isomorphisms between the nilpotent, unipotent, and elliptic unipotent cones (and their formal neighbourhoods). These isomorphisms arise from exponential maps in characteristic $0$.  

	In characteristic $p>0$, there is no isomorphism of formal groups $\widehat{\Ga} \cong \widehat{\Gm}$. Nevertheless, under very mild conditions on the characteristic, there are $G$-equivariant isomorphisms (the so-called Springer isomorphisms) between the nilpotent cone in $\fg$ and the unipotent cone in $G$. It is natural to conjecture that there are also Springer isomorphisms between $\GE^\uni$ and $G^\uni$ (and $\fg^\nil$). We plan to return to this in future work.
\end{remark}

\subsection{Tannaka duality}
In this subsection, we will use Tannaka duality to express the stack $\uGE$ in terms of its associated vector bundles. The primary reference will be the paper \cite{Lurie_Tannaka}, however see also \cite{Nori_fg} for the original  approach.

Let $S$ be a $k$-scheme. We denote by $\Rep_k(G)$ denote the symmetric monoidal category of finite dimensional representations of $G$ over $k$. Given a $G$-bundle $\cP_G$ on $S$, the associated vector bundle construction affords a symmetric monoidal functor
\begin{align*}
\ass(\cP_G)\colon\Rep(G) &\to \Vect(S)\\
V &\mapsto \cP_G \times^G V
\end{align*}
where the right hand side denotes the exact category of vector bundles on $S$ (with monoidal structure given by tensor product). Moreover, $\ass(\cP_G)$ is continuous (meaning it preserves all small colimits), exact, and sends finite dimensional representations to vector bundles on $S$.

The basic idea of Tannakian reconstruction is that the bundle $\cP_G$ can be recovered from the functor $\ass(\cP_G)$. 

More precisely, let $\Fun^\otimes(\Rep_k(G),\Vect(S))$ denote the groupoid of exact tensor functors (note that $\Vect(S)$ is an exact category, so this makes sense).

\begin{theorem}
The associated bundle construction defines an equivalence of groupoids
\[
\ass\colon \Bun_G(S) \xrightarrow{\sim} \Fun^\otimes(\Rep_k(G),\Vect(S)) 
\]
\end{theorem}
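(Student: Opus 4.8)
The plan is to recognise this statement as an instance of Tannaka duality for the classifying stack $BG$ and to deduce it from Lurie's theorem \cite{Lurie_Tannaka}; I will also indicate the direct reconstruction, which is really the content of the comparison.

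First I note that both $S\mapsto\Bun_G(S)$ and $S\mapsto\Fun^\otimes(\Rep_k(G),\Vect(S))$ are stacks for the fppf topology on $k$-schemes — for the former this is descent for torsors, for the latter it is fppf descent for $\Vect(-)$, compatibly with the exact symmetric monoidal structure — so it suffices to produce one natural equivalence, naturality in $S$ being then automatic. By definition a $G$-bundle on $S$ is a morphism $\phi\colon S\to BG$, and under the standard identification $\QCoh(BG)\simeq\Rep(G)$ of quasi-coherent sheaves on $BG$ with (not necessarily finite-dimensional) $G$-representations, the functor $\ass(\cP_G)$ is the restriction to finite-dimensional representations of the pullback $\phi^\ast\colon\QCoh(BG)\to\QCoh(S)$: pulling back the sheaf attached to a representation $V$ along $\phi$ yields exactly the associated vector bundle $\cP_G\times^G V$.

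Next I would compare the target groupoid with the one appearing in Lurie's theorem. The finite-dimensional representations are precisely the dualizable objects of $\QCoh(BG)=\Rep(G)$, and they generate $\Rep(G)=\Ind(\Rep_k(G))$ under filtered colimits; hence a colimit-preserving symmetric monoidal functor $\QCoh(BG)\to\QCoh(S)$ is determined by its restriction to $\Rep_k(G)$, and that restriction is an exact symmetric monoidal functor landing in the dualizable objects of $\QCoh(S)$, i.e.\ in $\Vect(S)$. Conversely any exact symmetric monoidal functor $\Rep_k(G)\to\Vect(S)$ extends by left Kan extension along $\Rep_k(G)\hookrightarrow\Ind(\Rep_k(G))$ to a colimit-preserving symmetric monoidal functor $\QCoh(BG)\to\QCoh(S)$, and the two operations are mutually inverse on objects and on monoidal natural isomorphisms. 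This yields an equivalence $\Fun^\otimes(\Rep_k(G),\Vect(S))\simeq\Fun^{\otimes,L}(\QCoh(BG),\QCoh(S))$ with the groupoid of colimit-preserving tensor functors. Since $BG$ is a geometric stack — its diagonal is affine and $\Spec k\to BG$ is a faithfully flat affine cover — Lurie's Tannaka duality identifies this last groupoid with $\Hom(S,BG)=\Bun_G(S)$, and by the previous paragraph the identification is implemented by $\ass$; this proves the theorem.

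The main obstacle, and the only point requiring real work, is the comparison in the third paragraph — equivalently, carrying out the reconstruction by hand. Concretely, given an exact tensor functor $F\colon\Rep_k(G)\to\Vect(S)$, let $\widehat{F}$ be its colimit-preserving extension and set $\cA_F:=\widehat{F}(k[G])$, where $k[G]$ is viewed as a commutative algebra object of $\Ind(\Rep_k(G))$ via multiplication of functions, carrying the left-translation $G$-action (for which multiplication is equivariant) together with the commuting right-translation action. Then $\cP_F:=\Spec_S\cA_F$ is an affine $S$-scheme with $G$-action, and the crux is to show it is a $G$-torsor: namely (i) $\cA_F$ is faithfully flat over $\cO_S$, and (ii) the action map induces an isomorphism $\cA_F\otimes_{\cO_S}\cA_F\cong\cA_F\otimes_k k[G]$. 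Point (ii) follows by applying the symmetric monoidal $\widehat{F}$ to the isomorphism of $G$-algebras $k[G]\otimes_k k[G]\cong k[G]\otimes_k(k[G])_{\mathrm{triv}}$ given by the shear map built from the antipode; the same shear isomorphism $k[G]\otimes_k V\cong k[G]\otimes_k V_{\mathrm{triv}}$ then shows that $\ass(\cP_F)$ recovers $F$, while $\ass(\cP_G)$ reconstructs $\cP_G$ as $\Spec_S$ of its value on $k[G]$. Point (i), faithful flatness, is the delicate step: it has to be extracted from the faithfulness and exactness of $\widehat{F}$ despite the absence of any $\cO_S$-linear structure on the source category (this is where the classical arguments, going back to \cite{Nori_fg}, or Lurie's proof, do their work), and it is the part I would expect to take the most care.
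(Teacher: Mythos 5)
Your main line of argument is the same as the paper's: both reduce the statement to \cite[Theorem 5.11]{Lurie_Tannaka} and then identify the groupoid of exact tensor functors $\Rep_k(G)\to\Vect(S)$ with the groupoid appearing in Lurie's theorem. The one place where you diverge is also the one place where the paper does actual work, and there your write-up has a gap: Lurie's theorem identifies $\Bun_G(S)=\Hom(S,BG)$ with the \emph{tame} colimit-preserving tensor functors $\QC(BG)\to\QC(S)$ (tame meaning: preserves flat objects and short exact sequences of flat objects), not with all colimit-preserving tensor functors as you assert. Correspondingly, your claim that the restriction of such a functor to $\Rep_k(G)$ is \emph{exact} is not automatic — a colimit-preserving additive functor is only right exact in general — and your dualizability argument only delivers that the restriction lands in $\Vect(S)$, not that it is exact. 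The paper closes exactly this gap by keeping the tameness hypothesis and observing that every object of $\QC(BG)$ (i.e.\ every $\cO(G)$-comodule) is flat, so that tameness on $\QC(BG)$ translates into exactness on $\Rep_k(G)$, and conversely the Ind-extension of an exact functor on $\Rep_k(G)$ is tame. You should restore the word ``tame'' in the citation and supply this flatness observation; as written, the middle equivalence in your chain is not the statement of the theorem you are invoking.

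The explicit reconstruction you sketch in the last paragraph (recovering $\cP_F$ as $\Spec_S$ of $\widehat{F}(k[G])$, with the shear isomorphism giving the torsor condition) is a genuinely different, self-contained route that would bypass Lurie entirely; you correctly identify faithful flatness of $\cA_F$ as the hard step, but since you do not carry it out, it cannot substitute for the citation-based argument. It is fine to leave it as commentary, but the proof as submitted stands or falls on fixing the tameness point above.
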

\begin{proof}
By \cite[Theorem 5.11]{Lurie_Tannaka} the associated bundle construction defines an equivalence of groupoids
\begin{equation}\label{eq:Lurie}
\ass\colon\Bun_G(S) \xrightarrow{\sim} \Fun^{\otimes}_{tame}(\QC(BG),\QC(S))
\end{equation}
where the right hand side denotes the groupoid of continuous (i.e. colimit preserving), tame tensor functors.  By definition, a functor is \emph{tame} (see \cite[Definition 5.9]{Lurie_Tannaka}) if it preserves flat objects, and short exact sequences of flat objects. 

As every object of $\QC(BG)$ (which is identified with the category of $\cO(G)$-comodules) is flat, the right hand side consists of exact functors which take values in flat objects of $\QC(S)$. 
Moreover, $\QC(BG)$ is the Ind-completion of $\Rep_k(G)$, and thus the data of an exact, continuous functor from $\QC(G)$ is equivalent to specifying an exact functor from $\Rep_k(G)$. 
By continuity of the tensor product, this equivalence preserves symmetric monoidal structures. 
Finally, by construction, every $\ass(\cP_G)(V)$ is a vector bundle for every finite dimensional representation $V$.
Thus we can identify the groupoid of tensor functors in \eqref{eq:Lurie} with those in the statement of the theorem, as required.
\end{proof}

Let $E$ be a curve of arithmetic genus 1 as usual, and let $E_S = E\times S$ denote the base-change to an arbitrary test scheme $S$. 

Note that a $G$-bundle on $E_S$ is semistable if and only if all its associated vector bundles are semistable and of degree $0$ (see \cref{r:semistability associated}). Thus we may identify the sublocus $\uGE = \Bun_G^{0}(E)^\ss$ in terms of Tannaka duality:

\begin{corollary}\label{c:tannaka}
	For each test scheme $S$, the associated bundle construction defines an equivalence of groupoids
	\[
	\uGE(S) \xrightarrow{\sim} \Fun^\otimes(\Rep_k(G),\Vect^{\ss,0}(E_S))
	\]
	where the right hand side denotes the groupoid of exact tensor functors.
\end{corollary}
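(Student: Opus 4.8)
The plan is to cut the equivalence of the preceding Theorem down to the relevant full subgroupoids. Applying that Theorem with the test scheme $E_S$ in place of $S$ gives an equivalence of groupoids
\[
\ass\colon\Bun_G(E_S)\xrightarrow{\sim}\Fun^\otimes(\Rep_k(G),\Vect(E_S)).
\]
By definition $\uGE(S)$ is the full subgroupoid of $\Bun_G(E_S)$ spanned by the $G$-bundles that are fiberwise (over $S$) semistable of degree $0$, and this is a full subgroupoid because $\Bun_G^{0,\ss}(E)\hookrightarrow\Bun_G(E)$ is an open substack. Since the restriction of an equivalence to a full subgroupoid is again an equivalence onto its essential image, it suffices to prove two things: first, that the groupoid of exact tensor functors $\Rep_k(G)\to\Vect^{\ss,0}(E_S)$ in the statement is naturally identified with the full subgroupoid of $\Fun^\otimes(\Rep_k(G),\Vect(E_S))$ on those $F$ with $F(V)\in\Vect^{\ss,0}(E_S)$ for all $V$; and second, that under $\ass$ this subgroupoid corresponds exactly to $\uGE(S)$.

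For the first point I would record that $\Vect^{\ss,0}(E_S)$ is an exact full subcategory of $\Vect(E_S)$, in fact fiberwise abelian: by \cref{l:semistable lemma}, applied on $E\times\{s\}$ for each geometric point $s\to S$, a morphism of fiberwise semistable degree $0$ bundles has locally constant rank on each fiber, so its kernel, image and cokernel are again vector bundles that are fiberwise semistable of degree $0$; the class is also closed under extensions, since degree is additive and a destabilizing subsheaf of the middle term of a short exact sequence would destabilize one of the outer terms. Hence the short exact sequences of $\Vect^{\ss,0}(E_S)$ are precisely those of $\Vect(E_S)$ with all three terms in $\Vect^{\ss,0}(E_S)$, so a tensor functor $\Rep_k(G)\to\Vect(E_S)$ landing in $\Vect^{\ss,0}(E_S)$ is exact in the sense of the Theorem if and only if it is exact as a functor to $\Vect^{\ss,0}(E_S)$.

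For the second point, let $\cP_G$ be a $G$-bundle on $E_S$. If $\cP_G\in\uGE(S)$, then for every $V\in\Rep_k(G)$ the associated bundle $\ass(\cP_G)(V)=V_{\cP_G}$ is, on each fiber $E\times\{s\}$, semistable — this is the characterization of semistability recalled in \cref{r:semistability associated}, available because $E$ has arithmetic genus $1$ — and of degree $0$, because the homomorphism $G\to\GL(V)$ carries the degree $0$ component of $\Bun_G$ into the degree $0$ component of $\Bun_{\GL(V)}$. Thus $\ass(\cP_G)$ lands in $\Vect^{\ss,0}(E_S)$. Conversely, if $\ass(\cP_G)$ lands in $\Vect^{\ss,0}(E_S)$, then evaluating on the adjoint representation shows $\fg_{\cP_G}$ is fiberwise semistable, so $\cP_G$ is semistable by \cref{D:semistability}; and the vanishing of the degrees of all associated bundles, read off via a fiberwise reduction of the semistable bundle $\cP_G$ to a maximal torus and the resulting induced line bundles, forces $\cP_G$ into the degree $0$ component. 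Combining the two directions with the first point yields the asserted equivalence, which is compatible with base change in $S$ and with the monoidal structures by construction.

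The step I expect to require the most care is the bookkeeping in the converse direction — verifying that ``all associated bundles semistable of degree $0$'' cuts out exactly the semistable degree $0$ $G$-bundles. Semistability itself is handled fiberwise by the results of Sun cited in \cref{r:semistability associated}, but the degree assertion for groups with nontrivial $\pi_1(G)$ has to be argued by passing to a torus reduction and analyzing the induced character bundles; everything else (exactness, symmetric monoidality, compatibility with base change) is formal once the preceding Theorem is available.
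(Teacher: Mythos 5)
Your overall strategy is the same as the paper's: restrict the Tannakian equivalence for $\Bun_G(E_S)$ to the full subgroupoids cut out by the semistable degree-$0$ condition on both sides. The forward direction and your discussion of the exact structure on $\Vect^{\ss,0}(E_S)$ are fine. The gap is in the converse, exactly where you flagged it. First, the mechanism you propose does not exist: a semistable $G$-bundle of nonzero degree need not admit a reduction to a maximal torus (already for $\GL_2$, a stable rank-$2$ bundle of degree $1$ is not a direct sum of line bundles), so you cannot ``read off'' the component from induced character line bundles. Second, and more seriously, the implication you are trying to establish --- that if every associated vector bundle is semistable of degree $0$ then $\cP_G$ lies in the degree-$0$ component --- fails whenever $\pi_1(G)$ has torsion. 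Take $G=\PGL_2$ and let $V$ be a stable rank-$2$ bundle of degree $1$ on an elliptic curve; the associated $\PGL_2$-bundle lies in the nontrivial component of $\Bun_{\PGL_2}(E)$ and is semistable, yet every irreducible representation of $\PGL_2$ is of the form $\Sym^{2k}(V_{\mathrm{std}})\otimes{\det}^{-k}$, whose associated bundle $\Sym^{2k}(V)\otimes\det(V)^{-k}$ is semistable of degree $k(2k+1)-k(2k+1)=0$. More generally, for semisimple $G$ every weight of every representation lies in the root lattice, so the degree of every associated vector bundle vanishes for \emph{every} $G$-bundle and carries no information about the component; only the free part of $\pi_1(G)$ (i.e.\ characters of $G$) is visible in degrees of associated bundles.

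To be fair, the paper's own proof consists of the single assertion that a $G$-bundle is semistable of degree $0$ if and only if all its associated vector bundles are semistable of degree $0$, citing \cref{r:semistability associated}, which only addresses semistability; so the degree claim is elided there as well, and as literally stated the corollary requires either a hypothesis on $G$ (e.g.\ $\pi_1(G)$ torsion-free) or a reformulation of the degree condition on the functor side. Your write-up does not repair this: the torus-reduction argument would have to be replaced by something that genuinely detects torsion classes in $\pi_1(G)$, and the $\PGL_2$ example shows that no such detection is possible from the exact tensor functor to $\Vect^{\ss,0}(E_S)$ alone.
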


\subsection{Fourier-Mukai transform}
Let $J$ be a one-dimensional commutative group scheme, for e.g. $J(E)$ for $E$ elliptic curve, $\Gm$, or $\Ga$. 
Given a test scheme $S$, we define the category of $S$-families of torsion sheaves on $J$:
\[ 
\Tor(J)(S):=\left\{\cP\in \Coh(J_S)\biggl| \begin{array}{l} \cP \text{ flat over }S \\ \Supp(\cP)\to S \text{ is finite} \end{array}\right\}. 
\]
The category $\Tor(J)(S)$ carries a monoidal structure given by convolution:
\[
\cP_1 \ast \cP_2 = m_\ast(\cP_1 \boxtimes \cP_2)
\]
where $m\colon J_S \times_S J_S \to J_S$ is the multiplication map. This construction defines a presheaf of tensor categories on $\Sch_k$.

\begin{theorem}\label{t:fourier mukai}
	Let $E$ be an irreducible projective curve of arithmetic genus one.
	The assignment 
	\[
	\cL \mapsto \cO_{[\cL]}
	\]
	taking a degree $0$ line bundle on $E$ to the corresponding skyscraper sheaf on $J(E)$ 
	extends to an equivalence of symmetric monoidal categories
	\[
	\left(\Vect^{0,\ss}(E_S), \otimes \right) \simeq \left( \Tor(J(E))(S), \ast \right)
	\]
\end{theorem}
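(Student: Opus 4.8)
The plan is to construct the equivalence in several stages, reducing everything to the classical Fourier–Mukai transform on a smooth elliptic curve and then handling the singular (nodal and cuspidal) cases by a degeneration/direct argument.

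\emph{Stage 1: the elliptic case.} When $E$ is a smooth elliptic curve, $J(E)\cong E$ and the statement is essentially Atiyah's classification of semistable bundles of degree $0$ combined with Mukai's equivalence $\Phi\colon D^b(\Coh E)\xrightarrow{\sim} D^b(\Coh \hat E)$ attached to the Poincar\'e bundle $\cPoinc$ on $E\times\hat E$ (writing $\hat E$ for the dual, which we identify with $E$ via the principal polarization attached to $x_0$). First I would recall that $\Phi$ sends the structure sheaf of a degree-$0$ line bundle $\cL=\cL_a$ (for $a\in\hat E$) to the skyscraper $\cO_a[-1]$, and more generally that $\Phi$ restricts to an exact equivalence between the heart $\Vect^{0,\ss}(E)$ (shifted appropriately) and the category $\Tor(J(E))=\Coh_{\mathrm{fin}}(\hat E)$ of finite-length sheaves: semistability of degree $0$ is precisely the condition that the Fourier–Mukai transform is concentrated in a single cohomological degree and is a torsion sheaf. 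The key compatibility is that $\Phi$ intertwines tensor product on $E$ with convolution (Pontryagin product) on $\hat E$; this is the standard ``exchange'' property of the Poincar\'e kernel, $\cPoinc_{a+b}\cong \cPoinc_a\otimes\cPoinc_b$, which upgrades to $m_*(P_1\boxtimes P_2)$ versus $P_1\otimes P_2$ by base change along $E\times\hat E\times\hat E$. The symmetric monoidal structure and the unit ($\cO_E\leftrightarrow \cO_e$, the skyscraper at the identity) are then immediate. To get the statement in families over a test scheme $S$, I would run the relative Fourier–Mukai transform $\Phi_S$ for $E_S\to S$, using flatness over $S$ and finiteness of support to stay inside the two categories in play; cohomology-and-base-change guarantees that the single-degree concentration is preserved fibrewise, and the projection formula gives the relative exchange property.

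\emph{Stage 2: the nodal and cuspidal cases.} Here $J(E)$ is $\Gm$ or $\Ga$, and one must give a substitute for the Poincar\'e bundle on the (non-proper, and in the nodal case non-separated-as-a-dual) side. I would instead argue directly: by \cite[Thm 3.3.1]{FriedMorgIII} (cf.\ the remark in the excerpt), a $\GL_n$-bundle on $E$ is degree-$0$ semistable iff its pullback to $\bP^1$ is trivial, and such bundles are classified by the data of the gluing/descent along the node or cusp — concretely, by a conjugacy-invariant of a matrix in $\GL_n$ in the nodal case and a nilpotent-type datum in the cuspidal case. Unwinding, a rank-$n$ semistable degree-$0$ vector bundle on $E_\node$ is the same as a finite-length $k[t,t^{-1}]$-module of length $n$ (equivalently a module over $\cO_{\Gm}$ with finite support), and on $E_\cusp$ the same with $k[t]$; tensor product of bundles corresponds to the convolution (= multiplicative/additive group-algebra convolution) of these modules. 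This identification is exactly $\Tor(\Gm)(S)$ resp.\ $\Tor(\Ga)(S)$ with its convolution monoidal structure, and the skyscraper at $\one$ corresponds to the trivial bundle, matching the normalization $\cL\mapsto\cO_{[\cL]}$ on the degree-$0$ line-bundle subcategory (which in these cases is $J(E)$ itself acting by tensoring).

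\emph{Uniform formulation and the main obstacle.} To present all three cases at once I would phrase Stage 1 and Stage 2 as: the assignment ``associated $\cO_{J(E)}$-module'' (sections of $E$ twisted forms, pushed to the compactified Jacobian picture) is an equivalence, and check the convolution-compatibility via the multiplication map $m\colon J_S\times_S J_S\to J_S$ together with a see-saw/base-change argument that does not reference properness of $E$ — in the nodal and cuspidal cases one works with the explicit module descriptions rather than a kernel. I expect the main obstacle to be Stage 2: making the exchange (tensor $\leftrightarrow$ convolution) property precise and functorial in families without a Poincar\'e kernel, since $\Ga$ and $\Gm$ are affine and the naive Fourier transform has no kernel sheaf on a proper variety to lean on; the cleanest route is probably to deduce the singular cases by a flat degeneration of a family of curves $\cE\to \bA^1$ whose generic fibre is elliptic and special fibre nodal/cuspidal, transporting the elliptic-case equivalence across the family, but checking that $\Tor(J(\cE_s))$ varies correctly (in particular that the convolution structure specializes) will require care. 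Alternatively, and perhaps more robustly, one simply verifies the three statements separately and notes they are compatible with the common normalization on degree-$0$ line bundles.
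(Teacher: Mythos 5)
Your proposal is correct and follows essentially the same route as the paper: the smooth case is the classical Fourier--Mukai exchange of tensor product and convolution for the Poincar\'e kernel, and the nodal/cuspidal cases are handled by the direct Friedman--Morgan identification of degree-$0$ semistable rank-$n$ bundles with length-$n$ torsion modules over $k[t^{\pm 1}]$ resp.\ $k[t]$ --- which is exactly the alternative argument the paper gives. The ``main obstacle'' you flag (no Poincar\'e kernel in the singular cases) is resolved in the paper not by a degeneration but by citing Teodorescu and Burban--Kreussler, who show that for any integral genus-one curve the equivalence is induced by a torsion-free extension $\widetilde{\cP}$ of the Poincar\'e bundle to $E\times E$ (the compactified Jacobian being $E$ itself); the case-by-case verification you propose as the robust fallback is likewise what the paper records.
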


\begin{proof}
The equivalence of abelian categories is classical in the case of a smooth elliptic curve; in our situation where $E$ is an integral curve of arithmetic genus 1, it may be deduced from \cite{teodorescu_semistable_1999}, Theorem 1.3 (for the absolute case), Theorem 1.9 (for the relative case). More generally, it is shown in \emph{loc. cit.} that there is a canonical equivalence between degree 0 semistable torsion-free sheaves on $E$ and torsion sheaves on $E$. It follows readily from the construction (see Proposition 1.8 of \emph{loc. cit.}) that when the torsion-free sheaf happens to be a vector bundle, the corresponding torsion sheaf is supported on the smooth locus of $E$, which is identified with $J(E)$ (using the section $x_0$). 

In fact, this construction is an example of a Fourier-Mukai transform. We have a canonical Poincar\'e line bundle $\cP$ on $E \times J(E)$ (normalized at $x_0 \in E$) with the defining property that $\cP|_{E \times \{\cL\}} \cong \cL$ for any degree zero line bundle $\cL \in J(E)$. This kernel extends to a torsion-free sheaf $\widetilde{\cP}$ on $E\times E$. Taking $\widetilde{\cP}$ as a Fourier-Mukai kernel defines a functor
\[
\F:D^b(E) \to D^b(E)
\]
which according to Burban--Kreussler \cite{burban_fourier_2005}, Theorem 2.21 agrees with the functor of \cite{teodorescu_semistable_1999} when restricted to semistable torsion-free sheaves of degree zero, and thus further restricts to the desired functor on semistable degree 0 vector bundles. 

From this perspective, the claim about symmetric monoidal structures is a special case of a more general claim that the Fourier-Mukai transform $D^b(E) \to D^b(J(E))$ induced by the Poincar\'e bundle intertwines the tensor product on $E$ with convolution (also known as Pontryagin product) induced by the group operation on $J(E)$. See e.g. \cite{bartocci_fourier_2009} for the case of abelian varieties, which applies to our setting when $E$ is a smooth elliptic curve. 

Alternatively, in the cuspidal and nodal cases, one may directly apply the results of \cite{FriedMorgIII}, Corollary 2.1.4, 2.2.4 which give an equivalence between degree $0$ semistable vector bundles on $E$ of rank $n$ and conjugacy classes of  $n\times n$ matrices (respectively, invertible matrices). It may be readily checked that, taking all ranks $n$ together, these equivalences define a symmetric monoidal equivalence between all degree zero semistable vector bundles on $E$, and torsion modules for $k[t]$ (respectively $k[t,t^{-1}]$).
\end{proof}

In particular, we obtain a description of the moduli stack of $GL_n$-bundles in terms of torsion sheaves. Each $S$-family of torsion sheaves has a well defined length: the degree of $\Supp(\cP) \to S$. For each test scheme $S$ we let $\Tor_n(J)(S)$ denote the maximal subgroupoid of $\Tor(J)(S)$ whose objects are torsion sheaves of length $n$.
\begin{corollary}\label{C:gln case}
	There are equivalences of stacks
	\[
	\underline{\GL}_{n,E} \cong \Vect_n^{0,\ss}(E) \cong \Tor_n(J(E))
	\]
\end{corollary}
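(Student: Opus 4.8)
The plan is to obtain both equivalences by combining the elementary dictionary between $\GL_n$-bundles and rank-$n$ vector bundles with the Fourier--Mukai equivalence of \cref{t:fourier mukai} (alternatively, one could feed the standard representation into \cref{c:tannaka}, but the direct route is shorter).

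First I would treat $\underline{\GL}_{n,E}\cong\Vect_n^{0,\ss}(E)$. For any test scheme $S$, the associated-bundle construction $\cP\mapsto\cP\times^{\GL_n}k^n$ is an equivalence between $\GL_n$-bundles on $E_S$ and rank-$n$ vector bundles on $E_S$, with quasi-inverse the frame bundle; it remains to check that it matches the degree-$0$ semistable loci. The degree of a $\GL_n$-bundle is read off from $\pi_1(\GL_n)=\bZ$, i.e. from the degree of the line bundle induced along $\det\colon\GL_n\to\Gm$, which equals $\deg(\cP\times^{\GL_n}k^n)$; so the degree-$0$ conditions match. For semistability, \cref{r:semistability associated} (valid since $E$ has arithmetic genus $1$) says that a $\GL_n$-bundle is degree-$0$ semistable iff all of its associated vector bundles are semistable of degree $0$; taking the standard representation forces $\cP\times^{\GL_n}k^n$ to be semistable of degree $0$, and conversely every $\cV\times^{\GL_n}V$ is built from $\cV$ and $\cV^\vee$ by iterated tensor products and direct summands, an operation under which semistable degree-$0$ sheaves on a genus-$1$ curve are closed. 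This yields the first equivalence.

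Next I would apply \cref{t:fourier mukai}: for each $S$ we obtain a symmetric monoidal equivalence $(\Vect^{0,\ss}(E_S),\otimes)\simeq(\Tor(J(E))(S),\ast)$, natural in $S$. Passing to maximal subgroupoids and restricting to the components cut out by rank on the left and by the length of the support (the degree of $\Supp(-)\to S$) on the right, it is enough to know that the transform sends rank to length. Both invariants are additive in short exact sequences and compatible with base change in $S$, so one may check this over $S=\Spec(k')$; after extending scalars to $\overline{k'}$, a Jordan--Hölder filtration of a semistable degree-$0$ bundle has line-bundle subquotients, each carried to a length-$1$ skyscraper by the normalization of the Poincaré kernel, whence rank $=$ length. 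Combining with the first equivalence gives $\underline{\GL}_{n,E}\cong\Vect_n^{0,\ss}(E)\cong\Tor_n(J(E))$ as stacks. The only point requiring care beyond routine bookkeeping is precisely this compatibility of the rank/length gradings with the monoidal equivalence of \cref{t:fourier mukai}; everything else is formal.
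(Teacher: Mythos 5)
Your proof is correct and follows the same route as the paper, which states this corollary without proof as an immediate consequence of \cref{t:fourier mukai} together with the standard dictionary between $\GL_n$-bundles and rank-$n$ vector bundles. The details you supply (matching of the degree-$0$ semistable loci via \cref{r:semistability associated}, and the identification of rank with length checked on Jordan--H\"older factors) are exactly the routine bookkeeping the authors leave implicit.
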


Putting together the Tannakian statement \cref{c:tannaka} with the Fourier-Mukai statement \cref{t:fourier mukai} we obtain:

\begin{corollary}\label{C:ssbundles as functors}
	There is an equivalence of stacks
	\[
	\uGE \simeq \Fun^\otimes(\Rep_k(G),\Tor(J(E))).
	\]
\end{corollary}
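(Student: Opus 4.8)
The plan is to obtain the equivalence by composing the two results just established. Fix a test scheme $S$. First I would invoke \cref{c:tannaka}, which via the associated-bundle construction $\ass$ identifies the groupoid $\uGE(S)$ with $\Fun^\otimes(\Rep_k(G),\Vect^{\ss,0}(E_S))$, the groupoid of exact symmetric monoidal functors. Next I would apply \cref{t:fourier mukai}: the Fourier--Mukai transform along the (extended) Poincar\'e kernel gives a symmetric monoidal equivalence $\Vect^{\ss,0}(E_S)\simeq\Tor(J(E))(S)$ which is moreover an equivalence of abelian categories, hence exact. Post-composition with this equivalence carries an exact tensor functor $\Rep_k(G)\to\Vect^{\ss,0}(E_S)$ to an exact tensor functor $\Rep_k(G)\to\Tor(J(E))(S)$, and post-composition with a quasi-inverse undoes this; so we obtain an equivalence of groupoids
\[
\uGE(S)\;\xrightarrow{\ \sim\ }\;\Fun^\otimes\bigl(\Rep_k(G),\Tor(J(E))(S)\bigr)
\]
for each $S$.

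The hard part will be naturality in $S$, i.e.\ checking that these pointwise equivalences glue to an equivalence of stacks rather than just a levelwise equivalence of groupoid-valued presheaves. On the Tannakian side this is already built into \cref{c:tannaka}. On the Fourier--Mukai side one needs the equivalence of \cref{t:fourier mukai} to be compatible with base change along maps $S'\to S$; this follows from the relative form of Teodorescu's theorem (\cite{teodorescu_semistable_1999}, Theorem 1.9) together with the standard fact that a Fourier--Mukai functor whose kernel is flat over the base commutes with arbitrary base change. Granting this, pullback along $S'\to S$ intertwines the two post-composition equivalences on the nose, so the presheaves of groupoids $S\mapsto\uGE(S)$ and $S\mapsto\Fun^\otimes(\Rep_k(G),\Tor(J(E))(S))$ are equivalent, hence so are the associated stacks.

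One minor point to watch is the bookkeeping of exact structures: the convolution product on $\Tor(J(E))(S)$ is well-behaved precisely because $\Supp(\cP)\to S$ is finite, so that the pushforward $m_\ast$ along the multiplication map is exact; one should confirm that under the Fourier--Mukai equivalence the short exact sequences of degree-$0$ semistable vector bundles on $E_S$ correspond to the short exact sequences of torsion sheaves, so that ``exact tensor functor'' means the same thing on both sides. This is immediate from \cref{t:fourier mukai}, since an equivalence of abelian categories automatically identifies the two exact structures and the monoidal compatibility is part of that statement. Aside from these points, the argument is entirely formal.
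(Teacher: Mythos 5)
Your proposal is correct and is exactly the paper's (implicit) argument: the paper states this corollary as the immediate composite of \cref{c:tannaka} with \cref{t:fourier mukai}, with no further proof given. The naturality-in-$S$ and exactness points you raise are legitimate routine checks that the paper leaves tacit, and your handling of them is fine.
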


\subsection{Torsion sheaves and effective divisors}
We wish to identify the loci of unipotent bundles in terms of their associated torsion sheaves. 
This will be expressed in terms of the \emph{Norm map} which associates a Cartier divisor to a torsion sheaf.

First let us recall that for a smooth curve $J$, we have isomorphisms
\[
\Div_n(J) \cong \Hilb_n(J) \cong \Sym^n(J)
\]
where:
\begin{itemize}
	\item $\Div_n(J)$ denotes the moduli of effective Cartier divisors of degree $n$, 
	\item $\Hilb_n(J)$ is the Hilbert scheme of points of length $n$ on $J$,
	\item $\Sym^n(J)$ is the quotient $J^n/\fS_n$.
\end{itemize}
(See e.g. Milne \cite{MR861976}[Theorem 3.13].)
Moreover, these are all smooth varieties of dimension $n$.

Given an object $\cP$ of $\Tor_n(J)(T)$ we will associate a relative effective Cartier divisor $Div_n(\cP)$ on $J_T$ which measures the support of $\cP$ with multiplicity. 

We will sketch one construction of $Div(\cP)$ from \cite{mumford_geometric_1994}[Section 5.3] (see also \cite{knudsen_projectivity_1976}). 
Suppose we are given $\cP$ an $S$-family of torsion sheaves on $J$. 
In particular, $\cP$ is a coherent sheaf on $J_S$ which is flat over $S$ and with no support in depth $0$. 
We consider the determinant line $\det(\cP)$, which may be defined locally in terms of a free resolution. 
As $\cP$ has no generic support, $\det(\cP)$ carries a canonical section $\cO_{J_S} \to \det(\cP)$ defined away from the support of $\cP$. 
This rational section defines the Cartier divisor $Div(\cP)$.

This construction gives rise to a morphism of stacks:
\[
Div\colon \Tor_n(J) \to \Div_n(J) \cong \Sym^n(J).
\]
Note that $\Sym^n(J(E))$ is isomorphic to the base of the characteristic polynomial map $\cM_E(GL_n)$. 
In fact, the following lemma explains that the morphism $Div$ is a realization of the characteristic polynomial map via the equivalence of $GL_n$-bundles and torsion sheaves of length $n$.
\begin{lemma}\label{L:div=char}
	The equivalence of \cref{C:gln case} fits in to a commutative square:
	\[
	\xymatrix{
		\underline{GL}_{n,E} \ar[d] \ar[r]^\sim & \Tor_n(J(E)) \ar[d]^\nu \\
		\cM_E(GL_n) \ar[r]^\sim & \Sym^n(J(E))
	}
	\]
\end{lemma}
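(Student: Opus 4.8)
The plan is to identify both vertical maps with natural ``support with multiplicity'' constructions and check they agree. Recall that the bottom equivalence $\cM_E(GL_n)\cong\Sym^n(J(E))$ and the right-hand equivalence $\underline{GL}_{n,E}\cong\Tor_n(J(E))$ both come (via \cref{C:gln case} and \cref{t:fourier mukai}) from the Fourier--Mukai transform: a degree $0$ semistable vector bundle $\cV$ on $E$ corresponds to a torsion sheaf $\F(\cV)$ on $J(E)$, and the semisimplification $\cV^\ss$, which is a direct sum of degree $0$ line bundles $\bigoplus_i \cL_i$ (with multiplicity), corresponds to the associated $0$-cycle $\sum_i [\cL_i]\in\Sym^n(J(E))$. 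So the content of the lemma is that the Cartier divisor $Div_n(\F(\cV))$, defined via the determinant-line section, records exactly the support-with-multiplicity $0$-cycle of $\F(\cV)$, and that this matches the ``characteristic polynomial'' $\chi_{GL_n}(\cV)=[\cV^\ss]$ under the identifications above.

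First I would reduce to the absolute case, i.e.\ work over $S=\Spec k$, since both sides of the square and both vertical maps are defined functorially in the test scheme $S$ and the claimed $2$-commutativity is a statement about a natural transformation of functors to a scheme (hence can be checked after evaluating on points, or directly on $S$-families by the same argument with $k$ replaced by $\cO_S$). Over a point, an object of $\Tor_n(J(E))$ is a finite-length sheaf $\cP$ on the smooth curve $J(E)$, so $\cP\cong\bigoplus_x \cP_x$ with $\cP_x$ supported at a closed point $x$ and of length $\ell_x$, and $\sum_x \ell_x=n$. On one hand, $\nu=Div_n$ sends this to the Cartier divisor $\sum_x \ell_x\,[x]$: this is immediate from the local description of $Div$ via a free resolution and the determinant line, since over a DVR (the local ring $\cO_{J(E),x}$) the determinant of a torsion module of length $\ell_x$ has its canonical rational section vanishing to order exactly $\ell_x$. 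Under $\Div_n(J(E))\cong\Sym^n(J(E))$ this is the $0$-cycle $\sum_x\ell_x\,[x]$.

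On the other hand I would trace through the characteristic polynomial map. Given $\cV\in\Vect_n^{0,\ss}(E)$ with $\cP=\F(\cV)$, the torsion sheaf $\cP$ has a filtration with graded pieces skyscrapers $\cO_{[\cL]}$, one for each Jordan--H\"older factor $\cL$ of $\cV$; equivalently $\cV^\ss=\bigoplus\cL$ corresponds via Fourier--Mukai to $\bigoplus_x\cO_x^{\oplus\ell_x}$ (same support cycle as $\cP$, since $\F$ is exact and support and length are additive in short exact sequences). By \cref{P:Fried-Morg-nodal-cuspidal} and the classical description, $\chi_{GL_n}$ sends $\cV$ to $[\cV^\ss]$, which under $\cM_E(GL_n)\cong\TE\git\fS_n\cong\Sym^n(J(E))$ is exactly the multiset of the $\cL$'s, i.e.\ the $0$-cycle $\sum_x\ell_x\,[x]$. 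Comparing with the previous paragraph, both composites $\underline{GL}_{n,E}\to\Sym^n(J(E))$ send $\cV$ to the same $0$-cycle, and one checks the identification is natural (the Fourier--Mukai transform and its effect on support are functorial), giving the commuting square.

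The main obstacle, and the one place requiring genuine care rather than bookkeeping, is pinning down the local multiplicity claim: that the determinant-line construction of $Div$ assigns to a length-$\ell$ torsion module over the local ring at $x$ precisely the divisor $\ell\,[x]$, compatibly in families (flat over $S$). For this I would invoke the properties of the $Div$ morphism established in \cite{mumford_geometric_1994}[Section~5.3] and \cite{knudsen_projectivity_1976}: the key input is that $\deg Div(\cP)$ equals the generic length of $\cP$ and that $Div$ is compatible with the decomposition of $\Tor_n$ by support, which forces the multiplicity at $x$ to be $\mathrm{length}_{\cO_{J(E),x}}\cP_x$. One then only needs that the Fourier--Mukai equivalence matches ``length of $\cP$ at $[\cL]$'' with ``multiplicity of $\cL$ in $\cV^\ss$'', which is immediate from exactness of $\F$ and the normalization $\F$ sends $\cL$ to the skyscraper at $[\cL]$.
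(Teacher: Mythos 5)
Your proof is correct and follows essentially the same route as the paper, whose own argument simply reduces commutativity to the dense substack of semisimple objects (equivalently, to $k$-points, using reducedness of the source and separatedness of $\Sym^n(J(E))$) and then to the evident case $n=1$; your write-up is a fleshed-out version of this reduction, computing both composites on points via lengths of torsion modules and multiplicities in the semisimplification. The one caveat is your parenthetical claim that the same computation works ``directly on $S$-families with $k$ replaced by $\cO_S$'': a flat family of torsion sheaves need not split along its support and semisimplification is not defined in families, but this alternative is not needed since the pointwise check already suffices.
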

\begin{proof}
	We have already constructed all the arrows in the diagram. To see that the diagram commutes, it is sufficient to check the commutativity on the dense substacks consisting of semisimple objects. This in turn reduces to checking the assertion for $n=1$ where it is clear.
\end{proof}

\subsection{Unipotent bundles}
Recall that $\one \in J$ denotes the unit for the group structure. 
Let $\one_n \in \Sym^n(J)$ correspond to the effective Cartier divisor on $J$ given by the unique length $n$ subscheme of $J$ whose underlying reduced scheme is $\one\times S$. 
We will also use the notation $\one_n$ to denote the corresponding length $n$ subscheme of $J$. 

The space of unipotent torsion sheaves on $J$, denoted $\Tor_n(J)^\uni$ is defined to be the fiber of $\Tor_n(J)$ over the point $\one_n$ in $\Sym^n(J)$. 
In other words, an $S$-family of torsion sheaves is called unipotent if the corresponding divisor is equal to $\one_{n,T}$.

We write $\Tor_n(J)^\wedge_\uni$ for the completion of $\Tor_n(J)$ along the substack $\Tor_n(J)^\uni$. 
The goal of the remainder of this subsection is to show that these stacks only depend on the formal neighborhood of $\one \in J$.

Just as above, we may define the presheaf of categories $\Tor_n(\widehat{J})$ and the presheaf of sets $\Sym^n(\widehat{J})$ parameterizing $S$-families of torsion sheaves (respectively effective Cartier divisors) in $\widehat{J}$. Again, there is an associated divisor map denoted abusively also by $Div$:
\[
Div\colon \Tor_n(\widehat{J}) \to \Sym^n(\widehat{J}).
\]
We denote by $\Tor_n(\widehat{J})^\uni$ the fiber over $\one_n$.

\begin{lemma}
We have natural isomorphisms giving rise to a commutative diagram
\[
\xymatrix{
\Tor_n(\widehat{J}) \ar[r]^\sim \ar[d] & \Tor_n(J)^\wedge_{\uni} \ar[d]\\
\Sym^n(\widehat{J}) \ar[r]^\sim & \Sym^n(J)^\wedge_{\one_n}.
}
\]
In particular, there is an equivalence 
\[
\Tor_n(J)^\uni \cong \Tor_n(\widehat{J})^\uni.
\]
\end{lemma}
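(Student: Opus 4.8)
The plan is to produce every map in the diagram, and the ``in particular'', from a single source --- the morphism $Div\colon\Tor_n(J)\to\Sym^n(J)$ --- by base change along the formal completion at the point $\one_n\in\Sym^n(J)$.

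First I would unwind the two completions on the right. Since $\Tor_n(J)^\uni$ is by definition the fibre $Div^{-1}(\one_n)$, it is the closed substack of $\Tor_n(J)$ cut out by the ideal generated by $Div^\ast\fm_{\one_n}$; hence, for an affine test scheme $S$, the formal completion $\Tor_n(J)^\wedge_\uni$ has $S$-points exactly the $S$-families of torsion sheaves $\cP$ for which $Div(\cP)^\ast\fm_{\one_n}\subseteq\cO_S$ is locally nilpotent, i.e.\ for which $Div(\cP)\colon S\to\Sym^n(J)$ factors through $\Sym^n(J)^\wedge_{\one_n}$. Thus
\[
\Tor_n(J)^\wedge_\uni\;\simeq\;\Tor_n(J)\times_{\Sym^n(J)}\Sym^n(J)^\wedge_{\one_n}.
\]
The bottom isomorphism $\Sym^n(\widehat J)\simeq\Sym^n(J)^\wedge_{\one_n}$ is the easy step: both sides corepresent, on $S$, the set of degree-$n$ effective Cartier divisors on $J_S$ that are finite over $S$ with support set-theoretically inside $\{\one\}\times S$, and in a coordinate $\widehat{\cO}_{J,\one}\cong k\llb t\rrb$ such a divisor is a monic polynomial $t^n+c_{n-1}t^{n-1}+\cdots+c_0$ with each $c_i$ nilpotent in $\cO_S$, so both sides are $\Spf k\llb c_0,\dots,c_{n-1}\rrb$; this is just the formal shadow of the identification $\Sym^n(J)\cong\bA^n$ near $\one_n$ used earlier.

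The main point is then to show that the natural monomorphism
\[
\Tor_n(\widehat J)\;\longrightarrow\;\Tor_n(J),
\]
which regards a length-$n$ torsion sheaf on $\widehat J_S$ --- being supported on a closed subscheme finite over $S$, hence honestly a coherent sheaf on $J_S$ --- as an object of $\Tor_n(J)(S)$, has image precisely the subfunctor of those $\cP$ with $\Supp(\cP)_{\mathrm{red}}\subseteq\{\one\}\times S$. Since $\Supp(\cP)$ has the same underlying space as the divisor $Div(\cP)$, this subfunctor coincides with $\Tor_n(J)^\wedge_\uni$ by the first step, and the inclusion $\widehat J_S\hookrightarrow J_S$ makes the square (with vertical maps $Div$ and its $\widehat J$-analogue) commute. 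The one inclusion that needs an argument is that a $\cP\in\Tor_n(J)(S)$ whose $Div(\cP)$ factors through the completion really does come from $\widehat J_S$: here $\Supp(\cP)$ is a closed subscheme of $J_S$, finite over $S$, with reduction inside $\{\one\}\times S$, so --- after reducing to $S$ of finite type, which is harmless since $\Tor_n$ and $\Sym^n$ are locally of finite type over $k$ --- the image of a local equation $t$ for $\{\one\}$ is a nilpotent element of the finite $\cO_S$-algebra $\cO_{\Supp(\cP)}$, whence $\Supp(\cP)\subseteq V(\fm_\one^m)\times_J J_S\subseteq\widehat J_S$ for some $m$ and $\cP$ lies in the image of $\Tor_n(\widehat J)$.

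Combining the three steps yields the top equivalence $\Tor_n(\widehat J)\simeq\Tor_n(J)^\wedge_\uni$ together with the commuting square; base changing this equivalence once more along the closed point $\one_n$ then identifies $\Tor_n(\widehat J)^\uni=\Tor_n(\widehat J)\times_{\Sym^n(\widehat J)}\{\one_n\}$ with $\Tor_n(J)^\uni=\Tor_n(J)\times_{\Sym^n(J)}\{\one_n\}$, giving the ``in particular''. The step I expect to be the real obstacle is the last inclusion above: over a non-reduced base, ``the divisor is topologically supported at $\one$'' does not by itself put the sheaf on the formal group, and one must promote this to containment in an honest finite-order thickening $V(\fm_\one^m)$ --- which is exactly where local finite-typeness (to make $\cO_{\Supp(\cP)}$ Noetherian) and the nilpotence of $t$ in that algebra are used.
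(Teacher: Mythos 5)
Your proposal is correct and follows essentially the same route as the paper's proof: both rest on the observations that torsion sheaves (resp.\ divisors) on $\widehat J$ are the same as those on $J$ set-theoretically supported at $\one_S$, that this characterizes the $S$-points of the respective formal completions, and that the square is cartesian because a torsion sheaf is so supported if and only if its divisor is. The only difference is cosmetic — you phrase the completion $\Tor_n(J)^\wedge_\uni$ as a fibre product over $\Sym^n(J)^\wedge_{\one_n}$ from the outset and supply the finite-order-thickening argument (nilpotence of $t$ in the finite $\cO_S$-algebra $\cO_{\Supp(\cP)}$) that the paper leaves implicit.
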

\begin{proof}		
First, we claim that for a test scheme $S$, the image of $\Tor_n(\widehat{J})(S)$ in $\Tor_n(J)(S)$ consists of $S$-families of torsion sheaves on $J$ which are set-theoretically supported on the closed subset $\one_S \subseteq J_S$. Indeed, note that an $S$-family of torsion sheaves on $\widehat{J}$ is, by definition, given by an $S$-family of torsion sheaves on some infinitesimal thickening of $\one \in J$. Giving such a family is indeed equivalent to an $S$-family of torsion sheaves on $J$ which is set-theoretically supported on $\one_S \subseteq  J_S$. Similarly, one shows that the image of $\Sym^n(\widehat{J})(S)$ in $\Sym^n(J)(S)$ consists of $S$-families of divisors which are set-theoretically supported in $\one_{S}$.

To show that $\Sym^n(\widehat{J}) \cong \Sym^n(J)^\wedge_{\one_n}$ observe that an $S$-point of the completion $\Sym^n(J)^\wedge_{\one_n}$ corresponds to an $S$-family of degree $n$ effective divisors in $\Sym^n(J)(S)$ whose restriction to $S_{\red}$ is equal to the divisor $\one_{n,S_{\red}}$. 
This is equivalent to saying that the set-theoretic support of the divisor is contained in $\one_S$ as required. 

Finally, to show that $\Tor_n(\widehat{J}) \cong \Tor_n(J)^\wedge_\uni$, it suffices to prove that the diagram
\[
\xymatrix{
	\Tor_n(\widehat{J}) \ar[d] \ar[r] &  \Tor_n(J) \ar[d] \\
	\Sym^n(\widehat{J}) \ar[r] &  \Sym^n(J)
}
\]
is cartesian. This follows from the observation that an $S$-family of torsion sheaves on $J$ is set-theoretically supported on $\one_S$ if and only if the associated divisor is set-theoretically supported on $\one_S$.
\end{proof} 

\begin{remark}
	In fact, an $S$-family of torsion sheaves is unipotent if and only if its (scheme-theoretic) support is contained in the subscheme $\one_{n,S}$. 
	This is a consequence of the Cayley-Hamilton theorem (see \cref{E:torsion example}).
\end{remark}

\begin{example}\label{E:torsion example}
Let $J=\Ga = \Spec(k[t])$, and let $S=\Spec(R)$ for a Noetherian local ring $R$. 
Then the groupoid $\Tor_n(\Ga)(R)$ consists of $R[t]$-modules $M$ which are free over $R$ of rank $n$.

Given such a module $M$ and choosing an $R$-basis, the action of $t$ is expressed by a matrix $A_M$. Then the associated divisor is given by the function $\det(t-A_M)$ (a polynomial of degree $n$ with coefficients in $R$). The coefficients of $\chi_A(t) = \det(t-A_M)$ define the corresponding element of $R^n = \bA^n(R) \cong \Sym^n(\Ga)(R)$. Note that, by the Cayley-Hamilton theorem,  $M_A$ is scheme-theoretically supported in the divisor $\chi_A(t)$. An $R$-point is nilpotent (respectively, formally nilpotent) if $\chi_A(t) = t^n$ (respectively, the non-leading coefficients of $\chi_A(t)$ are nilpotent in $R$). 
\end{example}

\subsection{Unipotent cones}

The next result explains that the subfunctors of $\uGE$ corresponding to unipotent (respectively infinitesimally unipotent) bundles can be understood via the equivalence of \cref{C:ssbundles as functors} as those functors which factor through the subcategories of unipotent (respectively, infinitesimally unipotent) torsion sheaves.

\begin{proposition}\label{p:unipotent iff associated unipotent}
	Let $\cP$ denote an object of $\uGE(S)$ with associated functor 
	\[ F\colon\Rep(G) \to \QC(J(E)\times S) .\]
	Then $\cP$ is contained in the sub-groupoid 
	of unipotent (respectively infinitesimally unipotent) bundles if and only if, for each $V \in \Rep(G)$, $F(V)$ is a unipotent (respectively, infinitesimally unipotent) family of torsion sheaves.
\end{proposition}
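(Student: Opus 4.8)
The plan is to reduce the proposition, via the characteristic-polynomial/Fourier--Mukai dictionary of \cref{L:div=char}, to a single assertion about the coarse moduli space $\MGE$: that the induction maps to the coarse spaces $\cM_E(\GL(V)) \cong \Sym^{\dim V}(J(E))$ jointly cut out the trivial bundle scheme-theoretically near $\one$. Throughout, ``$\cP$ is unipotent'' means $\underline{\chi}_G(\cP)\colon S \to \MGE$ is the constant map at $\one$, and ``$\cP$ is infinitesimally unipotent'' means $\underline{\chi}_G(\cP)$ factors through the formal completion $\MGE^\wedge_\one$ (equivalently, $\underline{\chi}_G(\cP)|_{S_\red} = \one$); likewise for torsion sheaves with $\Div$ in place of $\underline{\chi}_G$ and $\one_{\dim V}$ in place of $\one$.

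For the ``only if'' direction, suppose $\cP$ is unipotent. Inducing along $G \to \GL(V)$ for $V \in \Rep_k(G)$ gives a commutative square relating $\underline{\chi}_G$ and $\underline{\chi}_{\GL(V)}$ over the map $r_V\colon \MGE \to \cM_E(\GL(V))$ induced on coarse spaces; since the trivial $G$-bundle induces the trivial $\GL(V)$-bundle we have $r_V(\one) = \one$, so $\underline{\chi}_{\GL(V)}(\cP \times^G V) = r_V(\underline{\chi}_G(\cP)) = \one$. By \cref{L:div=char} (applied to $\GL(V)$ in place of $\GL_n$) this says exactly $\Div(F(V)) = \one_{\dim V}$, i.e.\ $F(V)$ is a unipotent family of torsion sheaves. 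The infinitesimally unipotent case is identical, carried out over $S_\red$ (or, equivalently, using that $r_V$ sends $\MGE^\wedge_\one$ into $\cM_E(\GL(V))^\wedge_{\one_{\dim V}}$ because $r_V(\one) = \one$).

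For the ``if'' direction, suppose $F(V)$ is unipotent for every $V$; by \cref{L:div=char} this says $r_V \circ \underline{\chi}_G(\cP) = \one_{\dim V}$ for all $V$, so it suffices to prove the \emph{Key Lemma}: the morphism $(r_V)_V\colon \MGE \to \prod_V \Sym^{\dim V}(J(E))$ has scheme-theoretic fibre over $(\one_{\dim V})_V$ equal to the reduced point $\{\one\} \hookrightarrow \MGE$. Granting this, $\underline{\chi}_G(\cP) = \one$, so $\cP$ is unipotent; and the infinitesimally unipotent case then follows by applying the Key Lemma over $S_\red$, using that formation of associated bundles and Fourier--Mukai transforms commutes with the base change $S_\red \hookrightarrow S$. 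Set-theoretically the Key Lemma is easy: a point of $\MGE$ is the isomorphism class of a semisimple bundle, which reduces to a $T$-bundle given by some $p \in \TE = \Hom(\bX^*(T), J(E))$ (well-defined up to $W$); taking $V$ faithful, its $T$-weights generate $\bX^*(T)$ because $T \hookrightarrow \GL(V)$ is a closed immersion, so $\Div(F(V)) = \one_{\dim V}$ forces $\mu_*(p) = \one_{J(E)}$ for every weight $\mu$ of $V$, hence $p = \one$ and the point is $\one$.

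The main obstacle is the scheme-theoretic part of the Key Lemma --- that the fibre is \emph{reduced}, not merely set-theoretically a point. I would argue on completed local rings: using the identification $\MGE \cong \TE \git W$, the completion $\widehat{\cO}_{\MGE,\one}$ is the ring of $W$-invariants in $\widehat{\cO}_{\TE,\one}$, and one must show that its maximal ideal is generated by the ``characteristic-polynomial coefficients of representations'' --- by the recipe of \cref{E:torsion example}, the elementary symmetric functions of the formal-coordinate values $z(\mu_\ast(-))$ over the weights $\mu$ of $V$ --- as $V$ ranges over enough representations. In characteristic zero this is immediate: $\widehat{J(E)} \cong \widehat{\Gm}$, the $W$-action can be linearized, and the statement becomes the classical fact that $k[\ft]^W$ is generated by characteristic-polynomial coefficients of representations (one may take $V$ among the fundamental representations, or tensor powers of a single faithful one). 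In characteristic $p$ the $W$-action on $\widehat{\TE} \cong \widehat{J(E)}^{\,r}$ need not be linearizable, so I expect to argue in stages --- first the nodal case $J = \Gm$, where $\cM_E(G) = G \git G$ by \cref{P:Fried-Morg-nodal-cuspidal} and the assertion is classical invariant theory for $G$, then the cuspidal case $J = \Ga$, and finally the elliptic case by a deformation/specialization argument, the fibre in question being supported at a single point of the one-dimensional group $J(E)$. This characteristic-$p$ bookkeeping is the step I expect to require the most care.
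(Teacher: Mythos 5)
Your overall architecture coincides with the paper's: the ``only if'' direction via functoriality of $\underline{\chi}$ under $G \to \GL(V)$ together with \cref{L:div=char} is exactly the paper's argument, and your Key Lemma is exactly what the paper's \cref{l:embedding of coarse moduli} delivers (a closed embedding in a formal neighbourhood of the basepoint forces the scheme-theoretic fibre over it to be the reduced point). The gap is that you do not actually prove the Key Lemma. In characteristic zero, linearizing the $W$-action on $\widehat{\TE}$ does not linearize the weight maps $\mu_\ast\colon \TE \to J(E)$, so the divisor map to $\prod_i\Sym^{d_i}(J(E))$ is not transported to the linear characteristic-polynomial map on $\ft$, and the statement does not literally ``become'' the classical fact about $k[\ft]^W$; you would still need an associated-graded or cotangent-space argument to get surjectivity on completed local rings. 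In positive characteristic you explicitly defer the elliptic case to an unspecified ``deformation/specialization argument,'' which is the entire content of the step you are trying to prove.

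The mechanism the paper uses, and which your sketch is missing, is representation-theoretic and characteristic-free: if $V_1,\dots,V_m$ generate $R(G)$ as a ring (e.g.\ the fundamental representations), then $R(\GL(V_1))\otimes\cdots\otimes R(\GL(V_m)) \to R(G) \cong \cO(T)^W$ is surjective, so $T\git W \to \prod_i \Sym^{d_i}(\Gm)$ is a closed embedding of affine varieties, in particular after completion at the basepoint; one then identifies this completed map with the completion of $\MGE \to \prod_i\Sym^{d_i}(J(E))$ at $\one_G$ via a formal-scheme isomorphism $\widehat{\Gm}\cong\widehat{J(E)}$, which exists in every characteristic with no ordinariness hypothesis. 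This bypasses both the linearization of the $W$-action and any case division by characteristic. If you want to salvage your route, replace the appeal to Chevalley's theorem by this global statement over $\Gm$ and then justify why the formal identification intertwines the two maps; as written, the ``if'' direction is not established.
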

\begin{proof}
	First note that for any morphism of groups $G \to G'$, there is a commutative diagram:
	\[
	\xymatrix{
		\uGE \ar[d] \ar[r] & \uGpE  \ar[d] \\
		\MGE \ar[r] & \MGpE
	}
	\]
	It follows that if an object of $\uGE(S)$ is unipotent (respectively infinitesimally unipotent) then its image in $\uGpE(S)$ is unipotent (respectively infinitesimally unipotent).
	
	In particular, we obtain such a diagram with $G'=GL(V)$ for each representation $V$ of $G$. Thus (noting \cref{L:div=char}) it follows that if $\cP_G$ is (infinitesimally) unipotent then all the associated torsion sheaves $F(V)$ are (infinitesimally) unipotent.
	
	Note that if $V$ is a faithful representation, then the map 
	\[
	\MGE \to \cM_E(GL_n) \cong \Sym^n(J(E))
	\]
	has the property that the set-theoretic fiber of the basepoint $\one_n$ consists only of the basepoint $\one_G$ of $\MGE$ (i.e. a semisimple $G$-bundle is trivial if and only if the associated vector bundle is trivial, at a set-theoretic level). It follows then that if $F(V)$ is infinitesimally unipotent, then $\cP$ is infinitesimally unipotent.
	
	This argument doesn't quite work for the non-infinitesimal case, as the map 
		\[
	\MGE \to \cM_E(GL_n) \cong \Sym^n(J(E))
	\]
	is not injective on $S$-points for a general (possibly non-reduced) scheme $S$.
	
	However, \cref{l:embedding of coarse moduli} below implies that it is enough to take a sufficiently large collection of representations $V_1, \ldots, V_m$ (for example the fundamental representations) to obtain that the map
			\[
	\MGE \to \prod_i \cM_E(GL_{d_i}) \cong \Sym^{d_1}(J(E))\times \ldots \times \Sym^{d_m}(J(E))
	\]
	is a closed embedding in a formal neighbourhood of the basepoint (and thus injective on $T$-points set-theoretically supported on the basepoint).
\end{proof}

\begin{lemma}\label{l:embedding of coarse moduli}
		Let $V_1, \ldots, V_m$ be such that their classes generate $R(G)$ as a ring . For example, we can take the collection of fundamental representations.
	Then the corresponding map
	\begin{equation}\label{eq:embedding E}
	\MGE \to \Sym^{d_1}(J(E)) \times \ldots \times \Sym^{d_m}(J(E))
	\end{equation}
	is a closed embedding in a formal neighbourhood of the basepoint $\one_{G} \in \MGE$.
\end{lemma}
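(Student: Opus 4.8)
The plan is to pass to formal completions at the basepoints, whereupon the statement reduces to the fact that the Chern classes of the $V_i$ generate the $W$-invariant polynomials on $\ft$. Recall that $\MGE\cong\TE\git W$ (\cite{Lasz}), with $\TE=\Hom(\bX^*(T),J(E))\cong J(E)^{\rk T}$, and that $\cM_E(\GL_{d_i})\cong\Sym^{d_i}(J(E))$. Under these identifications the morphism \eqref{eq:embedding E} is induced by the $W$-invariant map $\TE\to\prod_i\Sym^{d_i}(J(E))$ carrying a framed $T$-bundle $p$ to $\bigl(\sum_{j}[(\mu_{i,j})_\ast(p)]\bigr)_i$, where $\mu_{i,1},\dots,\mu_{i,d_i}\in\bX^*(T)$ are the weights of $V_i$ with multiplicity. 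Since the basepoint $\one\in\TE$ is $W$-fixed, completion at $\one_G$ commutes with taking $W$-invariants, so $\widehat{\cO}_{\MGE,\,\one_G}\cong\cO(\widehat{\TE})^W$ with $\widehat{\TE}=\Hom(\bX^*(T),\widehat{J(E)})$; similarly $\widehat{\cO}_{\Sym^{d_i}(J(E)),\,\one_{d_i}}\cong k[[\sigma_{i,1},\dots,\sigma_{i,d_i}]]$, the $\sigma_{i,l}$ being the elementary symmetric functions of a coordinate $z$ at $\one$. Hence it suffices to prove that
\[
k[[\sigma_{i,l}:1\le i\le m,\ 1\le l\le d_i]]\longrightarrow\cO(\widehat{\TE})^W,\qquad
\sigma_{i,l}\longmapsto e_l\bigl(\mu_{i,1}^\ast z,\dots,\mu_{i,d_i}^\ast z\bigr),
\]
is surjective.

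Now assume $\car(k)=0$ (the nodal and cuspidal cases can also be handled directly: there $\MGE$ is globally $T\git W$ or $\fg\git G\cong\ft\git W$, and the claim becomes the surjectivity of $\bigotimes_i R(\GL_{d_i})\to R(G)$, resp.\ of its Lie-algebra analogue, which is the hypothesis, resp.\ follows from it as below). In characteristic zero the formal logarithm furnishes a $W$-equivariant isomorphism $\widehat{J(E)}\cong\widehat{\Ga}$, so $\widehat{\TE}\cong\widehat{\ft}$ equivariantly; taking $z=\log$, the function $\mu^\ast z$ on $\widehat{\TE}$ is simply the linear form $\mu\in\bX^*(T)\otimes k=\ft^\ast$. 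Thus $e_l(\mu_{i,1}^\ast z,\dots,\mu_{i,d_i}^\ast z)$ becomes the Chern class $c_l(V_i)\in\Sym^l(\ft^\ast)^W$, and $\cO(\widehat{\TE})^W\cong\widehat{\cO}_{\ft\git W,\,0}$, the completion of the finitely generated graded ring $P:=\Sym(\ft^\ast)^W$ at its irrelevant ideal, which equals $\prod_d P_d$. Since the $c_l(V_i)$ are homogeneous of positive degree, the displayed map is surjective once $k[c_l(V_i):i,l]=P$.

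This last equality I would deduce from the hypothesis using the Chern character $\mathrm{ch}\colon R(G)\otimes k=\cO(T\git W)\to\widehat P$. It is the completion map of $\cO(T\git W)$ at the identity coset composed with the exponential isomorphism $\widehat{\cO}_{T\git W,\,\bar 1}\cong\widehat{\cO}_{\ft\git W,\,0}$, hence has dense image. As $[V_1],\dots,[V_m]$ generate $R(G)$, the elements $\mathrm{ch}(V_i)=\dim V_i+\sum_{N\ge1}\tfrac1{N!}\sum_j\mu_{i,j}^{N}$ topologically generate $\widehat P=\prod_d P_d$; comparing homogeneous components degree by degree, the graded subring of $P$ generated by the power sums $\sum_j\mu_{i,j}^N$ must be all of $P$, and by Newton's identities (in characteristic $0$) this subring is $k[c_l(V_i):i,l]$. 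Therefore $k[c_l(V_i):i,l]=P$, the map of complete local rings above is surjective, and the induced map of formal spectra is a closed immersion — which is the assertion.

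I expect the main work to be bookkeeping rather than a conceptual obstacle: one must check with care that the completion of the GIT quotient $\TE\git W$ at the single closed point $\one_G$ is indeed $\cO(\widehat{\TE})^W$ (using the $W$-fixedness of $\one$ and finiteness of $\cO(\TE)$ over $\cO(\TE)^W$), and that the irrelevant-ideal-adic filtration on $P$ is cofinal with the filtration by polynomial degree so that $\widehat P=\prod_d P_d$ and the homogeneous-component argument is valid. The only substantive input is the passage from $R(G)$ to $P=H^\bullet(BG)$ through the Chern character; in positive characteristic both this step and the linearization $\widehat{J(E)}\cong\widehat{\Ga}$ would need a separate treatment, although the nodal and cuspidal cases remain accessible by the direct route indicated above.
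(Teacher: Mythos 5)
Your characteristic-zero argument is correct, but it takes a genuinely different route from the paper's. You complete first, linearize $\widehat{J(E)}\cong\widehat{\Ga}$ by the formal logarithm, and reduce to showing that the Chern classes $c_l(V_i)$ generate $\Sym(\ft^*)^W$, which you then extract from the hypothesis via the Chern character and Newton's identities. The paper instead stays multiplicative and global: the hypothesis immediately gives surjectivity of $R(\GL(V_1))\otimes\cdots\otimes R(\GL(V_m))\to R(G)$ (the image contains the $[V_i]$, hence the ring they generate), so $T\git W\to\Sym^{d_1}(\Gm)\times\cdots\times\Sym^{d_m}(\Gm)$ is already a closed embedding of \emph{affine varieties}; one then completes at the basepoint and transports the statement to $J(E)$ by identifying formal neighbourhoods of the identity. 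That route is shorter --- no Chern character, no Newton's identities --- essentially because the exterior powers $\Lambda^l V_i$ lie automatically in the image of $R(\GL_{d_i})$, so no conversion between power sums and elementary symmetric functions is ever required. Your intermediate reductions (completion commutes with $W$-invariants at the $W$-fixed basepoint, the description of the map on $\TE$ via the weights $\mu_{i,j}$, the graded density argument, and the nodal case) are all sound.

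The restriction to $\car(k)=0$ that you flag at the end is, however, a genuine gap rather than bookkeeping: the lemma carries no characteristic hypothesis, and it is invoked (through \cref{p:unipotent iff associated unipotent} and \cref{T:formal equivalence}) for ordinary elliptic curves in characteristic $p$, where $\widehat{J(E)}\cong\widehat{\Gm}$ is \emph{not} isomorphic to $\widehat{\Ga}$ as a formal group, so your linearization is unavailable; moreover the denominators in the Chern character and in Newton's identities break independently once $p$ is small relative to the $d_i$. The repair is precisely the paper's move: do not linearize. Carry out the whole argument with $\Gm$ and representation rings, where it is characteristic-free, and only at the last step identify the formal neighbourhood of $\one$ in $J(E)$ with that of $1$ in $\Gm$.
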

\begin{proof} 
By assumption, the map of representation rings 
\begin{equation}\label{eq:embedding Gm}
R(GL(V_1)) \otimes \ldots \otimes R(GL(V_r)) \to R(G)
\end{equation}
is surjective. Thus the corresponding map of varieties
\[
T\git W \to \Sym^{d_1}(\Gm) \times \ldots \times \Sym^{d_r}(\Gm)
\]
is a closed embedding. 
In particular it is a closed embedding after completing at the basepoint of $T\git W$.

Choosing an isomorphism of formal schemes (not necessarily respecting the group structure) $\widehat{\Gm} \cong \widehat{J(E)}$ gives an identification of the maps in (\ref{eq:embedding E}) and (\ref{eq:embedding Gm}) in a formal neighbourhood of the basepoint as required.
\end{proof}

We are now ready to prove \cref{t:unipotent} (and consequently \cref{thm:unipotent G bundles intro} from the introduction). 

\begin{theorem}\label{T:formal equivalence}
Any equivalence of formal groups $\widehat{J(E_1)} \cong \widehat{J(E_2)}$ defines equivalences:
\[
\xymatrix{	
(\uGEone)^\wedge_\uni \ar[r]^\sim & (\uGEtwo)^\wedge_\uni \\
\uGEone^\uni \ar[r]^\sim \ar[u] & \uGEtwo^\uni {\ar[u]} .
}
\]
\end{theorem}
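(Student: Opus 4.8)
The plan is to deduce \cref{T:formal equivalence} from the Tannakian and Fourier--Mukai descriptions established above, with the formal-local analysis of torsion sheaves serving as the bridge. By \cref{C:ssbundles as functors} we have equivalences $\uGEi \simeq \Fun^\otimes(\Rep_k(G),\Tor(J(E_i)))$, natural in the obvious sense, so it suffices to produce the desired equivalences on the target side and to identify the unipotent (resp. formally unipotent) loci there.

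First I would unwind the formal completions. An equivalence of formal groups $\phi\colon\widehat{J(E_1)}\xrightarrow{\sim}\widehat{J(E_2)}$ induces, by pushforward of coherent sheaves, equivalences of presheaves of categories $\phi_\ast\colon\Tor_n(\widehat{J(E_1)})\xrightarrow{\sim}\Tor_n(\widehat{J(E_2)})$ for all $n$; since $\phi$ is a morphism of formal groups, $\phi_\ast$ is symmetric monoidal for the convolution product and compatible with the divisor maps to $\Sym^n(\widehat J)$. Taking disjoint unions over $n$ and invoking the lemma identifying $\Tor_n(\widehat J)$ with the completion $\Tor_n(J)^\wedge_\uni$ and $\Tor_n(\widehat J)^\uni$ with $\Tor_n(J)^\uni$ (together with the cartesian square in that lemma), one obtains a monoidal equivalence $\Tor(J(E_1))^\wedge_\uni\simeq\Tor(J(E_2))^\wedge_\uni$ which restricts to $\Tor(J(E_1))^\uni\simeq\Tor(J(E_2))^\uni$. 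Applying $\Fun^\otimes(\Rep_k(G),-)$ then yields equivalences of stacks
\[
\Fun^\otimes\!\bigl(\Rep_k(G),\Tor(J(E_1))^\wedge_\uni\bigr)\simeq\Fun^\otimes\!\bigl(\Rep_k(G),\Tor(J(E_2))^\wedge_\uni\bigr)
\]
and similarly for the genuine unipotent loci.

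It then remains to identify $\Fun^\otimes(\Rep_k(G),\Tor(J(E))^\uni)$ with $\uGE^\uni$, and $\Fun^\otimes(\Rep_k(G),\Tor(J(E))^\wedge_\uni)$ with $(\uGE)^\wedge_\uni$. For the genuine locus this is immediate from \cref{p:unipotent iff associated unipotent}: a tensor functor $F$ corresponds to a unipotent $G$-bundle exactly when every $F(V)$ is a unipotent torsion sheaf, i.e. exactly when $F$ factors through $\Tor(J(E))^\uni$. For the formal neighbourhood, $(\uGE)^\wedge_\uni$ has as $S$-points those families in $\uGE$ whose restriction to $S_{\red}$ is unipotent; by the infinitesimal case of \cref{p:unipotent iff associated unipotent} this translates to: every $F(V)$ is infinitesimally unipotent, i.e. set-theoretically supported on $\one_S$. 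To see that this is exactly the condition that $F$ factor through $\Tor(J(E))^\wedge_\uni$, I would use \cref{l:embedding of coarse moduli}, which embeds a formal neighbourhood of $\one_G\in\MGE$ into a product $\prod_i\Sym^{d_i}(J(E))$ via the characteristic polynomial map; hence an $S$-point of $\uGE$ lies in $(\uGE)^\wedge_\uni$ iff each associated divisor $\Div(F(V_i))$ is set-theoretically supported on $\one_S$, which by the cartesian square of the torsion-sheaf lemma is equivalent to $F$ factoring through $\Tor(J(E))^\wedge_\uni$. Naturality of every construction involved then forces the commutativity of the displayed square. The main obstacle I anticipate is precisely this last identification: matching the \emph{stacky} completion $(\uGE)^\wedge_\uni$ with the functor category $\Fun^\otimes(\Rep_k(G),\Tor(J(E))^\wedge_\uni)$ requires knowing that "completion along $\uGE^\uni$'' and "all associated torsion sheaves supported near $\one$'' cut out the same thing, which is not formal and genuinely uses \cref{l:embedding of coarse moduli} together with the cartesian square relating $\Tor_n$, $\Tor_n(\widehat J)$, $\Sym^n$, and $\Sym^n(\widehat J)$. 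Everything else is bookkeeping.
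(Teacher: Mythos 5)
Your proposal is correct and follows essentially the same route as the paper: transport the Tannakian/Fourier--Mukai description $\uGEi \simeq \Fun^\otimes(\Rep_k(G),\Tor(J(E_i)))$ along the formal group isomorphism via the monoidal equivalence $\Tor(\widehat{J(E_1)})\simeq\Tor(\widehat{J(E_2)})$, then recognize the unipotent and formally unipotent loci using \cref{p:unipotent iff associated unipotent}. You are in fact somewhat more explicit than the paper about identifying the stacky completion $(\uGE)^\wedge_\uni$ with $\Fun^\otimes(\Rep_k(G),\Tor(\widehat{J(E)}))$ via \cref{l:embedding of coarse moduli} and the cartesian square for $\Tor_n(\widehat J)$, a point the paper's proof passes over quickly.
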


\begin{proof}[Proof of \cref{T:formal equivalence}]
The first part of the theorem says that there is an equivalence of stacks of infinitesimally unipotent bundles:
	\[
	\xymatrix{	
		(\uGEone)^\wedge_\uni \ar[r]^\sim & {(\uGEtwo)^\wedge_\uni}.
	}
	\]
By \cref{C:ssbundles as functors}, we have an identification 
\[
\uGEi \cong \Fun^\otimes(\Rep(G),\Tor(\widehat{J(E_i)})).
\]
The identification of formal groups $\widehat{J(E_1)} \cong \widehat{J(E_2)}$ defines, for each test scheme $S$, an equivalence of symmetric monoidal categories $\Tor(\widehat{J(E_1)})(S) \simeq \Tor(\widehat{J(E_2)})(S)$, and thus we obtain the required equivalence.

The second part of the theorem means that this equivalence preserves the substacks of unipotent bundles. But according to \cref{p:unipotent iff associated unipotent}, the unipotent bundles may be recognized as those functors whose corresponding tensor functors factor through the subcategory of unipotent torsion sheaves. As the subcategory of unipotent torsion sheaves is preserved under the equivalence
\[
\Tor(\widehat{J(E_1)}) \simeq \Tor(\widehat{J(E_2)}).
\]
we obtain the required result.
\end{proof}

\newpage
\section{Examples}\label{S:examples}
In this section we compute (partially) some examples. 
The reference and notation for root systems that we used is from the appendix of \cite{Bour456}.

We recall that $\uGE$ stands for the stack of \emph{semistable} $G$-bundles of degree $0$ on $E$.
Denote by $J$ the Jacobian of $E$. 

\begin{example}
	Take $G=\PGL(2)$. We have two possible closed sets: empty set, full set. 
	The Weyl group acts on $(\bG_{m})_E=J$ by
	$\cL\mapsto \cL^{-1}$.
	
	We have a decomposition
	\[\uGE = \cN/G\bigsqcup  \left(J\setminus\{\cO\}\right)/T\rtimes\mathfrak{S}_2.\]

	Observe that the $\PGL(2)$-bundles $\cO\oplus\cL$ where $\cL\in J[2]$ and $\cL\neq \cO$, have automorphism group $T\rtimes\mathfrak{S}_2$ which is disconnected.
	This is to be expected because the centralizer of a semisimple element in a non-simply-connected group doesn't have to be connected.
\end{example}

\begin{example}
	Take $G=\SL(2)$. Then we have 
	\[\uGE=\left( J[2]\times\cN/G \right)\bigsqcup (J \setminus J[2])/T \rtimes\mathfrak{S}_2\]
	and here all the bundles have connected automorphism groups.
	
\end{example}

\begin{example}
	Take $G=\GL(2)$, $T=\bG_m^2$. Then we have
	\[	\uGE=J\times\cN/G\bigsqcup (\TE\setminus\diag)/(T\rtimes\mathfrak{S}_2) \]
	and here also all the automorphism groups are connected.
\end{example}

\begin{example}\label{Eg:type G2}
	Let $G$ be a group of type $G_2$. 	The root system is \[\Phi=\pm\{\alpha,\beta,\alpha+\beta,2\alpha+\beta,3\alpha+\beta,3\alpha+2\beta\}\]
	where $\alpha$ is the short root.
	
	The closed subsets not contained in any proper Levi are $\Phi$ and  \[\Sigma=\pm\{\beta,3\alpha+\beta,3\alpha+2\beta\}.\]
	We have $G(\Sigma) = \SL(3)$ which is a pseudo-Levi subgroup.
	The other closed sets (up to conjugation by $W$) are $\{\alpha\}$, $\{\beta\}$ and $\emptyset$ and one can easily see that $G(\alpha) \simeq  \GL(2)\simeq G(\beta)$ and $G(\emptyset)=T\simeq\bG_m^2$. 
	It is also an exercise to check that $N_G(G(\alpha))=G(\alpha)$ and similarly for $G(\beta)$.
	
	The roots give us an isomorphism $\TE\simeq J^2$ where the first coordinate corresponds to $\alpha$ and the second to $\beta$.
	The partition of $\uGE$ is therefore
	\begin{align*}
	\uGE =  \cN_G/G & \sqcup \left( J[3]\times \cN_{\SL(3)} \right)/\SL(3) \\ 
				& \sqcup(J-J[3])\times\cN_{G(\alpha)}/G(\alpha) \\
				& \sqcup J\times\cN_{G(\beta)}/G(\beta)\\
				& \sqcup(\TE-\coord)/T\rtimes W.
	\end{align*} 
	where $\coord\simeq J\times\{\cO_E\}\cup\{\cO_E\}\times J\subset J\times J$ corresponds to the coordinate axes.
\end{example}

Now a more involved example:
\begin{example}\label{Eg:type C3}
	Take $G=\Sp(6)$ which is a group of type $C_3$. 
	The simple roots are $\{\alpha_1,\alpha_2,\alpha_3\}$ with $\alpha_3$ the long root.
	The longest root is $2\alpha_1+2\alpha_2+\alpha_3$. 
	
	We put $\alpha_0:=-(2\alpha_1+2\alpha_2+\alpha_3)$.
	The affine Dynkin diagram is

\[ 	\begin{tikzpicture}
	\draw[double distance=2pt, thick] (0,0)--(1,0);
	\draw[thick] (0.5,0) circle (0pt) node{$>$} ;
	\filldraw[black] (0,0) circle (2pt) node[anchor=south] {$\alpha_0$};
	\filldraw[black] (1,0) circle (2pt) node[anchor=south] {$\alpha_1$};
	\draw[thick, double distance=2pt] (2,0) -- (3,0);
	\filldraw[black] (2,0) circle (2pt) node[anchor=south] {$\alpha_2$};
	\filldraw[black] (3,0) circle (2pt) node[anchor=south] {$\alpha_3$};
	\draw[black, thick] (1,0) -- (2,0);	
	\draw[thick] (2.5,0) circle (0pt) node{$<$} ;
	\end{tikzpicture} \]
	We use the Borel-de Siebenthal algorithm to produce closed sets (see \cref{SS:thm of Borel de Siebenthal}).
	If we remove only $\alpha_0$ or only $\alpha_3$ we get back the group $G$. 
	If we remove the affine vertex and some other vertices we get all the Levi subgroups of $G$.
	
	If we remove one vertex different from $\alpha_0$ or $\alpha_3$ we get
	
	\begin{tabular}{lcc}
		\begin{tikzpicture}
		\filldraw[black] (0,0) circle (2pt) node[anchor=south] {$\alpha_0$};
		\draw[thick, double distance=2pt] (2,0) -- (3,0);
		\filldraw[black] (2,0) circle (2pt) node[anchor=south] {$\alpha_2$};
		\filldraw[black] (3,0) circle (2pt) node[anchor=south] {$\alpha_3$};
		\draw[thick] (2.5,0) circle (0pt) node{$<$} ;
		\end{tikzpicture} &  $\Sigma_1=\pm\left\{
		\begin{array}{l}
		\alpha_2,\alpha_3,\alpha_2+\alpha_3, 2\alpha_2+\alpha_3, \\
		2\alpha_1+2\alpha_2+\alpha_3
		\end{array}\right\}$ & 
		$\rightsquigarrow \SL(2)\times \Sp(4)$
		
	\end{tabular}

	\begin{tabular}{lcc}
		\begin{tikzpicture}
		\draw[double distance=2pt, thick] (0,0)--(1,0);
		\draw[thick] (0.5,0) circle (0pt) node{$>$} ;
		\filldraw[black] (0,0) circle (2pt) node[anchor=south] {$\alpha_0$};
		\filldraw[black] (1,0) circle (2pt) node[anchor=south] {$\alpha_1$};
		\filldraw[black] (3,0) circle (2pt) node[anchor=south] {$\alpha_3$};
		\end{tikzpicture} &  
		$\Sigma_2=\pm \left\{
		\begin{array}{l} 
		\alpha_3,\alpha_1,2\alpha_1+2\alpha_2+\alpha_3,\\
		\alpha_1+2\alpha_2+\alpha_3,2\alpha_2+\alpha_3
		\end{array}
		\right\}$ & $\rightsquigarrow\Sp(4)\times \SL(2)$
		
	\end{tabular}
	
	One can check easily that $s_2s_1$ takes $\Sigma_1$ into $\Sigma_2$, hence also the group $G({\Sigma_1})$ into $G({\Sigma_2})$.
	
	By applying once more the above algorithm we get (discarding the Levi subgroups) the following root system:
	
	\begin{tabular}{lcc}
		\begin{tikzpicture}
		\filldraw[black] (0,0) circle (2pt) node[anchor=south] {$\alpha_0$};
		\filldraw[black] (1.5,0) circle (2pt) node[anchor=south] {$\alpha_0'$};
		\filldraw[black] (3,0) circle (2pt) node[anchor=south] {$\alpha_3$};
		\end{tikzpicture}&$\qquad \Sigma_3=\pm\left\{
		\alpha_3, 2\alpha_1+2\alpha_2+\alpha_3, 2\alpha_2+\alpha_3
		\right\}$ & $\rightsquigarrow\SL(2)^3$
	\end{tabular}
	
	where $\alpha_0'=-(2\alpha_1+\alpha_0)=2\alpha_2+\alpha_3$.
	
	The groups $G(\Sigma)$ are computed by inspecting the root/coroot system and by looking at the center and the fundamental group.
	
	We can also compute the relative Weyl groups and find $W_{G,\Sigma_1}=1$ and $W_{G,\Sigma_3}=\mathfrak{S}_3$. 
	Actually the Weyl group of $\Sp(6)$ is $W=(\bZ/2\bZ)^3\rtimes \mathfrak{S}_3$ and one can check (or see from the diagram) that the Weyl group of $\Sigma_3$ is $W_{\Sigma_3}\simeq \bZ^3$. 
	
	If we iterate once more the algorithm we only get Levi subgroups of $\Phi$, $\Sigma_1$ or $\Sigma_3$.
	
	For the partial order $\succeq$ the closed sets $\Phi,\Sigma_1,\Sigma_3$  are the maximal ones (up to permutation by $W$).
	
	Hence the closed pieces in the partition of $\uGE$ are 
	\[J[2]\times\cN_{\Sp(6)}/\Sp(6)\]
	\[\left( J[2]^2-\diag \right)\times\cN_{\SL(2)\times \Sp(4)}/\SL(2)\times\Sp(4)\]
	\[\left((J[2]^3- \diags) \times \cN_{\SL(2)^3} /\SL(2)^3\right)/\mathfrak{S}_3.\]
	
	Here also, all the bundles have connected automorphism group even though we quotient by $\mathfrak{S}_3$ because its action is free on $J[2]^3\setminus \diags$.
\end{example}

\begin{example}\label{Eg:type D4} 
	The last example we compute (in detail) is a simply connected group of type $D_4$.
	We would like to provide an example to show that the automorphism group of a semisimple bundle can be disconnected.
	
	Let $G=\Spin(8)$ be the simply connected group of type $D_4$ and denote by $T$ a maximal torus. 
	The simple roots are denoted by $\alpha_i,i=1,2,3,4$ and the center of $G$ is isomorphic to $\mu_2\times\mu_2$.

	For convenience, let us spell out the root datum that we used for the computations (for more details one should consult \cite[Planche IV, p.256]{Bour456}):
	\begin{itemize}
		\item the character lattice and the root lattice are
		\[ X^*(T)=\<\omega_1,\omega_2,\omega_3,\omega_4\> \supset \<\alpha_1,\alpha_2,\alpha_3,\alpha_4\> \] 
		where the roots are given in terms of fundamental characters as:
		\begin{align*}
			\alpha_1 &= -2\omega_1+\omega_2\\
			\alpha_2 & = -\omega_1 +2\omega_2-\omega_3-\omega_4\\
			\alpha_3 & = -\omega_2+2\omega_3\\
			\alpha_4 & = -\omega_2+2\omega_4
		\end{align*}
		\item the cocharacter lattice which equals the coroot lattice (because simply connected) is
		\[ X_*(T) = \<\valpha_1,\valpha_2,\valpha_3,\valpha_4\> \]
		\item the longest root is $\alpha_0:=\alpha_1+2\alpha_2+\alpha_3+\alpha_4$.
	\end{itemize}
	Notice that the simple coroots give us an isomorphism $(\valpha_1,\valpha_2,\valpha_3,\valpha_4)\colon\bG_m^4\to T$.
	It is useful ot think of the simple coroots (and fundamental characters) as being the coordinates of $T$.

	The affine Dynkin diagram is
	
	\begin{tabular}{cc}
		\parbox[c]{4cm}{
			\begin{tikzpicture}
			\filldraw[black] (0,0) circle (2pt) node[anchor=south east] {$\alpha_2$};
			\filldraw[black] (1,0) circle (2pt) node[anchor=south] {$\alpha_1$};
			\filldraw[black] (-1,0) circle (2pt) node[anchor=south] {$\alpha_3$};
			\filldraw[black] (0,-1) circle (2pt) node[anchor=north] {$\alpha_4$};
			\draw[black] (0,1.05) circle (2pt) node[anchor=south] {$\alpha_0$};
			\draw[thick] (0,0) -- (1,0);
			\draw[thick] (0,0) -- (0,1);
			\draw[thick] (0,0) -- (-1,0);
			\draw[thick] (0,0) -- (0,-1);		
			\end{tikzpicture} }& where $\alpha_0=-(\alpha_1+2\alpha_2+\alpha_3+\alpha_4)$.
	\end{tabular}
	
	If we remove a vertex different from $\alpha_2$ we get the whole $\Phi$. Removing $\alpha_2$ we obtain the diagram of a group of type $A_1\times A_1 \times A_1 \times A_1$. 
	
	Therefore (using again Borel--de Siebenthal algorithm, \cref{SS:thm of Borel de Siebenthal}), up to conjugacy, there are only two maximal closed sets, namely $\Phi$ and 
	\[\Sigma:=\pm\{\alpha_1,\alpha_3,\alpha_4,\alpha_1+2\alpha_2+\alpha_3+\alpha_4\}\]
	which is a root system of type $A_1 ^4$.
	All the other closed subsets that we can obtain iterating the algorithm are Levi subgroups of $G$ or of $G(\Sigma)$.
	
	The group $G(\Sigma)$ can be computed to be 
	\[(\SL(2)\times \SL(2)\times \SL(2)\times \SL(2))/\mu_2\]
	where $\mu_2$ is the diagonal central subgroup.
	(This is achieved by computing the root datum for $G(\Sigma)$.)
	Notice that this is not simply connected!	
	
	Let us recall that the Weyl group of $G$ is isomorphic to $\mathfrak{S}_4\ltimes P$ where $P\le (\bZ/2\bZ)^4$ is the hyperplane $\prod x_i=1$ and where the symmetric group acts on it by permutations.
	One sees best the action of $\mathfrak{S}_4\ltimes P$ on characters/cocharacters by introducing additional variables $\varepsilon_i, i=1,2,3,4$ such that $\omega_1=\varepsilon_1$, $\omega_2=\varepsilon_1+\varepsilon_2$, $\omega_3=\frac12(\varepsilon_1+\varepsilon_2+\varepsilon_3+\varepsilon_4)$ and $\omega_4=\frac12(\varepsilon_1+\varepsilon_2+\varepsilon_3-\varepsilon_4)$.
	Using these coordinates the action of $\fS_4$ is by permuting the $\varepsilon_i$ and the action of $P$ is by multiplication (i.e. changing signs).
	
	The Weyl group  $W_\Sigma$ of $G(\Sigma)$ is generated by the permutations $(12)$, $(34)$ together with $(-1,-1,1,1)$, $(1,1,-1,-1)\in P$ where we think of $\bZ/2\bZ=\{\pm1\}$ multiplicatively.

	Another computation shows that the normalizer of $W_\Sigma$ in $W$ is generated by $W_\Sigma$, $P$ and the permutation $(13)(24)$ . 
	The relative Weyl group is
	\[W_{G,\Sigma}\simeq\<(13)(24)\>\times \<(-1,1,-1,1)\>\simeq \mathfrak{S}_2\times \bZ/2\bZ.\]
		
	There are two closed strata in $\uGE$, namely
	\begin{equation}\label{Eq:closed strata in Spin8}
	\begin{aligned}
		&J[2]^2\times \cN_{G}/G\\
		&\left((J[2]^3-J[2]^2) \times \cN_{G(\Sigma)}\right)/G(\Sigma)\rtimes W_{G,\Sigma}.
	\end{aligned}
	\end{equation}
	
	Let us be more explicit about the semisimple parts $J[2]^2$ and $J[2]^3$.
	
	The center of $\Spin(8)$ is $\{t\in T\mid \alpha_i(t) = 1\}$ which, using the identification $\bG_m^4\simeq T$ given by cocharacters becomes 
	\[ \{(z_1,z_2,z_3,z_4)\in T\mid z_2=z_1^2=z_3^2=z_4^2 \text{ and } z_2^2=z_1z_3z_4\}\]
	which can be rewritten as
	\[ \{(z_1,z_2,z_3,z_4)\in T\mid 1=z_2=z_1^2=z_3^2=z_4^2=z_1z_3z_4\} \simeq \mu_2^2.\]
	
	Similarly, the center of $G(\Sigma)$ is
	\[ Z(G(\Sigma)) = \{(z_1,z_2,z_3,z_4)\in T\mid 1=z_2=z_1^2=z_3^2=z_4^2\} \simeq \mu_2^3.\]
	
	So in \cref{Eq:closed strata in Spin8} the $J[2]^2$ corresponds to $Z(\Spin(8))\simeq\mu_2^2$ and $J[2]^3$ corresponds to $Z(G(\Sigma))\simeq \mu_2^3$.
	The complement can also be made explicit
	\[ J[2]^3-J[2]^2=\{(\cL_1,\cO,\cL_3,\cL_4)\mid \cL_i\in J[2] \text{ and }\cL_1\cL_3\cL_4\not\simeq\cO\}. \]
	One can check that the relative Weyl group $W_{G,\Sigma}$ acts on the above locus without fixed points.
	Hence we won't find a semisimple bundle whose automorphism group (as a $\Spin(8)$-bundle) contains $G(\Sigma)$ and is disconnected.
	
	However, we'll produce a semisimple bundle whose automorphism group (as a $\Spin(8)$-bundle) is disconnected with connected component precisely the maximal torus.
	Using the identification $\bG_m^4\simeq T$ given by the simple coroots, a $T$-bundle is a quadruple of line bundles $\cP_T=(\cL_1,\cL_2,\cL_3,\cL_4)$.
	
	To simplify the analysis we impose furthermore $\cL_i^2\simeq \cO$ for $i=1,2,3,4$ (this will ensure later on that some element of the Weyl group stabilizes it).
	In order for $\Aut(\cP_T\timesT \Spin(8))^\circ$ to be $T$ we must have $\alpha_{*}(\cP_T)\not\simeq \cO$ for all roots $\alpha$ (there are 12 positive roots).
	Given the simplifying assumption we've made $\cL_i^2\simeq\cO$, $i=1,2,3,4$, this boils down to the following conditions
	\begin{align*}
	\cL_2\not\simeq&\cO	& \text{and}&	& \cO\not\simeq \cL_1\cL_3\cL_4\not\simeq&\cL_2.
	\end{align*}
		
	Let $\cL_2,\cL_4\in J[2]\setminus\{\cO\}$ be two non-isomorphic line bundles (this is possible if we're not in characteristic $2$).
	Then the $T$-bundle $\cP_T:=(\cO,\cL_2,\cO,\cL_4)$ satisfies the above conditions and hence the automorphism group of $\cP:=\cP_T\timesT \Spin(8)$ has connected component equal to $T$.
	
	The following element in the Weyl group $\sigma:=\sigma_{\alpha_0}\sigma_{\alpha_1}\sigma_{\alpha_3}\sigma_{\alpha_4}$ stabilizes $\cP_T$ (as a $T$-bundle!).
	More precisely, in general we have $\sigma(\cL_1,\cL_2,\cL_3,\cL_4) = (\cL_1\inv,\cL_2\inv,\cL_3\inv,\cL_4\inv)$.
	Hence $\sigma\in \Aut(\cP)$ which implies that $\Aut(\cP)$ is a disconnected group (with connected component equal to $T$).

\end{example}

\newpage
\appendix
\section{Classification of elliptic closed subsets}\label{sec:classification-of-elliptic-closed-subsets}
The focus of this section is on the case when $E$ is an elliptic curve but we'll quickly review the cusp (rational) and nodal (trigonometric) cases.
In \cref{sec:closed-subsets-of-e-type} we gave a general recipe to produce closed subsets of the root system $\Phi$ of $G$ as centralizers of elements in $\ft$, $T$ and $T_E$.
More precisely, we put
\begin{align}
x\in \ft	&\rightsquigarrow	\Sigma_x:=\{\alpha\in\Phi\mid \alpha(x)=0\} \\
t\in T		&\rightsquigarrow	\Sigma_t:=\{\alpha\in\Phi\mid \alpha(t)=1\}\\
\cP\in T_E	&\rightsquigarrow	\Sigma_\cP:=\{\alpha\in\Phi\mid \alpha_*(\cP)\simeq\triv\}.
\end{align}
The collection of these subsets will be denoted (in this section) by $\cA_\rat$, $\cA_\trig$ respectively $\cA_\ell$.

Over the complex numbers, using the analytic uniformizations $(\bX_*(T)\otimes_\bZ \bC)/\bX_*(T) \simeq T$ and $(\bX_*(T)\otimes_\bZ(\bR^2))/\bX_*(T)^{\oplus 2}\simeq E$, the proof of \cref{P:McNSomm-trig} is all that is needed.
However, to deal also with the positive characteristic, one needs to do a little combinatorics of root systems and diagonalizable groups.
The key ingredient is a Lemma from \cite[5.1]{Steinb-endo} that we record as \cref{L:from T to compact Tc} and its elliptic analog \cref{L:from E to two compact Tc}.

Let $a\in \ft$ and consider $\Phi_a\in \cA_\rat$. 
Then $\Phi_a$ is the root system of the Levi subgroup $C_G(k\cdot a) $ (the centralizer of a subtorus is always a Levi subgroup). 
If $L$ is a Levi subgroup of $G$ then for a generic element $a$ in the center of the Lie algebra $\fl$, the closed set $\Phi_a$ is the set of roots of $L$.

The trigonometric situation is a bit more complicated. 
For $t\in T$ the closed subset $\Phi_t$ is the root system of the connected centralizer $C_G(t)^\circ$ which might not be a Levi subgroup if the order of $t$ is finite. 
The reductive subgroups of $G$ thus obtained are called pseudo-Levi subgroups and they are well known in the theory of reductive groups.

Going further to the elliptic case, it turns out that the subsets $\Phi_{\cP}$ are the root systems of intersections of two pseudo-Levi subgroups of $G$.
We will sketch the proofs below after fixing some notation.

Fix a Borel subgroup $B$ containing the maximal torus $T\subset B\subset G$, and let $\Phi\supset\Delta$ be the set of roots and of simple roots respectively.  

A prime number $p$ is said to be \emph{good} for $G$ if $p$ doesn't divide any coefficient of the highest root\footnote{if the root system is not irreducible, consider all the highest roots} (w.r.t. $\Delta$) of $\Phi$.
This is a very tiny restriction on $p$ and $G$. For example, $p>5$ is good for every group and $p\ge 3$ is good for any classical group.

Denote by $\bX=\bX^*(T)$ the character lattice of $T$ and by $\bY$ its dual lattice. One can construct, in a functorial way, the \emph{compact} abelian Lie group $T_c=(\bY\otimes_\bZ\bR)/\bY$ such that $\Hom_\gr(T_c,\bC^\times)=\bX$.

For $t\in T$ we put $\bX_t:=\{\lam\in\bX\mid \lam(t)=1\}$ and similarly for $x\in T_c$ we put $\bX_x = \{\lam\in\bX\mid \lam(x)=1\}$.
We define analogously $\Phi_t$ and $\Phi_x$.

We start with a preparation lemma from \cite[5.1]{Steinb-endo} that is needed to pass from an arbitrary field to $\bR$.
\begin{lemma}\label{L:from T to compact Tc} \hfill
	\begin{enumerate}
		\item For any $t\in T$ there exists $x\in T_c$ such that $\bX_t=\bX_x$.
		\item If $x\in T_c$ is of finite order prime to $p=\car(k)$ then there exists $t\in T$ such that $\bX_t=\bX_x$. 
	\end{enumerate}
\end{lemma}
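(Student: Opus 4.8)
The plan is to strip the statement down to pure lattice combinatorics. First I would identify $T$ (more precisely $T(k)$) with $\Hom_\bZ(\bX,k^\times)$ — valid since $k$ is algebraically closed and $T=\Spec k[\bX]$ — so that a point $t$ is the same datum as a homomorphism $\chi_t\colon\bX\to k^\times$ with $\bX_t=\ker\chi_t$. Likewise I would rewrite $T_c=(\bY\otimes_\bZ\bR)/\bY$ as $\Hom_\bZ(\bX,\bR/\bZ)$: since $\bY=\Hom_\bZ(\bX,\bZ)$ and $\bX$ is free we have $\bY\otimes_\bZ\bR=\Hom_\bZ(\bX,\bR)$, and applying the exact functor $\Hom_\bZ(\bX,-)$ to $0\to\bZ\to\bR\to\bR/\bZ\to 0$ gives $T_c=\Hom_\bZ(\bX,\bR/\bZ)$, so that $x\in T_c$ corresponds to $\phi_x\colon\bX\to\bR/\bZ$ with $\bX_x=\ker\phi_x$ (matching the paper's definition under $U(1)=\bR/\bZ\subseteq\bC^\times$). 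With these identifications both parts become: realize a prescribed subgroup $\bX'\subseteq\bX$ as the kernel of a homomorphism $\bX\to\bR/\bZ$ (resp. $\bX\to k^\times$), i.e. produce an injection of the finitely generated abelian group $\bX/\bX'$ into $\bR/\bZ$ (resp. into $k^\times$). The $\Phi$-versions then come for free, as $\Phi_t=\Phi\cap\bX_t$ and $\Phi_x=\Phi\cap\bX_x$.

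For part (1) I would observe that $\bX/\bX_t\cong\operatorname{im}\chi_t$ is a finitely generated subgroup of $k^\times$; its torsion part is a finite subgroup of the multiplicative group of a field, hence cyclic, so $\bX/\bX_t\cong\bZ^r\oplus\bZ/m$ for some $r\ge 0$, $m\ge 1$. It then suffices to note that $\bR/\bZ$ contains a subgroup isomorphic to $\bZ^r\oplus\bZ/m$ for every such pair: take the class of $1/m$ together with the classes of reals $a_1,\dots,a_r$ for which $1,a_1,\dots,a_r$ are $\bQ$-linearly independent, and check directly that the subgroup they generate is free-times-cyclic of the right type (alternatively, invoke that $\bR/\bZ$ is divisible with torsion subgroup $\bQ/\bZ$). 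Composing $\bX\twoheadrightarrow\bX/\bX_t\xrightarrow{\sim}\bZ^r\oplus\bZ/m\hookrightarrow\bR/\bZ$ yields the desired $x\in T_c$. I want to flag that no restriction on $\car(k)$ is needed here, precisely because the torsion of $\operatorname{im}\chi_t$ is automatically of order prime to $p$, so the fact that $\bR/\bZ$ has only cyclic $p$-torsion never gets in the way.

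For part (2), starting from $x\in T_c$ of finite order $n$ with $\gcd(n,p)=1$ (a vacuous hypothesis if $p=0$), I would note that $\operatorname{im}\phi_x$ is finitely generated and killed by $n$, hence a finite — therefore cyclic — subgroup of $\bR/\bZ$, of order $\operatorname{ord}(x)=n$; thus $\bX/\bX_x\cong\bZ/n$. Now I need an embedding $\bZ/n\hookrightarrow k^\times$, i.e. a subgroup $\mu_n(k)$ of order exactly $n$, and this is the one place where the hypothesis $p\nmid n$ genuinely enters: then $X^n-1$ is separable over the algebraically closed field $k$, so $\mu_n(k)\cong\bZ/n$. Composing $\bX\twoheadrightarrow\bX/\bX_x\xrightarrow{\sim}\mu_n(k)\hookrightarrow k^\times$ gives a character with kernel $\bX_x$, i.e. the required $t\in T$. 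The only delicate point in the whole argument is exactly this last step — without $p\nmid n$ one has $|\mu_n(k)|<n$ and no such $t$ exists, so the hypothesis is sharp and must be tracked carefully; everything else is routine bookkeeping about finitely generated abelian groups and the abstract group structures of $k^\times$ and $\bR/\bZ$.
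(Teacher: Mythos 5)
Your proof is correct and follows essentially the same route as the paper's: both reduce the problem to embedding $\bX/\bX_t$ (whose torsion is cyclic, since finite subgroups of $k^\times$ are cyclic) into $\bR/\bZ$, and conversely embedding the cyclic group $\bX/\bX_x$ into $k^\times$ via $\mu_n\cong\bZ/n$ when $p\nmid n$. The only cosmetic difference is that the paper phrases part (1) via a topological generator of the dual subgroup $T'_c\subseteq T_c$, whereas you construct the injection $\bZ^r\oplus\bZ/m\hookrightarrow\bR/\bZ$ explicitly with $\bQ$-linearly independent reals — these are the same argument.
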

\begin{proof}
	We sketch the idea of the proof.
	
	The abelian group $\bX/\bX_t$ is finitely generated and injects into $k^\times$ through the map $\lambda\mapsto \lambda(t)$. Since a finite subgroup of $k^\times$ is cyclic we deduce that the torsion part of the abelian group $\bX/\bX_t$ is cyclic.
	To $\bX/\bX_t$ corresponds a sublattice of $\bY$ and hence a subgroup $T'_c\subset T_c$ which is the product of a compact torus and a cyclic group.
	As such it has a topological generator, say $x\in T'_c$.
	By construction we have $\bX_x=\bX_t$.

	Conversely, the finite cyclic group $\bX/\bX_x$ corresponds to a cyclic subgroup $\mu$ of $T$ of order prime to $p$.
	Hence $\mu$ has a generator $t$ of the same order as $x$ and by construction we have $\bX_t=\bX_x$.
\end{proof}
\begin{remark}
	The reason we need the order to be prime to $p=\car(k)$ is that in $\bG_m$ there are no points of order $p$, i.e. $\mu_p$ is infinitesimal.
	
	If the field $k$ has elements of infinite order then $\bG_m$ has a Zariski generator and hence any torus has a Zariski generator. Therefore in (2) above we can replace the assumption $x$ of finite order prime to $p$ by the component group of $\overline{\<x\>}$ has order prime to $p$.
\end{remark}

For $\cP\in T_E$ put $\bX_\cP = \{\lambda\in\bX\mid \lambda_*(\cP)\simeq \cO\}$.
Similarly one can prove

\begin{lemma}	\label{L:from E to two compact Tc}
For any $\cP\in T_E$ there exist $x_1,x_2\in T_c$ such that $\bX_\cP=\bX_{x_1}\cap \bX_{x_2}$.
Conversely, for $x_1,x_2\in T_c$ of finite order prime to $p$ there exists $\cP\in T_E$ such that $\bX_\cP=\bX_{x_1}\cap \bX_{x_2}$.
\end{lemma}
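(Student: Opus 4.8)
The plan is to mimic the proof of \cref{L:from T to compact Tc}, with the word ``cyclic'' replaced throughout by ``generated by two elements''; this upgrade is forced by the fact that the torsion of the Jacobian $J(E)$ (an elliptic curve in the situation at hand) has ``rank two'' rather than ``rank one''. Recall from \cref{sec:closed-subsets-of-e-type} that $\TE\cong\Hom_\gr(\bX,J(E))$, so that for $\cP\in\TE$ one has $\bX_\cP=\ker(\cP\colon\bX\to J(E))$; and identify $T_c=(\bY\otimes_\bZ\bR)/\bY$ with $\Hom_\gr(\bX,\bR/\bZ)$ via $\Hom(\bX,\bR)=\bY\otimes_\bZ\bR$ and $\Hom(\bX,\bZ)=\bY$. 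Under the pairing $\bX\times T_c\to\bC^\times$, an element $x\in T_c$ corresponding to $g\colon\bX\to\bR/\bZ$ satisfies $\lambda(x)=\exp(2\pi i\,g(\lambda))$, so $\bX_x=\ker g$. I will freely use two elementary facts: (a) a finitely generated abelian group whose torsion subgroup is cyclic embeds into $\bR/\bZ$ --- equivalently, for every surjection $\bX\twoheadrightarrow A$ with $A$ of cyclic torsion there is $x\in T_c$ with $\bX_x$ equal to the given kernel (take $\bX\twoheadrightarrow A\hookrightarrow\bR/\bZ$); and (b) for $n$ prime to $p=\car(k)$ one has $J(E)[n]\cong(\bZ/n\bZ)^2$, hence every finite abelian group generated by two elements and of order prime to $p$ embeds into $J(E)$, while conversely every finite subgroup of $J(E)$ is generated by two elements (its $p$-primary part being cyclic).

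For the first assertion, put $A:=\mathrm{im}(\cP)=\bX/\bX_\cP\subseteq J(E)$, a finitely generated abelian group whose torsion subgroup, lying in $J(E)_{\mathrm{tors}}$, is generated by two elements by (b). Hence $A\cong\bZ^{s}\oplus\bZ/a\bZ\oplus\bZ/b\bZ$ for suitable $s,a,b$; fixing such a decomposition, let $A_1:=\bZ^{s}\oplus\bZ/a\bZ$ and $A_2:=\bZ/b\bZ$ with the evident projections $p_i\colon A\to A_i$, so that $\ker p_1\cap\ker p_2=0$ and $A\hookrightarrow A_1\times A_2$. Each $A_i$ has cyclic torsion, so by (a) there is $x_i\in T_c$ with $\bX_{x_i}=\ker(\bX\xrightarrow{\cP}A\xrightarrow{p_i}A_i)$; intersecting and using the injectivity of $A\hookrightarrow A_1\times A_2$ gives $\bX_{x_1}\cap\bX_{x_2}=\ker(\bX\to A)=\bX_\cP$. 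Conversely, suppose $x_1,x_2\in T_c$ have finite orders prime to $p$; then each $\bX/\bX_{x_i}$ is finite cyclic of order prime to $p$, so $B:=\bX/(\bX_{x_1}\cap\bX_{x_2})$ injects into $(\bX/\bX_{x_1})\times(\bX/\bX_{x_2})$ and is therefore a finite abelian group generated by two elements and of order prime to $p$. By (b) fix an embedding $\iota\colon B\hookrightarrow J(E)$; then $\cP\colon\bX\twoheadrightarrow B\xrightarrow{\iota}J(E)$ lies in $\TE$ and satisfies $\bX_\cP=\ker\cP=\bX_{x_1}\cap\bX_{x_2}$.

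The only genuine input beyond bookkeeping is fact (b): that $J(E)[n]$ is free of rank two over $\bZ/n\bZ$ for $n$ prime to $p$, together with the fact that the $p$-primary torsion of an elliptic curve is cyclic (trivial in the supersingular case, $\cong\bQ_p/\bZ_p$ in the ordinary case). This is precisely the point where the rank passes from one to two, and hence where one is forced to use two elements $x_1,x_2$ of $T_c$ rather than one --- everything else is the same subdirect-product manipulation of finitely generated abelian groups used for \cref{L:from T to compact Tc}. I expect no serious obstacle: the one place to be careful is the positive-characteristic torsion of $J(E)$ and the role of the ``order prime to $p$'' hypothesis in the converse. (The nodal and cuspidal analogues need nothing new: for $J(E)=\Gm$ one recovers \cref{L:from T to compact Tc} and a single $x$ suffices; for $J(E)=\Ga$ the statement is only of interest when $\car(k)=0$, where $\Ga(k)$ is torsion-free and the same argument applies a fortiori.)
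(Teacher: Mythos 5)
Your proof is correct and follows essentially the same route as the paper's: the paper likewise reduces everything to the fact that finite subgroups of $J(E)$ are products of two cyclic groups (with $J(E)[r]\cong(\bZ/r\bZ)^2$ exactly when $r$ is prime to $\car(k)$) and then declares that ``the rest of the proof is the same'' as in \cref{L:from T to compact Tc}. You have simply written out the subdirect-product bookkeeping that the paper leaves implicit.
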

\begin{proof}
	In the proof of \cref{L:from T to compact Tc} we used that a finite subgroup of $k^\times$ must be cyclic.
	We also used that $k^\times$ has a primitive $r$th root of unity if and only if $r$ is prime to $\car(k)$.
	
	The analog for an elliptic curve is: a finite subgroup of $J(E)$ is a product of two cyclic groups.
	The group of torsion points $J(E)[r]$ is isomorphic to $\bZ/r\times \bZ/r$ if and only if $r$ is prime to $\car(k)$.
	
	Hence the quotient $\bX/\bX_\cP$ is a product of a free abelian group and two cyclic groups.
	The rest of the proof is the same.
\end{proof}
We can now easily deduce
\begin{corollary} \hfill
	\begin{enumerate}
	\item For any $t\in T$ there exists $x\in T_c$ such that $\Phi_x=\Phi_t$. 
	\item For any $\cP\in T_E$ there exist $x_1,x_2\in T_c$ such that $\Phi_\cP = \Phi_{x_1}\cap \Phi_{x_2}$.
	\end{enumerate}
\end{corollary}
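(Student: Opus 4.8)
The plan is to deduce both statements formally from \cref{L:from T to compact Tc} and \cref{L:from E to two compact Tc}, the point being that each of the closed subsets $\Phi_t$, $\Phi_x$, $\Phi_\cP$ is nothing but the intersection of the ambient root system $\Phi$ with the associated sublattice of characters. Indeed, straight from the definitions,
\[
\Phi_t = \Phi \cap \bX_t, \qquad \Phi_x = \Phi \cap \bX_x, \qquad \Phi_\cP = \Phi \cap \bX_\cP,
\]
since $\alpha \in \Phi_t$ means exactly that $\alpha \in \Phi$ and $\alpha(t) = 1$, i.e. $\alpha \in \bX_t$, and likewise in the two other cases. So once we know that the lattices $\bX_t$, $\bX_\cP$ can be matched (resp.\ written as an intersection of two) with lattices $\bX_x$, $x \in T_c$, the same holds after intersecting with $\Phi$.

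First I would prove (1): given $t \in T$, apply \cref{L:from T to compact Tc}(1) to obtain $x \in T_c$ with $\bX_t = \bX_x$; intersecting both sides with $\Phi$ gives $\Phi_t = \Phi_x$. Then for (2): given $\cP \in T_E$, apply \cref{L:from E to two compact Tc} to obtain $x_1, x_2 \in T_c$ with $\bX_\cP = \bX_{x_1} \cap \bX_{x_2}$, and compute
\[
\Phi_\cP = \Phi \cap \left(\bX_{x_1} \cap \bX_{x_2}\right) = \left(\Phi \cap \bX_{x_1}\right) \cap \left(\Phi \cap \bX_{x_2}\right) = \Phi_{x_1} \cap \Phi_{x_2},
\]
using only that intersection with $\Phi$ commutes with intersection of sublattices.

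I do not expect any obstacle here: all of the substantive work is contained in \cref{L:from T to compact Tc} and \cref{L:from E to two compact Tc}, which transport torsion information from $k^\times$ and $J(E)$ to the compact real torus $T_c$, and the corollary is a purely formal consequence. If one wanted the sharper statement that $\cA_\trig$ (resp.\ $\cA_\ell$) consists \emph{precisely} of the subsets $\Phi_x$ (resp.\ $\Phi_{x_1}\cap\Phi_{x_2}$) for $x, x_1, x_2 \in T_c$, one would additionally invoke the converse halves of the two lemmas together with the good-characteristic hypothesis; but for the stated corollary this is unnecessary.
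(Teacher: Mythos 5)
Your proof is correct and is exactly the intended argument: the paper states the corollary with no proof ("We can now easily deduce"), and the identification $\Phi_t = \Phi \cap \bX_t$, $\Phi_\cP = \Phi \cap \bX_\cP$ followed by an application of \cref{L:from T to compact Tc} and \cref{L:from E to two compact Tc} is the only content needed. Nothing is missing.
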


Conversely, by \cite[Prop. 30,32]{McNSomm} we have
\begin{proposition}\label{P:char(k) good then pseudo Levi same as char 0}
	Assume $\car(k)$ is good for $G$.
	\begin{enumerate}
		\item For any $x\in T_c$ there exists $t\in T$ such that $\Phi_t=\Phi_x$.
		\item For any $x_1,x_2\in T_c$ there exists $\cP\in T_E$ such that $\Phi_\cP=\Phi_{x_1}\cap\Phi_{x_2}$.
	\end{enumerate}
\end{proposition}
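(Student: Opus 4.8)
Since the reverse inclusions are the content of the preceding Corollary, it suffices to produce, for a given $x\in T_c$ (resp. $x_1,x_2\in T_c$), an element $t\in T$ (resp. $\cP\in T_E$) realizing the stated subsystem. This is exactly \cite[Prop.~30, 32]{McNSomm}; the plan is to explain the shape of the argument and where goodness is used. Statement (2) will be deduced from (1) by an elementary manipulation of torsion subgroups of $J(E)$, in the same way that \cref{L:from E to two compact Tc} is deduced from \cref{L:from T to compact Tc}, so the real work is in (1). For (1) I would first reduce to the case where $x$ has finite order, and then appeal to the Borel--de Siebenthal description of the subsystems $\Phi_x$.

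\emph{Reduction of (1) to finite order.} Let $x\in T_c$ and put $D=\overline{\langle x\rangle}$, a closed subgroup of $T_c$; write $D=D^{\circ}\cdot F$ with $D^{\circ}$ a subtorus and $F$ finite cyclic. The subtorus $D^{\circ}$ is cut out by a saturated sublattice of $\bY$, hence corresponds to a subtorus $S\subseteq T$ defined over $k$ with $\{\alpha\in\Phi:\alpha|_S=1\}=\{\alpha\in\Phi:\alpha|_{D^{\circ}}=1\}$; let $L:=C_G(S)$, a Levi subgroup with root system $\Phi_L$ and maximal torus $T$ (goodness is inherited by Levis). Every root in $\Phi_x$ is trivial on $D\supseteq D^{\circ}$, so $\Phi_x\subseteq\Phi_L$; and since each root of $L$ is trivial on $S$, writing $x=d_0f$ with $d_0\in D^{\circ}$ and $f\in F$ gives $\alpha(x)=\alpha(f)$ for all $\alpha\in\Phi_L$, so that $\Phi_x$ coincides with the analogous subset of $\Phi_L$ attached to the finite-order element $f$. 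Conversely, if one finds $t_1\in T$ with $\Phi_{t_1}\cap\Phi_L=\Phi_x$, then choosing $s_0\in S$ generic enough that $\alpha(s_0)\neq\alpha(t_1)^{-1}$ for the finitely many $\alpha\in\Phi\setminus\Phi_L$ (each such condition cuts out a proper subvariety of $S$, and $k$ is infinite), the element $t:=t_1s_0$ satisfies $\Phi_t=\Phi_x$. Thus one is reduced to solving (1) inside $L$ for the finite-order element $f$; in other words we may assume $x$ has finite order.

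\emph{The finite-order case.} If the order of $x$ is prime to $p=\car(k)$ (in particular if $\car(k)=0$), then \cref{L:from T to compact Tc}\,(2) produces $t\in T$ with $\bX_t=\bX_x$, hence $\Phi_t=\Phi_x$. In general, $\Sigma:=\Phi_x$ is a closed subsystem of Borel--de Siebenthal type: it is obtained from $\Phi$ by a finite sequence of steps, each passing either to a Levi subsystem (deleting a node of a Dynkin diagram) or to a pseudo-Levi subsystem (deleting a node of the \emph{extended} Dynkin diagram of an irreducible component). A Levi step is realized over any field by a generic point of a subtorus, and a pseudo-Levi step deleting a node of multiplicity $c$ in the relevant highest root is realized by a $T$-point of order $c$. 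When $\car(k)=p$ is good for $G$, by definition $p$ divides no multiplicity occurring in a highest root; since the bad primes of a closed subsystem of $\Phi$ lie among those of $\Phi$, goodness persists along the whole sequence, so every step is realized over $k$ by a torsion point of order prime to $p$. Composing, $\Sigma=\Phi_t$ for some $t\in T(k)$ of order prime to $p$. This is the content of \cite[Prop.~30]{McNSomm}, to which I would defer for the combinatorial bookkeeping.

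\emph{Deducing (2), and the main obstacle.} Given $x_1,x_2\in T_c$, part (1) gives $t_1,t_2\in T(k)$ with $\Phi_{t_i}=\Phi_{x_i}$, and since each $t_i$ has order prime to $p$, the quotient $\bX/\bX_{t_i}$ is cyclic of order prime to $p$. Set $\bX_0:=\bX_{t_1}\cap\bX_{t_2}$, so that $\bX_0\cap\Phi=\Phi_{x_1}\cap\Phi_{x_2}$ and $\bX/\bX_0$ embeds into $\bX/\bX_{t_1}\oplus\bX/\bX_{t_2}$, hence is a finite abelian group of order prime to $p$ generated by at most two elements. As $J(E)[r]\cong(\bZ/r)^2$ whenever $p\nmid r$, such a group embeds into $J(E)$, and any surjection $\bX\twoheadrightarrow\bX/\bX_0$ followed by this embedding defines $\cP\in T_E=\Hom(\bX,J(E))$ with $\ker\cP=\bX_0$, whence $\Phi_\cP=\bX_0\cap\Phi=\Phi_{x_1}\cap\Phi_{x_2}$; this is \cite[Prop.~32]{McNSomm}. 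The only substantive point is the finite-order case of (1) when $p$ divides the order of $x$: the Levi reduction, the $s_0$-trick, and the deduction of (2) are all formal, whereas the assertion that the Borel--de Siebenthal iteration producing $\Phi_x$ only ever requires torsion orders prime to $p$ in good characteristic rests on the explicit list of bad primes and on their compatibility with passage to pseudo-Levi subgroups — which is precisely the input we take from \cite{McNSomm}.
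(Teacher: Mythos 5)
The paper gives no proof of this proposition beyond the citation of \cite[Prop.~30, 32]{McNSomm}, and your proposal ultimately defers to the same source for the one substantive input (realizing a pseudo-Levi subsystem over $k$ by a torsion point of order prime to $p$ when $p$ is good), so the two take essentially the same route; your surrounding reductions are a correct and useful gloss, paralleling the paper's \cref{L:from T to compact Tc} and \cref{L:from E to two compact Tc}. One slip worth flagging in your deduction of (2): the elements $t_i$ produced by your part (1) need not have finite order (the generic $s_0\in S$ from your Levi reduction typically has infinite order, e.g.\ over $\bC$), so $\bX/\bX_{t_i}$ is in general only $(\text{free})\oplus(\text{cyclic of order prime to }p)$ rather than finite cyclic, and your assertion that $\bX/\bX_0$ is a finite group is not correct as stated. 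This is harmless and repairable — either split off the free part by a Levi reduction on the elliptic side before invoking $J(E)[r]\cong(\bZ/r)^2$, as the paper's \cref{L:from E to two compact Tc} does, or observe that only the equality $\Phi_\cP=\bX_0\cap\Phi$ is needed — but the argument should be phrased accordingly.
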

By construction, $\Phi_S$ is a closed subset of $\Phi$.
Let 
\[ \widetilde{\Delta} := \Delta \sqcup \{\alpha_0^{(l)}: l  \text{ a connected component of }\Phi\} \]
be the set of simple roots of the corresponding affine root system, where $\alpha_0^{(l)}$ is the negative of the longest root in the corresponding connected component $\Phi_+^{(l)}$. (If the Dynkin diagram is not connected there are several longest roots corresponding to each connected component and we want to add the negative of each of them to $\Delta$.)

Given a subset $S \subset \widetilde{\Delta}$ put $\Phi_S := \bZ S \cap \Phi$. 

Just as Levi subgroups correspond, up to conjugation, to subsets of the simple roots, pseudo-Levi subgroups admit a similar characterization. 

\begin{proposition}\cite[Lemma 29]{McNSomm},\cite[Lemma 5.4]{lusztig_classification_1995}
	\label{P:McNSomm-trig}
	The set $\{\Phi_x\mid x\in T_c\}/W$ consists of subsets of the form $\Phi_S$ for some proper subset $S$ of $\widetilde{\Delta}$ as defined above. 
\end{proposition}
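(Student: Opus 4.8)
The plan is to reduce the statement to the classical alcove geometry for the compact torus $T_c$; this is exactly the Borel--de Siebenthal classification, recorded as \cite[Lemma 29]{McNSomm} and \cite[Lemma 5.4]{lusztig_classification_1995}, which could also simply be cited. Write $V:=\bY\otimes_\bZ\bR$, so that $T_c=V/\bY$; using the functorial identification of $\bX$ with the character group of $T_c$, a character $\alpha\in\bX$ acts on the image $x\in T_c$ of a vector $v\in V$ by $\alpha(x)=e^{2\pi i\langle\alpha,v\rangle}$, where $\langle\,,\,\rangle$ is the pairing of $\bX$ with $V$. Hence, if $v$ maps to $x$, then
\[
\Phi_x=\{\alpha\in\Phi\mid\langle\alpha,v\rangle\in\bZ\}.
\]

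First I would move $v$ into the closed fundamental alcove. Let $W_{\mathrm{aff}}=W\ltimes\bY$ act on $V$ in the standard affine way. Since the coroot lattice $\bZ\Phi^\vee$ is contained in $\bY$ and the closed fundamental alcove $\overline{A}$ is a fundamental domain for the affine Weyl group $W\ltimes\bZ\Phi^\vee$, every $W_{\mathrm{aff}}$-orbit in $V$ meets $\overline{A}$. Translating $v$ by an element of $\bY$ does not change $x$, while applying $w\in W$ replaces $\Phi_x$ by $w(\Phi_x)$; so, working modulo $W$, I may assume $x$ is the image of some $v\in\overline{A}$.

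Next I would read off the subset $S$ from the walls of $\overline{A}$ through $v$. Decompose $\Phi=\bigsqcup_l\Phi^{(l)}$ into irreducible components with simple roots $\Delta^{(l)}$ and highest root $\theta_l$, so that $\alpha_0^{(l)}=-\theta_l$ and $\overline{A}=\{v:\langle\alpha_i,v\rangle\ge0\text{ for all }\alpha_i\in\Delta,\ \langle\theta_l,v\rangle\le1\text{ for all }l\}$. For a positive root $\alpha=\sum_i m_i\alpha_i\in\Phi^{(l)}$, both $\alpha$ and $\theta_l-\alpha$ are nonnegative integral combinations of simple roots (every positive root lies below $\theta_l$ in the dominance order), so $0\le\langle\alpha,v\rangle\le\langle\theta_l,v\rangle\le1$ for $v\in\overline{A}$. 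I then set
\[
S:=\{\alpha_i\in\Delta:\langle\alpha_i,v\rangle=0\}\cup\{\alpha_0^{(l)}:\langle\theta_l,v\rangle=1\}\subseteq\widetilde{\Delta},
\]
the set of affine simple roots whose wall contains $v$. The inclusion $\Phi_S\subseteq\Phi_x$ is immediate, since $\alpha(x)=1$ for every $\alpha\in S$ (reading $\alpha_0^{(l)}=-\theta_l$ as a character of $T_c$), hence $\bZ S\subseteq\bX_x$ and $\Phi_S=\bZ S\cap\Phi\subseteq\Phi_x$. For the reverse inclusion, given $\alpha\in\Phi_x$, which I may take to be a positive root of some $\Phi^{(l)}$, the bound above gives $\langle\alpha,v\rangle\in\{0,1\}$; if $\langle\alpha,v\rangle=0$, writing $\alpha=\sum m_i\alpha_i$ with $m_i\ge0$ and using $\langle\alpha_i,v\rangle\ge0$ forces $\langle\alpha_i,v\rangle=0$ whenever $m_i>0$, so $\alpha\in\bZ S$; if $\langle\alpha,v\rangle=1$, then $\langle\theta_l,v\rangle=1$, so $\alpha_0^{(l)}\in S$, and $\langle\theta_l-\alpha,v\rangle=0$ puts $\theta_l-\alpha\in\bZ S$ by the previous case, whence $\alpha=-\alpha_0^{(l)}-(\theta_l-\alpha)\in\bZ S$. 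Thus $\Phi_x=\Phi_S$. Finally $S$ is a proper subset of $\widetilde{\Delta}$: in each component the alcove simplex has $|\Delta^{(l)}|+1$ bounding walls with empty common intersection, so $v$ lies on at most $|\Delta^{(l)}|$ of them, giving $S\cap\widetilde{\Delta}^{(l)}\subsetneq\widetilde{\Delta}^{(l)}$ and a fortiori $S\subsetneq\widetilde{\Delta}$. (For good measure I would also note the easy converse: every proper $S\subsetneq\widetilde{\Delta}$ is realized, by choosing $v\in\overline{A}$ lying on exactly the walls indexed by $S$, which exists because $S$ omits a wall in each component.)

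I do not anticipate a real obstacle here: the only point demanding care is the bookkeeping between the three avatars of a root --- a character of $T_c$, the linear form $\langle\alpha,-\rangle$ on $V$, and (up to the sign $\alpha_0^{(l)}=-\theta_l$) an affine simple root --- together with a correct invocation of the standard facts used above, namely that $\overline{A}$ is a fundamental domain for $W\ltimes\bZ\Phi^\vee$, the description of its walls, and the inequality $\alpha\le\theta_l$ for positive roots. All of these are routine.
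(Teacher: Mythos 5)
Your proof is correct and follows essentially the same route as the paper: reduce modulo the affine Weyl group to a point of the closed fundamental alcove, then use the inequalities $0\le\langle\alpha,v\rangle\le\langle\theta_l,v\rangle\le1$ to identify $\Phi_x$ with $\Phi_S$ for $S$ the set of alcove walls through $v$. The only difference is that you run the alcove argument directly for an arbitrary $v$, whereas the paper first splits $x=x's$ into a torus-generating part and a finite-order part — a reduction your version shows to be unnecessary (and which the paper does not fully close, since it leaves $\Phi_x$ as an intersection $\Phi_{x'}\cap\Phi_s$); your treatment of properness of $S$ and of the converse is a welcome addition.
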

\begin{proof}	
	We recall the proof from loc.cit. for the convenience of the reader.
	Put $\pi\colon\ft_\bR = \bY\otimes_\bZ \bR \to T_c$ the projection.
	
	Let $x\in T_c$.
	If the closure of the subgroup generated by $x$ is a  torus, its centralizer is a Levi subgroup and we're done.
	
	Otherwise, write $x=x's$ with $s$ of finite order and $x'$ that generates a torus.
	We have $\Phi_{x's} = \Phi_{x'}\cap \Phi_s$ and since $\Phi_{x'}$ corresponds to a Levi subgroup we are left to deal with $\Phi_s$.
	So we can suppose $x$ is of finite order in $T_c$.
	
	The affine Weyl group $\bX_*(T)\ltimes W$ acts on $\ft_\bR$ and, up to conjugating $x$ by some element of $W$, we can assume $x$ lies in the image of the fundamental alcove through the map $\pi\colon\ft_\bR\to T_c$. 
	Hence we  we can take $\tilde x\in\pi\inv(x)$ in the fundamental alcove.
	
	Looking at the roots as linear functions on $\ft_\bR$ we have
	\[ \Phi_x=\{\alpha\in\Phi\mid \alpha(\tilde x)\in\bZ\}. \]
	
	Define $S:=\{\alpha\in \widetilde\Delta\} \mid \alpha(\tilde x)\in\{0,1\}\}$. 
	By construction $\Phi_S\subset \Phi_x$. 
	Let us prove that $\Phi_S=\Phi_x$.
	
	Recall that the fundamental alcove in $\ft_\bR$ is defined by the inequalities
	\[ 0\le \alpha(y)\le 1, \text{ for all }\alpha\in\Phi^+. \]
	Denoting by $\gamma^{(l)}=-\alpha_0^{(l)}$ the highest root of $\Phi^{(l)}$, the fundamental alcove can be rewritten as
	\begin{align*}
	0\le \alpha_i(y)\le 1 & \text{ for all }\alpha_i\in\Delta \\
	0\le \gamma^{(l)}(y)\le 1 & \text{ for all connected components } l \text{ of }\Phi.
	\end{align*}
	
	For $\alpha\in\Phi_x^{(l),+}$ we have $0\le \alpha(\tilde x)\le \gamma^{(l)}(\tilde x)\le 1$ hence $\alpha(\tilde x)\in\{0,1\}$.
	
	If $\alpha(\tilde x)=0$, then $\alpha$, being a positive sum of simple roots, is a sum of simple roots all of which must belong to $S$, hence $\alpha\in\Phi_S$. 
	
	If $\alpha(\tilde x)=1$ then $\gamma(\tilde x)-\alpha(\tilde x) = 0$ hence all the simple roots appearing in $\gamma^{(l)}-\alpha$ must be in $S$. 
	Thus $\gamma^{(l)}-\alpha\in\Phi_S$ and since $\gamma^{(l)}(\tilde x)=1$ we also have $\gamma^{(l)}\in\Phi_S$. 
	We deduce $\alpha\in\Phi_S$ and the proof is finished.
\end{proof}

\begin{remark}\label{R:algorithm Borel de Siebenthal}
	One can think of the above construction in the following way. 
	Take the extended Dynkin diagram whose nodes are indexed by $\widetilde{\Delta}$ and remove some non-zero number of nodes (at least one from each connected component). 
	This will then be the (non-extended) Dynkin diagram corresponding to some pseudo-Levi subgroup of $G$. 
	In fact it is known that the Dynkin diagrams corresponding to closed subsets $\Sigma\subset\Phi$ can be obtained by repeatedly applying this procedure, see \cite{BdS} for more details.
\end{remark}

\begin{proposition}\label{P:the set A_ell}
	Assume that $\car(k)$ is good for $G$.
	The collection of closed subsets $\cA_{\ell}$ consists precisely of intersections of two elements of $\cA_\trig$. 
	In other words, the $G(\Sigma)$ for $\Sigma \in \cA_{\ell}$ are precisely the neutral component of intersections of two pseudo-Levi subgroups.
\end{proposition}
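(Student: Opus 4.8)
The plan is to read the statement off from the two reformulations already assembled above, using the hypothesis that $\car(k)$ is good only to move between the compact torus $T_c$ and the algebraic torus $T$. First I would record that, under this hypothesis, $\cA_\trig=\{\Phi_t\mid t\in T\}$ coincides with $\{\Phi_x\mid x\in T_c\}$: the inclusion ``$\subseteq$'' follows from \cref{L:from T to compact Tc}(1) (together with $\Phi_t=\bX_t\cap\Phi$), and ``$\supseteq$'' is exactly \cref{P:char(k) good then pseudo Levi same as char 0}(1). So it is enough to show $\cA_\ell=\{\Phi_{x_1}\cap\Phi_{x_2}\mid x_1,x_2\in T_c\}$.

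For ``$\subseteq$'' I would take $\Sigma\in\cA_\ell$, write $\Sigma=\Phi_\cP$ with $\cP\in T_E$, and use \cref{L:from E to two compact Tc} to get $x_1,x_2\in T_c$ with $\bX_\cP=\bX_{x_1}\cap\bX_{x_2}$; intersecting with $\Phi$ gives $\Sigma=\Phi_{x_1}\cap\Phi_{x_2}$, and then \cref{P:char(k) good then pseudo Levi same as char 0}(1) replaces each $x_i$ by some $t_i\in T$ with $\Phi_{x_i}=\Phi_{t_i}$, so $\Sigma=\Phi_{t_1}\cap\Phi_{t_2}$ is an intersection of two elements of $\cA_\trig$. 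For ``$\supseteq$'' I would start from $\Sigma_i=\Phi_{t_i}\in\cA_\trig$, use \cref{L:from T to compact Tc}(1) to rewrite $\Phi_{t_i}=\Phi_{x_i}$ with $x_i\in T_c$, and invoke \cref{P:char(k) good then pseudo Levi same as char 0}(2) to produce $\cP\in T_E$ with $\Phi_\cP=\Phi_{x_1}\cap\Phi_{x_2}=\Sigma_1\cap\Sigma_2$, so that $\Sigma_1\cap\Sigma_2\in\cA_\ell$. If one wants the strata made explicit, \cref{P:McNSomm-trig} then identifies each $\Sigma_i$, up to $W$, with a subsystem $\Phi_S$ for $S\subsetneq\widetilde\Delta$.

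Finally I would translate this into the language of subgroups via \cref{T:Borel de Siebenthal}. For pseudo-Levis $M_i=C_G(t_i)^\circ\supseteq T$ with root systems $\Sigma_i=\Phi_{t_i}$, the semisimplicity of $t_i$ gives $\Lie C_G(t_i)=\fg^{t_i}=\ft\oplus\bigoplus_{\alpha\in\Sigma_i}\fg_\alpha$, so $M_1\cap M_2$ contains $T$ and has Lie algebra $\ft\oplus\bigoplus_{\alpha\in\Sigma_1\cap\Sigma_2}\fg_\alpha$; since $\Sigma_1\cap\Sigma_2$ is a symmetric closed subsystem, its neutral component is the connected reductive subgroup with maximal torus $T$ and root system $\Sigma_1\cap\Sigma_2$, namely $G(\Sigma_1\cap\Sigma_2)$. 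Hence the $G(\Sigma)$, $\Sigma\in\cA_\ell$, are exactly the neutral components of intersections of two pseudo-Levi subgroups. The only genuinely nontrivial input is \cref{P:char(k) good then pseudo Levi same as char 0} (the $T_c\leftrightarrow T$ comparison, imported from \cite{McNSomm}), and the hard part is really concentrated there; the rest is a short chase with the lattices $\bX_\bullet$, so I do not expect any further obstacle.
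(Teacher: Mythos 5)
Your proof is correct and follows essentially the same route as the paper, whose own proof is just the one-line observation that the statement follows from \cref{P:char(k) good then pseudo Levi same as char 0} together with the preceding corollary; you have simply spelled out the lattice bookkeeping and added the (routine) translation into subgroups via Borel--de Siebenthal.
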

\begin{proof}
	It follows from \cref{P:McNSomm-trig} together with \cref{P:char(k) good then pseudo Levi same as char 0}
\end{proof}

\begin{remark}
	The elliptic closed subsets are obtained, up to $W$-conjugation, by applying two times the Borel--de Siebenthal algorithm: take the affine Dynkin diagram and remove some vertices and then consider the closed subset of roots generated by it (which is a root system again).
\end{remark}

\bibliographystyle{alpha}
\bibliography{MyLibrary}


\end{document}